\documentclass[10pt, reqno]{amsart}

\usepackage[utf8]{inputenc}
\usepackage[margin=1.2in]{geometry}

\usepackage[T1]{fontenc}
\usepackage[english]{babel}
\usepackage{amssymb, amsmath, amsthm, mathrsfs}
\usepackage[dvipsnames]{xcolor}
\usepackage{amsfonts}%
\usepackage{enumitem}
\usepackage{mathtools}
\usepackage{stmaryrd}
\usepackage{thmtools}
\usepackage{bm}
\usepackage{comment}
\usepackage{hyperref}
\usepackage{amscd}
\usepackage{tikz-cd}
\usetikzlibrary{arrows.meta,positioning,calc}
\usepackage{caption}

\newcommand{\dashbullet}{\raisebox{0.2ex}{\rule{0.8em}{0.4pt}}}

\newlist{subtlelist}{itemize}{1}
\setlist[subtlelist,1]{
  label=\dashbullet,
  labelsep=0.6em,
  leftmargin=2em,
  itemsep=0.25\baselineskip,
  topsep=0.5\baselineskip
}

\hypersetup{
    colorlinks=true,
    linkcolor=NavyBlue,
    citecolor=NavyBlue,
    urlcolor=NavyBlue
}

\theoremstyle{plain}
\newtheorem{theorem}{Theorem}[section]
\newtheorem{proposition}[theorem]{Proposition}
\newtheorem{lemma}[theorem]{Lemma}
\newtheorem{corollary}[theorem]{Corollary}

\theoremstyle{definition}
\newtheorem{definition}[theorem]{Definition}
\newtheorem{remark}[theorem]{Remark}
\newtheorem{axiom}[theorem]{Structural axiom}

\theoremstyle{plain}
\newtheorem{hypothesis}[theorem]{Hypothesis}
\newtheorem{conjecture}[theorem]{Conjecture}

\newcommand{\Q}{\mathbb{Q}}
\newcommand{\Z}{\mathbb{Z}}
\newcommand{\R}{\mathbb{R}}
\newcommand{\C}{\mathbb{C}}

\newcommand{\ZF}{\mathsf{ZF}}

\newcommand{\Ho}{\operatorname{Ho}}
\newcommand{\Reg}{\operatorname{Reg}}

\newcommand{\coloneqq}{\mathrel{\mathop:}=}
\newcommand{\defect}{\delta_{\mathrm{mod}}}

\DeclareFontFamily{U}{wncy}{}
\DeclareFontShape{U}{wncy}{m}{n}{<->wncyr10}{}
\DeclareSymbolFont{mcy}{U}{wncy}{m}{n}
\DeclareMathSymbol{\Sha}{\mathord}{mcy}{"58}
\newcommand{\Sel}{\operatorname{Sel}}

\newcommand{\parahead}[1]{%
  \par\medskip
  \noindent\textit{#1.}\enspace
}

\begin{document}

\title[Intensional semantics for comparison problems]{Intensional semantics for comparison problems in arithmetic geometry}
\date{\today}
\author{R. Laniewski}

\subjclass[2020]{Primary 18N10, 03F35; Secondary 18D20, 03B38, 11G05, 11G40}
\keywords{Lawvere enriched categories, quantitative categories, $(2,1)$-categories, C-systems, identity types, intensional semantics, transport cost, Szpiro conjecture, $abc$ conjecture, Birch and Swinnerton-Dyer conjecture, Tate--Shafarevich group}

\begin{abstract}
This work proposes a semantic environment for arithmetic geometry in which the passage between two presentations of one object carries a measurable weight, rather than dissolving into a transparent identification. Following Lawvere, a category equipped with a cost on its morphisms is enriched over the monoidal poset $([0,\infty],\ge,+)$, so that a height becomes a map controlled by a generalized distance. Following Voevodsky, the identity type of a C-system is a locus of structure open to transport. A labeled quantitative $(2,1)$-category reconciles the two readings, with invertible $2$-morphisms witnessing that two transports differ only by a coherent change of presentation. A comparison between two labeled realizations of one object becomes a transport inequality. If it is asked to descend functorially to an inequality between extensional invariants, then it collapses to a tautology, unless a non-zero load is assigned to the transport itself, at which point the inequality regains its content.

We formulate an intensional Szpiro inequality for elliptic packages, where the discriminant height transports along a morphism with a controlled model defect, and relate a bound of this shape to the $abc$ conjecture.

We then treat the Birch and Swinnerton-Dyer conjecture in its pre-modularity form, so that the rank and refined forms descend to $\mathrm{ACA}_{0}$ without analytic continuation. The refined constant reads as a transport cost in which the regulator, the period, the local Tamagawa data, and the order of the Tate--Shafarevich group $\mathrm{Sha}(E/K)$ are held apart by the descent obstruction. We reduce the finiteness of $\mathrm{Sha}(E/K)$ to one hypothesis.
\end{abstract}

\maketitle

\tableofcontents

\section{Introduction}\label{sec:intro}

A convention governs much of contemporary arithmetic geometry, so familiar that it is rarely stated. Once two number fields, two schemes, or two elliptic curves are known to be isomorphic, and once an isomorphism between them has been chosen, one treats them as the same object of discourse. Heights and volumes are then required to be insensitive to the route by which the identification was reached. This discipline is convenient, and for a great many purposes it is the correct one. It is nonetheless a choice, and it carries a cost. It discards the history of a construction, and with that history a quantity that, in certain arithmetic questions, one would wish to measure.

The present work takes that quantity as its subject. We develop a semantic environment in which the passage from one presentation of an arithmetic object to another carries an explicit weight, and in which a height is permitted to change along such a passage by an amount bounded by that weight. The proposal does not depart from established mathematics. It draws together three lines of thought, each of long standing, and observes that they describe one structure.

The first line is the theory of enriched categories in the form given to it by Lawvere~\cite{Lawvere1973}. A generalized metric space, in his reading, is nothing other than a small category enriched over the monoidal poset $([0,\infty],\ge,+)$, with the triangle inequality appearing as the compositional law and with no requirement of symmetry. A category whose morphisms carry a non-negative cost, subadditive under composition and vanishing on identities, is precisely such an enriched structure. The distance from one object to another becomes the least cost of a transport between them, and a height becomes a non-expansive map controlled by this metric. This is the language in which the weight of a transport is expressed.

The second line is the theory of C-systems and univalent foundations developed by Voevodsky~\cite{voevodsky2023martin}, together with the $\infty$-topos semantics of Riehl~\cite{Riehl2024}. In a C-system the identity type $\mathrm{Id}_A(a,b)$ between two terms is itself a type rather than a bare proposition, its inhabitants witnessing particular paths of identification, and its J-structure regulating how an identification may be transported or contracted. When the system is non-trivial in the homotopical sense, two terms may be identified by paths that carry distinct content. This is the language in which we register that an identification is a structure rather than a redundancy.

The third line is the resource-sensitive type theory of Atkey~\cite{Atkey2018}, whose quantitative categories with families enrich the standard semantics of Martin-L\"of type theory by a semiring of usage annotations. The role of the semiring is played here by the Lawvere quantale $([0,\infty],\ge,+)$, and the usage of a transport is its cost. This is the language in which the metric and the identity type are reconciled, so that transport along a path of zero usage preserves a height exactly, while transport along a path of positive usage incurs a bounded penalty.

These three lines meet inside a single object, which we term a labeled quantitative $(2,1)$-category. Its objects are arithmetic data carrying labels and construction histories. Its $1$-morphisms are transports endowed with a cost. Its $2$-morphisms are invertible identifications between transports, present so that two transports differing only by a coherent change of auxiliary choice are never forced into literal equality, yet silent in the numerical account, since the cost is required to depend on a transport only up to such a $2$-isomorphism. Over this structure sits a forgetful functor to the extensional category of ordinary arithmetic geometry, and the fiber over a fixed object -- its vertical groupoid -- is the place where pure intensionality lives. In the extensional world this groupoid is contractible and every transport is free. In the intensional world it retains positive distances, and these distances are what the framework measures.

The contribution of this device to the study of arithmetic comparison statements is the following. A comparison between two presentations of one underlying object is a transport inequality in the labeled quantitative $(2,1)$-category. We isolate, in a form requiring no special geometry, the exact condition under which such an inequality descends to a non-trivial inequality between extensional invariants. If one demands that the descent be functorial with respect to extensional isomorphisms and transparent to the labels at the end, then the two sides of the comparison are forced to be evaluations of one invariant on one object, and the inequality collapses to the harmless form $H \le H + \Delta$. The content of a comparison statement, when there is any, resides exactly in the refusal to contract the vertical groupoid -- that is, in the assignment of a positive load to the transport. We regard this observation as the categorical reinterpretation of comparison problems named in the title.

The remainder of the work carries this reading to a Diophantine question and then grounds it. We introduce elliptic packages, in which an elliptic curve is presented together with a chosen model, local data, and a label. A morphism of packages transports the discriminant height along a change of Weierstrass coordinates, and the cost of the transport -- the model defect -- bounds the increase of the height. Within this quantitative category we state an intensional Szpiro inequality, in which the retained defect appears as a necessary term, and we describe how a bound of this shape relates to the $abc$ conjecture through the classical reduction of Szpiro~\cite{Szpiro1990} and Masser--Oesterl\'e~\cite{MasserOesterle1985}, once the defect is held small. We then descend the entire account to a pre-set-theoretic arithmetic universe formalizable in the subsystem $\mathsf{ACA}_0$ of second-order arithmetic~\cite{simpson2009subsystems}, where the inequalities become comparisons between finite tuples of integers. A small formalization in the Coq system~\cite{Bertot2004} exhibits the grammar of the framework inside a kernel, keeping the $2$-categorical level explicit while the scalar bounds attach to $1$-morphisms alone.

It remains to delimit what is, and what is not, claimed. We do not assert that the intensional Szpiro inequality is a theorem, and the central Diophantine statement is offered as a hypothesis whose conditional consequences we trace. What we offer is a semantic environment, with a precise categorical shape and a conservative arithmetic shadow, in which comparison statements of a certain kind acquire stable and measurable content. The environment is, we believe, of independent interest, and the same vocabulary is turned, later in this paper, toward the Birch and Swinnerton-Dyer conjecture, where regulators, Tate--Shafarevich data, and periods invite a reading as transport costs over a labeled base. That application occupies Sections~\ref{sec:BSD} and~\ref{sec:TateShafarevich}.

\parahead{Orientation} Throughout, two environments recur, and it is convenient to name them at the outset. Universe~H is the Hilbertian and extensional background, where a single model of $\ZF$ is fixed, isomorphic objects are identified once an isomorphism has been chosen, and heights depend only on isomorphism classes. Universe~A is the intensional and quantitative background, where objects carry labels and construction histories, where identifications carry a cost, and where the logical distance between two isomorphic copies may register as a datum. A third environment, Universe~P, appears later as the pre-set-theoretic arithmetic recipient of the framework, formalizable in $\mathsf{ACA}_0$. The letters serve only as signposts. The mathematics resides in the structures they label.

\section{Two disciplines of identification}\label{sec:identification}

Before any cost can be measured, one must agree on when two presentations are to count as the same object of discourse. Two disciplines are available, and the distance between them is the subject of this work.

\begin{definition}[Static and dynamic identification]\label{def:StaticDynamic}
Let $\mathcal{C}$ be a category of arithmetic objects such as number fields or schemes.

\smallskip
\noindent \textit{Static identification.} Under \emph{static identification}, one works, for Diophantine purposes, with isomorphism classes in $\mathcal{C}$. Once two objects $X$ and $Y$ of $\mathcal{C}$ are known to be isomorphic, they may be interchanged without comment and treated as the same object of discourse.

\smallskip
\noindent \textit{Dynamic identification.} Under \emph{dynamic identification}, the atomic object of discourse comprises both the underlying entity $X$ of $\mathcal{C}$ and a label encoding its contextual origin. Two labeled objects $(X,\ell_X)$ and $(Y,\ell_Y)$ keep their semantic separation within formulas regardless of any structural isomorphism $X \cong Y$ in $\mathcal{C}$, so that intensional distinctions persist as mathematical content.

Static identification reflects the prevailing standpoint of arithmetic geometry. Dynamic identification models a discipline in which distinct labeled copies of arithmetic data are kept apart even where the underlying objects coincide up to isomorphism.
\end{definition}

\begin{remark}[Structural rather than arbitrary identifications]\label{rem:ArbitraryStructural}
The phrase ``removing labels'' is to be understood in a purely structural sense. The background environment $\mathcal{S}$ is a fixed model of set-theoretic semantics for arithmetic geometry. Within $\mathcal{S}$, static identification applies only along isomorphisms preserving the relevant structure. When two number fields, schemes, or elliptic curves are isomorphic, and once an isomorphism has been fixed, one recovers the familiar convention by which these objects are treated as identical for Diophantine purposes.

Outside this scope, no identifications are imposed. In particular, the arguments do not rely on collapsing arbitrary distinct elements of a ring, nor on fusing nested universes in ways that would violate the axioms of $\ZF$. When the text speaks of forgetting labels, it refers to discarding auxiliary names and indices once a structural or protocol-determined identification has been fixed in $\mathcal{S}$.
\end{remark}

\begin{definition}[Standard and labeled signatures]\label{def:Signatures}
Within the background environment $\mathcal{S}$, the \emph{standard signature} refers to the regime in which one works with static identification together with ring coherence as in Axiom~\ref{ax:RingCoherence}. In this regime, isomorphic copies of a number field or scheme are not distinguished once an identification has been fixed, and quantifiers range over underlying objects of a $\ZF$-style universe.

The \emph{labeled signature} refers to the regime in which dynamic identification is maintained. Arithmetic objects carry explicit context labels, and not every isomorphism is available as an identification inside formulas. Ring coherence may then be suspended locally and partly reconstituted by auxiliary devices.

The two signatures share the same informal arithmetic vocabulary. What changes is the discipline that governs when two presentations may be regarded as the same object of discourse.
\end{definition}

\begin{definition}[Protocol-determined data]\label{def:ProtocolDetermined}
Fix a semantic environment $\mathcal{S}$. By a \emph{protocol} we mean an explicit rule of construction, written as finite mathematical instructions, that takes specified input data and outputs specified objects or morphisms.

A datum, for instance a family of local identifications or a gluing map, will be called \emph{protocol-determined in $\mathcal{S}$} when it is produced by a declared protocol from declared input data, and when its standing as the output of the protocol does not rely on an ambient uniqueness statement in the sense of static identification.

In this work, the phrase ``protocol-determined'' is used in places where the word ``canonical'' would invite an extensional reading that is not intended. Inside the extensional environment $\mathcal{S}$, an expression such as ``canonical'' often carries an implicit promise of uniqueness up to a unique isomorphism, together with insensitivity to the route by which an identification is reached. In the labeled signature the standing of a construction comes instead from a declared derivation protocol and from the construction history of the objects being transported, so that the output is singled out by the fact that the protocol produces it, and not by an ambient uniqueness principle that erases the history of transport.
\end{definition}

\begin{remark}[The divergence of the two disciplines]\label{rem:divergence}
A comparison statement, read in the standard signature, identifies its two sides as soon as their underlying objects are isomorphic, so that any apparent discrepancy between them is absorbed into the chosen isomorphism. The same statement, read in the labeled signature, keeps the two sides apart, and any discrepancy between them may survive as a measured quantity. The whole of this work lives in the space between these two readings, and its purpose is to give the second reading a precise categorical form together with a conservative arithmetic shadow.
\end{remark}

\section{Ring coherence and suspended evaluation}\label{sec:ring-coherence}

The informal assumption that underlies much of conventional arithmetic geometry may be described, for present purposes, as a principle of \emph{ring coherence}. One starts with a single ambient ring and expects the arithmetic quantities under discussion to derive from this ring in a compatible way. Heights, log-volumes, and similar expressions are then regarded as maps of ring elements that respect both the additive and the multiplicative structures. This coherence is rarely stated, yet it governs many habitual identifications.

\begin{axiom}[Ring coherence]\label{ax:RingCoherence}
Let $R$ be a commutative ring with identity. Arithmetic invariants that depend simultaneously on the additive and multiplicative structures of $R$ are taken to be defined on the underlying set of $R$ in a manner compatible with both structures. Once an element $x \in R$ has been fixed, all such invariants take values that depend only on $x$ as a ring element, and not on any auxiliary presentation or labeling of $x$.
\end{axiom}

The content of Axiom~\ref{ax:RingCoherence} may be unfolded by separating the symbolic level of arithmetic expressions from the level of their evaluation in a fixed ring, and then imposing a compatibility requirement between the two levels.

\begin{definition}[Arithmetic term category]\label{def:ArithmeticTermCategory}
Let $\mathcal{T}_\mathrm{arith}$ be the small category defined as follows.

\begin{enumerate}
\item[(i)] Objects are finite lists of formal variables
  \[
    X = (x_1,\dots,x_n)
  \]
with $n \geq 0$. The empty list is allowed and represents a nullary context.

\item[(ii)] A morphism
  \[
    \tau \colon (x_1,\dots,x_n) \longrightarrow (y_1,\dots,y_m)
  \]
is an $m$-tuple of formal expressions in the variables $x_1,\dots,x_n$ built from the symbols $0,1$, the binary operations $+$ and $\cdot$, and the unary operation $-$, using finitely many applications of these operations. Composition is defined by substitution of expressions.
\end{enumerate}
\end{definition}

An object of $\mathcal{T}_\mathrm{arith}$ represents a choice of formal parameters, while a morphism represents a vector of arithmetic terms in these parameters. To connect this with a fixed ring, one introduces an evaluation functor.

\begin{definition}[Evaluation functor]\label{def:EvaluationFunctor}
Let $R$ be a commutative ring with identity. The evaluation functor
\[
  E_R \colon \mathcal{T}_\mathrm{arith} \longrightarrow \mathbf{Set}
\]
is defined as follows.

\begin{enumerate}
\item[(i)] On objects, $E_R(x_1,\dots,x_n) = R^n$, with the understanding that $E_R(\varnothing)$ is a singleton set.

\item[(ii)] A morphism $\tau \colon (x_1,\dots,x_n) \to (y_1,\dots,y_m)$, corresponding to expressions $(t_1,\dots,t_m)$, induces
  \[
    E_R(\tau) \colon R^n \longrightarrow R^m,
    \qquad
    (a_1,\dots,a_n) \longmapsto \bigl(t_1(a),\dots,t_m(a)\bigr)
  \]
where the right-hand side is computed using the ring operations of $R$.
\end{enumerate}
\end{definition}

The symbolic calculus of $\mathcal{T}_\mathrm{arith}$ is thereby interpreted inside a fixed ring through $E_R$. The principle of ring coherence may then be expressed as the requirement that arithmetic quantities respect this interpretation.

\begin{definition}[Arithmetic quantities]\label{def:MultiAssignment}
Let $R$ be a commutative ring. An \emph{arithmetic quantity} on $R$ in $n$ variables is a map $Q \colon R^n \to V$, where $V$ is an abelian group or a real vector space of values. A \emph{multi-assignment of arithmetic quantities} on $R$ consists of a choice, for each object $X = (x_1,\dots,x_n)$ of $\mathcal{T}_\mathrm{arith}$, of a family $\mathcal{Q}_X$ of arithmetic quantities $Q \colon E_R(X) = R^n \to V_Q$ with various value groups $V_Q$.
\end{definition}

In practice $Q$ may be a height, a log-volume, or an expression that aggregates local contributions attached to a tuple of elements of $R$. The coherence condition expresses the expectation that equivalent symbolic presentations lead to equal values of such quantities.

\begin{definition}[Categorical ring coherence]\label{def:RingCoherenceCategorical}
A multi-assignment of arithmetic quantities on $R$ satisfies \emph{categorical ring coherence} when for every morphism $\tau \colon X \to Y$ of $\mathcal{T}_\mathrm{arith}$ and every quantity $Q \in \mathcal{Q}_Y$, the composite $Q \circ E_R(\tau)$ belongs to the family $\mathcal{Q}_X$. That is, the value of an arithmetic quantity attached to a tuple of terms depends only on the elements of $R$ that these terms denote, and not on the symbolic route by which they were assembled.
\end{definition}

\begin{remark}[Suspended evaluation]\label{rem:SuspendedEvaluation}
The labeled signature of Definition~\ref{def:Signatures} suspends categorical ring coherence locally. Two symbolic routes that denote the same element of $R$ are no longer obliged to yield the same value, because the route itself -- the label, the construction history -- has been admitted as part of the datum. The framework of the following sections may be read as a disciplined way of keeping such suspended evaluations, by attaching to each symbolic route a cost, so that two routes denoting the same element differ by an amount controlled by the distance between their histories.
\end{remark}

\section{Foundational motivations for quantitative identity}\label{sec:FoundationalIdentity}

The proposal that the logical distance between two isomorphic copies of an arithmetic object need not be vacuous stands at variance with the formalism that governs current practice. It is therefore fitting to recall that alternative attitudes toward identity have a long and well-established history, and that the quantitative discipline pursued here is less unfamiliar than it may at first appear.

\subsection{Hilbertian formalism and the exclusion of history}

Within the classical Hilbertian perspective~\cite{vonHeijenoort1967Hilbert}, a formal theory is constituted by a stock of symbols, a grammar of formation, and a set of derivation rules, while the semantic interpretation is a supplementary apparatus added once the formal system has been fixed. Practice that aligns with this attitude treats a structure $\mathcal{X}$ strictly as its isomorphism class $[\mathcal{X}]$. Once two models $M$ and $M'$ are proved isomorphic, no room remains for a distinction between them, and the isomorphism $\psi \colon M \xrightarrow{\sim} M'$ behaves as a transparent path.

In such an environment, any height or volume $H$ on the category of arithmetic objects must satisfy the extensionality principle
\[
  X \cong Y \implies H(X) = H(Y)
\]
A distance between $X$ and $Y$ is forbidden, regarded either as identically zero or as a residue of presentation that must be purged once the formalism has been expressed in invariant terms. This is a coherent and powerful stance. It is also, as we observe in Section~\ref{sec:Comparison}, exactly the stance under which a comparison between two presentations collapses to a tautology.

\subsection{Pre-Hilbertian concerns: sense, reference, and construction history}
\label{subsec:PreHilbert}

The distinction between extensional sameness and intensional identity has a persistent ancestry, running from the \textit{Sinn} and \textit{Bedeutung} distinction of Frege~\cite{Frege1892}, through the constructive epistemology of Brouwer~\cite{Brouwer1908}, to the intensional type theory of Martin-L\"of~\cite{MartinLof1975}. Within this lineage, the membership of a mathematical object in an extensional set does not exhaust its identity, which is constituted also by the manner in which the object was introduced and by the history of the constructions leading to it. An element $a$ of a type $A$ remains distinct from an element $b$ of $A$ unless they share the same introductory form, even where they eventually yield the same observable value.

This constructive lineage finds a contemporary realization in the theory of C-systems and univalent foundations~\cite{voevodsky2023martin}. There the interpretation of identity is governed by the J-structure, a mechanism that treats equality as a structure open to transport. The formal distance between syntactic C-systems, which carry the rich data of derivation, and semantic C-systems, which reside in categories such as sets or sheaves, measures the price one pays when attempting to collapse intensional history into extensional semantics.

\begin{remark}[Identity types as paths]\label{rem:IntensionalRealignment}
Within the homotopy-theoretic perspective, identity types behave as paths rather than as predicates. The effort required to construct non-degenerate J-structures on semantic categories, as documented by Voevodsky~\cite{voevodsky2023martin} for the passage from syntactic to semantic C-systems, shows that intensional identity must be installed in a model through explicit architectural choices. It does not follow inferentially from the mere fixation of an underlying universe.

The work of Riehl~\cite{Riehl2024} on the $\infty$-topos semantics of homotopy type theory affords a further confirmation. There the univalence axiom identifies the identity type of a universe with the type of equivalences, so that distinct paths between isomorphic objects keep their individuality up to higher coherence. A space in which all such paths have been contracted to a single point corresponds, in the vocabulary of this work, to Universe~H -- the extensional environment where isomorphic number fields are identified without qualification. Standard arithmetic practice thus appears as the adoption of a particular extensional discipline, a choice to work in a truncated space where the paths between isomorphic points have been rendered contractible. The refusal of this contraction produces a richer ambient in which the logical distance between two isomorphic copies may be witnessed by explicit transport data.
\end{remark}

From this vantage, a label attached to a copy of a number field, or the construction history of a configuration, serves as a concise trace of sense or of derivation rather than as a typographical decoration. Two copies $K_1$ and $K_2$ of a field $K$ that are isomorphic as rings need not be interchangeable in a context where their labels encode distinct paths of construction. The statement $K_1 \cong K_2$ then carries equality of algebraic form, while leaving open a logical space in which the senses of $K_1$ and $K_2$ differ.

\begin{remark}[An intensional cover of an extensional category]\label{rem:IntensionalCover}
The role of labels may be expressed by viewing them as part of an intensional cover of an extensional arithmetic category. Let $\mathcal{C}_{\mathrm{ext}}$ be a category such as that of number fields and ring homomorphisms, read in Universe~H. Consider a second category $\mathcal{C}_{\mathrm{int}}$ whose objects are pairs $(X,\lambda)$, where $X$ is an object of $\mathcal{C}_{\mathrm{ext}}$ and $\lambda$ is a label or construction history, and whose morphisms respect these labels. There is a forgetful functor
\[
  U \colon \mathcal{C}_{\mathrm{int}} \longrightarrow \mathcal{C}_{\mathrm{ext}}
\]
which erases labels and remembers only the underlying extensional object. Two labeled copies $(X,\lambda_1)$ and $(X,\lambda_2)$ share the same extensional image, yet they need not be identified in $\mathcal{C}_{\mathrm{int}}$ in the absence of a label-preserving isomorphism between them. The passage from a sentence about labeled objects to its image under $U$ may collapse distinctions that were significant above. The whole apparatus of the following sections refines this forgetful functor into a $2$-categorical and quantitative form.
\end{remark}

\subsection{Quantitative identity as controlled intensionality}

The quantitative categories introduced in Section~\ref{sec:QuantitativeCategories} formalize this nuanced stance. Within such a category, a morphism $f \colon X \to Y$ represents an isomorphism able to carry a non-zero load. The cost $c(f)$ registers a semantic distortion or informational effort that is invisible at the structural level of static identification. The height $H$ then exchanges strict invariance for subadditivity, satisfying
\[
  H(Y) \leq H(X) + c(f)
\]
for every morphism $f \colon X \to Y$. An isomorphism with $c(f) > 0$ thus articulates an equivalence that preserves algebraic structure while registering the dissipation of information about the construction history of the object. The cost $c(f)$ measures the decoding effort associated with the derivation, so that the history of formation and the position in a network of links acquire a numerical weight, comparable to the description length in the theory of Kolmogorov complexity~\cite{Kolmogorov1965}.

\subsection{Exigencies for a distance-based semantics}

The passage from a qualitative category to a quantitative one imposes new architectural requirements, more exacting than those it replaces. It is requisite, first, that the costs $c(f)$ be stable under composition and factorization, respecting the triangle inequality of a Lawvere metric space in the sense of Section~\ref{sec:QuantitativeCategories}, for otherwise the distances would become arbitrary decorations without structural import.

Simultaneously, this quantitative regime must form a conservative extension of the classical view. When restricted to a subcategory of protocol-determined morphisms, such as identity maps or standard changes of coordinates, the costs must vanish, reproducing the invariance of heights and volumes. The theory must contain the standard signature as its zero-cost limit, a requirement made precise as Proposition~\ref{prop:ConservativeExtension}.

For these distances to serve as substantive bounds in Diophantine geometry, the cost must finally be effective. The value $c(f)$ cannot remain abstract. It must be computable from the arithmetic data and must relate transparently to known invariants such as the discriminant and the conductor. The elliptic packages of Section~\ref{sec:EllipticPackages} are introduced precisely to meet this last exigency.

\section{Quantitative categories and Lawvere enrichment}\label{sec:QuantitativeCategories}

We now describe the structure that carries the weight of a transport. The reading through enriched category theory is placed first, since it furnishes the language in which every later cost will be measured.

\subsection{The Lawvere quantale}

\begin{definition}[The cost quantale]\label{def:CostQuantale}
Let $\mathcal{V}$ denote the structure
\[
  \mathcal{V} = \bigl([0,\infty],\ \ge,\ +,\ 0\bigr)
\]
where $[0,\infty]$ is the extended non-negative reals, the order is the \emph{reverse} of the natural order (a morphism $a \to b$ exists exactly when $a \ge b$, so that $0$, the costless identification, is the terminal object), the tensor is addition with the convention $a + \infty = \infty$, and the unit is $0$. As a poset $[0,\infty]$ is complete, addition preserves the order in each variable and distributes over arbitrary infima, and $0$ is the greatest element for $\ge$. Thus $\mathcal{V}$ is a small complete monoidal poset -- a commutative quantale in the sense suited to enrichment.
\end{definition}

The reversal of the order is the point on which the whole reading turns. A smaller cost is a stronger relation, the unit $0$ is the cost of doing nothing, and the tensor adds costs along a composite. The triangle inequality and the vanishing of self-distance, which one would otherwise impose by hand, become the two axioms of an enriched category.

\begin{definition}[$\mathcal{V}$-category]\label{def:VCategory}
A \emph{$\mathcal{V}$-category} $\mathsf{X}$ consists of a set $\mathrm{Ob}(\mathsf{X})$ together with, for every ordered pair of objects $X,Y$, an element
\[
  \mathsf{X}(X,Y) \in [0,\infty]
\]
subject to the two requirements
\[
  \mathsf{X}(X,X) = 0
  \qquad\text{and}\qquad
  \mathsf{X}(X,Z) \le \mathsf{X}(X,Y) + \mathsf{X}(Y,Z)
\]
for all objects $X,Y,Z$. A \emph{$\mathcal{V}$-functor} $F \colon \mathsf{X} \to \mathsf{Y}$ is a map of objects with $\mathsf{Y}(FX,FY) \le \mathsf{X}(X,Y)$ for all $X,Y$.
\end{definition}

Following Lawvere~\cite{Lawvere1973}, a $\mathcal{V}$-category is exactly a generalized metric space, with the asymmetry of $\mathsf{X}(X,Y)$ and $\mathsf{X}(Y,X)$ permitted, with infinite distances permitted, and with a $\mathcal{V}$-functor playing the role of a short, that is, distance-decreasing, map. The framework that follows produces, for each fixed extensional object, a $\mathcal{V}$-category whose distances are the least costs of transports, and the height is a $\mathcal{V}$-functor into the real line read as a generalized metric space.

\subsection{Quantitative categories of isomorphisms}

\begin{definition}[Quantitative category of isomorphisms]\label{def:QuantitativeCategory}
A \emph{quantitative category of isomorphisms} is a category $\mathcal{Q}$ endowed with the following data. Invertible and directed morphisms may reside side by side, and the cost applies to all arrows.

\begin{itemize}
\item[(i)] Each morphism $f \colon X \to Y$ carries a non-negative real number
  \[
    c(f) \in \R_{\ge 0}
  \]
termed its \emph{distortion cost}. The cost attaches to the morphism rather than to the endpoints, so that parallel morphisms $f,g \colon X \to Y$ may carry distinct costs. One requires
  \[
    c(\mathrm{id}_X) = 0
    \qquad\text{and}\qquad
    c(g \circ f) \le c(f) + c(g)
  \]
for every object $X$ and every composable pair.

\item[(ii)] A morphism $f \colon X \to Y$ is an \emph{isomorphism} of $\mathcal{Q}$ when it admits a two-sided inverse $g \colon Y \to X$ in the categorical sense. There is no requirement that $c(f)$ or $c(g)$ vanish. One may have $c(f) > 0$ and $c(g) > 0$ even for isomorphisms.

\item[(iii)] A functor $H \colon \mathcal{Q} \to \mathbf{R}$ to the category of real numbers and order-preserving maps, playing the role of a height or volume, satisfying the transport inequality
  \[
    H(Y) \le H(X) + c(f)
  \]
for every morphism $f \colon X \to Y$.
\end{itemize}
\end{definition}

\begin{remark}[Lawvere metric spaces and the enriched reading]\label{rem:LawvereMetric}
The axioms imposed on the cost $c$ -- namely $c(\mathrm{id}_X) = 0$ and the subadditivity $c(g \circ f) \le c(f) + c(g)$ -- reproduce the structure of a category enriched over the quantale $\mathcal{V}$ of Definition~\ref{def:CostQuantale}. The distance from $X$ to $Y$ is the infimum of costs over all morphisms between them, and the triangle inequality follows from the subadditivity of $c$ under composition. This generalized distance need not be symmetric, since the cost of transporting data from $X$ to $Y$ may differ from that of the reverse passage. The asymmetry is exactly what the framework requires, for the cost of a transport and that of its inverse are not constrained to coincide.

Universe~A thus views the category of arithmetic objects as a generalized metric space in which the distance between two configurations measures the effort, computational or geometric, of decoding one construction history into another. The height functor of part~(iii) appears in this light as a map controlled by the enrichment, satisfying a short-map condition with respect to the Lawvere distance. This anchors the quantitative categories within an established categorical lineage~\cite{Lawvere1973, Kelly1982}, while the specifically arithmetic substance of the cost -- its dependence on discriminants, conductors, and Galois-theoretic data -- remains proper to the present inquiry.
\end{remark}

\begin{definition}[Transport distance]\label{def:TransportDistance}
Let $\mathcal{Q}$ be a quantitative category of isomorphisms. For objects $X,Y$ of $\mathcal{Q}$ set
\[
  d_{\mathcal{Q}}(X,Y) \coloneqq \inf\bigl\{ c(f) : f \in \mathcal{Q}(X,Y) \bigr\}
\]
with the convention $\inf\varnothing = +\infty$. The assignment $d_{\mathcal{Q}}$ endows $\mathrm{Ob}(\mathcal{Q})$ with the structure of a $\mathcal{V}$-category in the sense of Definition~\ref{def:VCategory}, termed the \emph{transport distance} of $\mathcal{Q}$.
\end{definition}

\begin{proposition}[The height is short for the transport distance]
\label{prop:HeightShort}
Let $\mathcal{Q}$ be a quantitative category of isomorphisms with height functor $H \colon \mathcal{Q} \to \mathbf{R}$, and equip $\R$ with the $\mathcal{V}$-category structure $\R(a,b) = \max\{b - a, 0\}$. Then $H$ is a $\mathcal{V}$-functor
\[
  \bigl(\mathrm{Ob}(\mathcal{Q}), d_{\mathcal{Q}}\bigr)
  \longrightarrow
  (\R, \R(\,\cdot\,,\,\cdot\,))
\]
that is to say, $\max\{H(Y) - H(X),\, 0\} \le d_{\mathcal{Q}}(X,Y)$ for all objects $X,Y$.
\end{proposition}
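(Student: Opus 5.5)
The plan is to reduce the claim to the transport inequality of Definition~\ref{def:QuantitativeCategory}(iii) and then pass to the infimum defining $d_{\mathcal{Q}}$ in Definition~\ref{def:TransportDistance}. Before that, two preliminary checks are needed so that the statement makes sense as an assertion about $\mathcal{V}$-functors.

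\emph{Step 1.} I will check that $(\R,\R(\cdot,\cdot))$ is a $\mathcal{V}$-category in the sense of Definition~\ref{def:VCategory}. Clearly $\R(a,a)=\max\{0,0\}=0$. For the triangle inequality, note that $b\mapsto\max\{b,0\}$ is subadditive, so
\[
\max\{c-a,0\}\le \max\{b-a,0\}+\max\{c-b,0\}.
\]

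\emph{Step 2.} I will check that $d_{\mathcal{Q}}$ is a $\mathcal{V}$-category, as asserted in Definition~\ref{def:TransportDistance}. Since $c(\mathrm{id}_X)=0$, we get $d_{\mathcal{Q}}(X,X)=0$. For the triangle inequality, take any $f\colon X\to Y$ and $g\colon Y\to Z$. Subadditivity gives $d_{\mathcal{Q}}(X,Z)\le c(g\circ f)\le c(f)+c(g)$. Taking infima over $f$ and $g$ separately yields $d_{\mathcal{Q}}(X,Z)\le d_{\mathcal{Q}}(X,Y)+d_{\mathcal{Q}}(Y,Z)$. The convention $\inf\varnothing=+\infty$ together with $a+\infty=\infty$ covers the cases where some hom-set is empty.

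\emph{Step 3 (main step).} Fix objects $X,Y$. If $\mathcal{Q}(X,Y)=\varnothing$, then $d_{\mathcal{Q}}(X,Y)=+\infty$ and the inequality is trivial. Otherwise, for each $f\in\mathcal{Q}(X,Y)$, axiom (iii) gives $H(Y)-H(X)\le c(f)$. Since $c(f)\ge 0$, this upgrades to
\[
\max\{H(Y)-H(X),0\}\le c(f).
\]
The left-hand side does not depend on $f$, so it is a lower bound for the set $\{c(f)\}$ and hence bounded by its infimum $d_{\mathcal{Q}}(X,Y)$. This is exactly the $\mathcal{V}$-functor condition
\[
\R(HX,HY)\le d_{\mathcal{Q}}(X,Y).
\]

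The only point needing care is the bookkeeping rather than any real obstacle. One must remember that $H$ is applied to objects only, so its functoriality into $\mathbf{R}$ plays no role here. One must also handle the empty hom-set and infinite-cost conventions explicitly. Finally, the truncation at $0$ must be justified by the nonnegativity of $c$, not by anything about $H$.
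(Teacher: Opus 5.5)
Your proposal is correct, and your Step 3 is exactly the paper's argument: apply the transport inequality to each $f\colon X\to Y$, truncate at $0$, pass to the infimum, and dispose of the empty hom-set via $d_{\mathcal{Q}}(X,Y)=+\infty$. Steps 1 and 2 check the $\mathcal{V}$-category structures that the paper takes as given in Definition~\ref{def:TransportDistance}. Your justification of the truncation by $c(f)\ge 0$ is the accurate one; the paper's phrase ``since the left side is non-negative'' appeals to the wrong side of the inequality.
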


\begin{proof}
Fix $X,Y$. For every morphism $f \colon X \to Y$ the transport inequality of Definition~\ref{def:QuantitativeCategory}(iii) gives $H(Y) - H(X) \le c(f)$, hence $\max\{H(Y) - H(X),0\} \le c(f)$ since the left side is non-negative. Taking the infimum over all such $f$ yields $\max\{H(Y) - H(X),0\} \le d_{\mathcal{Q}}(X,Y)$. If there is no morphism $X \to Y$, then $d_{\mathcal{Q}}(X,Y) = +\infty$ and the inequality holds trivially.
\end{proof}

Proposition~\ref{prop:HeightShort} is the smallest statement that expresses the intended reading. A height is a short map for a generalized distance rather than an invariant of isomorphism classes, and the distance is the cost of transport. In the extensional limit, where every transport between isomorphic objects is free, the distance between isomorphic objects is zero, and the short-map condition recovers the extensionality principle $X \cong Y \implies H(X) = H(Y)$.

\begin{remark}[Where the cost is arithmetic]\label{rem:CostArithmetic}
The quantale $\mathcal{V}$ and the enriched reading carry no arithmetic of their own. They organize the account. The arithmetic enters when one populates $\mathcal{Q}$ with copies of number fields and elliptic curves carrying Galois and model data, at which point the identity of an object is determined chiefly by its position within a web of correlations rather than by the extensional membership of its underlying set. Two objects may be isomorphic as bare rings while occupying distinct positions in this web, and a morphism then realigns the positions at a cost that the later sections compute explicitly.
\end{remark}

\section{The labeled quantitative \texorpdfstring{$(2,1)$}{(2,1)}-category}\label{sec:UniverseAFormal}

A quantitative category of isomorphisms assigns a cost to transports. It does not yet express the fact that two transports may agree only up to a coherent change of presentation. To keep this last distinction without collapsing it into equality, one passes to a $(2,1)$-category, where invertible $2$-morphisms witness such agreements while the scalar cost remains attached to the $1$-morphisms.

\begin{definition}[$(2,1)$-category]\label{def:TwoOneCategory}
A \emph{$(2,1)$-category} is a $2$-category in which every $2$-morphism is invertible. Equivalently, for any two objects $A$ and $B$, the hom-category $\mathfrak{A}(A,B)$ is a groupoid.
\end{definition}

\begin{remark}[A point on strictness]\label{rem:strictness}
In what follows the word ``$2$-category'' is used in a pragmatic sense. Should one prefer a weak $2$-category, that is a bicategory, then the same discussion may be reproduced by replacing $2$-functors with pseudofunctors and inserting the requisite coherence isomorphisms. Since the present inquiry treats the arithmetic role of transport rather than a full higher-categorical development, the strict language is retained.

Strictness in this sense concerns the associativity and unit laws of composition, together with the on-the-nose preservation of composition by $2$-functors, rather than the presence of $2$-morphisms. The invertible $2$-cells between parallel $1$-morphisms, upon which the invariance of the cost and the pseudonaturality of Definition~\ref{def:PseudoNatural} rest, are retained in full. The point at which the strict language reaches its limit, namely the transport of geometric data in place of scalars, is taken up in Remark~\ref{rem:StrictPrototypeLimit}.

This choice has no bearing on the arithmetic descent of Section~\ref{sec:UniverseP}. A unitor or associator cell, strict or merely coherent, acts as the identity upon the scalar data -- the costs and heights -- that Universe~P ever reads, since a transport is registered there by its cost alone and not by the $1$-morphism or the coherence cell that produced it. The finite integer tuples of a code are built from these scalars, so no coherence cell of either the strict or the weak reading can alter an entry of such a code. Where the descent does pass through a coherence cell, as it does for the intensional Szpiro inequality of Section~\ref{sec:IntensionalSzpiro}, the point is verified in detail for the toy Coq interface in Remark~\ref{rem:StrictPrototypeLimit}, and the same argument governs every later descent to a finite integer code.
\end{remark}

\begin{definition}[Homotopy $1$-category]\label{def:HoOneCategory}
Let $\mathfrak{A}$ be a $(2,1)$-category. The \emph{homotopy $1$-category} $\Ho(\mathfrak{A})$ is defined as follows. Its objects coincide with the objects of $\mathfrak{A}$. For objects $A$ and $B$, the morphisms in $\Ho(\mathfrak{A})(A,B)$ are the $2$-isomorphism classes of $1$-morphisms $A \to B$. Composition is induced by the composition of $1$-morphisms, which remains well-defined by virtue of the invertibility of all $2$-morphisms. There is a canonical functor
\[
  \pi \colon \mathfrak{A} \longrightarrow \Ho(\mathfrak{A})
\]
which acts as the identity on objects and sends a $1$-morphism to its $2$-isomorphism class.
\end{definition}

The passage $\mathfrak{A} \mapsto \Ho(\mathfrak{A})$ renders visible a distinction often lived informally. One might present a transport alongside its auxiliary choices, and two such presentations may prove equivalent without being equal. Working within $\Ho(\mathfrak{A})$ allows one to speak of transports only up to such equivalence.

\begin{definition}[Labeled quantitative $(2,1)$-category]\label{def:UniverseAFormal}
A \emph{Universe~A structure} consists of the following data:

\begin{enumerate}
\item[(i)] A $(2,1)$-category $\mathfrak{A}$ in the sense of Definition~\ref{def:TwoOneCategory}, whose
  \begin{itemize}
\item objects are triples $(X,\lambda,\mathcal{H})$, where $X$ is an underlying arithmetic object in the sense of Universe~H, such as a number field, a scheme, or an elliptic curve, where $\lambda$ is a label or construction index, and where $\mathcal{H}$ is a package of auxiliary structures such as Galois actions, model data, or other decorations
\item $1$-morphisms $f \colon (X,\lambda,\mathcal{H}) \to (Y,\lambda',\mathcal{H}')$ are structure-preserving transports, in the manner of the examples of Sections~\ref{sec:Anabelian} and~\ref{sec:EllipticPackages}
\item $2$-morphisms are invertible identifications between $1$-morphisms, encoding that two transports may differ by auxiliary choices while keeping a formal distinctness.
  \end{itemize}

\item[(ii)] A forgetful $2$-functor
  \[
    U \colon \mathfrak{A} \longrightarrow \mathcal{C}_{\mathrm{ext}}
  \]
where $\mathcal{C}_{\mathrm{ext}}$ is the extensional category of arithmetic objects of Universe~H, such as number fields with ring homomorphisms or elliptic curves with isogenies, viewed as a $2$-category with only identity $2$-morphisms. On objects, $U(X,\lambda,\mathcal{H}) = X$. On $1$-morphisms, $U$ forgets labels and auxiliary structures and retains the underlying extensional morphism. The fibers of $U$ over a fixed $X$ are the family of intensional realizations of $X$ with their labels and histories.

\item[(iii)] A quantitative structure on $1$-morphisms, given by a map
  \[
    c \colon \operatorname{Mor}_1(\mathfrak{A}) \longrightarrow \R_{\ge 0}
  \]
termed the \emph{distortion cost}, satisfying $c(\mathrm{id}_A) = 0$ and $c(g \circ f) \le c(f) + c(g)$ for every object $A$ and every composable pair. The cost is an invariant of $2$-isomorphism. If there exists an invertible $2$-morphism $\alpha \colon f \Rightarrow g$ between parallel $1$-morphisms, then $c(f) = c(g)$. Equivalently, $c$ factors through the homotopy $1$-category $\Ho(\mathfrak{A})$ of Definition~\ref{def:HoOneCategory}.

\item[(iv)] A height assignment $H \colon \operatorname{Ob}(\mathfrak{A}) \to \R$ such that for every $1$-morphism $f \colon A \to B$ the transport inequality
  \[
    H(B) \le H(A) + c(f)
  \]
holds. Since $H$ is defined on objects and $c$ respects $2$-isomorphisms, the inequality depends only on the $2$-isomorphism class of the chosen transport.

\item[(v)] A contextual structure in the sense of C-systems~\cite{voevodsky2023martin}, realized as a fibration over a base of types. For each object $A = (X,\lambda,\mathcal{H})$ there is a small category $\mathrm{Ctx}(A)$ of \emph{contexts above $A$}, whose objects are refinements or extensions of the auxiliary data $\mathcal{H}$. A $1$-morphism $f \colon A \to B$ induces a pullback functor
  \[
    f^\ast \colon \mathrm{Ctx}(B) \longrightarrow \mathrm{Ctx}(A)
  \]
compatible with composition up to coherent invertible $2$-morphisms. This structure describes how the identity rules, and in particular J-elimination, behave when one transports auxiliary data along a morphism of $\mathfrak{A}$. The pullback functors carry no cost of their own. The whole weight of a transport is borne once, by the scalar $c(f)$ of item (iii), and the substitution $f^\ast$ is free, in the sense that no quantity $c(f^\ast)$ is levied over and above $c(f)$.
\end{enumerate}
A Universe~A structure in this sense is termed a \emph{labeled quantitative $(2,1)$-category with contextual flavor}.
\end{definition}

\begin{remark}[Forgetting labels through $\Ho(\mathfrak{A})$]\label{rem:ForgetThroughHo}
Since $\mathcal{C}_{\mathrm{ext}}$ carries only identity $2$-morphisms, the forgetful $2$-functor $U$ factors canonically through $\Ho(\mathfrak{A})$, yielding a $1$-functor
\[
  \Ho(U) \colon \Ho(\mathfrak{A}) \longrightarrow \mathcal{C}_{\mathrm{ext}}
\]
which discards labels and auxiliary structures once transports have been reduced to their $2$-isomorphism classes. Within this language, the inequalities governed by $c$ and $H$ read as statements attached to morphisms in $\Ho(\mathfrak{A})$.
\end{remark}

\begin{definition}[Vertical groupoid over an extensional object]\label{def:VerticalGroupoid}
Let $(\mathfrak{A},U)$ be a Universe~A structure and let $X$ be an object of $\mathcal{C}_{\mathrm{ext}}$. The \emph{vertical groupoid over $X$}, denoted $\mathfrak{A}_X^{\mathrm{vert}}$, has as objects those $A \in \operatorname{Ob}(\mathfrak{A})$ with $U(A) = X$, and as morphisms from $A$ to $B$ the $2$-isomorphism classes of $1$-morphisms $f \colon A \to B$ such that $U(f) = \mathrm{id}_X$ and $f$ is an equivalence in $\mathfrak{A}$. Composition is induced from $\mathfrak{A}$. Because equivalences are stable under composition and admit quasi-inverses, $\mathfrak{A}_X^{\mathrm{vert}}$ is a groupoid.
\end{definition}

\begin{remark}[The locus of pure intensionality]\label{rem:locus_pure_intensionality}
In Universe~H the vertical groupoid $\mathfrak{A}_X^{\mathrm{vert}}$ is contractible, since all vertical morphisms carry zero cost. In Universe~A, morphisms in $\mathfrak{A}_X^{\mathrm{vert}}$ may carry a strictly positive distortion cost, reflecting the non-trivial effort of realigning labels or auxiliary data above a fixed extensional object. The whole substance of the framework lives in this fiber, and a comparison statement that retains content is one whose two sides sit at positive vertical distance.
\end{remark}

\begin{definition}[Normalization datum]\label{def:UniverseANormalization}
Let $(\mathfrak{A},U,c,H)$ be a Universe~A structure. A \emph{normalization datum} on $\mathfrak{A}$ comprises, for every object $X$ of $\mathcal{C}_{\mathrm{ext}}$, a representative $\mathrm{NF}(X) \in \operatorname{Ob}(\mathfrak{A})$ with $U(\mathrm{NF}(X)) = X$, and for every object $A$, a distinguished $1$-morphism
\[
  \nu_A \colon \mathrm{NF}(U(A)) \longrightarrow A
\]
whose image under $U$ is $\mathrm{id}_{U(A)}$. One defines the \emph{normalization defect} of $A$ as $\delta_{\mathfrak{A}}(A) \coloneqq c(\nu_A)$.
\end{definition}

\begin{remark}[On reference points]\label{rem:reference_points}
Such a datum demands neither the uniqueness of $\mathrm{NF}(X)$ nor that $\nu_A$ be of minimal cost. It serves to fix a disciplined selection of a reference point within each extensional fiber, together with a distinguished path required to reach a given object. This mirrors the normalization procedures invoked for elliptic packages in Definition~\ref{def:AccumulatedDefect}, extending that logic to the ambient categorical environment.
\end{remark}

\section{The J-structure and controlled elimination}\label{sec:Jstructure}

The vertical groupoid of Definition~\ref{def:VerticalGroupoid} admits a reading through the optics of C-systems, and it is this reading that ties the arithmetic cost to the identity type of a dependent theory.

\begin{remark}[Vertical groupoids and identity types]\label{rem:VerticalGroupoidCsystems}
In a C-system~\cite{voevodsky2023martin}, the identity type $\mathrm{Id}_A(a,a)$ over a fixed term $a$ of type $A$ is itself a type whose inhabitants witness paths from $a$ to $a$. When the C-system is non-trivial in the homotopical sense, this type may contain inhabitants other than reflexivity, each carrying its own computational or geometric content. In the present framework, the role of $\mathrm{Id}_A(a,a)$ is played by the vertical groupoid $\mathfrak{A}_X^{\mathrm{vert}}$. A morphism $f \colon A \to B$ in this groupoid, with $U(f) = \mathrm{id}_X$, represents a non-trivial self-identification of the extensional object $X$ -- a path that permutes labels or auxiliary data while leaving the underlying arithmetic structure fixed. In Universe~H, where the J-structure has been collapsed and all self-identifications are forced to be reflexivity, the vertical groupoid is contractible. In Universe~A it retains the richer structure of a groupoid whose morphisms may carry positive distortion cost.

This parallel reinforces the thesis that the intensional environment of Universe~A arises of its own accord, wherever one declines to contract the identity type to a mere proposition. Such a refusal has received independent mathematical life in homotopy type theory and its $\infty$-topos semantics~\cite{Riehl2024}.
\end{remark}

\begin{definition}[Lawvere--Voevodsky interpretation of the vertical cost]
\label{def:LawvereVoevodskyInterpretation}
Let $(\mathfrak{A},U,c,H)$ be a Universe~A structure and let $X$ be an object of $\mathcal{C}_{\mathrm{ext}}$. The \emph{Lawvere--Voevodsky interpretation} of the vertical groupoid $\mathfrak{A}_X^{\mathrm{vert}}$ comprises the following data:

\begin{enumerate}
\item[(i)] \textit{Vertical distance.} For objects $A,B \in \operatorname{Ob}(\mathfrak{A}_X^{\mathrm{vert}})$ set
\[
  d_X^{\mathrm{vert}}(A,B)
  \coloneqq
  \inf\bigl\{ c(f) : f \in \mathfrak{A}_X^{\mathrm{vert}}(A,B) \bigr\}
\]
with $\inf\varnothing = +\infty$. By the cost axioms of Definition~\ref{def:UniverseAFormal}(iii), the assignment $d_X^{\mathrm{vert}}$ satisfies $d_X^{\mathrm{vert}}(A,A) = 0$ and the triangle inequality, and so endows $\operatorname{Ob}(\mathfrak{A}_X^{\mathrm{vert}})$ with the structure of a Lawvere generalized metric space, a $\mathcal{V}$-category in the sense of Definition~\ref{def:VCategory}.

\item[(ii)] \textit{Path-space reading.} Under the correspondence
\[
  \mathrm{Id}_{\mathcal{C}_{\mathrm{ext}}}(X,X)
  \;\longleftrightarrow\;
  \mathfrak{A}_X^{\mathrm{vert}}
\]
an inhabitant of the identity type corresponds to a vertical morphism $f \colon A \to B$ with $U(f) = \mathrm{id}_X$, and the cost $c(f)$ measures the length of that path in the Lawvere metric.

\item[(iii)] \textit{Reflexivity and zero cost.} The reflexivity term $\mathrm{refl}_A$ corresponds to $\mathrm{id}_A$ in $\mathfrak{A}_X^{\mathrm{vert}}$, and the cost axiom $c(\mathrm{id}_A) = 0$ expresses that reflexivity carries zero length.
\end{enumerate}
\end{definition}

\begin{proposition}[Controlled J-elimination from the transport inequality]
\label{prop:ControlledJElimination}
Let $(\mathfrak{A},U,c,H)$ be a Universe~A structure and let $d_X^{\mathrm{vert}}$ be the vertical distance of Definition~\ref{def:LawvereVoevodskyInterpretation}.

\begin{enumerate}
\item[\textup{(i)}] \textit{Quantitative J-rule.} For every vertical morphism $f \colon A \to B$ in $\mathfrak{A}_X^{\mathrm{vert}}$ and every height $H$ satisfying the transport inequality of Definition~\ref{def:UniverseAFormal}(iv), one has
\[
  |H(B) - H(A)| \le c(f) + c(f^{-1})
\]
where $f^{-1}$ is the inverse in the groupoid. If $c(f) = c(f^{-1}) = 0$, then $H(B) = H(A)$, recovering the classical J-elimination principle that transport along reflexivity preserves all structure.

\item[\textup{(ii)}] \textit{Metric control on the height.} The height $H$, restricted to $\operatorname{Ob}(\mathfrak{A}_X^{\mathrm{vert}})$, is Lipschitz with respect to the symmetrized vertical distance
\[
  \bar{d}_X^{\mathrm{vert}}(A,B)
  \coloneqq
  \inf_{f \in \mathfrak{A}_X^{\mathrm{vert}}(A,B)}
  \bigl( c(f) + c(f^{-1}) \bigr)
\]
in the precise sense that $|H(B) - H(A)| \le \bar{d}_X^{\mathrm{vert}}(A,B)$ for all objects $A,B$.

\item[\textup{(iii)}] \textit{Truncation.} In Universe~H the vertical groupoid is essentially trivial, every vertical morphism carrying zero cost. The metric $d_X^{\mathrm{vert}}$ vanishes identically, and the J-elimination rule degenerates to the extensional principle that isomorphic objects share the same height.
\end{enumerate}
\end{proposition}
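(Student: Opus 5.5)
The plan is to derive all three parts directly from the transport inequality of Definition~\ref{def:UniverseAFormal}(iv), together with the invariance of $c$ under $2$-isomorphism from item (iii).

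\textbf{Part (i).} Fix a vertical morphism in $\mathfrak{A}_X^{\mathrm{vert}}$. It is a $2$-isomorphism class, so I first choose a representative $1$-morphism $f \colon A \to B$ that is an equivalence with $U(f)=\mathrm{id}_X$, and a quasi-inverse $g \colon B \to A$ representing $f^{-1}$. By item (iii), the numbers $c(f)$ and $c(f^{-1})=c(g)$ do not depend on these choices. This is the one point needing care: the cost must be well defined on $2$-isomorphism classes before the statement even makes sense.

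Applying the transport inequality to $f$ gives $H(B)-H(A)\le c(f)$. Applying it to $g$ gives $H(A)-H(B)\le c(g)$. Hence
\[
|H(B)-H(A)| \le \max\{c(f),c(f^{-1})\} \le c(f)+c(f^{-1}),
\]
using that costs are non-negative. The special case $c(f)=c(f^{-1})=0$ forces $H(B)=H(A)$.

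\textbf{Part (ii).} For each $f\in\mathfrak{A}_X^{\mathrm{vert}}(A,B)$, part (i) gives $|H(B)-H(A)|\le c(f)+c(f^{-1})$. Taking the infimum over all such $f$ yields $|H(B)-H(A)|\le \bar d_X^{\mathrm{vert}}(A,B)$. If the hom-set is empty, the infimum is $+\infty$ by convention and the inequality is trivial.

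I would also remark that in a groupoid, $f\mapsto f^{-1}$ is a bijection from $\mathfrak{A}_X^{\mathrm{vert}}(A,B)$ to $\mathfrak{A}_X^{\mathrm{vert}}(B,A)$. Consequently $\bar d_X^{\mathrm{vert}}$ is symmetric, and it dominates both $d_X^{\mathrm{vert}}(A,B)$ and $d_X^{\mathrm{vert}}(B,A)$. This step is not needed for the inequality itself.

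\textbf{Part (iii).} This part is essentially an unpacking of the definitions. Universe~H is characterized, as in Remark~\ref{rem:locus_pure_intensionality}, by the condition that every vertical morphism has zero cost. Under that condition, every $d_X^{\mathrm{vert}}(A,B)$ over a nonempty hom-set is $0$, and part (i) gives $H(A)=H(B)$ whenever $A$ and $B$ are vertically isomorphic. This is exactly the extensional principle.

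The main obstacle I expect is one of interpretation rather than computation. The claim that the vertical groupoid is ``essentially trivial'' should be read as zero cost in each fiber, not as literal contractibility. I would state explicitly that this zero-cost hypothesis is what is being assumed, and note that it agrees with the zero-cost limit described before Proposition~\ref{prop:ConservativeExtension}.
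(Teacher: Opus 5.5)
Your proposal is correct and follows essentially the same route as the paper: two applications of the transport inequality, to $f$ and to $f^{-1}$, for (i); an infimum over vertical morphisms for (ii); and the zero-cost reading of Universe~H from Remark~\ref{rem:locus_pure_intensionality} for (iii). Your extra care about choosing representatives of $2$-isomorphism classes, and about empty hom-sets (where $d_X^{\mathrm{vert}}=+\infty$ rather than $0$), is a harmless refinement that the paper's proof leaves implicit.
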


\begin{proof}
For (i), apply the transport inequality to $f$ and to $f^{-1}$ separately, then combine. For (ii), the infimum over all $f \in \mathfrak{A}_X^{\mathrm{vert}}(A,B)$ of $c(f) + c(f^{-1})$ bounds $|H(B) - H(A)|$ by (i). For (iii), in Universe~H the vertical groupoid is contractible by Remark~\ref{rem:locus_pure_intensionality}, so every vertical morphism $f$ satisfies $c(f) = 0$.
\end{proof}

\begin{remark}[Quantitative categories with families]\label{rem:QuantitativeCwF}
The structure of Definition~\ref{def:LawvereVoevodskyInterpretation} parallels the quantitative categories with families introduced by Atkey~\cite{Atkey2018} as the categorical semantics of quantitative type theory. There the standard categories-with-families semantics for Martin-L\"of type theory is enriched by a semiring of usage annotations, and substitution is required to respect these annotations. In the present context the role of the semiring is played by the Lawvere quantale $([0,\infty],\ge,+)$, and the usage of a morphism is its distortion cost $c(f)$. The J-elimination rule of Proposition~\ref{prop:ControlledJElimination}(i) reads as the statement that transport along a zero-usage path preserves height exactly, while transport along a path of positive usage incurs a bounded penalty. This is the sense in which Universe~A implements a resource-sensitive discipline of identity, where the effort of decoding one intensional presentation into another is held as a quantitative datum rather than erased by a truncation.

A word is owed on substitution, since Definition~\ref{def:UniverseAFormal}(v) introduces the pullback functors $f^\ast \colon \mathrm{Ctx}(B) \to \mathrm{Ctx}(A)$ without a cost of their own. The choice made there, that context substitution is free, is the choice of Atkey's semantics as well. In a quantitative category with families, substitution respects the usage annotations and re-indexes them along the morphism, and it consumes no resource in doing so, the whole expenditure being carried by the terms and, in the present reading, by the $1$-morphism $f$. The cost $c$ is accordingly defined upon $\operatorname{Mor}_1(\mathfrak{A})$ alone, and a context of the form $f^\ast\Gamma$ is charged only through $c(f)$ wherever it enters a transport inequality. One could entertain a finer discipline in which a quantity $c(f^\ast)$ were levied upon the substitution itself, subadditive along the coherence isomorphisms $(g \circ f)^\ast \cong f^\ast \circ g^\ast$ and bounded above by $c(f)$, and such a discipline would sit closer still to the annotated substitutions of~\cite{Atkey2018}. Nothing below requires that refinement, and every construction of this paper reads the weight of a transport once, at the level of the $1$-morphism.

The $\infty$-topos semantics of Riehl~\cite{Riehl2024} affords the geometric realization of this picture. In an $\infty$-topos the identity type of a universe is the type of equivalences, and distinct paths between equivalent objects keep their individuality up to higher coherence. Universe~H corresponds to the $0$-truncation of this space, where all paths have been contracted. Universe~A corresponds to an intermediate regime in which paths are retained and measured, yet the measurement obeys the compositional discipline of a Lawvere metric.
\end{remark}

\subsection{Universe H as the zero-cost limit}

The conservativity demanded in Section~\ref{sec:FoundationalIdentity} can now be stated as a proposition. Universe~H is recovered from any Universe~A structure by contracting every vertical fiber, and the recovery is compatible with the height.

\begin{definition}[The extensional collapse]\label{def:ExtensionalCollapse}
Let $(\mathfrak{A},U,c,H)$ be a Universe~A structure. The \emph{extensional collapse} is the functor
\[
  \kappa \colon \Ho(\mathfrak{A}) \longrightarrow \mathcal{C}_{\mathrm{ext}}
\]
of Remark~\ref{rem:ForgetThroughHo}, together with the equivalence relation on objects that identifies $A$ and $B$ whenever $U(A) = U(B)$ and there exists a vertical equivalence $A \to B$. We write $\mathcal{C}_{\mathrm{ext}}^{\,\kappa}$ for the quotient.
\end{definition}

\begin{proposition}[Conservative extension]\label{prop:ConservativeExtension}
Let $(\mathfrak{A},U,c,H)$ be a Universe~A structure, and suppose that the height $H$ is \emph{extensionally coherent}, meaning that $H(A) = H(B)$ whenever $U(A) = U(B)$ and the vertical distance $\bar{d}_{U(A)}^{\mathrm{vert}}(A,B)$ vanishes. Then there is a well-defined extensional height
\[
  H^{\mathrm{ext}} \colon \operatorname{Ob}(\mathcal{C}_{\mathrm{ext}}^{\,\kappa})
  \longrightarrow \R,
  \qquad
  H^{\mathrm{ext}}\bigl([X]\bigr) \coloneqq H(A) \text{ for any } A \text{ with } U(A) = X
\]
and $H^{\mathrm{ext}}$ is an invariant of isomorphism classes in $\mathcal{C}_{\mathrm{ext}}$. Conversely, every extensional height on $\mathcal{C}_{\mathrm{ext}}$ lifts to an extensionally coherent height on $\mathfrak{A}$ by composition with $U$, and the cost of every vertical morphism under this lift is zero.
\end{proposition}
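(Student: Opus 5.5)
The plan has three parts: establish the quotient used by $H^{\mathrm{ext}}$, prove well-definedness and isomorphism invariance, and treat the converse separately. Throughout I read the class $[X]\in\operatorname{Ob}(\mathcal{C}_{\mathrm{ext}}^{\,\kappa})$ as the class of the objects $A$ of $\mathfrak{A}$ lying over $X$. These are identified by the vertical-equivalence relation of Definition~\ref{def:ExtensionalCollapse}, so $H^{\mathrm{ext}}([X])$ is the common value of $H$ on this class.

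The first step is to check that the relation of Definition~\ref{def:ExtensionalCollapse} is an equivalence relation. Reflexivity uses $\mathrm{id}_A$, symmetry uses quasi-inverses in the groupoid $\mathfrak{A}_X^{\mathrm{vert}}$, and transitivity uses composition, exactly as in Definition~\ref{def:VerticalGroupoid}.

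The second step is well-definedness. Given $A,B$ with $U(A)=U(B)=X$ that are identified in the quotient, I invoke Proposition~\ref{prop:ControlledJElimination}(ii) to get $|H(B)-H(A)|\le \bar d_X^{\mathrm{vert}}(A,B)$. When this symmetrized distance vanishes, extensional coherence gives $H(A)=H(B)$ directly.

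\emph{This is the step I expect to be the main obstacle.} Extensional coherence is only assumed at vertical distance zero. Two vertically equivalent objects at positive distance therefore need not share a height: $H$ genuinely varies in the fiber, which is the whole point of Remark~\ref{rem:locus_pure_intensionality}. I would first try to close the gap by reading the hypothesis as applying to each collapsed class, i.e.\ by taking $\mathcal{C}_{\mathrm{ext}}^{\,\kappa}$ to identify only objects at $\bar d^{\mathrm{vert}}=0$. With that reading, Proposition~\ref{prop:ControlledJElimination}(ii) together with the hypothesis gives well-definedness immediately. If the statement is meant literally, with ``any $A$'' ranging over the whole fiber, I would record that one needs $H$ constant on fibers, which is the content of the converse direction. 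I would state explicitly which reading is adopted.

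The third step is invariance under isomorphisms of $\mathcal{C}_{\mathrm{ext}}$. For an isomorphism $\phi\colon X\to Y$, I would use a lift: choose $A$ over $X$ and a $1$-morphism $f$ with $U(f)=\phi$, then transport along $f$ and its inverse with the transport inequality. For this to give equality the lifted costs must vanish, which is the zero-cost-limit situation. In Universe~H, Proposition~\ref{prop:ControlledJElimination}(iii) supplies exactly this vanishing.

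For the converse, let $h$ be an extensional height on $\mathcal{C}_{\mathrm{ext}}$ and set $H\coloneqq h\circ U$. Then $H$ is constant on each fiber of $U$, so extensional coherence holds trivially. For a vertical $f$ we have $U(f)=\mathrm{id}_X$, so $H(B)-H(A)=0$, and the transport inequality holds with $c(f)=0$. Hence the cost may be taken to be zero on vertical morphisms, provided the assignment of zero on vertical morphisms remains compatible with subadditivity. On non-vertical morphisms the existing cost is kept, or enlarged to $\max\{c(f),h(UB)-h(UA)\}$ where needed. I would verify that this still satisfies $c(\mathrm{id})=0$, subadditivity and $2$-invariance, and treat that check as routine.
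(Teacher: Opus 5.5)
Your treatment of well-definedness and of the converse is the paper's argument. The paper also applies Proposition~\ref{prop:ControlledJElimination}(ii) to representatives at zero symmetrized vertical distance, and it proves the converse by noting that $h\circ U$ takes the same value at both ends of a vertical morphism.

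The obstacle you flag is genuine, and the paper's proof does not resolve it either. That proof only compares representatives with $\bar d^{\mathrm{vert}}_X(A,B)=0$, whereas Definition~\ref{def:ExtensionalCollapse} identifies all vertically equivalent objects. Extensional coherence is in fact automatic, since it is exactly what Proposition~\ref{prop:ControlledJElimination}(ii) gives. Meanwhile the packages $\mathcal{E}_0$ and $\mathcal{E}_1$ of Lemma~\ref{lem:DilationTowerGrowth} are vertically equivalent, with $h_\Delta$ equal to $\log 37$ and $13\log 37$. So the claim holds under your restricted reading (a quotient by the relation $\bar d^{\mathrm{vert}}_X=0$) and fails under the literal one. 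Adopt that reading outright. Then check that this relation is an equivalence relation: transitivity follows from subadditivity applied to $g\circ f$ and $f^{-1}\circ g^{-1}$. Your first step verifies the properties of vertical equivalence instead.

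Two of your own steps fail as written.

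\emph{Isomorphism invariance.} In your third step, Proposition~\ref{prop:ControlledJElimination}(iii) concerns vertical morphisms only. A lift of an isomorphism $\phi\colon X\to Y$ with $X\neq Y$ is not vertical, and nothing in a Universe~A structure provides such a lift, let alone one of zero cost. Invariance across distinct isomorphic objects is false in general. Take one object over each of $X\cong Y$, joined by mutually inverse $1$-morphisms of cost $1$, with heights $0$ and $1$. You must therefore impose, as an explicit extra hypothesis, that every isomorphism of $\mathcal{C}_{\mathrm{ext}}$ lifts to a transport of zero symmetrized cost. The paper's one-line justification does not supply this.

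\emph{The modified cost in the converse.} The cost that vanishes on vertical morphisms and equals $\max\{c(f),h(UB)-h(UA)\}$ elsewhere is not subadditive, so the check you call routine fails. Take a vertical $f$ with $c(f)>0$, followed by a non-vertical $g$ over an automorphism of $X$ with $c(g)=0$ and $c(g\circ f)=c(f)$. For instance, take $\rho_1$ followed by the negation $[-1]$ of $E$, realized with $\lvert u_v\rvert_v=1$ at every place. The composite then has modified cost $c(f)$, while both factors have modified cost $0$.

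The modification is also unnecessary. The paper reads the final clause as saying that $h\circ U$ satisfies the transport inequality with zero cost on vertical morphisms, which you have already shown, and it leaves $c$ unchanged. You may also want $h\circ U$ to obey the transport inequality on non-vertical morphisms, a point the paper passes over. In that case use $c'(f)=\max\{c(f),h(UB)-h(UA)\}$ on every morphism. This $c'$ is subadditive, $2$-invariant, zero on identities, and equal to $c$ on vertical morphisms.
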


\begin{proof}
If $U(A) = U(B) = X$ and $A,B$ lie at zero symmetrized vertical distance, then $|H(A) - H(B)| \le \bar{d}_X^{\mathrm{vert}}(A,B) = 0$ by Proposition~\ref{prop:ControlledJElimination}(ii), so extensional coherence makes $H^{\mathrm{ext}}([X])$ independent of the chosen representative. Isomorphic objects of $\mathcal{C}_{\mathrm{ext}}$ have equal extensional images under any representative, so $H^{\mathrm{ext}}$ is an invariant of isomorphism classes. For the converse, given an extensional height $h$ on $\mathcal{C}_{\mathrm{ext}}$, set $H \coloneqq h \circ U$. Then for any $1$-morphism $f \colon A \to B$ one has $H(B) - H(A) = h(U(B)) - h(U(A))$, which equals $0$ when $U(f) = \mathrm{id}$ since $h$ is invariant. Thus the transport inequality holds with cost zero on every vertical morphism.
\end{proof}

\begin{remark}[The reading of the proposition]\label{rem:readingConservative}
Proposition~\ref{prop:ConservativeExtension} states the precise sense in which the framework loses nothing. Every theorem of the extensional discipline is a theorem of the intensional one, read at zero cost, while the intensional discipline carries in addition the positive distances of the vertical fibers. A statement of Universe~H is the shadow under $\kappa$ of a statement of Universe~A whose vertical content has been contracted. The next section shows that a comparison statement is exactly a statement whose vertical content refuses to be contracted, and that this refusal is the source of whatever Diophantine substance the comparison may carry.
\end{remark}

\section{Comparison problems and the descent obstruction}\label{sec:Comparison}

We come to the categorical reinterpretation named in the title. A comparison problem, in the present reading, is the data of two intensional realizations of one extensional object together with a transport between them, and the question attached to it is whether the resulting transport inequality descends to a non-trivial inequality between extensional invariants. The answer is governed entirely by the vertical groupoid, and the obstruction to descent is exact.

\subsection{Comparison problems}

\begin{definition}[Comparison problem]\label{def:ComparisonProblem}
Let $(\mathfrak{A},U,c,H)$ be a Universe~A structure and let $X$ be an object of $\mathcal{C}_{\mathrm{ext}}$. A \emph{comparison problem over $X$} is a pair of objects $A_{\mathrm{src}}, A_{\mathrm{tgt}}$ of $\mathfrak{A}$ with $U(A_{\mathrm{src}}) = U(A_{\mathrm{tgt}}) = X$, together with a vertical transport
\[
  f \colon A_{\mathrm{src}} \longrightarrow A_{\mathrm{tgt}}
\]
in $\mathfrak{A}_X^{\mathrm{vert}}$. Its \emph{comparison inequality} is the transport inequality
\[
  H(A_{\mathrm{tgt}}) \le H(A_{\mathrm{src}}) + c(f)
\]
and its \emph{content} is the quantity $c(f) \in \R_{\ge 0}$, the vertical cost of the transport.
\end{definition}

\begin{remark}[Why this captures the informal notion]\label{rem:WhyComparison}
In arithmetic practice a comparison statement places two realizations of one object side by side -- two models of an elliptic curve, two reconstructions of a field from local data, two presentations of a fundamental group -- and asserts an inequality between invariants attached to them. The two realizations share an underlying extensional object $X$, and the transport $f$ is the explicit passage from one realization to the other. The content of the comparison is the cost of that passage. When the passage is free, the comparison says nothing beyond $H(X) \le H(X)$. When the passage carries a positive cost, the comparison registers that cost. The next two results make this dichotomy precise.
\end{remark}

\subsection{A two-sided descent lemma}

We first describe, in a form requiring no special geometry, the mechanism by which a comparison collapses when one insists on reading it in the extensional signature. The lemma is internal to Universe~H.

\begin{definition}[Toy labeled and standard data]\label{def:ToyAliasedStandard}
Let $X$ be an object of $\mathcal{C}_{\mathrm{ext}}$. A \emph{toy labeled configuration over $X$} is a tuple
\[
  \mathcal{C} = \bigl(A_{\mathrm{src}}, A_{\mathrm{tgt}}, f, V^{\mathrm{src}}, V^{\mathrm{tgt}}, \delta\bigr)
\]
consisting of two objects $A_{\mathrm{src}}, A_{\mathrm{tgt}}$ above $X$, a vertical transport $f \colon A_{\mathrm{src}} \to A_{\mathrm{tgt}}$, two real numbers $V^{\mathrm{src}}, V^{\mathrm{tgt}}$ playing the role of intensional volumes attached to the two avatars, and a non-negative real number $\delta$ with
\[
  V^{\mathrm{tgt}} \le V^{\mathrm{src}} + \delta
\]
There is a natural involution $\sigma$ on such data that exchanges source and target, sending $\mathcal{C}$ to
\[
  \sigma(\mathcal{C}) = \bigl(A_{\mathrm{tgt}}, A_{\mathrm{src}}, f^{-1}, V^{\mathrm{tgt}}, V^{\mathrm{src}}, \delta'\bigr)
\]
where $\delta'$ is the non-negative real number appearing in the reverse inequality $V^{\mathrm{src}} \le V^{\mathrm{tgt}} + \delta'$.
\end{definition}

\begin{lemma}[Two-sided descent lemma]\label{lemma:ToyNoBridge}
Suppose there is a rule
\[
  \mathcal{B}^{\mathrm{toy}}
  \colon
  \{\text{toy labeled configurations over } X\}
  \longrightarrow
  \R^2 \times \R_{\ge 0},
  \qquad
  \mathcal{C} \longmapsto
  \bigl(H^{\mathrm{std}}_{\mathrm{src}}(\mathcal{C}), H^{\mathrm{std}}_{\mathrm{tgt}}(\mathcal{C}), \Delta(\mathcal{C})\bigr)
\]
to be read as a standard inequality in Universe~H,
\begin{equation}\label{eq:ToyStdIneq}
  H^{\mathrm{std}}_{\mathrm{tgt}}(\mathcal{C})
  \le
  H^{\mathrm{std}}_{\mathrm{src}}(\mathcal{C}) + \Delta(\mathcal{C}).
\end{equation}
Assume the following:
\begin{itemize}
\item[(i)] \emph{Equivariance under exchange.} If $\mathcal{C}$ and $\mathcal{C}'$ differ only by replacing the objects above $X$ by isomorphic copies and by exchanging the roles of source and target, then $\mathcal{B}^{\mathrm{toy}}(\mathcal{C})$ and $\mathcal{B}^{\mathrm{toy}}(\mathcal{C}')$ agree up to the same exchange. In particular $\Delta(\mathcal{C}) = \Delta(\sigma(\mathcal{C}))$ and $H^{\mathrm{std}}_{\mathrm{src}}(\mathcal{C}) = H^{\mathrm{std}}_{\mathrm{tgt}}(\sigma(\mathcal{C}))$.

\item[(ii)] \emph{Label transparency.} The quantities $H^{\mathrm{std}}_{\mathrm{src}}(\mathcal{C})$ and $H^{\mathrm{std}}_{\mathrm{tgt}}(\mathcal{C})$ are values of a single ring-coherent invariant in the sense of Axiom~\ref{ax:RingCoherence}, evaluated on the underlying object $X$ after static identification. Once the labels have been erased, there is no remaining standard datum that distinguishes the source evaluation from the target evaluation.

\item[(iii)] \emph{Intended survival of the discrepancy.} There is at least one configuration $\mathcal{C}$ with $V^{\mathrm{tgt}} \ne V^{\mathrm{src}}$ for which~\eqref{eq:ToyStdIneq} is intended to keep a non-trivial trace of the discrepancy, so that the rule does not force $H^{\mathrm{std}}_{\mathrm{src}}(\mathcal{C}) = H^{\mathrm{std}}_{\mathrm{tgt}}(\mathcal{C})$ by stipulation.
\end{itemize}
Then no such rule exists. Equivalently, under (i) and (ii) any rule producing a standard inequality of the form~\eqref{eq:ToyStdIneq} forces $H^{\mathrm{std}}_{\mathrm{src}}(\mathcal{C}) = H^{\mathrm{std}}_{\mathrm{tgt}}(\mathcal{C})$, and the comparison collapses in Universe~H to a harmless inequality.
\end{lemma}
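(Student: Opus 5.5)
The argument is short, and the work lies in reading hypotheses (i) and (ii) so that together they pin down the two standard heights. It runs by contradiction: assume a rule $\mathcal{B}^{\mathrm{toy}}$ with properties (i) and (ii), and show that it forces $H^{\mathrm{std}}_{\mathrm{src}}(\mathcal{C}) = H^{\mathrm{std}}_{\mathrm{tgt}}(\mathcal{C})$ for every configuration. This contradicts (iii), and it also gives the equivalent reformulation at the end of the statement.

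The first step uses label transparency (ii). Both $H^{\mathrm{std}}_{\mathrm{src}}(\mathcal{C})$ and $H^{\mathrm{std}}_{\mathrm{tgt}}(\mathcal{C})$ are values of one ring-coherent invariant $h$, evaluated on the underlying object after static identification. Since $U(A_{\mathrm{src}}) = U(A_{\mathrm{tgt}}) = X$, I would write $H^{\mathrm{std}}_{\mathrm{src}}(\mathcal{C}) = h(X) = H^{\mathrm{std}}_{\mathrm{tgt}}(\mathcal{C})$. Axiom~\ref{ax:RingCoherence} and Definition~\ref{def:RingCoherenceCategorical} ensure that the value depends only on the denoted extensional data, not on the label or the route. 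The second clause of (ii), that no remaining standard datum distinguishes the two evaluations, is what excludes two different invariants $h_{\mathrm{src}} \neq h_{\mathrm{tgt}}$.

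The second step is an independent route through equivariance (i), which I would include because it avoids leaning entirely on the informal wording of (ii). Apply (i) to the pair $\mathcal{C}$ and $\sigma(\mathcal{C})$. This gives $H^{\mathrm{std}}_{\mathrm{src}}(\mathcal{C}) = H^{\mathrm{std}}_{\mathrm{tgt}}(\sigma(\mathcal{C}))$, and symmetrically $H^{\mathrm{std}}_{\mathrm{tgt}}(\mathcal{C}) = H^{\mathrm{std}}_{\mathrm{src}}(\sigma(\mathcal{C}))$. Next, $\mathcal{C}$ and $\sigma(\mathcal{C})$ have the same underlying object $X$, and after labels are erased they have the same standard data. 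So (ii) gives $\mathcal{B}^{\mathrm{toy}}(\mathcal{C})$ and $\mathcal{B}^{\mathrm{toy}}(\sigma(\mathcal{C}))$ the same height components, and combining these with (i) yields $H^{\mathrm{std}}_{\mathrm{src}}(\mathcal{C}) = H^{\mathrm{std}}_{\mathrm{tgt}}(\mathcal{C})$. Inequality \eqref{eq:ToyStdIneq} then reads $h(X) \le h(X) + \Delta(\mathcal{C})$. This holds for any $\Delta \ge 0$ and is independent of $V^{\mathrm{src}}, V^{\mathrm{tgt}}$, which is exactly the harmless form.

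The final step confronts (iii). Choose a configuration with $V^{\mathrm{tgt}} \neq V^{\mathrm{src}}$ where the inequality is meant to retain a trace of the discrepancy. Since the two standard heights coincide and the inequality is satisfied for every $\Delta$, the discrepancy leaves no trace, and such a rule cannot exist. The main obstacle is formal rather than computational, because (ii) and (iii) are phrased semi-informally. I would fix their meaning precisely: (ii) as factorization of both height components through a single function of $U$-images, and (iii) as the requirement that the rule does not make the two heights coincide whenever $V^{\mathrm{tgt}} \neq V^{\mathrm{src}}$. Once these readings are fixed, the contradiction is a one-line equation, and the proof mainly has to argue that these readings are faithful to the statement.
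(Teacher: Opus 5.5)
Your proposal is correct and is essentially the paper's proof: (i) applied to $\mathcal{C}$ and $\sigma(\mathcal{C})$, together with label transparency (ii), forces $H^{\mathrm{std}}_{\mathrm{src}}(\mathcal{C}) = H^{\mathrm{std}}_{\mathrm{tgt}}(\mathcal{C})$ for every configuration, so \eqref{eq:ToyStdIneq} degenerates to $H \le H + \Delta(\mathcal{C})$ and contradicts (iii). Your remark that (ii) alone already yields the equality is accurate, and your second route still uses (ii), so it is not independent of it; since the equality holds for every configuration, the existential form of (iii) as stated is enough, and your stronger ``whenever'' reading is not needed.
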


\begin{proof}
Fix a configuration $\mathcal{C}$. By equivariance (i), the output triples of $\mathcal{C}$ and $\sigma(\mathcal{C})$ agree up to exchange, so that
\[
  H^{\mathrm{std}}_{\mathrm{src}}(\mathcal{C}) = H^{\mathrm{std}}_{\mathrm{tgt}}(\sigma(\mathcal{C}))
  \qquad\text{and}\qquad
  H^{\mathrm{std}}_{\mathrm{tgt}}(\mathcal{C}) = H^{\mathrm{std}}_{\mathrm{src}}(\sigma(\mathcal{C})).
\]
By label transparency (ii), once one passes to Universe~H and performs static identification, there is no standard datum distinguishing the source evaluation from the target evaluation, so the two outputs coincide,
\[
  H^{\mathrm{std}}_{\mathrm{src}}(\mathcal{C}) = H^{\mathrm{std}}_{\mathrm{tgt}}(\mathcal{C}).
\]
Then~\eqref{eq:ToyStdIneq} reduces to $H \le H + \Delta(\mathcal{C})$ with $\Delta(\mathcal{C}) \ge 0$, which holds independently of any discrepancy between $V^{\mathrm{src}}$ and $V^{\mathrm{tgt}}$. This contradicts the intended survival condition (iii).
\end{proof}

\subsection{The general descent obstruction}

The two-sided lemma is the elementary form of a single statement about the framework. A comparison inequality descends to the extensional quotient as a non-trivial inequality only when the transport is not a vertical equivalence of zero symmetrized cost. The proof is immediate from Proposition~\ref{prop:ControlledJElimination}, and the transparency of the statement is the principal benefit of the categorical reading.

\begin{theorem}[Descent obstruction]\label{thm:DescentObstruction}
Let $(\mathfrak{A},U,c,H)$ be a Universe~A structure with an extensionally coherent height in the sense of Proposition~\ref{prop:ConservativeExtension}, and let $(A_{\mathrm{src}}, A_{\mathrm{tgt}}, f)$ be a comparison problem over an object $X$. Let $H^{\mathrm{ext}}$ be the extensional height induced on $\mathcal{C}_{\mathrm{ext}}^{\,\kappa}$ by Proposition~\ref{prop:ConservativeExtension}. Then the images of $A_{\mathrm{src}}$ and $A_{\mathrm{tgt}}$ under the extensional collapse $\kappa$ satisfy
\[
  H^{\mathrm{ext}}\bigl(\kappa(A_{\mathrm{src}})\bigr)
  =
  H^{\mathrm{ext}}\bigl(\kappa(A_{\mathrm{tgt}})\bigr)
\]
whenever the symmetrized vertical cost $c(f) + c(f^{-1})$ vanishes. Consequently the comparison inequality descends to $H^{\mathrm{ext}} \le H^{\mathrm{ext}}$, a tautology, exactly when the transport is free. The comparison transmits content to the extensional level only through the residual term $c(f) + c(f^{-1})$, and that term is invisible to $H^{\mathrm{ext}}$.
\end{theorem}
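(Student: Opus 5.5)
The plan is to reduce everything to Proposition~\ref{prop:ControlledJElimination} and Proposition~\ref{prop:ConservativeExtension}. Since $(A_{\mathrm{src}},A_{\mathrm{tgt}},f)$ is a comparison problem over $X$, the transport $f$ is a morphism of $\mathfrak{A}_X^{\mathrm{vert}}$, so it has an inverse $f^{-1}$ in that groupoid, and $U(A_{\mathrm{src}})=U(A_{\mathrm{tgt}})=X$. First I would note that $A_{\mathrm{src}}$ and $A_{\mathrm{tgt}}$ are identified in the quotient $\mathcal{C}_{\mathrm{ext}}^{\,\kappa}$ of Definition~\ref{def:ExtensionalCollapse}, because $f$ is a vertical equivalence. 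Hence $\kappa(A_{\mathrm{src}})=\kappa(A_{\mathrm{tgt}})$, and the equality of extensional heights is then literally the equality $H^{\mathrm{ext}}([X])=H^{\mathrm{ext}}([X])$.

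The substantive step is to check that this value is consistent with the intensional heights, i.e.\ that $H(A_{\mathrm{src}})=H(A_{\mathrm{tgt}})$ when $c(f)+c(f^{-1})=0$. For this I apply Proposition~\ref{prop:ControlledJElimination}(i): $|H(A_{\mathrm{tgt}})-H(A_{\mathrm{src}})|\le c(f)+c(f^{-1})=0$. Equivalently, $\bar d_X^{\mathrm{vert}}(A_{\mathrm{src}},A_{\mathrm{tgt}})\le c(f)+c(f^{-1})=0$, so the hypothesis of extensional coherence applies directly. In either form, the well-definedness clause of Proposition~\ref{prop:ConservativeExtension} gives $H^{\mathrm{ext}}(\kappa(A_{\mathrm{src}}))=H(A_{\mathrm{src}})=H(A_{\mathrm{tgt}})=H^{\mathrm{ext}}(\kappa(A_{\mathrm{tgt}}))$.

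For the consequence, I push the comparison inequality $H(A_{\mathrm{tgt}})\le H(A_{\mathrm{src}})+c(f)$ through $\kappa$. Its two sides become the same extensional value, and with $c(f)=0$ the inequality reads $H^{\mathrm{ext}}\le H^{\mathrm{ext}}$. In general the only term that survives is the cost, which is not a function of $\kappa$-images. So $c(f)+c(f^{-1})$ is the sole carrier of content, and $H^{\mathrm{ext}}$, being constant on $\kappa$-classes, cannot see it.

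The main obstacle is the word ``exactly''. The forward direction (free transport gives a tautology) is what the argument above proves. The converse is weaker. Because $\kappa$ already identifies the endpoints via any vertical equivalence, the extensional descent is $H^{\mathrm{ext}}\le H^{\mathrm{ext}}+c(f)$ regardless of the cost. So I would read ``exactly'' as saying that the residual $c(f)+c(f^{-1})$ is the only possible source of non-tautological content, and prove only that interpretation. I would also flag that $H^{\mathrm{ext}}$ being well defined on all objects over $X$ implicitly requires the fibre to be connected at zero symmetrized distance, which is a hypothesis the proof should state explicitly.
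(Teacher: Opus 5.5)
Your proposal is correct and is essentially the paper's own argument: identify the $\kappa$-images of the endpoints through Definition~\ref{def:ExtensionalCollapse}, match the intensional heights by extensional coherence and the controlled J-rule of Proposition~\ref{prop:ControlledJElimination}, and read the descended comparison as an equality of extensional heights plus the slack $c(f)$; your part (i) and the paper's part (ii) give the same bound once the infimum is taken. Your two caveats are also sound and sharper than the paper's text: the paper ties the identification of the $\kappa$-images to the vanishing of $c(f)+c(f^{-1})$, whereas any vertical equivalence already forces it, so only the forward half of ``exactly'' is actually established, and the well-definedness of $H^{\mathrm{ext}}$ on a class containing objects at positive vertical distance is a gap inherited from Proposition~\ref{prop:ConservativeExtension} rather than one in your argument.
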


\begin{proof}
Both objects lie above $X$, so $\kappa(A_{\mathrm{src}})$ and $\kappa(A_{\mathrm{tgt}})$ are the same class $[X]$ in $\mathcal{C}_{\mathrm{ext}}^{\,\kappa}$ as soon as $\bar{d}_X^{\mathrm{vert}}(A_{\mathrm{src}}, A_{\mathrm{tgt}}) = c(f) + c(f^{-1}) = 0$, by Definition~\ref{def:ExtensionalCollapse}. Extensional coherence then gives $H^{\mathrm{ext}}(\kappa(A_{\mathrm{src}})) = H^{\mathrm{ext}}(\kappa(A_{\mathrm{tgt}}))$, and the comparison inequality $H(A_{\mathrm{tgt}}) \le H(A_{\mathrm{src}}) + c(f)$ descends to the equality of the two extensional heights together with the slack $c(f)$. By Proposition~\ref{prop:ControlledJElimination}(ii) the height cannot separate two objects at zero symmetrized vertical distance, so no part of the content survives in $H^{\mathrm{ext}}$.
\end{proof}

\begin{remark}[The import of the obstruction]\label{rem:MoralObstruction}
Theorem~\ref{thm:DescentObstruction} and Lemma~\ref{lemma:ToyNoBridge} express the same fact from two directions. A comparison statement that one wishes to read, at the end, as an inequality between ordinary invariants of a single object will collapse to a tautology, because the extensional height is blind to the vertical fiber where the comparison lives. The difficulty lies in the tension between the dynamic identification of the labeled signature and the static identification of Universe~H, rather than in any deficiency of analytic structure on either side of the comparison. If one wishes a comparison to retain Diophantine content, then one must accept the positive vertical cost as part of the statement, and read the inequality in Universe~A rather than after the collapse. The Diophantine sections below adopt exactly this discipline, keeping the model defect visible as a term of the inequality.
\end{remark}

\section{Anabelian packages: a quantitative category from practice}\label{sec:Anabelian}

Before turning to the Diophantine application, it is worth exhibiting that quantitative categories of isomorphisms are not an artifice imposed from outside. They arise within established arithmetic practice, and anabelian geometry furnishes a transparent instance, where the bare profinite group of a curve carries less information than the group endowed with its Galois action and decomposition data, so that an isomorphism of bare groups incurs a cost from the start when read at the enriched level.

Let $k$ be a finitely generated field over $\Q$, and let $X$ be a smooth hyperbolic curve over $k$, with arithmetic fundamental group
\[
  \pi_1(X) \coloneqq \pi_1(X_{\overline{k}}) \rtimes G_k
\]
where $G_k$ is the absolute Galois group of $k$ and $\pi_1(X_{\overline{k}})$ is the geometric fundamental group. The action of $G_k$ on $\pi_1(X_{\overline{k}})$ is regarded up to inner automorphism, through the outer representation
\[
  \rho_X \colon G_k \longrightarrow \mathrm{Out}\bigl(\pi_1(X_{\overline{k}})\bigr).
\]
Anabelian results, originating in Grothendieck's vision and developed by Mochizuki~\cite{mochizuki1999local, MochizukiAbAnIII}, show that under suitable hypotheses one reconstructs the isomorphism class of the pair $(X,k)$ from the profinite group $\pi_1(X_{\overline{k}})$ equipped with this outer action and with boundary decorations such as inertia and decomposition subgroups.

In Universe~H one tends to read $\pi_1(X_{\overline{k}})$ as a profinite group up to isomorphism. If two curves have isomorphic geometric fundamental groups as abstract profinite groups, then under this extensional outlook they cannot be separated by any invariant depending only on the group structure, and the outer action is extraneous unless explicitly encoded. Universe~A introduces a different viewpoint.

\begin{definition}[Anabelian package]\label{def:AnabelianPackage}
Let $k$ be a finitely generated field over $\Q$. An \emph{anabelian package over $k$} is a tuple $\mathcal{G} = (\Pi, G_k, \rho, \mathcal{D})$, where $\Pi$ is a profinite group, $\rho \colon G_k \to \mathrm{Out}(\Pi)$ is a continuous homomorphism encoding the outer Galois action, and $\mathcal{D} = \{D_v\}_{v \in S}$ is a finite collection of closed subgroups $D_v \subset \Pi \rtimes G_k$, indexed by a finite set $S$ of places or marked points, playing the role of decomposition or inertia groups. A \emph{morphism} $f \colon \mathcal{G} \to \mathcal{G}'$ over the same base field $k$ is an isomorphism of profinite groups $\varphi \colon \Pi \xrightarrow{\sim} \Pi'$ such that the induced map on outer automorphism groups intertwines $\rho$ and $\rho'$, and such that for each $v \in S$ the image $\varphi(D_v)$ is conjugate to some $D'_{v'} \in \mathcal{D}'$. The category of anabelian packages over $k$ is denoted $\mathbf{AnPkg}_k$.
\end{definition}

\begin{definition}[Transport defect]\label{def:TransportDefect}
Let $f \colon \mathcal{G} \to \mathcal{G}'$ be a morphism in $\mathbf{AnPkg}_k$. Its \emph{transport defect} is
\[
  \delta(f) \coloneqq \#\bigl\{ v \in S : \varphi(D_v) \ne D'_{v'} \text{ while } \varphi(D_v) \sim D'_{v'} \bigr\}
\]
where $\sim$ denotes conjugacy. That is, $\delta(f)$ counts the decomposition groups preserved only up to conjugacy rather than exactly. The identity morphism satisfies $\delta(\mathrm{id}_{\mathcal{G}}) = 0$, and for composable morphisms $\delta(g \circ f) \le \delta(f) + \delta(g)$.
\end{definition}

With the transport defect $\delta$ in the role of the distortion cost, the category $\mathbf{AnPkg}_k$ becomes a quantitative category of isomorphisms in the sense of Definition~\ref{def:QuantitativeCategory}. An isomorphism $f \colon \mathcal{G} \xrightarrow{\sim} \mathcal{G}'$ may have $\delta(f) > 0$ when the underlying group isomorphism fails to preserve the decomposition data exactly. The defect measures the positional information -- the placement of distinguished subgroups within the ambient group -- that is shifted when passing from $\mathcal{G}$ to $\mathcal{G}'$.

\begin{proposition}[A statement with stable content in Universe~A]\label{prop:OrdinaryInA}
Let $X$ and $Y$ be smooth hyperbolic curves over $k$ with anabelian packages $\mathcal{G}_X$ and $\mathcal{G}_Y$. Suppose there is an isomorphism of abstract profinite groups
\[
  \varphi \colon \pi_1(X_{\overline{k}}) \xrightarrow{\sim} \pi_1(Y_{\overline{k}})
\]
which does not respect the outer Galois actions $\rho_X$ and $\rho_Y$. Then the bare profinite groups are identified in Universe~H, becoming isomorphic once one keeps only the abstract group structure, while the enriched objects $\mathcal{G}_X$ and $\mathcal{G}_Y$ remain non-isomorphic in $\mathbf{AnPkg}_k$. In particular the statement
\begin{equation}\label{eq:AnabelianDistinction}
  \text{``}\,\mathcal{G}_X \not\cong \mathcal{G}_Y\,\text{''}
\end{equation}
has a determinate and true value in Universe~A, whereas the corresponding statement about the bare groups evaluates to false in Universe~H.
\end{proposition}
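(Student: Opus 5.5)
The plan is to separate the two halves of the statement and treat each in its own universe. The first half is immediate. In Universe~H the geometric fundamental groups are read as profinite groups up to isomorphism, so the hypothesis that $\varphi \colon \pi_1(X_{\overline{k}}) \xrightarrow{\sim} \pi_1(Y_{\overline{k}})$ exists already places the bare groups in one isomorphism class. Under static identification (Definition~\ref{def:StaticDynamic}), any invariant depending only on the group structure takes the same value on them. Hence the bare statement ``the groups are non-isomorphic'' evaluates to false there. Nothing beyond the existence of $\varphi$ is used.

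The second half needs an argument internal to $\mathbf{AnPkg}_k$. I would unwind Definition~\ref{def:AnabelianPackage}: a morphism $\mathcal{G}_X \to \mathcal{G}_Y$ is an isomorphism $\psi \colon \Pi_X \xrightarrow{\sim} \Pi_Y$ whose induced map $\mathrm{Out}(\Pi_X) \to \mathrm{Out}(\Pi_Y)$ carries $\rho_X$ to $\rho_Y$, and which matches decomposition data up to conjugacy. The given $\varphi$ fails the intertwining condition by hypothesis, so $\varphi$ does not underlie a morphism of packages. The extensional relation therefore does not lift to the enriched level. In the language of Remark~\ref{rem:IntensionalCover}, the two objects have extensionally isomorphic images under the forgetful functor to bare profinite groups, but $\varphi$ is not the image of a label-preserving isomorphism. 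Finally I would note that the question ``is $\mathcal{G}_X \cong \mathcal{G}_Y$'' is posed about the structured objects themselves. Its truth value in Universe~A is therefore fixed by the data $(\Pi,\rho,\mathcal{D})$ alone, and this gives the determinacy claim in \eqref{eq:AnabelianDistinction}.

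The main obstacle is a quantifier gap. Showing that $\mathcal{G}_X \not\cong \mathcal{G}_Y$ requires that \emph{no} isomorphism $\psi$ intertwines $\rho_X$ and $\rho_Y$, whereas the hypothesis excludes only the single isomorphism $\varphi$. Indeed, $\psi = \varphi \circ \alpha$ with $\alpha \in \mathrm{Aut}(\Pi_X)$ might repair the action. I would close the gap in one of two ways.

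\begin{itemize}
\item[(a)] Read the hypothesis as stated for the whole isomorphism class of $\varphi$ modulo $\mathrm{Aut}(\Pi_X)$. This means asking that $\rho_Y$ not be $\mathrm{Out}(\Pi)$-conjugate, after transport by $\varphi$, to $\rho_X$, which is exactly the non-existence of an intertwining isomorphism.
\item[(b)] Argue contrapositively through the anabelian reconstruction results cited above~\cite{mochizuki1999local, MochizukiAbAnIII}. A package isomorphism would reconstruct an isomorphism $X \cong Y$ over $k$. Then $X \not\cong Y$ over $k$, combined with an isomorphism of bare geometric groups, yields the required pair of non-isomorphic packages.
\end{itemize}

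I would adopt (a) as the precise form of the hypothesis and record (b) as the arithmetic source of examples. Route (b) supplies examples rather than a proof: by reconstruction, pairs with $X \not\cong Y$ over $k$ are exactly those whose outer actions are not related by any group isomorphism. Curves over $k$ of the same type $(g,n)$ then supply examples, since they have isomorphic bare geometric groups.
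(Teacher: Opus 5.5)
Your argument is the paper's own: the Universe~H half is read off from the existence of $\varphi$, and the Universe~A half unwinds Definition~\ref{def:AnabelianPackage}, observes that a morphism $\mathcal{G}_X\to\mathcal{G}_Y$ needs an isomorphism intertwining $\rho_X$ and $\rho_Y$, and concludes that none exists. The paper passes from ``$\varphi$ does not respect the actions'' to ``no such intertwining exists'' without comment, so your reading~(a) is exactly the hypothesis its proof uses. Your quantifier objection is sound: take $\mathcal{G}_Y=\mathcal{G}_X$ and let $\varphi$ be an automorphism of $\Pi_X$ whose outer class does not centralize $\rho_X(G_k)$; then the literal hypothesis holds, yet the identity makes the packages isomorphic.
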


\begin{proof}
By hypothesis $\varphi$ is an isomorphism of abstract profinite groups, so in Universe~H, where objects are identified up to isomorphism of the underlying group, $X$ and $Y$ become indistinguishable at the level of their geometric fundamental groups. In $\mathbf{AnPkg}_k$ a morphism from $\mathcal{G}_X$ to $\mathcal{G}_Y$ requires $\varphi$ to intertwine $\rho_X$ and $\rho_Y$. Since by hypothesis no such intertwining exists, there is no morphism, and the packages are non-isomorphic. Thus~\eqref{eq:AnabelianDistinction} is true in Universe~A and expresses the arithmetically meaningful fact that $X$ and $Y$, while sharing isomorphic geometric fundamental groups, differ in their Galois-theoretic structure.
\end{proof}

For Diophantine purposes one requires a real-valued invariant playing the role of a height. The following definition furnishes a schematic one.

\begin{definition}[Anabelian height]\label{def:AnabelianHeight}
Let $\mathcal{G} = (\Pi, G_k, \rho, \mathcal{D})$ arise from a smooth hyperbolic curve $X$ over $k$, with $\Pi = \pi_1(X_{\overline{k}})$ and $\mathcal{D}$ encoding decomposition groups at a finite set of rational points or points at infinity. The \emph{anabelian height} is
\[
  h_{\mathrm{ab}}(\mathcal{G}) \coloneqq h(X) + \sum_{v \in S} \log\, [G_k : \mathrm{Stab}_{G_k}(D_v)]
\]
where $h(X)$ is a classical height of the curve, for instance the Faltings height or a projective height, and the sum runs over the logarithmic indices of the stabilizers of the decomposition groups under the Galois action.
\end{definition}

\begin{lemma}[Transport inequality for anabelian height]\label{lem:TransportAnabelian}
Let $f \colon \mathcal{G} \to \mathcal{G}'$ be a morphism in $\mathbf{AnPkg}_k$ with transport defect $\delta(f)$. Then
\[
  h_{\mathrm{ab}}(\mathcal{G}') \le h_{\mathrm{ab}}(\mathcal{G}) + C_k \cdot \delta(f)
\]
for a constant $C_k$ depending only on $k$.
\end{lemma}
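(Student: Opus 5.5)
The plan is to compare the two summands of $h_{\mathrm{ab}}$ of Definition~\ref{def:AnabelianHeight} separately: the classical height $h(X)$, and the sum of logarithmic stabilizer indices indexed by $S$. Write $f$ as given by $\varphi \colon \Pi \xrightarrow{\sim} \Pi'$, where $\Pi = \pi_1(X_{\overline{k}})$ and $\Pi' = \pi_1(Y_{\overline{k}})$.

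\emph{Classical term.} By Definition~\ref{def:AnabelianPackage}, $\varphi$ intertwines $\rho_X$ and $\rho_Y$. The anabelian reconstruction results recalled above (Grothendieck's conjecture for hyperbolic curves over finitely generated fields, in the form of Mochizuki~\cite{mochizuki1999local}) then give a $k$-isomorphism $X \cong Y$ inducing $\varphi$ up to inner automorphism. I will fix $h$ to be an isomorphism invariant, such as the (stable) Faltings height. Then $h(X) = h(Y)$, and this term contributes nothing to the difference $h_{\mathrm{ab}}(\mathcal{G}') - h_{\mathrm{ab}}(\mathcal{G})$.

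\emph{Stabilizer sum.} Let $v \mapsto v'$ be the matching supplied by the morphism, so that $\varphi(D_v) \sim D'_{v'}$. I will first arrange that this matching is a bijection $S \to S'$. This should follow from the invertibility of $\varphi$ applied to the inverse morphism; if it is not automatic, I will add it as a standing convention on $\mathbf{AnPkg}_k$. Now split $S = S_0 \sqcup S_1$, where $S_0$ consists of the places with $\varphi(D_v) = D'_{v'}$ exactly, and $\#S_1 = \delta(f)$.
\begin{itemize}
\item For $v \in S_0$: $\varphi$ is $G_k$-equivariant modulo inner automorphisms, and $G_k$ acts through $\mathrm{Out}$. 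So $\sigma \in G_k$ stabilizes $D_v$ if and only if it stabilizes $\varphi(D_v) = D'_{v'}$, after adjusting by the inner correction. Hence $\mathrm{Stab}_{G_k}(D_v)$ and $\mathrm{Stab}_{G_k}(D'_{v'})$ have equal index, and these terms cancel.
\item For $v \in S_1$: the two indices may differ. I will bound each log-index individually, $0 \le \log [G_k : \mathrm{Stab}_{G_k}(D)] \le C_k$. The difference of the two sums is then at most $C_k \cdot \#S_1 = C_k\,\delta(f)$, which is the claimed inequality.
\end{itemize}

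\emph{Main obstacle.} The hard step is producing the uniform constant $C_k$. A priori the stabilizer of a closed subgroup can have infinite index, and the action is only defined up to inner automorphisms, so $\mathrm{Stab}_{G_k}(D_v)$ must itself be read modulo conjugation. My first attempt will use the fact that $D_v$ is the decomposition group of a $k$-rational point or a cusp. In that case $D_v$ is the image of a section $G_k \to \Pi \rtimes G_k$ up to $\Pi$-conjugacy, so its stabilizer in the outer sense should be all of $G_k$, or a subgroup of index bounded by the degree of the field of definition of the point or cusp. This would let $C_k$ be taken as $\log$ of a bound on those degrees, which is trivially $0$ for rational points. If that fails for cusps, the fallback is to restrict the decorations $\mathcal{D}$ to points defined over a fixed finite extension $k'/k$ and set $C_k \coloneqq \log [k' : k]$. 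This makes the constant depend only on $k$ together with this declared normalization, which I would state explicitly as part of the hypotheses.
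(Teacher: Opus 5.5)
Your treatment of the stabilizer sum follows the paper's proof. Places with $\varphi(D_v)=D'_{v'}$ exactly contribute identical terms, each conjugacy-only place shifts the sum by at most $C_k$, and $\delta(f)$ counts the latter. The paper justifies that per-place bound no further than you do: it simply asserts a shift ``by a bounded amount depending on the structure of $G_k$''.

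Your two additions repair points the paper's argument passes over.
\begin{itemize}
\item \emph{The classical term.} The paper never compares $h(X)$ with $h(Y)$. Your route works, but two things must be said. First, $\varphi$, intertwining $\rho_X$ and $\rho_Y$, extends to an isomorphism $\mathrm{Aut}(\Pi)\times_{\mathrm{Out}(\Pi)}G_k\cong\mathrm{Aut}(\Pi')\times_{\mathrm{Out}(\Pi')}G_k$ over $G_k$, because $\Pi$ is center-free; this is the input Mochizuki's theorem~\cite{mochizuki1999local} actually needs. Second, the argument only covers an isomorphism-invariant $h$, so the ``projective height'' permitted in Definition~\ref{def:AnabelianHeight} is excluded.
\item \emph{The matching.} Your convention on $v\mapsto v'$ is genuinely needed, not automatic. $\varphi^{-1}$ need not respect the decorations, so Definition~\ref{def:AnabelianPackage} allows $\mathcal{D}'$ to contain an unmatched $D'_w$ of positive log-index, for instance at a cusp not defined over $k$. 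Such a term breaks the inequality even when $\delta(f)=0$. Surjectivity of $v\mapsto v'$ is what you need.
\end{itemize}

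The gap is at the step you call the main obstacle, and you created it yourself. Your cancellation on $S_0$ is valid only if $\mathrm{Stab}_{G_k}(D)$ is read in the outer sense. That is, $\sigma$ belongs to it when conjugation by a lift of $\sigma$ carries $D$ to a $\Pi$-conjugate of $D$; this is what ``after adjusting by the inner correction'' means.

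With that reading the same argument applies verbatim to $S_1$.
\begin{itemize}
\item The outer stabilizer of $hDh^{-1}$ is $\bar h\,\mathrm{Stab}_{G_k}(D)\,\bar h^{-1}$, where $\bar h$ is the image of $h$ in $G_k$. So the index is a conjugacy invariant.
\item The index is also preserved by the extension of $\varphi$, which is an isomorphism over $G_k$ carrying $\Pi$ onto $\Pi'$.
\end{itemize}
Hence every term of the stabilizer sum is preserved, and the inequality holds with $C_k=0$; under your conventions it is even an equality. Neither the analysis of points and cusps nor the fallback to a fixed $k'/k$ is needed. The fallback would in any case change the statement, since the constant would then depend on $k'$.

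Under the literal reading instead, take the stabilizer of the subgroup itself, with $G_k$ embedded through a splitting. Then the inner corrections in the extension of $\varphi$ move the splitting, so even the $S_0$ cancellation need not hold. And, as you observe, the index can be infinite, so no uniform $C_k$ exists.

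In neither reading does the distinction between exact and conjugacy-only matching govern the index. Commit to the outer reading: the proof then closes, and $\delta(f)$ plays no role in the bound.
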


\begin{proof}
When $f$ preserves a decomposition group exactly, the corresponding term in the anabelian height is unchanged. When $f$ preserves a decomposition group only up to conjugacy, the stabilizer index may shift by a bounded amount depending on the structure of $G_k$. The transport defect $\delta(f)$ counts the number of such shifts, and the constant $C_k$ bounds the contribution of each.
\end{proof}

\begin{remark}[A faithful fragment of Universe~A]\label{rem:AnabelianFragment}
The category $\mathbf{AnPkg}_k$ affords a compact yet precise fragment of Universe~A. In Universe~H, where one works with the curve up to isomorphism, only $h(X)$ is visible. In Universe~A the full anabelian height becomes the natural invariant, and the transport inequality of Lemma~\ref{lem:TransportAnabelian} shows it transforms in a controlled way under morphisms, with the transport defect entering as an explicit correction. The anabelian reconstruction theorems imply that, under suitable hypotheses, the package $\mathcal{G}_X$ determines the isomorphism class of $X$ over $k$, so that the intensional data carry the full Diophantine content of the curve. Universe~A, by keeping this data as part of the object, preserves the capacity to make fine arithmetic distinctions that Universe~H effaces. It is not claimed that this fragment suffices to organize any large theory. It shows that categories with quantitative structure on morphisms occur in arithmetic geometry, and that the distinction between the two universes carries tangible mathematical content.
\end{remark}

%==================================================================
\section{Elliptic packages and the discriminant height}\label{sec:EllipticPackages}
%==================================================================

The preceding sections have stayed at the level of formalism. We now descend to arithmetic geometry and exhibit a quantitative category whose objects are presentations of elliptic curves and whose morphisms carry an explicit transport cost. The construction is elementary, and it rests upon the transformation law of the discriminant under a change of Weierstrass coordinates. What the intensional reading contributes is the decision to keep the presentation as part of the object and to measure, rather than to suppress, the price of passing from one presentation to another.

\subsection{An extensional anchor}\label{subsec:ConductorComplexity}

Before introducing the intensional data, we isolate the one invariant that belongs to the curve itself and not to any of its presentations.

\begin{definition}[Conductor complexity]\label{def:ConductorComplexity}
Let $\mathcal{E}$ be an elliptic package over a number field $K$ in the sense of Definition~\ref{def:EllipticPackage} below, with underlying curve $E$. The \emph{conductor complexity} of $\mathcal{E}$ is
\[
n(\mathcal{E}) \coloneqq \frac{1}{[K:\Q]} \log N_E
\]
where $N_E$ is the absolute norm of the conductor ideal of $E$ over $K$. This quantity depends solely upon the underlying curve $E$, so that it descends to Universe~H and serves as the extensional anchor against which the intensional heights are later compared.
\end{definition}

\subsection{Packages and their morphisms}\label{subsec:EllPkgDef}

\begin{definition}[Elliptic package]\label{def:EllipticPackage}
Let $K$ be a number field. An \emph{elliptic package over $K$} consists of a tuple
\[
\mathcal{E} = \bigl(E, \mathcal{W}, \{\sigma_v\}_{v \in S}, \lambda\bigr)
\]
where the four components are the following:
\begin{enumerate}
\item[(i)] $E$ is an elliptic curve over $K$.
\item[(ii)] $\mathcal{W}$ is a Weierstrass model for $E$ over $\mathcal{O}_K$ or over a localization thereof.
\item[(iii)] $\{\sigma_v\}_{v \in S}$ is a finite collection of local sections or level structures at the places $v$ within a finite set $S$ that contains the places of bad reduction of $E$.
\item[(iv)] $\lambda$ is a label or construction history, keeping the manner in which the data $(E,\mathcal{W},\{\sigma_v\})$ arose from the surrounding arithmetic context -- for instance a derivation from a modular or Shimura curve, a base change along a morphism of number fields, or a pointer to the ambient presentation from which the model was extracted.
\end{enumerate}

A \emph{morphism of elliptic packages} $f \colon \mathcal{E} \to \mathcal{E}'$ over the same base field $K$ is an isomorphism of elliptic curves
\[
  \varphi_f \colon E \xrightarrow{\sim} E'
\]
accompanied by the following compatible data:
\begin{enumerate}
\item[(i)] For each place $v$ of $K$, a change of Weierstrass coordinates relating the local equation $\mathcal{W}_v$ of $\mathcal{E}$ to the local equation $\mathcal{W}'_v$ of $\mathcal{E}'$. After extension of scalars to $K_v$, this change assumes the standard form
  \begin{equation}\label{eq:local-change-of-variables}
    x = u_v(f)^2\,x' + r_v(f)
    \qquad
    y = u_v(f)^3\,y' + u_v(f)^2 s_v(f)\,x' + t_v(f)
  \end{equation}
with $u_v(f) \in K_v^\times$ and $r_v(f), s_v(f), t_v(f) \in K_v$, identifying $\mathcal{W}_v$ with the pullback of $\mathcal{W}'_v$ along $\varphi_f$. For all but finitely many finite places $v$, the coefficients lie in $\mathcal{O}_v$ and $u_v(f)$ is a unit in $\mathcal{O}_v$. The element $u_v(f)$ is termed the \emph{local dilation factor} of $f$ at $v$, and the associated factor
  \[
    c_v(f) \coloneqq u_v(f)^{-12}
  \]
renders the discriminant relation in the simple form
  \[
    \Delta_{\mathcal{W}',v} = c_v(f)\,\Delta_{\mathcal{W},v}.
  \]
\item[(ii)] A correspondence between the local data $\{\sigma_v\}$ and $\{\sigma'_v\}$ keeping the transformation of sections or level structures under $\varphi_f$ and the coordinate changes in~\eqref{eq:local-change-of-variables}.
\end{enumerate}

The labels $\lambda$ and $\lambda'$ are not constrained to match. In applications they often refer to distinct construction histories. They play a passive role in the inequalities below, and yet they remain faithful to the principle that one compares presentations rather than collapsing them. The category of elliptic packages over $K$ is denoted $\mathbf{EllPkg}_K$.
\end{definition}

\begin{remark}[Variable base fields]\label{rem:EllPkgVarCrossRef}
Definition~\ref{def:EllipticPackage} fixes the base field $K$, and the category $\mathbf{EllPkg}_K$ keeps morphisms over $K$. When one wishes to speak of Kummerian moves, such as adjoining an $\ell$-th root $K \hookrightarrow K(\sqrt[\ell]{q})$, then it is convenient to pass to the variable-base category $\mathbf{EllPkg}_{\mathrm{var}}$ of Definition~\ref{def:EllPkgVar} and to separate the model defect $\mu$ from the Kummer defect of Definition~\ref{def:KummerDefect}.
\end{remark}

\subsection{The model defect}\label{subsec:ModelDefect}

\begin{definition}[Model defect]\label{def:ModelDefect}
Let $f \colon \mathcal{E} \to \mathcal{E}'$ be a morphism in $\mathbf{EllPkg}_K$. The \emph{model defect} of $f$ is
\[
\mu(f) \coloneqq \frac{1}{[K:\Q]} \sum_{v} \log^{+}\!\bigl(\lvert c_v(f)\rvert_v^{-1}\bigr)
\]
where $c_v(f) \in K_v^\times$ is the factor of Definition~\ref{def:EllipticPackage}, the absolute values are normalized, the sum runs over all places $v$ of $K$, and $\log^{+}(x) \coloneqq \max\{\log x,0\}$ for $x>0$. The sum is finite since for all but finitely many finite places $v$ one finds $\lvert c_v(f)\rvert_v = 1$, so that the corresponding summand vanishes. For the identity morphism one has $\mu(\mathrm{id}_{\mathcal{E}}) = 0$, and for composable morphisms $f \colon \mathcal{E}_1 \to \mathcal{E}_2$ and $g \colon \mathcal{E}_2 \to \mathcal{E}_3$ one observes the sub-additivity
\[
\mu(g \circ f) \le \mu(f) + \mu(g).
\]
\end{definition}

\begin{proof}[Proof of sub-additivity in Definition~\ref{def:ModelDefect}]
For each place $v$ one has $c_v(g \circ f) = c_v(g)\,c_v(f)$. For positive real numbers $a$ and $b$ one has the elementary inequality $\log^{+}(ab) \le \log^{+}(a) + \log^{+}(b)$. Applying this with $a = \lvert c_v(f)\rvert_v^{-1}$ and $b = \lvert c_v(g)\rvert_v^{-1}$ and then summing over $v$ yields the stated inequality.
\end{proof}

\begin{remark}[Equivalent expressions for the model defect]\label{rem:ModelDefectEquivalent}
Let $f \colon \mathcal{E}\to\mathcal{E}'$ be a morphism in $\mathbf{EllPkg}_K$, with local dilation factors $u_v(f)$ and $c_v(f)=u_v(f)^{-12}$ as in Definition~\ref{def:EllipticPackage}. Then
\[
\mu(f)
=
\frac{1}{[K:\Q]} \sum_v \log^{+}\!\bigl(|u_v(f)|_v^{12}\bigr)
=
\frac{12}{[K:\Q]} \sum_v \log^{+}\!\bigl(|u_v(f)|_v\bigr).
\]
In particular, if $|u_v(f)|_v\le 1$ for all places $v$, then $\mu(f)=0$.
\end{remark}

\begin{remark}[Elliptic packages as a quantitative category]\label{rem:EllPkgQuantitative}
With the model defect $\mu$ in the role of a transport cost, the category $\mathbf{EllPkg}_K$ acquires the structure of a quantitative category of isomorphisms in the sense of Definition~\ref{def:QuantitativeCategory}. Two elliptic packages may be isomorphic in $\mathbf{EllPkg}_K$, and yet be linked by an isomorphism $f$ that carries a positive defect $\mu(f)$. This defect preserves the visibility of the arithmetic price paid to alter the presentation, which is the whole purpose of the intensional reading.
\end{remark}

\subsection{The discriminant height and its transport law}\label{subsec:DiscriminantHeight}

\begin{lemma}[Local change of discriminant]\label{lem:LocalDiscriminantChange}
Let $K_v$ be a local field, and let $\mathcal{W}_v$ and $\mathcal{W}'_v$ be two Weierstrass equations of an elliptic curve over $K_v$. Suppose that $\mathcal{W}'_v$ is obtained from $\mathcal{W}_v$ by a change of variables of the form
\[
  x = u^2 x' + r
  \qquad
  y = u^3 y' + u^2 s x' + t
\]
with $u \in K_v^{\times}$ and $r,s,t \in K_v$. Then their discriminants satisfy
\[
  \Delta(\mathcal{W}'_v) = u^{-12}\,\Delta(\mathcal{W}_v)
\]
in $K_v^\times$.
\end{lemma}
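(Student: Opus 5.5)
The plan is to run the classical computation of Silverman (Chapter~III, Table~3.1). I will track the standard quantities $b_2,b_4,b_6,b_8$ and then $c_4,c_6$ through the substitution, and read off the discriminant from $1728\Delta = c_4^3 - c_6^2$. Because every identity involved is polynomial with integer coefficients in $a_1,\dots,a_6,u^{\pm1},r,s,t$, I can treat it as an identity in $\mathbb{Z}[a_i,u^{\pm 1},r,s,t]$ and then specialize to $K_v$. The local field plays no role beyond being a field, and $u\in K_v^\times$ ensures that $u^{-1}$ makes sense.

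The first step is to substitute $x = u^2x'+r$ and $y = u^3y'+u^2sx'+t$ into
\[
y^2+a_1xy+a_3y = x^3+a_2x^2+a_4x+a_6,
\]
and then divide by $u^6$. This puts the equation back in Weierstrass form and gives the new coefficients, for instance $ua_1' = a_1+2s$ and $u^2a_2' = a_2 - sa_1 + 3r - s^2$. I will not need every $a_i'$ explicitly. What I need is the transformation of the $b_i$, namely
\[
u^2b_2' = b_2+12r,\qquad u^4b_4' = b_4 + rb_2 + 6r^2,
\]
together with the analogous formulas for $b_6$ and $b_8$. It is cleaner to get these by first completing the square, since $(2y+a_1x+a_3)^2 = 4x^3+b_2x^2+2b_4x+b_6$. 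Under the substitution, $2y+a_1x+a_3$ becomes $u^3(2y'+a_1'x'+a_3')$, which means the new cubic equals $u^{-6}$ times the old cubic evaluated at $u^2x'+r$. In this form $s$ and $t$ disappear, and only a translation and a scaling of a cubic in $x$ remain.

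The second step uses that cubic picture. The quantities $c_4$ and $c_6$ are translation-invariant, weighted-homogeneous invariants of $4x^3+b_2x^2+2b_4x+b_6$. Translating by $r$ leaves them unchanged, and scaling gives $c_4' = u^{-4}c_4$ and $c_6' = u^{-6}c_6$. It follows that $1728\Delta' = u^{-12}(c_4^3-c_6^2)$. This settles the result in characteristic $\neq 2,3$.

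The third step deals with residue characteristic $2$ or $3$. This is not a real problem here, since $K_v$ has characteristic $0$, so $1728$ is invertible. Still, to keep the argument honest I would note that the identity $\Delta' = u^{-12}\Delta$ holds in the polynomial ring. The reason is that it holds after multiplying by $1728$, and $\mathbb{Z}[a_i,u^{\pm1},r,s,t]$ is torsion-free.

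The only obstacle I expect is bookkeeping, namely verifying the $b_8$ transformation, or equivalently the invariance of the discriminant of the cubic under translation. I would handle it by writing $\Delta$ directly as $16$ times the discriminant of the monic cubic $x^3+\tfrac{b_2}{4}x^2+\tfrac{b_4}{2}x+\tfrac{b_6}{4}$. The discriminant of a cubic is a product of squared root differences. That product is visibly invariant under translation and scales by $\lambda^{6}$ when $x$ is scaled by $\lambda$. Taking $\lambda = u^{-2}$, together with the prefactor $u^{-6}$ acting on the polynomial, the total factor comes out to $u^{-12}$.
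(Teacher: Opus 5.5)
Your proposal is correct and is essentially the classical transformation-law computation of Silverman (Chapter III, \S1) that the paper simply cites. Completing the square makes $s,t$ drop out, leaving only a translation and scaling of the cubic $4x^3+b_2x^2+2b_4x+b_6$. The $b_8$ bookkeeping you flag as the main obstacle is not actually needed: both your $c_4,c_6$ route and the $16\cdot\mathrm{disc}$ route use only $b_2,b_4,b_6$, and these are already fixed by your identity that the new cubic is $u^{-6}$ times the old cubic evaluated at $u^2x'+r$.
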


\begin{proof}
This is the customary transformation law for the discriminant under a change of Weierstrass coordinates. One may consult Silverman \cite[III, Proposition~1.1]{SilvermanAEC}.
\end{proof}

We fix the normalization of absolute values once, so that the definition below may be stated without further comment. For each place $v$ of $K$ we take $\lvert\,\cdot\,\rvert_v$ to be the normalized absolute value of the completion $K_v$, with the local degree carried inside the absolute value rather than left as a separate weight. At a finite place $v$ above the rational prime $p$, with normalized valuation $\mathrm{ord}_v$ taking the value $1$ at a uniformizer, this reads $\lvert x\rvert_v = q_v^{-\mathrm{ord}_v(x)}$, where $q_v = \#\kappa(v) = p^{\,f_v}$ is the residue-field cardinality and $f_v = [\kappa(v):\mathbb{F}_p]$ the residue degree. At a real place $\lvert x\rvert_v = \lvert\sigma_v(x)\rvert$ and at a complex place $\lvert x\rvert_v = \lvert\sigma_v(x)\rvert^{2}$, with $\sigma_v$ the corresponding embedding. The local degrees $n_v = [K_v:\Q_p]$ are thereby absorbed into the absolute values, the product formula $\prod_v \lvert x\rvert_v = 1$ holds for every $x\in K^\times$, and the factor $1/[K:\Q]$ in the height below renders the result an absolute height, independent of the field over which the package is presented.

\begin{definition}[Discriminant height on elliptic packages]\label{def:DiscriminantHeight}
Let $\mathcal{E} = (E, \mathcal{W}, \{\sigma_v\}, \lambda)$ be an elliptic package over $K$. The \emph{discriminant height} of $\mathcal{E}$ is
\[
h_\Delta(\mathcal{E}) \coloneqq \frac{1}{[K:\Q]} \sum_{v} \log \max\bigl(1, \lvert \Delta_{\mathcal{W},v} \rvert_v^{-1}\bigr)
\]
where $\Delta_{\mathcal{W},v}$ is the discriminant of the local model $\mathcal{W}_v$, the sum runs over all places $v$ of $K$, and the absolute values are normalized as above.
\end{definition}

In particular, the contribution of a finite place to the sum is
\[
  \log \max\bigl(1, \lvert \Delta_{\mathcal{W},v}\rvert_v^{-1}\bigr)
  = f_v \, \max\bigl(0,\, \mathrm{ord}_v(\Delta_{\mathcal{W}})\bigr)\,\log p
\]
so that the residue degree $f_v$ appears as the explicit local weight from the outset, in agreement with Remark~\ref{rem:HeightsAsLogIntData}. Unlike the conductor complexity of Definition~\ref{def:ConductorComplexity}, this quantity reads the chosen model $\mathcal{W}$ and therefore lives in Universe~A.

\begin{lemma}[Transport inequality for the discriminant height]\label{lem:TransportDiscriminant}
Let $K$ be a number field and let $f \colon \mathcal{E} \to \mathcal{E}'$ be a morphism in $\mathbf{EllPkg}_K$. Suppose that for each place $v$ of $K$ the discriminants satisfy $\Delta_{\mathcal{W}',v} = c_v(f)\,\Delta_{\mathcal{W},v}$. Then
\[
  h_\Delta(\mathcal{E}')
  \le
  h_\Delta(\mathcal{E}) + \mu(f).
\]
\end{lemma}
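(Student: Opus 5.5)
The inequality follows place by place, so I would compare the local summands of $h_\Delta(\mathcal{E}')$ and $h_\Delta(\mathcal{E})$ at each $v$ and then sum with the common weight $1/[K:\Q]$. Write $a_v \coloneqq \lvert \Delta_{\mathcal{W},v}\rvert_v^{-1}$ and $b_v \coloneqq \lvert c_v(f)\rvert_v^{-1}$, both positive reals since $\Delta_{\mathcal{W},v}$ and $c_v(f)$ lie in $K_v^\times$. By hypothesis, and multiplicativity of the normalized absolute value,
\[
\lvert \Delta_{\mathcal{W}',v}\rvert_v^{-1} = \lvert c_v(f)\rvert_v^{-1}\,\lvert \Delta_{\mathcal{W},v}\rvert_v^{-1} = b_v\,a_v .
\]
The hypothesis is supplied for every place by Lemma~\ref{lem:LocalDiscriminantChange}, applied to the local change of variables~\eqref{eq:local-change-of-variables}, but the statement takes it as given. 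The local summand of the target is therefore $\log\max(1, a_v b_v) = \log^{+}(a_v b_v)$.

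The key step is the elementary inequality already used in the proof of sub-additivity after Definition~\ref{def:ModelDefect}: $\log^{+}(ab) \le \log^{+}(a) + \log^{+}(b)$ for positive reals $a,b$. It holds because $\log(ab) = \log a + \log b \le \log^{+}a + \log^{+}b$, and the right side is also $\ge 0$. Applying it with $a = a_v$ and $b = b_v$ gives
\[
\log\max\bigl(1, \lvert \Delta_{\mathcal{W}',v}\rvert_v^{-1}\bigr) \le \log\max\bigl(1, \lvert \Delta_{\mathcal{W},v}\rvert_v^{-1}\bigr) + \log^{+}\bigl(\lvert c_v(f)\rvert_v^{-1}\bigr),
\]
and the last term is exactly the $v$-summand of $\mu(f)$.

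Summing over all places and dividing by $[K:\Q]$ then yields $h_\Delta(\mathcal{E}') \le h_\Delta(\mathcal{E}) + \mu(f)$. The only point needing care, and the place I expect to be the mild obstacle, is that all sums are finite, so that term-by-term addition is legitimate. The sum defining $\mu(f)$ is finite by the remark in Definition~\ref{def:ModelDefect}. The sums defining $h_\Delta$ are finite because all but finitely many local discriminants are units. Even if one only knew the source sum converges, the pointwise bound would force the target sum to converge as well, since all terms are non-negative. No choice of normalization interferes, because the local degrees are absorbed into $\lvert\cdot\rvert_v$ consistently on both sides.
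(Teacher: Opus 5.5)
Your proof is correct and follows the paper's argument. The paper likewise works place by place, writing the local summands as $\max\{0,\alpha_v\}$ and $\max\{0,\alpha_v+\gamma_v\}$ and applying $\max\{0,x+y\}\le\max\{0,x\}+\max\{0,y\}$ — your inequality $\log^{+}(ab)\le\log^{+}a+\log^{+}b$ in additive form — before summing and dividing by $[K:\Q]$; your remark on the finiteness of the sums is a harmless addition that the paper leaves implicit.
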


\begin{proof}
Fix a place $v$ and set $A_v \coloneqq |\Delta_{\mathcal{W},v}|_v^{-1}$ and $\gamma_v \coloneqq \log |c_v(f)|_v^{-1}$, so that $|\Delta_{\mathcal{W}',v}|_v^{-1} = e^{\gamma_v} A_v$. Writing $\alpha_v \coloneqq \log A_v$, one has
\[
\log \max(1, A_v) = \max\{0,\alpha_v\}
\qquad
\log \max(1, e^{\gamma_v}A_v) = \max\{0,\alpha_v+\gamma_v\}.
\]
The elementary inequality $\max\{0,x+y\} \le \max\{0,x\} + \max\{0,y\}$ yields, at each place,
\[
\log \max\bigl(1,\ |\Delta_{\mathcal{W}',v}|_v^{-1}\bigr)
\le
\log \max\bigl(1,\ |\Delta_{\mathcal{W},v}|_v^{-1}\bigr)
+
\log^{+}\!\bigl(|c_v(f)|_v^{-1}\bigr).
\]
Summing over $v$ and dividing by $[K:\Q]$ gives $h_\Delta(\mathcal{E}') \le h_\Delta(\mathcal{E}) + \mu(f)$, following Definition~\ref{def:ModelDefect}.
\end{proof}

\parahead{Non-arbitrariness of the transport cost} The model defect $\mu(f)$ enters as a cost attached to a chosen transport $f$. It is helpful to note that $\mu$ cannot be treated as a free parameter once the morphism $f$ is fixed, since any increase of the discriminant height forces a corresponding payment.

\begin{lemma}[Height increase forces defect]\label{lem:HeightIncreaseForcesDefect}
Let $f \colon \mathcal{E} \to \mathcal{E}'$ be a morphism in $\mathbf{EllPkg}_K$. If $h_\Delta(\mathcal{E}') \ge h_\Delta(\mathcal{E})$, then
\[
h_\Delta(\mathcal{E}') - h_\Delta(\mathcal{E}) \le \mu(f).
\]
In particular, if $h_\Delta(\mathcal{E}') - h_\Delta(\mathcal{E}) = D$ with $D>0$, then $\mu(f)\ge D$.
\end{lemma}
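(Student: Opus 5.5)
The statement is an immediate corollary of Lemma~\ref{lem:TransportDiscriminant}, and I would prove it by reducing to that lemma. The only hypothesis of Lemma~\ref{lem:TransportDiscriminant} beyond $f$ being a morphism of $\mathbf{EllPkg}_K$ is the local relation $\Delta_{\mathcal{W}',v} = c_v(f)\,\Delta_{\mathcal{W},v}$ at every place $v$. So the first step is to check that this relation holds automatically for every morphism.

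For this I would use Definition~\ref{def:EllipticPackage}(i). It equips $f$, at each place $v$, with a change of Weierstrass coordinates of the standard form~\eqref{eq:local-change-of-variables}, with dilation factor $u_v(f)\in K_v^\times$. Lemma~\ref{lem:LocalDiscriminantChange}, applied over $K_v$, then gives $\Delta(\mathcal{W}'_v) = u_v(f)^{-12}\Delta(\mathcal{W}_v)$. Since $c_v(f) = u_v(f)^{-12}$ by definition, this is exactly the required relation. The one point I would state explicitly is the direction of the change: $\mathcal{W}_v$ is the pullback of $\mathcal{W}'_v$ along $\varphi_f$, and the unprimed coordinates are expressed through the primed ones. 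This matches the orientation in Lemma~\ref{lem:LocalDiscriminantChange}, so the factor is $u^{-12}$ and not $u^{12}$.

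With the hypothesis verified, Lemma~\ref{lem:TransportDiscriminant} yields $h_\Delta(\mathcal{E}') \le h_\Delta(\mathcal{E}) + \mu(f)$. Rearranging gives $h_\Delta(\mathcal{E}') - h_\Delta(\mathcal{E}) \le \mu(f)$. This inequality actually holds without the assumption $h_\Delta(\mathcal{E}') \ge h_\Delta(\mathcal{E})$; that assumption only ensures the left side is the non-negative height increase, so the bound has content. For the last clause, set the difference equal to $D>0$; the inequality then reads $D \le \mu(f)$, so the defect is at least $D$.

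I do not expect a genuine obstacle. The only care needed is the orientation in the coordinate change, and confirming that the finiteness of the sums defining $h_\Delta$ and $\mu$, already noted after Definitions~\ref{def:ModelDefect} and~\ref{def:DiscriminantHeight}, makes the place-by-place summation in Lemma~\ref{lem:TransportDiscriminant} legitimate.
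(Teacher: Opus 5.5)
Your proposal is correct and matches the paper's proof, which simply applies Lemma~\ref{lem:TransportDiscriminant} and rearranges the terms. Your extra check that $\Delta_{\mathcal{W}',v} = c_v(f)\,\Delta_{\mathcal{W},v}$ holds for every morphism, via Lemma~\ref{lem:LocalDiscriminantChange} with the orientation you describe, is sound. The paper treats that relation as already built into Definition~\ref{def:EllipticPackage}(i).
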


\begin{proof}
Lemma~\ref{lem:TransportDiscriminant} gives $h_\Delta(\mathcal{E}') \le h_\Delta(\mathcal{E}) + \mu(f)$, so the claim follows by rearranging the terms.
\end{proof}

\begin{corollary}[Two-sided control by a transport and its inverse]\label{cor:TwoSidedControlTransport}
Let $f \colon \mathcal{E}\to\mathcal{E}'$ be an isomorphism in $\mathbf{EllPkg}_K$. Then
\[
\bigl|h_\Delta(\mathcal{E}') - h_\Delta(\mathcal{E})\bigr|
\le
\mu(f) + \mu(f^{-1}).
\]
In particular, if $\mu(f)=\mu(f^{-1})=0$, then $h_\Delta(\mathcal{E}') = h_\Delta(\mathcal{E})$.
\end{corollary}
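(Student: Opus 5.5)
The plan is to apply Lemma~\ref{lem:TransportDiscriminant} twice, once to $f$ and once to its inverse, and then combine the two one-sided bounds. This is the same argument that gave Proposition~\ref{prop:ControlledJElimination}(i), now in the concrete category $\mathbf{EllPkg}_K$.

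First, I would check that $f^{-1}$ is a morphism of $\mathbf{EllPkg}_K$ to which the transport lemma applies. The inverse is carried by $\varphi_f^{-1}$. At each place $v$ it uses the inverse change of Weierstrass coordinates, which again has the standard form~\eqref{eq:local-change-of-variables}, with local dilation factor $u_v(f^{-1}) = u_v(f)^{-1}$. Composing the two coordinate changes gives the identity change, so $u_v(f^{-1})\,u_v(f) = 1$, and therefore $c_v(f^{-1}) = c_v(f)^{-1}$. Lemma~\ref{lem:LocalDiscriminantChange} then gives $\Delta_{\mathcal{W},v} = c_v(f^{-1})\,\Delta_{\mathcal{W}',v}$ at every place. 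This is exactly the hypothesis of Lemma~\ref{lem:TransportDiscriminant} for $f^{-1}\colon \mathcal{E}'\to\mathcal{E}$. The same lemma gives the hypothesis for $f$ itself, as already recorded in Definition~\ref{def:EllipticPackage}(i).

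Next, Lemma~\ref{lem:TransportDiscriminant} applied to $f$ gives $h_\Delta(\mathcal{E}') - h_\Delta(\mathcal{E}) \le \mu(f)$. Applied to $f^{-1}$, it gives $h_\Delta(\mathcal{E}) - h_\Delta(\mathcal{E}') \le \mu(f^{-1})$. Since $\mu \ge 0$, each of the two right-hand sides is at most $\mu(f)+\mu(f^{-1})$. The absolute value of the difference is the larger of the two signed differences, so it is bounded by $\mu(f)+\mu(f^{-1})$. The particular case follows at once: if both defects vanish, the difference is $0$.

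The only point needing care is the first step, namely confirming that the inverse morphism's data are the reciprocal local factors. I expect this to be routine. As a side remark, Remark~\ref{rem:ModelDefectEquivalent} shows that $\mu(f)+\mu(f^{-1}) = \frac{12}{[K:\Q]}\sum_v \bigl|\log|u_v(f)|_v\bigr|$. This makes the bound transparent, but the proof does not need it.
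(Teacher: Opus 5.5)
Your proposal is correct and follows essentially the same argument as the paper, which applies Lemma~\ref{lem:TransportDiscriminant} to $f$ and to $f^{-1}$ and combines the two one-sided bounds. Your extra verification that $u_v(f^{-1}) = u_v(f)^{-1}$ is accurate but redundant: $f^{-1}$ is itself a morphism of $\mathbf{EllPkg}_K$, so it satisfies the discriminant relation by Definition~\ref{def:EllipticPackage}.
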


\begin{proof}
Apply Lemma~\ref{lem:TransportDiscriminant} to $f$ and to $f^{-1}$, then combine the two inequalities.
\end{proof}

\begin{remark}[A small interpretive comment]\label{rem:ZeroCostInterpretation}
When the morphism $f$ is fixed, the condition $\mu(f)=0$ expresses that the local coordinate changes carry no dilation cost, and Corollary~\ref{cor:TwoSidedControlTransport} then shows that the two presentations are indistinguishable as far as $h_\Delta$ is concerned. This is the arithmetic shadow of the extensional collapse of Definition~\ref{def:ExtensionalCollapse} -- a costless isomorphism is precisely one that survives the passage to Universe~H.
\end{remark}

\subsection{Keeping the coherence level visible}\label{subsec:EllPkgTwoOne}

In the parts of the theory where functoriality or the commutativity of squares is invoked, coherence data is not ancillary, and we work in a quantitative $(2,1)$-categorical enhancement of the $1$-category $\mathbf{EllPkg}_K$.

\begin{definition}[Quantitative $(2,1)$-category of elliptic packages]\label{def:EllPkgTwoOne}
Fix a number field $K$. A \emph{quantitative $(2,1)$-category of elliptic packages over $K$} is a $(2,1)$-category $\mathfrak{EllPkg}_K$ with the following features:
\begin{enumerate}
\item[\textup{(i)}] Objects are elliptic packages over $K$ in the sense of Definition~\ref{def:EllipticPackage}.
\item[\textup{(ii)}] $1$-morphisms are morphisms of elliptic packages, with the same underlying data as in $\mathbf{EllPkg}_K$.
\item[\textup{(iii)}] All $2$-morphisms are invertible. We write $f \cong g$ to mean that there exists an invertible $2$-morphism $f \Rightarrow g$.
\item[\textup{(iv)}] The defect $\mu$ of Definition~\ref{def:ModelDefect} extends to $1$-morphisms of $\mathfrak{EllPkg}_K$ and is invariant under $2$-isomorphism, so that $f \cong g$ implies $\mu(f)=\mu(g)$.
\end{enumerate}
The homotopy $1$-category $\Ho(\mathfrak{EllPkg}_K)$ may be identified with $\mathbf{EllPkg}_K$ by passing to $2$-isomorphism classes of $1$-morphisms.
\end{definition}

\begin{definition}[Commutativity up to an invertible $2$-morphism]\label{def:CommUpToDefect}
Let $\mathfrak{A}$ be a $(2,1)$-category. Given $1$-morphisms
\[
  A \xrightarrow{u} B \xrightarrow{v} C
  \qquad
  A \xrightarrow{u'} B' \xrightarrow{v'} C
\]
a \emph{comparison} is an invertible $2$-morphism $\alpha \colon v\circ u \Rightarrow v'\circ u'$. In this situation we say that the square \emph{commutes via $\alpha$}.
\end{definition}

\begin{definition}[Pseudonatural transformation in a $(2,1)$-category]\label{def:PseudoNatural}
Let $\mathfrak{A}$ be a $(2,1)$-category and let $F,G\colon\mathfrak{A}\to\mathfrak{A}$ be $2$-functors. A \emph{pseudonatural transformation} $\eta\colon F \Rightarrow G$ consists of the following data:
\begin{enumerate}
\item[\textup{(i)}] for every object $A$, a $1$-morphism $\eta_A\colon F(A)\to G(A)$,
\item[\textup{(ii)}] for every $1$-morphism $f\colon A\to B$, an invertible $2$-morphism $\eta_f \colon \eta_B\circ F(f)\Rightarrow G(f)\circ \eta_A$,
\end{enumerate}
subject to the unit and composition coherence axioms.
\end{definition}

\begin{remark}[Why pseudonaturality stays at the $(2,1)$-level]\label{rem:PseudoNaturalNotBicategorical}
Definitions~\ref{def:CommUpToDefect} and~\ref{def:PseudoNatural} are kept in their native $(2,1)$-categorical form, with coherence registered by explicit invertible $2$-morphisms. The quantitative structure sits on $1$-morphisms, by way of $\mu$, and Definition~\ref{def:EllPkgTwoOne}\textup{(iv)} requires $\mu$ to be invariant under $2$-isomorphism. In this manner the visibility of the $2$-morphisms prevents accidental identifications while the numerical accounting stays attached to the transports themselves.

It is worth saying where these $2$-cells exert force and where they fall silent, since the two are easily confused. They exert force at the level of derivation. The invariance of $\mu$ under $2$-isomorphism is what turns the cost into a quantity attached to the transport rather than to its presentation, and so it is what permits the cost to descend to the homotopy $1$-category by Proposition~\ref{prop:CoherenceStability}. Pseudonaturality plays the parallel role for transport along a morphism, since the invertible $2$-cell $(\tau_\varepsilon)_f$ of the pseudonatural comparison in Hypothesis~\ref{hyp:ReducedNormalizedProfile} renders the two routes around a normalization square equal in cost, so that the family of bounds coheres across $\mathbf{EllPkg}_K$ rather than being chosen object by object with no relation between the choices.

They fall silent at the level of the final statement. The normalized inequality is objectwise, and its derivation uses none of the pseudonaturality data, as Lemma~\ref{lem:ObjectwiseInsensitive} states. When the construction descends to Universe~P in Proposition~\ref{prop:SoundnessDescentNormalization}, the $2$-cells leave no residue. This silence does not mark the coherence level as idle. The descent is sound for the very reason that the cost was first shown invariant under the $2$-cells, so that the coherence data is the scaffolding by which the scalar is proven independent of its presentation. Once that independence is in hand, the scaffolding may be set aside for the arithmetical statement, in the conservative spirit of Proposition~\ref{prop:ConservativeExtension}.
\end{remark}

\begin{proposition}[Coherence stability of the cost]\label{prop:CoherenceStability}
Let $\mathfrak{A}$ be a $(2,1)$-category equipped with a cost $c \colon \mathrm{Mor}_1(\mathfrak{A}) \to \R_{\ge 0}$ satisfying the axioms of Definition~\ref{def:UniverseAFormal}\textup{(iii)}. Then the following hold:
\begin{enumerate}
\item[\textup{(i)}] \textit{Descent to the homotopy $1$-category.} The assignment $[f] \mapsto c(f)$ defines a map $\bar{c} \colon \mathrm{Mor}(\Ho(\mathfrak{A})) \to \R_{\ge 0}$ on the homotopy $1$-category of Definition~\ref{def:HoOneCategory}, satisfying $\bar{c}([\mathrm{id}_A]) = 0$ and the sub-additivity $\bar{c}([g]\circ[f]) \le \bar{c}([f]) + \bar{c}([g])$.
\item[\textup{(ii)}] \textit{Cost equality for $2$-isomorphic composites.} If $f, g \colon A \to B$ are $1$-morphisms and there exists an invertible $2$-morphism $\alpha \colon f \Rightarrow g$, then $c(f) = c(g)$.
\end{enumerate}
In consequence, no cost accrues from the $2$-morphisms that witness the commutativity of faces. Every accumulation of cost arises from the sub-additivity of $c$ applied to the $1$-morphisms that appear as factors of a composite.
\end{proposition}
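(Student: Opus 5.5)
The plan is to derive everything from the single axiom in Definition~\ref{def:UniverseAFormal}\textup{(iii)} that $c$ is invariant under invertible $2$-morphisms. This is essentially (ii), so (ii) comes first and (i) is built on it.

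For (ii), I would simply cite that axiom. If $\alpha \colon f \Rightarrow g$ is invertible, then $c(f) = c(g)$ by hypothesis. Nothing more is needed, since in a $(2,1)$-category every $2$-morphism is invertible. If one wants the statement for composites specifically, say a face $\alpha \colon v \circ u \Rightarrow v' \circ u'$ as in Definition~\ref{def:CommUpToDefect}, apply the axiom to the parallel $1$-morphisms $v \circ u$ and $v' \circ u'$. This gives $c(v\circ u) = c(v' \circ u')$.

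For (i), the main step is well-definedness of $\bar c$. A morphism of $\Ho(\mathfrak{A})$ is a $2$-isomorphism class $[f]$, and any two representatives $f, f'$ are linked by an invertible $2$-cell. By (ii) they have equal cost, so $\bar c([f]) \coloneqq c(f)$ is independent of the choice. Normalization is immediate: $[\mathrm{id}_A]$ is the identity of $\Ho(\mathfrak{A})$ by Definition~\ref{def:HoOneCategory}, and $\bar c([\mathrm{id}_A]) = c(\mathrm{id}_A) = 0$.

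For sub-additivity, recall from Definition~\ref{def:HoOneCategory} that composition in $\Ho(\mathfrak{A})$ is $[g]\circ[f] = [g\circ f]$, well defined because whiskering invertible $2$-cells gives invertible $2$-cells. Hence $\bar c([g]\circ[f]) = c(g\circ f) \le c(f) + c(g) = \bar c([f]) + \bar c([g])$.

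For the closing consequence, consider a pasting diagram. Its composite $1$-morphism is determined up to $2$-isomorphism by any decomposition into faces, so by (ii) its cost does not depend on which commuting face is used. Iterated sub-additivity then bounds that cost by the sum of the costs of the constituent $1$-morphisms along any chosen route. The $2$-cells contribute no term, because they enter only through equalities of cost.

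I expect no genuine obstacle. The one point that needs care is that composition in $\Ho(\mathfrak{A})$ is well defined, which is stated in Definition~\ref{def:HoOneCategory} and only needs to be quoted. In the bicategorical reading of Remark~\ref{rem:strictness}, the associator and unitor cells are themselves invertible $2$-morphisms. By (ii) they do not affect cost, so the argument goes through unchanged.
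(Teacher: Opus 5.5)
Your proposal is correct and follows the paper's proof. Well-definedness of $\bar c$ comes from the invariance of $c$ under invertible $2$-morphisms, normalization and sub-additivity are inherited from $c$ through $[g]\circ[f]=[g\circ f]$, and (ii) is that invariance axiom restated for one parallel pair. Your citation of Definition~\ref{def:UniverseAFormal}\textup{(iii)} as the source of the invariance matches the proposition's hypothesis, whereas the paper's proof cites Definition~\ref{def:EllPkgTwoOne}\textup{(iv)} for it.
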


\begin{proof}
For \textup{(i)}, the invariance of $c$ under $2$-isomorphism, which is part of Definition~\ref{def:EllPkgTwoOne}\textup{(iv)}, shows that $c$ is constant on $2$-isomorphism classes, so that $\bar{c}$ is well defined. The normalization and sub-additivity descend from the corresponding properties of $c$. Part \textup{(ii)} is the invariance under $2$-isomorphism restated for a single pair of parallel $1$-morphisms.
\end{proof}

\subsection{Normalization and the accumulated defect}\label{subsec:Normalization}

To compare an arbitrary presentation with the curve itself, one needs a canonical representative within each isomorphism class and a distinguished transport toward it. We package these choices into a normalization procedure.

\begin{definition}[Normalization defect]\label{def:AccumulatedDefect}
Let $K$ be a number field. A \emph{normalization procedure} for elliptic packages over $K$ comprises the following choices:
\begin{enumerate}
\item[(i)]\label{item:NF_rule} a rule $\mathrm{NF}_K$ assigning to every elliptic package $\mathcal{E}$ over $K$ a normalized elliptic package $\mathrm{NF}_K(\mathcal{E})$ defined over the same field $K$,
\item[(ii)]\label{item:NF_transport} for every $\mathcal{E}$, a distinguished morphism $\nu_{\mathcal{E}} \colon \mathrm{NF}_K(\mathcal{E}) \to \mathcal{E}$ in $\mathbf{EllPkg}_K$ whose local coordinate changes are produced by the normalization rule,
\item[(iii)]\label{item:NF_finitary} a finitary discipline, where the outputs $\mathrm{NF}_K(\mathcal{E})$ and $\nu_{\mathcal{E}}$ depend only upon the finitely presented data of $\mathcal{E}$, so that one has in mind a deterministic procedure that clears denominators, performs local minimization at the finite places, and resolves the remaining ambiguities by a fixed tie-breaker,
\item[(iv)]\label{item:NF_idempotent} an idempotence discipline, where the normalized output is a fixed point of the procedure and its distinguished transport is the strict identity, so that
\[
  \mathrm{NF}_K\bigl(\mathrm{NF}_K(\mathcal{E})\bigr) = \mathrm{NF}_K(\mathcal{E})
  \qquad\text{and}\qquad
  \nu_{\mathrm{NF}_K(\mathcal{E})} = \mathrm{id}_{\mathrm{NF}_K(\mathcal{E})},
\]
\item[(v)]\label{item:NF_curve_invariance} a curve invariance discipline, where for every $\mathcal{E} = (E,\mathcal{W},\{\sigma_v\}_{v\in S},\lambda)$ the normalized output $\mathrm{NF}_K(\mathcal{E})$ has the same underlying curve $E$ and the distinguished morphism $\nu_{\mathcal{E}}$ has underlying curve isomorphism $\varphi_{\nu_{\mathcal{E}}} = \mathrm{id}_E$.
\end{enumerate}
Given such a choice, the \emph{normalization defect} of $\mathcal{E}$ is
\[
\delta_{\mathrm{mod}}(\mathcal{E}) \coloneqq \mu(\nu_{\mathcal{E}})
\]
and one also writes $\operatorname{defect}(\mathcal{E}) \coloneqq \delta_{\mathrm{mod}}(\mathcal{E})$. In particular, the idempotence discipline~\ref{item:NF_idempotent} implies $\defect(\mathrm{NF}_K(\mathcal{E})) = 0$.
\end{definition}

\begin{corollary}[A lower bound for the normalization defect]\label{cor:DefectLowerBoundNormalization}
Let $\mathrm{NF}_K$ be a normalization procedure in the sense of Definition~\ref{def:AccumulatedDefect}. For every elliptic package $\mathcal{E}$ over $K$ one has
\[
\defect(\mathcal{E})
\ge
\max\Bigl\{0,\ h_\Delta(\mathcal{E}) - h_\Delta\bigl(\mathrm{NF}_K(\mathcal{E})\bigr)\Bigr\}.
\]
\end{corollary}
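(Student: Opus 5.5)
The plan is to apply the transport inequality of Lemma~\ref{lem:TransportDiscriminant} to the distinguished morphism of the normalization procedure, and then absorb the non-negativity of the model defect into the maximum.

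First, fix $\mathcal{E}$ and take the morphism $\nu_{\mathcal{E}} \colon \mathrm{NF}_K(\mathcal{E}) \to \mathcal{E}$ supplied by Definition~\ref{def:AccumulatedDefect}(ii). This is a morphism of $\mathbf{EllPkg}_K$. By Definition~\ref{def:EllipticPackage}(i) and Lemma~\ref{lem:LocalDiscriminantChange}, its local coordinate changes satisfy $\Delta_{\mathcal{W},v} = c_v(\nu_{\mathcal{E}})\,\Delta_{\mathcal{W}_{\mathrm{NF}},v}$ at every place $v$. Hence the hypothesis of Lemma~\ref{lem:TransportDiscriminant} holds, with source $\mathrm{NF}_K(\mathcal{E})$ and target $\mathcal{E}$.

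Second, Lemma~\ref{lem:TransportDiscriminant} (equivalently Lemma~\ref{lem:HeightIncreaseForcesDefect}) gives
\[
h_\Delta(\mathcal{E}) - h_\Delta\bigl(\mathrm{NF}_K(\mathcal{E})\bigr) \le \mu(\nu_{\mathcal{E}}) = \defect(\mathcal{E}).
\]

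Third, $\mu(\nu_{\mathcal{E}}) \ge 0$, because Definition~\ref{def:ModelDefect} writes it as a sum of $\log^{+}$ terms, each of which is non-negative. Since $\defect(\mathcal{E})$ bounds both $0$ and the height difference, it bounds their maximum, which is the claimed inequality.

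The only point needing care is the direction of the transport. The morphism $\nu_{\mathcal{E}}$ points from the normalized package to $\mathcal{E}$, so the lemma bounds $h_\Delta(\mathcal{E})$ from above, which is exactly the orientation required. No further obstacle arises.
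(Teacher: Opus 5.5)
Your proof is correct and follows the paper's proof: apply Lemma~\ref{lem:TransportDiscriminant} to $\nu_{\mathcal{E}}\colon \mathrm{NF}_K(\mathcal{E})\to\mathcal{E}$ and identify $\mu(\nu_{\mathcal{E}})$ with $\defect(\mathcal{E})$. Your explicit check that $\mu(\nu_{\mathcal{E}})\ge 0$, since it is a sum of $\log^{+}$ terms, covers the $0$ inside the maximum, which the paper's proof leaves implicit.
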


\begin{proof}
By Lemma~\ref{lem:TransportDiscriminant} applied to $\nu_{\mathcal{E}} \colon \mathrm{NF}_K(\mathcal{E}) \to \mathcal{E}$ one has $h_\Delta(\mathcal{E}) \le h_\Delta(\mathrm{NF}_K(\mathcal{E})) + \mu(\nu_{\mathcal{E}})$, and Definition~\ref{def:AccumulatedDefect} identifies $\mu(\nu_{\mathcal{E}})$ with $\defect(\mathcal{E})$.
\end{proof}

\begin{remark}[The deterministic nature of the normalization]\label{rem:DeterministicNormalizationDiscipline}
The finitary discipline of Definition~\ref{def:AccumulatedDefect}\textup{\ref{item:NF_finitary}} requires the normalization procedure to be deterministic in the sense of Universe~P. At each finite place $v$, Tate's algorithm proceeds through a fixed sequence of conditional branches, each applying a coordinate change with parameters $(u_v,r_v,s_v,t_v)$ drawn from $\mathcal{O}_{K_v}$. When the algorithm permits more than one choice of local coordinate, then a tie-breaking rule must be specified. One fixes, once and for all, the following convention. Every local parameter $\alpha \in \mathcal{O}_{K_v}$ is expressed as its unique tuple of integers $(z_1, \dots, z_d) \in \Z^d$ in the \textit{certified integral basis} $(\omega_1, \dots, \omega_d)$ of the field code (Definition~\ref{def:CertifiedFieldCode}), and among all coordinate parameters yielding a minimal model at $v$, the algorithm selects the one whose combined integer vector is lexicographically smallest. This renders the output a typographically unique string of integers determined by the input data alone. The global normalization $\mathrm{NF}_K$ applies the local minimization at each prime dividing the discriminant in increasing order and retains the archimedean model unchanged, so that the idempotence discipline of Definition~\ref{def:AccumulatedDefect}\textup{\ref{item:NF_idempotent}} follows from the lexicographic rule selecting the same representative on a fixed point.
\end{remark}

%------------------------------------------------------------------
\subsection{A dilation tower at the bad prime}\label{subsec:DilationTower}
%------------------------------------------------------------------

Before the discriminant height becomes the term of an inequality, it is instructive to see how it moves along an isomorphism of packages. The two-sided control of Corollary~\ref{cor:TwoSidedControlTransport} permits such movement, and the tower below shows that the movement is unbounded. This is the arithmetic face of the distinction between the static and the dynamic disciplines of Section~\ref{sec:identification}. It is also what determines, in Section~\ref{sec:IntensionalSzpiro}, that a scalar comparable to the discriminant height is read as a quantity transforming under transport rather than as an invariant of an isomorphism class.

\begin{definition}[Dilation tower at the bad prime]\label{def:DilationTowerAtBadPrime}
Let
\[
  E \colon y^2+y=x^3-x
\]
over $\Q$, and let $\mathcal{E}_0$ be the elliptic package whose Weierstrass model is
\[
  \mathcal{W}_0 \colon y^2+y=x^3-x.
\]
For each integer $m\ge 0$, let $\mathcal{W}_m$ be the Weierstrass model obtained from $\mathcal{W}_0$ by the change of variables
\[
  x=37^{-2m}x'
  \qquad
  y=37^{-3m}y'.
\]
Equivalently,
\[
  \mathcal{W}_m \colon y'^2+37^{3m}y'=x'^3-37^{4m}x'.
\]
Let $\mathcal{E}_m$ be the elliptic package obtained from $\mathcal{E}_0$ by replacing the model with $\mathcal{W}_m$ and by transporting the remaining local data along the same coordinate change. We write
\[
  \rho_m \colon \mathcal{E}_0 \longrightarrow \mathcal{E}_m
\]
for the induced morphism of elliptic packages.
\end{definition}

\begin{lemma}[Growth along the dilation tower]\label{lem:DilationTowerGrowth}
For every integer $m\ge 0$, the morphism
\[
  \rho_m \colon \mathcal{E}_0 \longrightarrow \mathcal{E}_m
\]
is an isomorphism in $\mathbf{EllPkg}_{\Q}$, and one has
\[
  h_\Delta(\mathcal{E}_m)=(12m+1)\log 37.
\]
Moreover,
\[
  \mu(\rho_m)=12m\log 37.
\]
\end{lemma}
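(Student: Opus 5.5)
The plan is to compute everything directly from the explicit coordinate change, using Lemma~\ref{lem:LocalDiscriminantChange} for the discriminant and Definitions~\ref{def:DiscriminantHeight} and~\ref{def:ModelDefect} place by place.

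\emph{Step 1: the base discriminant.} First compute the discriminant of $\mathcal{W}_0$. The coefficients are $a_1=a_2=0$, $a_3=1$, $a_4=-1$, $a_6=0$, which give $b_2=0$, $b_4=-2$, $b_6=1$, $b_8=-1$. Then
\[
  \Delta_0=-b_2^2b_8-8b_4^3-27b_6^2+9b_2b_4b_6=64-27=37 .
\]

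\emph{Step 2: the package morphism.} The change of variables $x=37^{-2m}x'$, $y=37^{-3m}y'$ is of the form~\eqref{eq:local-change-of-variables} with $u=37^{-m}$ and $r=s=t=0$. The same $u$ is used at every place. Substituting and multiplying through by $37^{6m}$ reproduces $\mathcal{W}_m$, which confirms the displayed equation.

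The underlying curve map is $\mathrm{id}_E$. At every place $v\ne 37,\infty$ the factor $u$ is a $v$-adic unit, so the finiteness condition of Definition~\ref{def:EllipticPackage} holds. The inverse coordinate change, with $u=37^{m}$, defines a morphism $\mathcal{E}_m\to\mathcal{E}_0$. The two composites have $u=1$ and $r=s=t=0$ at each place and transport the local data back, so they are identities. Hence $\rho_m$ is an isomorphism.

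\emph{Step 3: discriminant and height.} By Lemma~\ref{lem:LocalDiscriminantChange},
\[
  \Delta(\mathcal{W}_m)=u^{-12}\Delta_0=37^{12m+1}.
\]
This is also visible from the integral model. In Definition~\ref{def:DiscriminantHeight}, the finite places $p\ne 37$ contribute $0$, since $\mathrm{ord}_p=0$. At $p=37$ the contribution is $(12m+1)\log 37$, using $f_v=1$. At the real place $|\Delta|_\infty^{-1}=37^{-(12m+1)}<1$, so the contribution is $0$. Since $[\Q:\Q]=1$, this gives $h_\Delta(\mathcal{E}_m)=(12m+1)\log 37$.

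\emph{Step 4: the model defect.} We have $c_v(\rho_m)=u^{-12}=37^{12m}$. At $v=37$, $|c_v|_{37}^{-1}=37^{12m}$, so $\log^+$ gives $12m\log 37$. At the real place $|c_v|_\infty^{-1}=37^{-12m}$, so $\log^+$ gives $0$. At all other places the summand vanishes. Hence $\mu(\rho_m)=12m\log 37$. As a consistency check, this is equivalent to Remark~\ref{rem:ModelDefectEquivalent}, and it meets Lemma~\ref{lem:TransportDiscriminant} with equality.

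\emph{Main obstacle.} There is no conceptual difficulty. The only delicate point is bookkeeping of conventions: the direction of $u$ against $u^{-1}$ in the paper's transformation law, and the archimedean place, where $\log^+$ must be seen to vanish rather than contribute $-12m\log 37$. I would settle both by checking directly against the explicit integral model $\mathcal{W}_m$.
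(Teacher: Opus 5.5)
Your proposal is correct and follows the paper's proof. You take $u=37^{-m}$, apply Lemma~\ref{lem:LocalDiscriminantChange} to get $\Delta_{\mathcal{W}_m}=37^{12m+1}$, read off $h_\Delta$ and $\mu$ place by place, and get invertibility from the inverse coordinate change; your added checks (the $b$-invariant computation $\Delta_0=37$, the explicit inverse, and the vanishing of the archimedean $\log^{+}$ term in $\mu$, which the paper leaves implicit) are all accurate.
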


\begin{proof}
The coordinate change in Definition~\ref{def:DilationTowerAtBadPrime} has local dilation factor
\[
  u=37^{-m}
\]
at the prime $37$. Hence the discriminant transformation law of Lemma~\ref{lem:LocalDiscriminantChange} gives
\[
  \Delta_{\mathcal{W}_m}=u^{-12}\Delta_{\mathcal{W}_0}
  =
  37^{12m}\cdot 37
  =
  37^{12m+1}.
\]
There are no new finite prime divisors. Thus the finite contribution to $h_\Delta(\mathcal{E}_m)$ is
\[
  \log\max\bigl(1,\lvert 37^{12m+1}\rvert_{37}^{-1}\bigr)
  =
  (12m+1)\log 37.
\]
The archimedean contribution vanishes for this height, since $\lvert 37^{12m+1}\rvert_\infty^{-1}<1$. This proves the displayed formula for $h_\Delta(\mathcal{E}_m)$.

The morphism $\rho_m$ is invertible, since the coordinate change is invertible over $\Q$. Its model defect is computed from
\[
  c_{37}(\rho_m)=u^{-12}=37^{12m}.
\]
Therefore
\[
  \mu(\rho_m)
  =
  \log^{+}\bigl(\lvert 37^{12m}\rvert_{37}^{-1}\bigr)
  =
  12m\log 37.
\]
\end{proof}

\begin{proposition}[No isomorphism-invariant scalar comparable to the discriminant height]
\label{prop:NoInvariantLogVolume}
There is no assignment
\[
  V \colon \operatorname{Ob}(\mathbf{EllPkg}_{\Q})\longrightarrow \R_{\ge 0}
\]
satisfying both of the following conditions:
\begin{enumerate}
\item[\textup{(i)}] If two packages are isomorphic in $\mathbf{EllPkg}_{\Q}$, then their values under $V$ are equal.
\item[\textup{(ii)}] There exist constants $A,B>0$ such that for every elliptic package $\mathcal{E}$ over $\Q$ one has
\[
  A^{-1}h_\Delta(\mathcal{E})-B
  \le
  V(\mathcal{E})
  \le
  Ah_\Delta(\mathcal{E})+B.
\]
\end{enumerate}
\end{proposition}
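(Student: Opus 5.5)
The plan is to argue by contradiction using the dilation tower of Definition~\ref{def:DilationTowerAtBadPrime}. Suppose $V$ satisfies (i) and (ii) with constants $A,B>0$. By Lemma~\ref{lem:DilationTowerGrowth}, each $\rho_m \colon \mathcal{E}_0 \to \mathcal{E}_m$ is an isomorphism in $\mathbf{EllPkg}_{\Q}$. Condition (i) therefore gives $V(\mathcal{E}_m) = V(\mathcal{E}_0)$ for every $m \ge 0$, so $V$ is constant along the whole tower.

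Next we feed the explicit heights into condition (ii). The same lemma gives $h_\Delta(\mathcal{E}_m) = (12m+1)\log 37$, and the lower bound of (ii) reads
\[
  A^{-1}(12m+1)\log 37 - B \le V(\mathcal{E}_m) = V(\mathcal{E}_0).
\]
The left-hand side tends to $+\infty$ as $m \to \infty$, while the right-hand side is a fixed finite real number. Choosing
\[
  m > \frac{A\bigl(V(\mathcal{E}_0)+B\bigr)}{12\log 37}
\]
gives the contradiction. The upper bound in (ii) is not needed, and neither is the non-negativity of $V$; only the lower comparison and isomorphism invariance are used.

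The main point to check is that the $\mathcal{E}_m$ really are objects of $\mathbf{EllPkg}_{\Q}$ and that the $\rho_m$ are isomorphisms there, not merely isomorphisms of the underlying curves. This needs two things:
\begin{enumerate}
\item[\textup{(a)}] The transported local data $\{\sigma_v\}$ must correspond under $\rho_m$, and the change of variables must be admissible at every place. Away from $37$ we have $u=37^{-m}$, a $v$-adic unit, so the coefficients are integral and $u_v$ is a unit for all $v \ne 37$, as Definition~\ref{def:EllipticPackage} requires.
\item[\textup{(b)}] The inverse coordinate change must again be of the standard form, with $u = 37^{m}$ and $r=s=t=0$, so that $\rho_m^{-1}$ is a morphism of packages.
\end{enumerate}
Both points are asserted in Lemma~\ref{lem:DilationTowerGrowth} and in the construction, which transports the local data along the same coordinate change. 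I will cite them rather than reprove them.
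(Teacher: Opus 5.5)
Your proof is correct and is essentially the paper's argument: isomorphism invariance forces $V(\mathcal{E}_m)=V(\mathcal{E}_0)$ along the dilation tower of Lemma~\ref{lem:DilationTowerGrowth}, and the lower bound in (ii) with $h_\Delta(\mathcal{E}_m)=(12m+1)\log 37$ then grows without bound in $m$, while the upper bound is never used. Your explicit threshold for $m$ and your checks (a)--(b) that each $\rho_m$ is an isomorphism of packages are accurate; the paper leaves these implicit in the lemma.
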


\begin{proof}
Apply the two conditions to the packages $\mathcal{E}_m$ of Definition~\ref{def:DilationTowerAtBadPrime}. By Lemma~\ref{lem:DilationTowerGrowth}, each $\mathcal{E}_m$ is isomorphic to $\mathcal{E}_0$. Hence condition \textup{(i)} gives $V(\mathcal{E}_m)=V(\mathcal{E}_0)$ for all $m\ge 0$. The lower bound in condition \textup{(ii)} gives
\[
  V(\mathcal{E}_0)
  =
  V(\mathcal{E}_m)
  \ge
  A^{-1}(12m+1)\log 37-B
\]
for all $m\ge 0$. The right member tends to infinity with $m$, while the left member is fixed. This is impossible.
\end{proof}

\begin{remark}[The meaning of the obstruction]\label{rem:MeaningOfLogVolumeObstruction}
The obstruction marks the expected cost of keeping positive-cost isomorphisms, and one reads it as a feature of the discipline of Universe~A. The morphisms $\rho_m$ of Definition~\ref{def:DilationTowerAtBadPrime} are precisely the kind of transports that the universe permits. They are isomorphisms of packages, and yet they alter the discriminant height by a visible amount. A scalar comparable to $h_\Delta$ must see this alteration, and so it cannot at the same time be invariant under all such isomorphisms.
\end{remark}

%==================================================================
\section{The Kummerian cost protocol}\label{sec:Kummer}
%==================================================================

Definition~\ref{def:EllipticPackage} fixes the base field, and yet many arithmetic operations move between fields. Adjoining a root, passing to a splitting field, or extending scalars to trivialize a torsion structure all replace $K$ by a larger field $L$. In Universe~H such a passage is often treated as free once a base has been fixed. In Universe~A it carries a cost of its own, and the present section isolates that cost and calibrates it against the ramification of the extension.

\subsection{A variable-base category}\label{subsec:EllPkgVar}

\begin{definition}[A variable-base category of elliptic packages]\label{def:EllPkgVar}
Let $\mathbf{EllPkg}_{K}$ be as in Definition~\ref{def:EllipticPackage}. Define a category $\mathbf{EllPkg}_{\mathrm{var}}$ as follows. An object of $\mathbf{EllPkg}_{\mathrm{var}}$ is a pair $(K,\mathcal{E})$ where $K$ is a number field and $\mathcal{E}$ is an elliptic package over $K$. A morphism
\[
  f \colon (K,\mathcal{E}) \longrightarrow (L,\mathcal{E}')
\]
consists of an inclusion of number fields $\iota \colon K \hookrightarrow L$ together with a morphism of elliptic packages over $L$
\[
  f_L \colon \iota_\ast(\mathcal{E}) \longrightarrow \mathcal{E}'
\]
where $\iota_\ast(\mathcal{E})$ is the package obtained from $\mathcal{E}$ by base change along $\iota$, with the same underlying Weierstrass equation viewed over $\mathcal{O}_L$ and with the same label transported as a symbol.
\end{definition}

\begin{remark}[Why the variable base is introduced]\label{rem:WhyEllPkgVar}
The category $\mathbf{EllPkg}_{\mathrm{var}}$ is not a replacement for $\mathbf{EllPkg}_K$. Its role is to provide a place where one may speak of a morphism that witnesses a passage $K \leadsto L$. In Universe~A the morphism $f\colon (K,\mathcal{E}) \to (L,\mathcal{E}_L)$ may carry a strictly positive distortion cost $\mu(f) > 0$, reflecting the arithmetic price of the field extension. This term enters the intensional Szpiro inequality of Section~\ref{sec:IntensionalSzpiro} through the detachment defect introduced below.
\end{remark}

\subsection{Radical transports and the Kummer defect}\label{subsec:KummerDefect}

We isolate the morphisms that correspond to root extraction.

\begin{definition}[Radical transport morphism]\label{def:RadicalTransport}
Let $(K,\mathcal{E})$ be an object of $\mathbf{EllPkg}_{\mathrm{var}}$. Fix an integer $\ell\ge 2$ and an element $q\in K^\times$, and let $L = K(\sqrt[\ell]{q})$. A \emph{radical transport morphism} of degree $\ell$ is a morphism
\[
  f_{\mathrm{rad}} \colon (K,\mathcal{E}) \longrightarrow (L,\mathcal{E}_L)
\]
in $\mathbf{EllPkg}_{\mathrm{var}}$ for which the underlying package $\mathcal{E}_L$ is the base change of $\mathcal{E}$ to $L$, and for which the Weierstrass coordinate changes in the package part are trivial at every place. In particular, the model defect contribution coming from the local dilation factors vanishes.
\end{definition}

The classical part of the cost sees nothing through such a morphism, so that one needs a separate term to keep the detachment visible.

\begin{definition}[Kummer degree and Kummer defect]\label{def:KummerDefect}
Let $f \colon (K,\mathcal{E}) \to (L,\mathcal{E}')$ be a morphism in $\mathbf{EllPkg}_{\mathrm{var}}$ with underlying field inclusion $\iota \colon K \hookrightarrow L$. The \emph{Kummer degree} of $f$ is
\[
  \deg_{\mathrm{kum}}(f) \coloneqq [L:K]
\]
and, having fixed a constant $C_{\mathrm{kum}}\ge 0$ depending only upon the chosen protocol, the \emph{Kummer defect} of $f$ is
\[
  \mu_{\mathrm{kum}}(f) \coloneqq C_{\mathrm{kum}} \cdot \log\bigl(\deg_{\mathrm{kum}}(f)\bigr).
\]
\end{definition}

\begin{remark}[Total transport cost]\label{rem:TotalTransportCost}
For morphisms internal to $\mathbf{EllPkg}_{K}$, the model defect $\mu$ of Definition~\ref{def:ModelDefect} is the operative cost. For morphisms in $\mathbf{EllPkg}_{\mathrm{var}}$ it is convenient to keep two layers visible, and one writes
\[
  \mu_{\mathrm{tot}}(f) \coloneqq \mu_{\mathrm{mod}}(f) + \mu_{\mathrm{kum}}(f)
\]
where $\mu_{\mathrm{mod}}(f)$ is the model defect computed from the local dilation factors in the package component of $f$, and $\mu_{\mathrm{kum}}(f)$ is the detachment term of Definition~\ref{def:KummerDefect}. A radical transport in the sense of Definition~\ref{def:RadicalTransport} satisfies $\mu_{\mathrm{mod}}(f_{\mathrm{rad}})=0$, while $\mu_{\mathrm{kum}}(f_{\mathrm{rad}})$ may be non-zero.
\end{remark}

\begin{lemma}[Cost of Kummer detachment]\label{lem:KummerDetachmentCost}
Let $f_{\mathrm{rad}}$ be a radical transport morphism of degree $\ell$ in the sense of Definition~\ref{def:RadicalTransport}. Then
\[
  \mu_{\mathrm{kum}}(f_{\mathrm{rad}})
  \le
  C_{\mathrm{kum}} \cdot \log(\ell).
\]
\end{lemma}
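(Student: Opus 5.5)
The bound follows from the definitions once the Kummer degree of a radical transport is controlled, namely $\deg_{\mathrm{kum}}(f_{\mathrm{rad}}) = [L:K] \le \ell$. By Definition~\ref{def:KummerDefect} we have $\mu_{\mathrm{kum}}(f_{\mathrm{rad}}) = C_{\mathrm{kum}}\log [L:K]$. The function $\log$ is increasing on $(0,\infty)$, and the constant satisfies $C_{\mathrm{kum}} \ge 0$. So $[L:K]\le \ell$ gives $C_{\mathrm{kum}}\log[L:K] \le C_{\mathrm{kum}}\log \ell$, which is the claim. The whole task is therefore the field-degree bound.

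To prove $[L:K]\le\ell$, fix an $\ell$-th root $\alpha$ of $q$ in $L = K(\sqrt[\ell]{q})$. Then $L = K(\alpha)$, and $\alpha$ is a root of $T^\ell - q \in K[T]$. The minimal polynomial of $\alpha$ over $K$ divides $T^\ell - q$, so its degree is at most $\ell$. Hence
\[
  [L:K] = [K(\alpha):K] \le \ell .
\]
Equality can fail, for instance when $q$ is already an $\ell$-th power in $K$, but only the upper bound is needed. This also explains why the statement is an inequality rather than an equality.

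I expect the only point needing care to be this: the symbol $K(\sqrt[\ell]{q})$ must denote a simple extension generated by one chosen root, as in Definition~\ref{def:RadicalTransport}, and not the splitting field of $T^\ell - q$. The splitting field can have degree up to $\ell\,\varphi(\ell)$, which would break the bound. I will read the definition as fixing a single root inside a chosen algebraic closure, with $\iota\colon K\hookrightarrow L$ the resulting inclusion. The model-defect layer plays no role here, since Remark~\ref{rem:TotalTransportCost} already records $\mu_{\mathrm{mod}}(f_{\mathrm{rad}})=0$. The lemma concerns $\mu_{\mathrm{kum}}$ alone, so no local computation at the places of $L$ is required.
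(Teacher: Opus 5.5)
Your proof is correct and follows the paper's argument: the paper likewise observes $[L:K]\le\ell$ for $L=K(\sqrt[\ell]{q})$ and then applies Definition~\ref{def:KummerDefect}, using that $\log$ is increasing and $C_{\mathrm{kum}}\ge 0$. Your minimal-polynomial justification of the degree bound, and your remark that $K(\sqrt[\ell]{q})$ must be read as a simple extension rather than a splitting field, make explicit what the paper leaves implicit.
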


\begin{proof}
Let $L=K(\sqrt[\ell]{q})$, so that $[L:K]\le \ell$. By Definition~\ref{def:KummerDefect} one then has $\mu_{\mathrm{kum}}(f_{\mathrm{rad}}) = C_{\mathrm{kum}} \cdot \log([L:K]) \le C_{\mathrm{kum}} \cdot \log(\ell)$.
\end{proof}

\begin{remark}[Arithmetic calibration of the logarithmic scaling]\label{rem:KummerDefectArithmeticCalibration}
The choice of a logarithmic dependence on $\ell$ in Definition~\ref{def:KummerDefect} may appear axiomatic at first reading, and yet it admits a natural justification from the arithmetic of ramification in Kummer extensions, which it is useful to indicate. Let $L = K(\sqrt[\ell]{q})$ with $[L:K] \leq \ell$. The extension $L/K$ is abelian of exponent dividing $\ell$, and its ramification is confined to two families of places of $K$: those dividing $q$ and those dividing the rational prime $\ell$. At all other places, $L/K$ is unramified.

It is the relative different $\mathfrak{D}_{L/K}$ that registers this ramification place by place, and the relative discriminant is its norm, $\mathfrak{d}_{L/K}=N_{L/K}(\mathfrak{D}_{L/K})$, so that the exponent of $\mathfrak{D}_{L/K}$ at a place $w$ of $L$ above $v$ is the quantity that fixes the ceiling. For a place $w \mid v$ with ramification index $e_{w/v}$, the exponent $d_{w/v}$ of $\mathfrak{D}_{L/K}$ obeys the bounds of Serre~\cite{Serre1979}. When $v \nmid \ell$ the ramification is tame and the exponent is exactly $d_{w/v}=e_{w/v}-1\le \ell-1$. When $v \mid \ell$ the ramification is wild and the exponent rises, subject to
\[
  e_{w/v} \le d_{w/v} \le e_{w/v}-1+v_w(e_{w/v})
\]
where $v_w$ is the normalized valuation of $L$ at $w$. For the cyclic extension of prime degree $\ell$ the ramified places $w \mid \ell$ are totally ramified, so that $e_{w/v}=\ell$ and $v_w(\ell)=\ell\, e_{v/\ell}$, whence $d_{w/v}\le (\ell-1)+\ell\, e_{v/\ell}$, a quantity linear in $\ell$. It is this linear growth of the wild different exponent, weighted by the residual contribution $f_{w/v}\log\mathrm{N}v$ and then normalized by the degree $[L:\Q]$, that provokes a logarithmic ceiling rather than a linear one. The place-by-place reckoning that follows realizes these bounds, the tame places first and then the wild.

At a place $v$ of $K$ dividing $q$ but not $\ell$, the extension is at most tamely ramified. The local discriminant exponent at such a place is bounded by $e_{w/v} - 1 \leq \ell - 1$, where $e_{w/v}$ is the ramification index of a place $w$ of $L$ above $v$. After normalization by $[L:\Q] \geq [K:\Q]$, the total contribution from the tame places remains bounded independently of $\ell$, since the factor $(\ell - 1) / [L:K]$ does not exceed $1$ and the number and residual degrees of the places dividing $q$ depend only upon $K$.

At a place $v \mid \ell$, wild ramification may occur. The conductor-discriminant formula for abelian extensions expresses the discriminant exponent at $v$ in terms of the Artin conductor exponents of the non-trivial characters of $\mathrm{Gal}(L/K)$. For a cyclic extension of prime degree $\ell$, these $\ell - 1$ characters share a common conductor exponent $\mathfrak{f}_v$ at $v$, and the discriminant exponent is $(\ell - 1) \cdot \mathfrak{f}_v$. The quantity $\mathfrak{f}_v$ is bounded above in terms of the local ramification data of $K$ at $v$ -- that is, in terms of the ramification index $e_{v/\ell}$ and the residue degree $f_{v/\ell}$ of $v$ over the rational prime $\ell$ (see Neukirch~\cite{Neukirch1999} or Serre~\cite{Serre1979} for the relevant local and global theory). Since $\sum_{v \mid \ell} e_{v/\ell} \, f_{v/\ell} = [K:\Q]$ and each $\log N_{K/\Q}(v) = f_{v/\ell} \cdot \log \ell$, the total wild contribution to the logarithmic relative discriminant is of order $\ell \cdot \log \ell$ before normalization. After dividing by $[L:\Q]$, one obtains
\begin{equation}\label{eq:KummerRelativeDiscriminant}
  \frac{1}{[L:\Q]}\,\log N_{K/\Q}(\mathfrak{d}_{L/K})
  \leq
  C_{K,q} \cdot \log \ell
\end{equation}
for a constant $C_{K,q}$ depending only upon $K$ and upon the places of $K$ dividing $q$.

This bound grounds the logarithmic scaling of $\mu_{\mathrm{kum}}$ in Definition~\ref{def:KummerDefect}. When one descends a Szpiro-type bound from $L$ to $K$ -- as in the controlled descent of Hypothesis~\ref{hyp:ControlledDescentWithSupport} below -- then the price of the descent is governed by the relative discriminant of $L/K$ through the standard height comparison machinery. The wild contribution from the places above $\ell$ generates the $\log \ell$ term, while the tame contribution from the places dividing $q$ enters the full descent cost through a separate support term, whose necessity is shown in Section~\ref{subsec:ControlledDescentUniformity}. The scaling at issue here concerns the detachment cost $\mu_{\mathrm{kum}}$ alone, which sees only the degree $\ell$. A linear dependence $\mu_{\mathrm{kum}} \sim \ell$ would overestimate this cost by a factor of $\ell / \log \ell$, while a sub-logarithmic scaling would leave the wild ramification at the places above $\ell$ inadequately accounted for. The logarithmic regime is the point of equilibrium between the growth of the discriminant exponent and the normalization by the degree of the extension.
\end{remark}

\subsection{Descent back to the base field}\label{subsec:ControlledDescentBridge}

To compare a bound proved over a Kummer extension $L/K$ with a statement over the original field $K$, one needs a descent step. In the variable-base category we isolate this as a transport whose cost carries two terms of distinct provenance, one logarithmic in the Kummer degree and one sensitive to the tame support of the radicand. The necessity of the second term, and the explicit form of both, are taken up in Section~\ref{subsec:ControlledDescentUniformity}.

\begin{definition}[Package descent morphism]\label{def:DescentMorphism}
Let $L/K$ be a finite extension of number fields and let $\mathcal{E}_L$ be an elliptic package over $L$. The \emph{descent morphism} $f_{\mathrm{desc}} \colon \mathcal{E}_L \to \mathcal{E}_{L\mid K}$ is the morphism in $\mathbf{EllPkg}_{\mathrm{var}}$ defined by two operations. First, by scalar restriction, where the coefficients of the Weierstrass model are viewed over the maximal subfield $K$. Second, by re-minimization, where a normalization $\mathrm{NF}_K$ is applied to the restricted coefficients at each place $v$ of $K$ following Tate's algorithm. The dilation factors $u_v$ of $f_{\mathrm{desc}}$ keep the ratio between the $L$-minimal discriminant and the re-normalized $K$-minimal discriminant.
\end{definition}

\begin{definition}[Controlled descent datum]\label{def:ControlledDescentDatum}
Let $K$ be a number field, let $L/K$ be a finite extension, let $\mathcal{E}$ be an elliptic package over $K$, and write $\mathcal{E}_L$ for its base change to $L$. A \emph{controlled descent datum} for $(\mathcal{E},L/K)$ is a morphism in $\mathbf{EllPkg}_{\mathrm{var}}$
\begin{equation}\label{eq:ControlledDescentMorphism}
  \delta_{\mathcal{E},L/K} \colon (L,\mathrm{NF}_L(\mathcal{E}_L))
  \longrightarrow (K,\mathrm{NF}_K(\mathcal{E})).
\end{equation}
Its \emph{descent cost} is $\mu_{\mathrm{tot}}(\delta_{\mathcal{E},L/K})$.
\end{definition}

\begin{definition}[Tame Kummer support]\label{def:TameKummerSupport}
Let $K$ be a number field and let
\[
  L=K(\sqrt[\ell]{q})
\]
with $\ell\ge 2$ and $q\in K^{\times}$. The \emph{tame Kummer support} of $(q,\ell)$ is
\[
  \operatorname{Supp}^{\mathrm{tame}}_{K}(q,\ell)
  =
  \bigl\{
    v<\infty
    \mid
    v\nmid \ell
    \text{ and }
    v(q)\ne 0
  \bigr\}.
\]
Its logarithmic size is
\[
  R_{K}(q,\ell)
  \coloneqq
  \frac{1}{[K:\Q]}
  \sum_{v\in \operatorname{Supp}^{\mathrm{tame}}_{K}(q,\ell)}
  \log \mathrm{N}v
\]
where $\mathrm{N}v$ denotes the absolute norm of the prime ideal $v$.
\end{definition}

\begin{remark}[A harmless overcount]\label{rem:TameSupportOvercount}
The set of Definition~\ref{def:TameKummerSupport} may overcount the ramified tame places, since a valuation $v(q)$ divisible by $\ell$ can disappear after the removal of an $\ell$-th power. This overcount is useful. It is computable directly from the finite code of $q$, and it gives an upper bound for the tame ramification places of $L/K$ outside the primes above $\ell$.
\end{remark}

\begin{hypothesis}[Controlled descent with tame support]\label{hyp:ControlledDescentWithSupport}
Fix a number field $K$. There exist constants
\[
  C_{\mathrm{desc}}>0
  \qquad
  A_{\mathrm{desc}}\ge 0
  \qquad
  B_{\mathrm{desc},K}\ge 0
\]
such that the following holds. For every elliptic package $\mathcal{E}$ over $K$, and for every Kummer extension $L=K(\sqrt[\ell]{q})$, there exists a controlled descent datum $\delta_{\mathcal{E},L/K}$ in the sense of Definition~\ref{def:ControlledDescentDatum} satisfying
\begin{equation}\label{eq:ControlledDescentWithSupport}
  \mu_{\mathrm{tot}}(\delta_{\mathcal{E},L/K})
  \le
  C_{\mathrm{desc}}\log \ell
  +
  A_{\mathrm{desc}}\,R_{K}(q,\ell)
  +
  B_{\mathrm{desc},K}.
\end{equation}
\end{hypothesis}

\begin{remark}[The descent cost belongs to finite-Galois arithmetic]\label{rem:DescentCostClassicalNature}
Hypothesis~\ref{hyp:ControlledDescentWithSupport} occupies a distinguished position among the assumptions of this paper. The controlled descent datum $\delta_{\mathcal{E},L/K}$ of Definition~\ref{def:ControlledDescentDatum} involves three operations from the standard repertoire of algebraic number theory: scalar restriction of Weierstrass coefficients, re-minimization by way of Tate's algorithm at each place of $K$, and the computation of the dilation factors $u_v$ arising from the re-minimization. Each of these belongs to finite-Galois arithmetic. The two coefficients of the bound draw upon distinct sources of ramification. The term $C_{\mathrm{desc}}\log \ell$ collects the wild contribution at the places above $\ell$, read through the local theory of different exponents (see Serre~\cite{Serre1979} and Neukirch~\cite{Neukirch1999}), together with the Kummer detachment cost $C_{\mathrm{kum}}$ of Lemma~\ref{lem:KummerDetachmentCost}. The term $A_{\mathrm{desc}}R_{K}(q,\ell)$ collects the tame contribution at the places in the support of the radicand, of Definition~\ref{def:TameKummerSupport}. In this sense the descent cost is not an exotic quantity, and the bound is a calibrated restatement of a familiar ramification estimate within the language of transport costs. That both of its terms are forced, and that both may be made explicit away from the primes above $2$ and $3$, is shown in Section~\ref{subsec:ControlledDescentUniformity}.
\end{remark}

\begin{lemma}[Bridge inequality for the discriminant height]\label{lem:BridgeDescentHeight}
Let $K$ be a number field and assume Hypothesis~\ref{hyp:ControlledDescentWithSupport}. Let $\mathcal{E}$ be an elliptic package over $K$ and let $L=K(\sqrt[\ell]{q})$ be a Kummer extension, with $\mathcal{E}_L$ the base change of $\mathcal{E}$ to $L$. Then
\[
  h_\Delta\bigl(\mathrm{NF}_K(\mathcal{E})\bigr)
  \le
  h_\Delta\bigl(\mathrm{NF}_L(\mathcal{E}_L)\bigr)
  + C_{\mathrm{desc}}\log \ell
  + A_{\mathrm{desc}}\,R_{K}(q,\ell)
  + B_{\mathrm{desc},K}.
\]
\end{lemma}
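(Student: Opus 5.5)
The plan is to treat the bridge inequality as a transport inequality along the controlled descent morphism of Hypothesis~\ref{hyp:ControlledDescentWithSupport}, in exact parallel with Lemma~\ref{lem:TransportDiscriminant} and Corollary~\ref{cor:DefectLowerBoundNormalization}. Fix $\mathcal{E}$ and $L=K(\sqrt[\ell]{q})$. The hypothesis supplies a controlled descent datum
\[
  \delta_{\mathcal{E},L/K}\colon (L,\mathrm{NF}_L(\mathcal{E}_L))\longrightarrow (K,\mathrm{NF}_K(\mathcal{E}))
\]
whose total cost is bounded by $C_{\mathrm{desc}}\log\ell + A_{\mathrm{desc}}R_K(q,\ell)+B_{\mathrm{desc},K}$. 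It therefore suffices to prove the single transport inequality
\[
  h_\Delta\bigl(\mathrm{NF}_K(\mathcal{E})\bigr)\le h_\Delta\bigl(\mathrm{NF}_L(\mathcal{E}_L)\bigr)+\mu_{\mathrm{tot}}(\delta_{\mathcal{E},L/K}),
\]
and then to chain it with~\eqref{eq:ControlledDescentWithSupport}.

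For the transport inequality, I will split $\mu_{\mathrm{tot}}=\mu_{\mathrm{mod}}+\mu_{\mathrm{kum}}$ as in Remark~\ref{rem:TotalTransportCost}. Since $\mu_{\mathrm{kum}}\ge 0$, it is enough to prove the inequality with $\mu_{\mathrm{mod}}$ alone. By Definition~\ref{def:EllPkgVar} the descent morphism consists of a field inclusion together with a morphism of packages over the larger field. The first step is a base-change invariance of the discriminant height. Because $h_\Delta$ is normalized by $1/[K:\Q]$ with local degrees absorbed into the absolute values, one has $\sum_{w\mid v}\log\max(1,|\Delta|_w^{-1})=[L:K]\log\max(1,|\Delta|_v^{-1})$ for $\Delta\in K^\times$. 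Hence $h_\Delta$ of a package equals $h_\Delta$ of its base change, and likewise $\mu$ of a coordinate change defined over $K$ is unchanged by base change. This reduces the comparison to a single field, where the place-by-place argument of Lemma~\ref{lem:TransportDiscriminant} applies verbatim. Writing $|\Delta'|_w^{-1}=|c_w|_w^{-1}|\Delta|_w^{-1}$ and using $\max\{0,x+y\}\le\max\{0,x\}+\max\{0,y\}$, summing and normalizing gives the bound with $\mu_{\mathrm{mod}}(\delta_{\mathcal{E},L/K})$. The discriminant relation $\Delta'=c_w\Delta$ is supplied by Lemma~\ref{lem:LocalDiscriminantChange}, since the dilation factors of Definition~\ref{def:DescentMorphism} are exactly the local $u_v$ of the re-minimization.

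The main obstacle is the direction of $\delta_{\mathcal{E},L/K}$. It goes from an $L$-object to a $K$-object, whereas Definition~\ref{def:EllPkgVar} only provides morphisms along inclusions $K\hookrightarrow L$. So the target $\mathrm{NF}_K(\mathcal{E})$ must be read through its base change to $L$. I would interpret the package component of the datum as an isomorphism over $L$ from $\mathrm{NF}_L(\mathcal{E}_L)$ to $\iota_\ast\mathrm{NF}_K(\mathcal{E})$, which is the reading forced by Definition~\ref{def:DescentMorphism}. With that reading, the transport inequality over $L$ combines with the base-change invariance of the first step to give the height of $\mathrm{NF}_K(\mathcal{E})$ over $K$. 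I would check explicitly that the $1/[K:\Q]$ versus $1/[L:\Q]$ normalizations agree, since this is where a factor of $[L:K]$ could slip in. If that check fails, the fallback is to absorb the discrepancy into $B_{\mathrm{desc},K}$, or into the $\mu_{\mathrm{kum}}$ term, which is exactly logarithmic in $[L:K]$.
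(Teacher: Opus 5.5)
Your proposal is correct and follows the paper's proof: take the descent datum supplied by Hypothesis~\ref{hyp:ControlledDescentWithSupport}, apply the transport inequality of Lemma~\ref{lem:TransportDiscriminant} to its package component to get the bound with $\mu_{\mathrm{mod}}$, then use $\mu_{\mathrm{mod}}\le\mu_{\mathrm{tot}}$ and substitute the hypothesized cost bound. Your extra step reads the package component over $L$ and uses $\sum_{w\mid v}e_{w/v}f_{w/v}=[L:K]$ to identify $h_\Delta(\iota_\ast\mathrm{NF}_K(\mathcal{E}))$ with $h_\Delta(\mathrm{NF}_K(\mathcal{E}))$; this spells out what the paper leaves implicit, and because that normalization check succeeds, your fallback is never needed (it could not have worked anyway, since a multiplicative factor $[L:K]$ on a height cannot be absorbed into an additive constant or a $\log\ell$ term).
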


\begin{proof}
By Hypothesis~\ref{hyp:ControlledDescentWithSupport} choose a controlled descent datum $\delta_{\mathcal{E},L/K} \colon (L,\mathrm{NF}_L(\mathcal{E}_L)) \to (K,\mathrm{NF}_K(\mathcal{E}))$ with $\mu_{\mathrm{tot}}(\delta_{\mathcal{E},L/K}) \le C_{\mathrm{desc}}\log \ell + A_{\mathrm{desc}}R_{K}(q,\ell) + B_{\mathrm{desc},K}$. The transport inequality of Lemma~\ref{lem:TransportDiscriminant}, applied to the package component of $\delta_{\mathcal{E},L/K}$, gives
\[
  h_\Delta\bigl(\mathrm{NF}_K(\mathcal{E})\bigr)
  \le
  h_\Delta\bigl(\mathrm{NF}_L(\mathcal{E}_L)\bigr)
  + \mu_{\mathrm{mod}}(\delta_{\mathcal{E},L/K})
  \le
  h_\Delta\bigl(\mathrm{NF}_L(\mathcal{E}_L)\bigr)
  + \mu_{\mathrm{tot}}(\delta_{\mathcal{E},L/K})
\]
since $\mu_{\mathrm{mod}} \le \mu_{\mathrm{tot}}$ by Remark~\ref{rem:TotalTransportCost}. Substituting the bound on the descent cost yields the claim.
\end{proof}

\subsection{The necessity of the tame support term and explicit local bounds}\label{subsec:ControlledDescentUniformity}

The descent estimate of Hypothesis~\ref{hyp:ControlledDescentWithSupport} carries two terms, and it is worth seeing why each is present. This subsection establishes that the tame support term is forced -- a bound logarithmic in the Kummer degree alone cannot hold uniformly as the radicand varies -- and it then proves the estimate explicitly away from the residue characteristics $2$ and $3$, where the bound holds with $C_{\mathrm{desc}}=12+C_{\mathrm{kum}}$ and $A_{\mathrm{desc}}=12$. The part that follows from local minimization is thereby separated from the part that must see the tame support of $q$.

There is also a small directional matter. The arrows of Definition~\ref{def:EllPkgVar} follow inclusions of fields from the smaller field to the larger one, while the descent arrow of Definition~\ref{def:ControlledDescentDatum} points in the reverse direction. For the estimates below, this reverse arrow is read as a finite descent code in a descent completion of $\mathbf{EllPkg}_{\mathrm{var}}$. This completion has the same objects as $\mathbf{EllPkg}_{\mathrm{var}}$. In addition to the base-change arrows, it carries formal reverse descent arrows whose data are finite coordinate changes produced by scalar restriction and re-minimization.

\begin{definition}[Height-admissible descent datum]\label{def:HeightAdmissibleDescentDatum}
Let $L/K$ be a finite extension of number fields, and let $\mathcal{E}$ be an elliptic package over $K$. A \emph{height-admissible descent datum} for $(\mathcal{E},L/K)$ is a controlled descent datum
\[
  \delta_{\mathcal{E},L/K}
  \colon
  (L,\mathrm{NF}_{L}(\mathcal{E}_{L}))
  \longrightarrow
  (K,\mathrm{NF}_{K}(\mathcal{E}))
\]
in the sense of Definition~\ref{def:ControlledDescentDatum}, read as an arrow of the descent completion described above, such that the transport inequality
\[
  h_{\Delta}\bigl(\mathrm{NF}_{K}(\mathcal{E})\bigr)
  \le
  h_{\Delta}\bigl(\mathrm{NF}_{L}(\mathcal{E}_{L})\bigr)
  +
  \mu_{\mathrm{tot}}(\delta_{\mathcal{E},L/K})
\]
holds.
\end{definition}

\begin{remark}[Finite nature of descent codes]\label{rem:FiniteDescentCodes}
A height-admissible descent datum is a finite object. It consists of a certified extension code for $L/K$, local coordinate changes at the finitely many places where the base-changed normalized model ceases to be minimal, and the finite list of local dilation exponents that enter the model defect. Thus the descent completion does not enlarge the foundational strength of the framework. It gives a typed place for the reverse arrows that are computed by the finite procedures of Definition~\ref{def:EllPkgCode} and Remark~\ref{rem:PkgCodeComputability}.
\end{remark}

\subsubsection{The tame support term is forced}\label{subsubsec:QuadraticTwistObstruction}

The first result shows that the coefficient $A_{\mathrm{desc}}$ of Hypothesis~\ref{hyp:ControlledDescentWithSupport} cannot be taken to be zero. A quadratic twist family lets a single tame prime grow without bound while the Kummer degree stays fixed at two.

\begin{definition}[Quadratic twist descent family]\label{def:QuadraticTwistDescentFamily}
For each rational prime $p\ge 5$, let
\[
  E_{p}\colon y^{2}=x^{3}-p^{2}x
\]
over $\Q$. Let $\mathcal{E}_{p}$ be the elliptic package over $\Q$ whose Weierstrass model is this equation, with the remaining package data chosen by the fixed deterministic conventions of Definition~\ref{def:AccumulatedDefect}. Let
\[
  L_{p}=\Q(\sqrt p)
\]
and write $t_{p}=\sqrt p$. Over $L_{p}$, the change of variables
\[
  x=t_{p}^{2}X
  \qquad
  y=t_{p}^{3}Y
\]
identifies $E_{p}$ with the curve
\[
  E_{0}\colon Y^{2}=X^{3}-X.
\]
\end{definition}

\begin{lemma}[Height gap in the twist family]\label{lem:QuadraticTwistHeightGap}
For the family of Definition~\ref{def:QuadraticTwistDescentFamily}, one has
\[
  h_{\Delta}\bigl(\mathrm{NF}_{\Q}(\mathcal{E}_{p})\bigr)
  -
  h_{\Delta}\bigl(\mathrm{NF}_{L_{p}}((\mathcal{E}_{p})_{L_{p}})\bigr)
  \ge
  6\log p-6\log 2.
\]
\end{lemma}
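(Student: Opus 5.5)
The plan is to bound the two terms separately: a lower bound $h_\Delta(\mathrm{NF}_{\Q}(\mathcal{E}_p)) \ge 6\log p$ coming from the single tame prime $p$, and an upper bound $h_\Delta(\mathrm{NF}_{L_p}((\mathcal{E}_p)_{L_p})) \le 6\log 2$ coming from the untwisted curve $E_0$. Subtracting gives the stated inequality. The slack $-6\log 2$ in the statement lets me ignore the prime $2$ on the $\Q$ side, so I never need to determine the $2$-adic minimal model of $E_p$ over $\Q$.

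\emph{Lower bound over $\Q$.} For $y^2 = x^3 + ax + b$ one has $\Delta = -16(4a^3 + 27b^2)$. With $a = -p^2$ and $b = 0$ this gives $\Delta(E_p) = 2^6 p^6$.

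Since $p \ge 5$ and $\mathrm{ord}_p(\Delta) = 6 < 12$, the model is already minimal at $p$. Any Weierstrass model of $E_p$ that is integral at $p$ therefore satisfies $\mathrm{ord}_p(\Delta) \ge 6$. Here I read $\mathrm{NF}_{\Q}$, as in Remark~\ref{rem:DeterministicNormalizationDiscipline}, as producing a model that is integral and locally minimal at every finite prime.

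Local minimization at a prime is a change of variables with $u$ a unit at that prime, up to the minimizing power. Minimizing at other primes, such as $2$, changes $\mathrm{ord}_p(\Delta)$ only by multiples of $12$ and keeps the model minimal at $p$. So the normalized model still has $\mathrm{ord}_p(\Delta) = 6$ exactly.

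The formula following Definition~\ref{def:DiscriminantHeight} gives a finite contribution at $p$ of $6\log p$. Every other summand of $h_\Delta$ is non-negative, so $h_\Delta(\mathrm{NF}_{\Q}(\mathcal{E}_p)) \ge 6\log p$.

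\emph{Upper bound over $L_p$.} The change of variables $x = t_p^2 X$, $y = t_p^3 Y$ of Definition~\ref{def:QuadraticTwistDescentFamily} has dilation factor $u = t_p$. Lemma~\ref{lem:LocalDiscriminantChange} gives $\Delta(E_0) = t_p^{-12}\Delta(E_p) = 2^6$, and this value is confirmed directly from $a = -1$, $b = 0$.

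$E_0$ has an integral model over $\mathcal{O}_{L_p}$ with discriminant $64$. Local minimization can therefore only lower the finite valuations of the discriminant, and it creates nothing at primes not above $2$. Its finite contribution to the normalized height is at most the absolute contribution of $64$ at the places above $2$. Because the height is degree-normalized, that contribution is $\tfrac{1}{[L_p:\Q]}\log \mathrm{N}_{L_p/\Q}(2^6) = 6\log 2$.

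For the archimedean places, the real embeddings of $64$ are all $> 1$, so $\log\max(1, |64|_v^{-1}) = 0$.

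\emph{Main obstacle.} The hard step is controlling the archimedean term after $\mathrm{NF}_{L_p}$ has acted. Minimizing at a prime above $2$ in the real quadratic field $L_p$ rescales by some $u \in L_p^\times$. The resulting discriminant $64u^{-12}$ is an algebraic integer whose norm can be small, while one conjugate can be less than $1$ in absolute value. That would give a positive archimedean summand.

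My first attempt is to use the convention of Remark~\ref{rem:DeterministicNormalizationDiscipline} that the archimedean model is retained unchanged. Under that convention, minimization alters only finite local data, and the archimedean summand is computed from the retained model, for which it vanishes.

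If that reading is not available, the fallback is to bound the sum of all contributions at once. The product formula gives $\sum_v \log|\Delta|_v = 0$. Hence $h_\Delta = \tfrac{1}{[L_p:\Q]}\sum_v \log^+|\Delta|_v^{-1}$ equals, for an algebraic integer $\Delta$, the Weil height of $\Delta^{-1}$, which is the same as the height of $\Delta$. That quantity is at most $6\log 2 + 12\,h(u)$, so the remaining task would be to choose the minimizing $u$ with controlled height.

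I would also check that $\mathrm{NF}$ cannot output a non-minimal model at the primes above $2$ that raises the finite part. This follows from the minimization being monotone in the discriminant valuation.
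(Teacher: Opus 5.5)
Your proposal is correct and follows the paper's proof. Over $\Q$ you obtain the lower bound $6\log p$ from minimality of $y^2=x^3-p^2x$ at $p$, certified by $\mathrm{ord}_p(\Delta)=6<12$ where the paper uses $v_p(-p^2)=2<4$, and over $L_p$ you obtain the upper bound $6\log 2$ from the integral model $Y^2=X^3-X$ of discriminant $64$; your treatment of the archimedean term through the retained-archimedean-model convention of Remark~\ref{rem:DeterministicNormalizationDiscipline} is sound (the retained model has discriminant $64p^6$, whose real conjugates exceed $1$), it makes explicit what the paper compresses into ``normalization can only decrease the discriminant height,'' and so the unfinished product-formula fallback is not needed.
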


\begin{proof}
The discriminant of the equation $y^{2}=x^{3}-p^{2}x$ is $\Delta(E_{p})=64p^{6}$. At the prime $p\ge 5$, the displayed equation is minimal. Indeed, for a short Weierstrass equation in residue characteristic at least $5$, the condition $v_{p}(-p^{2})=2<4$ prevents a scaling by $p^{-1}$ from preserving integrality. Thus the $p$-part of $h_{\Delta}(\mathrm{NF}_{\Q}(\mathcal{E}_{p}))$ is exactly $6\log p$, and all other finite contributions are non-negative. Hence
\[
  h_{\Delta}\bigl(\mathrm{NF}_{\Q}(\mathcal{E}_{p})\bigr)\ge 6\log p.
\]

Over $L_{p}$, Definition~\ref{def:QuadraticTwistDescentFamily} identifies $(E_{p})_{L_{p}}$ with $E_{0}$. The model $Y^{2}=X^{3}-X$ has discriminant $64$. Therefore its normalized finite discriminant contribution over $L_{p}$ is at most $6\log 2$. Since normalization can only decrease the discriminant height, one has
\[
  h_{\Delta}\bigl(\mathrm{NF}_{L_{p}}((\mathcal{E}_{p})_{L_{p}})\bigr)
  \le
  6\log 2.
\]
The two estimates give the claim.
\end{proof}

\begin{theorem}[The support term cannot be dropped]\label{thm:NoUniformControlledDescent}
There are no constants $C>0$ and $B\ge 0$ with the following property. For every prime $p\ge 5$, there exists a height-admissible descent datum
\[
  \delta_{p}
  \colon
  (L_{p},\mathrm{NF}_{L_{p}}((\mathcal{E}_{p})_{L_{p}}))
  \longrightarrow
  (\Q,\mathrm{NF}_{\Q}(\mathcal{E}_{p}))
\]
satisfying $\mu_{\mathrm{tot}}(\delta_{p})\le C\log 2+B$. Consequently a descent bound of the form $C_{\mathrm{desc}}\log \ell+B_{\mathrm{desc},K}$, carrying no tame support term, fails to hold uniformly over $K=\Q$ as the radicand $q$ varies, and the coefficient $A_{\mathrm{desc}}$ of Hypothesis~\ref{hyp:ControlledDescentWithSupport} is strictly positive.
\end{theorem}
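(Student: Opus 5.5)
The argument is by contradiction, played off against the height gap of Lemma~\ref{lem:QuadraticTwistHeightGap}. Suppose constants $C>0$ and $B\ge 0$ existed as in the statement. For each prime $p\ge 5$ take the asserted height-admissible descent datum $\delta_p$. By Definition~\ref{def:HeightAdmissibleDescentDatum}, the transport inequality holds along $\delta_p$:
\[
  h_{\Delta}\bigl(\mathrm{NF}_{\Q}(\mathcal{E}_{p})\bigr)
  \le
  h_{\Delta}\bigl(\mathrm{NF}_{L_{p}}((\mathcal{E}_{p})_{L_{p}})\bigr)
  + \mu_{\mathrm{tot}}(\delta_{p}).
\]
Combining this with the bound $\mu_{\mathrm{tot}}(\delta_p)\le C\log 2+B$ gives an upper bound $C\log 2+B$ on the height difference. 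That bound is independent of $p$.

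Lemma~\ref{lem:QuadraticTwistHeightGap} bounds the same difference from below by $6\log p-6\log 2$. Together these give $6\log p\le (C+6)\log 2+B$ for every prime $p\ge 5$. This fails once $p>2^{(C+6)/6}e^{B/6}$, and infinitely many such primes exist. This contradiction proves the first assertion.

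For the consequence, I specialize a putative support-free bound $C_{\mathrm{desc}}\log\ell+B_{\mathrm{desc},\Q}$ to the data $K=\Q$, $q=p$, $\ell=2$, so $L=L_p=\Q(\sqrt p)$.

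The one point needing care is that the descent datum supplied by Hypothesis~\ref{hyp:ControlledDescentWithSupport} must actually be height-admissible. This follows from Lemma~\ref{lem:BridgeDescentHeight}, whose proof applies Lemma~\ref{lem:TransportDiscriminant} to the package component and uses $\mu_{\mathrm{mod}}\le\mu_{\mathrm{tot}}$. That is exactly the transport inequality required. Setting $C=C_{\mathrm{desc}}$ and $B=B_{\mathrm{desc},\Q}$ then produces the forbidden constants, so no support-free bound can hold.

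Now suppose Hypothesis~\ref{hyp:ControlledDescentWithSupport} holds with $A_{\mathrm{desc}}=0$. Its bound is then support-free, which we have just excluded. Hence $A_{\mathrm{desc}}>0$. As a consistency check, $R_{\Q}(p,2)=\log p$, so the support term has the right order to absorb the $6\log p$ gap.

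I expect the main obstacle to be this height-admissibility point: the reversed descent arrow only makes sense in the descent completion, so the transport inequality cannot be taken for granted there. I would settle it by appealing directly to Lemma~\ref{lem:BridgeDescentHeight}, or to Definition~\ref{def:HeightAdmissibleDescentDatum} where the inequality is built in. The rest is arithmetic.
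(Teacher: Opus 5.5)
Your proposal is correct and follows the paper's proof: the transport inequality built into height-admissibility, set against the $6\log p-6\log 2$ gap of Lemma~\ref{lem:QuadraticTwistHeightGap}, forces $6\log p$ to stay bounded over all primes $p\ge 5$, which is impossible. Your treatment of the consequence is more careful than the paper's, which only points to $R_{\Q}(q,\ell)$. Your appeal to the proof of Lemma~\ref{lem:BridgeDescentHeight} supplies the transport inequality for the datum given by Hypothesis~\ref{hyp:ControlledDescentWithSupport}. The fallback to Definition~\ref{def:HeightAdmissibleDescentDatum} would be circular, so rely on the lemma.
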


\begin{proof}
Assume such constants $C$ and $B$ have been fixed. Since $\delta_{p}$ is height-admissible, Definition~\ref{def:HeightAdmissibleDescentDatum} gives
\[
  h_{\Delta}\bigl(\mathrm{NF}_{\Q}(\mathcal{E}_{p})\bigr)
  -
  h_{\Delta}\bigl(\mathrm{NF}_{L_{p}}((\mathcal{E}_{p})_{L_{p}})\bigr)
  \le
  \mu_{\mathrm{tot}}(\delta_{p}).
\]
By Lemma~\ref{lem:QuadraticTwistHeightGap}, this implies $6\log p-6\log 2\le C\log 2+B$ for every prime $p\ge 5$. The primes are unbounded, and the left member tends to infinity with $p$. This is impossible. The tame prime $p$ enters the family of Definition~\ref{def:QuadraticTwistDescentFamily} through the radicand, and it is the term $R_{\Q}(q,\ell)$ of Definition~\ref{def:TameKummerSupport} that carries it, so that $A_{\mathrm{desc}}$ is positive.
\end{proof}

\begin{remark}[The source of the obstruction]\label{rem:SourceOfUniformDescentObstruction}
The obstruction does not lie in wild ramification at the prime $\ell$, since in the family above one has $\ell=2$ throughout. The growing term is the tame prime $p$ that enters through $q=p$, and the quadratic twist makes this prime visible in the descent from $L_{p}$ back to $\Q$. Thus a bound logarithmic in $\ell$ alone cannot measure the descent cost, and the tame support of Definition~\ref{def:TameKummerSupport} is the quantity that does.
\end{remark}

\subsubsection{Local scaling under extension}\label{subsubsec:LocalScalingUnderExtension}

The model part of the descent cost is governed by the scaling needed to pass from the base-changed $K$-minimal equation to an $L$-minimal equation. The next local bound is elementary away from residue characteristics $2$ and $3$.

\begin{definition}[Local scaling exponent]\label{def:LocalScalingExponent}
Let $F$ be a non-archimedean local field, let $F'/F$ be a finite extension with ramification index $e$, and let $E/F$ be given by a minimal integral Weierstrass equation $\mathcal{W}$. After base change to $F'$, let $\mathcal{W}'_{\min}$ be a minimal Weierstrass equation for $E_{F'}$. The \emph{local scaling exponent} $r(E,F'/F)$ is the non-negative integer determined by
\[
  v_{F'}(\Delta_{\mathcal{W}'_{\min}})
  =
  e\,v_{F}(\Delta_{\mathcal{W}})
  -
  12\,r(E,F'/F).
\]
Equivalently, $r(E,F'/F)$ is the valuation of the dilation factor that is removed from the base-changed equation during minimization over $F'$.
\end{definition}

\begin{lemma}[Prime-to-six local scaling bound]\label{lem:PrimeToSixLocalScalingBound}
Let $F$ be a finite extension of $\Q_{p}$ with $p\ge 5$. Let $F'/F$ be a finite extension with ramification index $e$. For every elliptic curve $E/F$ given by a minimal integral Weierstrass equation, the local scaling exponent of Definition~\ref{def:LocalScalingExponent} satisfies
\[
  0\le r(E,F'/F)\le e-1.
\]
\end{lemma}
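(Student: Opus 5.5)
The plan is to reduce everything to short Weierstrass equations, which residue characteristic $p\ge 5$ allows, and then read off $r(E,F'/F)$ from the valuations of the two coefficients. Since $2$ and $3$ are units in $\mathcal{O}_F$ and $\mathcal{O}_{F'}$, completing the square and the cube is a coordinate change with $u=1$ and $r,s,t$ integral. So we may assume the given minimal equation is $\mathcal{W}\colon y^2=x^3+Ax+B$ with $A,B\in\mathcal{O}_F$, without changing $v_F(\Delta_{\mathcal{W}})$. Over $F'$ the same normalization applies. Hence every integral model of $E_{F'}$ can be brought to short form, and then only the scalings $(A,B)\mapsto(u^{-4}A,u^{-6}B)$ remain. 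By Lemma~\ref{lem:LocalDiscriminantChange} such a scaling changes the discriminant valuation by $-12\,v_{F'}(u)$.

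The first step is the standard minimality criterion in residue characteristic at least $5$. A short integral equation is minimal exactly when it is \emph{not} the case that $v(A)\ge 4$ and $v(B)\ge 6$. The minimal equation is obtained from an integral short equation by the largest scaling that keeps it integral. Applied over $F'$, where $v_{F'}(A)=e\,v_F(A)$ and $v_{F'}(B)=e\,v_F(B)$, this gives the explicit formula
\[
  r(E,F'/F)=\min\Bigl\{\Bigl\lfloor \tfrac{e\,v_F(A)}{4}\Bigr\rfloor,\ \Bigl\lfloor \tfrac{e\,v_F(B)}{6}\Bigr\rfloor\Bigr\},
\]
with the convention $v(0)=+\infty$; $A$ and $B$ cannot both vanish since $\Delta\neq 0$. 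Non-negativity, $r\ge 0$, is immediate: the base-changed equation is already integral over $\mathcal{O}_{F'}$, so minimization can only lower the discriminant valuation.

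For the upper bound we use minimality over $F$: either $v_F(A)\le 3$ or $v_F(B)\le 5$. In the first case $r\le\lfloor 3e/4\rfloor$, and in the second $r\le\lfloor 5e/6\rfloor$. In both cases the bound is the floor of a real number strictly less than $e$, so it is at most $e-1$. For $e=1$ this gives $r=0$, as expected for an unramified extension.

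The main obstacle is the reduction to the explicit formula. One must check that, over $F'$, no clever choice of non-zero $r,s,t$ could produce a smaller discriminant than pure scaling of the short form. This is where $p\ge 5$ is used. Any integral model is carried by an integral coordinate change with $u=1$ to an integral short model with the same discriminant. Two short models of the same curve differ only by $u$, because the translation parts are forced to vanish once the $x^2$, $xy$ and $y$ terms are absent. I would cite Silverman~\cite{SilvermanAEC} (Chapter~VII, the minimality criterion for $p\ge5$) for this step rather than reprove it.
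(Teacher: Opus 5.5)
Your argument is correct and is essentially the paper's proof. Both combine the $p\ge 5$ minimality criterion over $F$ with the integrality of the rescaled invariants over $F'$ to get $r<e$. Your condition ``$v_F(A)\le 3$ or $v_F(B)\le 5$'' is the paper's $\min\{v_F(c_4)/4,\,v_F(c_6)/6,\,v_F(\Delta)/12\}<1$, because $A$ and $B$ are unit multiples of $c_4$ and $c_6$ when $p\ge 5$.

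The paper works with $c_4,c_6,\Delta$ directly. These change only through the dilation factor $u$ and not through the translation parameters, so the paper never needs the reduction to short form that you flag as the main obstacle. Your route instead gives the exact value $r=\min\{\lfloor e\,v_F(A)/4\rfloor,\lfloor e\,v_F(B)/6\rfloor\}$, and an explicit proof of $r\ge 0$, which the paper leaves implicit.
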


\begin{proof}
Let $c_{4}$, $c_{6}$, and $\Delta$ be the invariants of a minimal integral Weierstrass equation for $E/F$. In residue characteristic at least $5$, the minimality criterion for Weierstrass equations gives
\[
  \min
  \left\{
    \frac{v_{F}(c_{4})}{4},
    \frac{v_{F}(c_{6})}{6},
    \frac{v_{F}(\Delta)}{12}
  \right\}
  <1
\]
with the zero invariants omitted from the minimum. The discriminant is non-zero, so the displayed minimum is finite.

Suppose that a scaling by a uniformizer of $F'$ to exponent $r$ appears in the minimization over $F'$. Since the scaled invariants must remain integral, one has $4r\le e\,v_{F}(c_{4})$ whenever $c_{4}\ne 0$, one has $6r\le e\,v_{F}(c_{6})$ whenever $c_{6}\ne 0$, and one has $12r\le e\,v_{F}(\Delta)$. Thus
\[
  r
  \le
  e
  \min
  \left\{
    \frac{v_{F}(c_{4})}{4},
    \frac{v_{F}(c_{6})}{6},
    \frac{v_{F}(\Delta)}{12}
  \right\}
  <e.
\]
Since $r$ is an integer, it follows that $r\le e-1$.
\end{proof}

The primes above $2$ and $3$ require the full local minimization procedure. Since there are only finitely many such places for a fixed number field $K$, the remaining input can be isolated as a finite local certificate.

\begin{hypothesis}[Small-prime local scaling certificate]\label{hyp:SmallPrimeLocalScalingCertificate}
Let $K$ be a fixed number field. For every finite place $v$ of $K$ above $2$ or $3$, there exists an integer $\eta_{v}\ge 0$ with the following property. For every finite extension $L_{w}/K_{v}$ with ramification index $e_{w/v}$, and for every elliptic curve over $K_{v}$ given by a minimal integral Weierstrass equation, the local scaling exponent satisfies
\[
  r(E,L_{w}/K_{v})
  \le
  e_{w/v}-1+\eta_{v}.
\]
\end{hypothesis}

\begin{remark}[Status of the small-prime certificate]\label{rem:SmallPrimeCertificateStatus}
Hypothesis~\ref{hyp:SmallPrimeLocalScalingCertificate} is local and finite once $K$ is fixed. In a Universe-P presentation, it is supplied as a finite list of certificates for the places over $2$ and $3$. The prime-to-six places are covered by Lemma~\ref{lem:PrimeToSixLocalScalingBound}.
\end{remark}

\subsubsection{A support-controlled descent bound}\label{subsubsec:SupportControlledDescentTheorem}

\begin{proposition}[Support-controlled descent bound]\label{prop:SupportControlledDescentBound}
Fix a number field $K$, and assume Hypothesis~\ref{hyp:SmallPrimeLocalScalingCertificate}. Let $L=K(\sqrt[\ell]{q})$ with $\ell\ge 2$ and $q\in K^{\times}$. Then for every elliptic package $\mathcal{E}$ over $K$, the descent completion contains a height-admissible descent datum
\[
  \delta_{\mathcal{E},L/K}
  \colon
  (L,\mathrm{NF}_{L}(\mathcal{E}_{L}))
  \longrightarrow
  (K,\mathrm{NF}_{K}(\mathcal{E}))
\]
satisfying
\[
  \mu_{\mathrm{tot}}(\delta_{\mathcal{E},L/K})
  \le
  (12+C_{\mathrm{kum}})\log \ell
  +
  12\,R_{K}(q,\ell)
  +
  B_{K,6}.
\]
Here
\[
  B_{K,6}
  \coloneqq
  \frac{12}{[K:\Q]}
  \sum_{v\mid 6}
  \eta_{v}\log \mathrm{N}v
\]
with $\eta_{v}=0$ for places $v\nmid 6$.
\end{proposition}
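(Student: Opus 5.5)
Construct the descent datum explicitly and bound its cost place by place, splitting $\mu_{\mathrm{tot}}=\mu_{\mathrm{mod}}+\mu_{\mathrm{kum}}$ as in Remark~\ref{rem:TotalTransportCost}. The Kummer part is disposed of at once: by Definition~\ref{def:KummerDefect} and the argument of Lemma~\ref{lem:KummerDetachmentCost}, $\mu_{\mathrm{kum}}(\delta_{\mathcal{E},L/K})=C_{\mathrm{kum}}\log[L:K]\le C_{\mathrm{kum}}\log\ell$. It then remains to build the package component so that $\mu_{\mathrm{mod}}\le 12\log\ell+12R_K(q,\ell)+B_{K,6}$, and so that the height-admissibility inequality of Definition~\ref{def:HeightAdmissibleDescentDatum} holds.

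For the construction, start from $\mathrm{NF}_L(\mathcal{E}_L)$ and take as local coordinate changes the inverses of the minimizing scalings. At each place $w$ of $L$ above $v$, the base change of the $K$-minimal model of $\mathrm{NF}_K(\mathcal{E})$ is carried to the $L$-minimal model by a dilation of valuation $r_w\coloneqq r(E,L_w/K_v)$ in the sense of Definition~\ref{def:LocalScalingExponent}. The reverse arrow undoes this, so its dilation factor has $|u_w|_w^{12}=\mathrm{N}w^{12 r_w}$, with $|u|\ge1$ in the direction that increases the discriminant. Height-admissibility then follows from the transport inequality of Lemma~\ref{lem:TransportDiscriminant}, read in the descent completion with the normalization $1/[L:\Q]$ versus $1/[K:\Q]$ (the absolute height is field-independent). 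By Remark~\ref{rem:ModelDefectEquivalent},
\[
\mu_{\mathrm{mod}}\le\frac{12}{[L:\Q]}\sum_{w}r_w\log\mathrm{N}w .
\]
Only places $w$ with $e_{w/v}>1$ or $v\mid6$ contribute: for $v\nmid6$ unramified in $L$, Lemma~\ref{lem:PrimeToSixLocalScalingBound} gives $r_w\le e-1=0$.

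Next, split the ramified places into the three families of Remark~\ref{rem:KummerDefectArithmeticCalibration}. \emph{Tame places} $v\nmid\ell$, $v\nmid 6$: ramification forces $v(q)\ne0$, so $v\in\operatorname{Supp}^{\mathrm{tame}}_K(q,\ell)$, and Lemma~\ref{lem:PrimeToSixLocalScalingBound} gives $r_w\le e_{w/v}-1$. Using $\sum_{w\mid v}(e_{w/v}-1)f_{w/v}\log\mathrm{N}v\le [L_w:K_v]$-weighted sums $\le [L:K]\log\mathrm{N}v$, and then dividing by $[L:\Q]=[L:K][K:\Q]$, yields at most $12R_K(q,\ell)$ in total. \emph{Places $v\mid 6$}: Hypothesis~\ref{hyp:SmallPrimeLocalScalingCertificate} gives $r_w\le e_{w/v}-1+\eta_v$. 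The $\eta_v$ part sums to $\frac{12}{[K:\Q]}\sum_{v\mid6}\eta_v\log\mathrm{N}v=B_{K,6}$ after the same normalization. The $e_{w/v}-1$ part is absorbed into the tame or wild count, according as $v\mid q$ or $v\mid\ell$; for $v\mid 6$ ramified only through $\ell$ it joins the wild estimate below. \emph{Wild places} $v\mid\ell$: $r_w\le e_{w/v}-1$, and the quantity to be bounded is
\[
\frac{12}{[L:\Q]}\sum_{v\mid\ell}\sum_{w\mid v}(e_{w/v}-1)f_{w}\log\mathrm{N}v.
\]

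The main obstacle is this wild estimate, which must come out as exactly $12\log\ell$. A naive bound by $\log\mathrm{N}v$ summed over $v\mid\ell$ gives roughly $12\log\ell$ per unit of $[K:\Q]$. I will try: $\sum_{w\mid v}(e_{w/v}-1)f_{w/v}\le[L:K]$, then $\sum_{v\mid\ell}\log\mathrm{N}v=\sum_{v\mid\ell}f_{v/\ell}\log\ell\le[K:\Q]\log\ell$, giving exactly $12\log\ell$ after dividing by $[L:\Q]$. Care is needed in three respects: places dividing both $q$ and $\ell$ must not be double counted (they are charged to the wild term, since the tame support excludes $v\mid\ell$); places with $v\mid 6$ and $v\mid\ell$ must be charged consistently; and a single place must not be counted in both the tame and the wild sums, which would require a constant larger than $12$.
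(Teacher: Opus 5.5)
Your proposal is correct and follows the paper's proof essentially step for step: the Kummer term is bounded by $C_{\mathrm{kum}}\log\ell$, and the model term is $\frac{12}{[L:\Q]}\sum_w r_w\log\mathrm{N}w$ with $r_w\le e_{w/v}-1$ (plus $\eta_v$ at $v\mid 6$). The identity $\sum_{w\mid v}e_{w/v}f_{w/v}=[L:K]$ then turns each ramified $v$ into at most $\frac{12}{[K:\Q]}\log\mathrm{N}v$, charged to $12R_K(q,\ell)$ when $v\nmid\ell$ and to $12\log\ell$ when $v\mid\ell$, with the $\eta_v$ parts totalling at most $B_{K,6}$; your explicit check of height-admissibility is a point the paper leaves implicit. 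One small slip: writing $\log\mathrm{N}v=f_{v/\ell}\log\ell$ presumes $\ell$ prime, whereas the statement allows any integer $\ell\ge 2$, so use $\sum_{v\mid\ell}\log\mathrm{N}v\le[K:\Q]\sum_{p\mid\ell}\log p\le[K:\Q]\log\ell$ instead, which is the inequality the paper actually invokes.
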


\begin{proof}
Start with the normalized $K$-model $\mathrm{NF}_{K}(\mathcal{E})$, and base change it to $L$. At each place $w$ of $L$ above a place $v$ of $K$, let $r_{w}$ be the local scaling exponent needed to pass from this base-changed model to the $L_{w}$-minimal model. The model part of the reverse descent cost is
\[
  \mu_{\mathrm{mod}}(\delta_{\mathcal{E},L/K})
  =
  \frac{12}{[L:\Q]}
  \sum_{w}
  r_{w}\log \mathrm{N}w.
\]
Unramified extensions preserve minimality of Weierstrass equations, so places unramified in $L/K$ have $r_{w}=0$. The polynomial $T^{\ell}-q$ has discriminant divisible only by primes above $\ell$ and by primes in the support of $q$. Hence $L/K$ is unramified outside the places above $\ell$ and the tame support of Definition~\ref{def:TameKummerSupport}.

For $v\nmid 6$, Lemma~\ref{lem:PrimeToSixLocalScalingBound} gives $r_{w}\le e_{w/v}-1$. For $v\mid 6$, Hypothesis~\ref{hyp:SmallPrimeLocalScalingCertificate} gives $r_{w}\le e_{w/v}-1+\eta_{v}$. Consequently, for each ramified place $v$ of $K$,
\[
  \frac{12}{[L:\Q]}
  \sum_{w\mid v}
  r_{w}\log \mathrm{N}w
  \le
  \frac{12}{[L:\Q]}
  \sum_{w\mid v}
  e_{w/v}\log \mathrm{N}w
  +
  \frac{12\eta_{v}}{[L:\Q]}
  \sum_{w\mid v}
  \log \mathrm{N}w.
\]
The first term on the right is bounded by $\tfrac{12}{[K:\Q]}\log \mathrm{N}v$, since $\sum_{w\mid v}e_{w/v}\log\mathrm{N}w=[L:K]\log\mathrm{N}v$. The second term contributes only at the finite set of places $v\mid 6$, and its total contribution is bounded by $B_{K,6}$.

Summing over the ramified places away from $\ell$ gives the term $12R_{K}(q,\ell)$. Summing over the places $v\mid \ell$ gives
\[
  \frac{12}{[K:\Q]}
  \sum_{v\mid \ell}\log \mathrm{N}v
  \le
  12\log \ell.
\]
Therefore $\mu_{\mathrm{mod}}(\delta_{\mathcal{E},L/K})\le 12R_{K}(q,\ell)+12\log \ell+B_{K,6}$. The Kummer detachment term satisfies $\mu_{\mathrm{kum}}(\delta_{\mathcal{E},L/K})\le C_{\mathrm{kum}}\log \ell$ by Definition~\ref{def:KummerDefect}. Adding the two estimates gives the displayed bound for $\mu_{\mathrm{tot}}$.
\end{proof}

\begin{remark}[The hypothesis is proved away from the small primes]\label{rem:SupportHypothesisProved}
Proposition~\ref{prop:SupportControlledDescentBound} proves Hypothesis~\ref{hyp:ControlledDescentWithSupport} under the finite local certificate of Hypothesis~\ref{hyp:SmallPrimeLocalScalingCertificate}, with $C_{\mathrm{desc}}=12+C_{\mathrm{kum}}$, $A_{\mathrm{desc}}=12$, and $B_{\mathrm{desc},K}=B_{K,6}$. The bound is therefore explicit, and the only local input that remains beyond the prime-to-six estimate is concentrated at the finitely many places above $2$ and $3$. Together with Theorem~\ref{thm:NoUniformControlledDescent}, this leaves the support-aware form as the correct shape of the descent estimate, with the coefficient $A_{\mathrm{desc}}=12$ both necessary and sufficient.
\end{remark}

\begin{corollary}[Recovery of the purely logarithmic form under bounded tame support]
\label{cor:RecoveryControlledDescentLogCost}
Assume Hypothesis~\ref{hyp:ControlledDescentWithSupport}. Let $\mathcal{L}$ be a family of Kummer extensions $L=K(\sqrt[\ell]{q})$ for which there is a constant $B_{\mathrm{supp},K}$ with $R_{K}(q,\ell)\le B_{\mathrm{supp},K}$ for every member of the family. Then for every member of $\mathcal{L}$ the descent bound assumes the purely logarithmic form
\[
  \mu_{\mathrm{tot}}(\delta_{\mathcal{E},L/K})
  \le
  C_{\mathrm{desc}}\log \ell
  +
  B_{\mathrm{desc},K}^{\prime}
\]
with $B_{\mathrm{desc},K}^{\prime}=A_{\mathrm{desc}}B_{\mathrm{supp},K}+B_{\mathrm{desc},K}$.
\end{corollary}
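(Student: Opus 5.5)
The corollary is a direct specialization of Hypothesis~\ref{hyp:ControlledDescentWithSupport}, and I would prove it by substitution and monotonicity. Fix a member $L=K(\sqrt[\ell]{q})$ of $\mathcal{L}$ and an elliptic package $\mathcal{E}$ over $K$. The hypothesis supplies a controlled descent datum $\delta_{\mathcal{E},L/K}$ in the sense of Definition~\ref{def:ControlledDescentDatum} that satisfies \eqref{eq:ControlledDescentWithSupport}. I would take this datum as the one whose cost is being bounded, since the corollary is an existence statement about the same descent data.

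The single substantive step is to bound the support term. The hypothesis gives $A_{\mathrm{desc}}\ge 0$, and the family assumption gives $R_{K}(q,\ell)\le B_{\mathrm{supp},K}$. Multiplying by the non-negative constant therefore yields $A_{\mathrm{desc}}R_{K}(q,\ell)\le A_{\mathrm{desc}}B_{\mathrm{supp},K}$. Inserting this into \eqref{eq:ControlledDescentWithSupport} gives
\[
  \mu_{\mathrm{tot}}(\delta_{\mathcal{E},L/K})
  \le C_{\mathrm{desc}}\log \ell + A_{\mathrm{desc}}B_{\mathrm{supp},K} + B_{\mathrm{desc},K},
\]
and the right-hand side is the asserted form with $B_{\mathrm{desc},K}^{\prime}$.

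Two small points need checking, though neither is a real obstacle. First, the new constant must be uniform over the family. It is, because $A_{\mathrm{desc}}$ and $B_{\mathrm{desc},K}$ depend only on $K$ and the protocol, and $B_{\mathrm{supp},K}$ is uniform by assumption. Second, the sign of $A_{\mathrm{desc}}$ matters, because a negative coefficient would reverse the inequality. Here $A_{\mathrm{desc}}\ge 0$ is built into Hypothesis~\ref{hyp:ControlledDescentWithSupport}, and the forced positivity from Theorem~\ref{thm:NoUniformControlledDescent} is not needed.

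I would also note that $B_{\mathrm{supp},K}\ge 0$ may be assumed, since $R_{K}$ is a sum of logarithms of norms, each at least $\log 2$ or zero. As a closing remark, Proposition~\ref{prop:SupportControlledDescentBound} then yields the explicit instance $B'_{\mathrm{desc},K}=12B_{\mathrm{supp},K}+B_{K,6}$.
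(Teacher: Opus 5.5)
Your proposal is correct and follows the paper's proof, which substitutes $R_{K}(q,\ell)\le B_{\mathrm{supp},K}$ into the inequality of Hypothesis~\ref{hyp:ControlledDescentWithSupport} and absorbs the support term into the constant. You also note explicitly that the sign condition $A_{\mathrm{desc}}\ge 0$ is what licenses the substitution, which the paper leaves implicit.
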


\begin{proof}
Substitute the bound $R_{K}(q,\ell)\le B_{\mathrm{supp},K}$ into the inequality of Hypothesis~\ref{hyp:ControlledDescentWithSupport}. The support term is thereby absorbed into the constant, and the bound assumes the displayed purely logarithmic form.
\end{proof}

\begin{remark}[Two regimes of the descent bound]\label{rem:DescentRegimes}
The estimate of Hypothesis~\ref{hyp:ControlledDescentWithSupport} is read in two regimes. For a family of bounded tame support, as in Corollary~\ref{cor:RecoveryControlledDescentLogCost}, the support term collapses into a constant and the bound is logarithmic in the Kummer degree. For a family of unbounded support, the term $R_{K}(q,\ell)$ is carried through the later inequalities, where it measures the cost of the primes that the radicand introduces.
\end{remark}

\subsubsection{Arithmetic formalization}\label{subsubsec:ControlledDescentACA}

\begin{proposition}[Formalization in \texorpdfstring{\(\mathsf{ACA}_{0}\)}{ACA0}]
\label{prop:ControlledDescentACAFormalization}
The constructions and proofs of this subsection are formalizable in $\mathsf{ACA}_{0}$, once the finite local certificates of Hypothesis~\ref{hyp:SmallPrimeLocalScalingCertificate} are supplied as part of the certified field code.
\end{proposition}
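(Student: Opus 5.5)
The plan is to code every object of this subsection as a finite tuple of integers and to check that each step of each proof is either a finite computation or an arithmetical induction. \textbf{First}, I will fix the coding.
\begin{itemize}
\item A number field $K$ is given by its certified field code: a minimal polynomial, the integral basis $(\omega_1,\dots,\omega_d)$ with its multiplication table, and the finite list of prime ideals above $2$ and $3$ together with the integers $\eta_v$ of Hypothesis~\ref{hyp:SmallPrimeLocalScalingCertificate}.
\item A Kummer extension $L=K(\sqrt[\ell]{q})$ is given by $\ell$ and the integer vector of $q$.
\item An elliptic package is given by its Weierstrass coefficients in the integral basis, together with its finite local data and a label code.
\end{itemize}
Prime factorization of ideals, valuations $v(x)$, residue degrees and norms $\mathrm{N}v$ are primitive recursive in these codes. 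Hence so are the tame support of Definition~\ref{def:TameKummerSupport} and the rational multiple of $\log$ values entering $R_K(q,\ell)$. Real quantities such as $\log \mathrm{N}v$ I will treat as formal $\Z$-linear combinations of logarithms of primes. Every inequality is then a comparison of finite integer vectors, exactly as in Universe~P. This avoids any appeal to real analysis beyond $\mathsf{RCA}_0$.

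\textbf{Second}, I will verify the individual results in turn.
\begin{itemize}
\item Lemma~\ref{lem:QuadraticTwistHeightGap} is a finite computation: the discriminant $64p^6$, the valuation $v_p(-p^2)=2<4$, and minimality of $Y^2=X^3-X$ away from $2$. It is uniform in $p$ by a $\Sigma^0_0$ formula.
\item Theorem~\ref{thm:NoUniformControlledDescent} then uses only the unboundedness of primes (Euclid, provable in $\mathsf{RCA}_0$) and the archimedean property of the formal log-coefficients.
\item Lemma~\ref{lem:PrimeToSixLocalScalingBound} uses the integrality constraints $4r\le e\,v(c_4)$ and its companions, which are statements about integers.
\item Proposition~\ref{prop:SupportControlledDescentBound} sums over the finitely many places dividing $\ell$, $q$ and $6$, using the identity $\sum_{w\mid v}e_{w/v}f_{w/v}=[L:K]$.
\end{itemize}

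\textbf{The main obstacle} is the existence of minimal models and of the descent datum itself, since Tate's algorithm and the $L_w$-minimal model are quantified over all elliptic curves and all extensions. I would handle this by noting that the algorithm is a bounded search: the scaling exponent is bounded by $e_{w/v}-1+\eta_v$, so the candidate $(u,r,s,t)$ range over a finite set determined by the code. The minimal model is therefore defined by a $\Delta^0_1$ recursion with the lexicographic tie-breaker of Remark~\ref{rem:DeterministicNormalizationDiscipline}.

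The only genuine use of arithmetical comprehension is in forming sets such as the set of places ramified in $L/K$ uniformly over the whole family of packages. That set is arithmetical in the codes and exists in $\mathsf{ACA}_0$. The local theory of differents invoked in Remark~\ref{rem:KummerDefectArithmeticCalibration} is not needed. The proof of Proposition~\ref{prop:SupportControlledDescentBound} uses only that $T^\ell-q$ has discriminant $\pm\ell^\ell q^{\ell-1}$, a polynomial identity, so unramifiedness outside its support follows by finite algebra.
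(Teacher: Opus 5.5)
Your proposal is correct and follows the paper's own proof essentially item by item: primitive-recursive coding of the twist family with the integer identity $\Delta(E_p)=64p^6$, the unboundedness of the primes for Theorem~\ref{thm:NoUniformControlledDescent}, the finite valuation argument with $c_4$, $c_6$, $\Delta$ for Lemma~\ref{lem:PrimeToSixLocalScalingBound}, the supplied certificates at $2$ and $3$, the discriminant of $T^{\ell}-q$ for unramifiedness, and finite sums over listed places; your bounded search for minimal models makes explicit what the paper leaves to the deterministic Tate's algorithm. One small correction: the arbitrary real constants $C$, $B$ of that theorem and $C_{\mathrm{kum}}$ are not formal integer combinations of logarithms of primes, so at those points you need coded reals (fast Cauchy names, as the paper uses) rather than integer comparisons alone.
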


\begin{proof}
The quadratic-twist family of Definition~\ref{def:QuadraticTwistDescentFamily} is given by the primitive recursive map
\[
  p\longmapsto [0,0,0,-p^{2},0]
\]
on Weierstrass coefficients. The discriminant identity $\Delta(E_{p})=64p^{6}$ is an identity among integers. The height estimates of Lemma~\ref{lem:QuadraticTwistHeightGap} are therefore comparisons of logarithmic integer data in the sense of Definition~\ref{def:LogIntDatum}. The unboundedness of the primes, used in Theorem~\ref{thm:NoUniformControlledDescent}, is provable in elementary arithmetic and hence in $\mathsf{ACA}_{0}$.

The tame support $R_{K}(q,\ell)$ of Definition~\ref{def:TameKummerSupport} is computed from the valuations of the finite element $q$ in the certified field code of Definition~\ref{def:CertifiedFieldCode}. The assertion that $K(\sqrt[\ell]{q})/K$ is unramified outside the primes above $\ell$ and the support of $q$ follows from the discriminant of the polynomial $T^{\ell}-q$, which is a finite algebraic computation. The local scaling exponents of Definition~\ref{def:LocalScalingExponent} are integer valuations of discriminants before and after minimization. Lemma~\ref{lem:PrimeToSixLocalScalingBound} is a finite valuation argument using $c_{4}$, $c_{6}$, and $\Delta$. The remaining places above $2$ and $3$ are handled by the finite list of certificates in Hypothesis~\ref{hyp:SmallPrimeLocalScalingCertificate}.

All sums in Proposition~\ref{prop:SupportControlledDescentBound} range over finite lists of prime ideals appearing in the certified field code and in the factorization of $q\ell$. The real quantities are represented by logarithmic integer data or by fast Cauchy names, as in Definition~\ref{def:UniverseP}. No comprehension beyond arithmetical comprehension is invoked.
\end{proof}

%==================================================================
\section{An intensional Szpiro inequality}\label{sec:IntensionalSzpiro}
%==================================================================

We now arrive at the arithmetic statement that the framework was built to carry. The Szpiro inequality, in its classical reading over a fixed number field, compares the minimal discriminant of an elliptic curve with its conductor. In Universe~A the curve is replaced by a presentation, the discriminant by the discriminant height of Definition~\ref{def:DiscriminantHeight}, and the comparison gains a third term that keeps the cost of the chosen presentation. The dilation tower of Section~\ref{subsec:DilationTower} showed that this height moves along an isomorphism, by an amount the model defect measures exactly, so that a scalar comparable to it cannot be invariant under all isomorphisms of packages. The log-volume of the inequality is therefore read as a quantity that transforms under transport. We make this reading precise, prove that the discriminant height itself serves as the log-volume, and then reduce the remaining arithmetic input to a single reference profile. The inequality of Proposition~\ref{prop:NormalizedToIntensionalSzpiro} is the intensional shape of Szpiro, and the classical statement is recovered as its costless, normalized specialization.

\subsection{The transport Szpiro log-volume}\label{subsec:TransportLogVolume}

The dilation tower indicates the correct shape for the log-volume. It should be invariant under coherent changes of the transport, namely under $2$-isomorphism of $1$-morphisms, while it transforms along positive-cost $1$-morphisms by an inequality.

\begin{definition}[Transport Szpiro log-volume]\label{def:TransportSzpiroLogVolume}
Let $K$ be a number field, and let $\mathfrak{EllPkg}_K$ be a quantitative $(2,1)$-category of elliptic packages as in Definition~\ref{def:EllPkgTwoOne}. A \emph{transport Szpiro log-volume} over $K$ is a numerical assignment
\[
  V \colon \operatorname{Ob}(\mathfrak{EllPkg}_K)\longrightarrow \R_{\ge 0}
\]
for which there exist constants $A,B>0$ and $C_0\ge 0$, depending only upon $K$, such that the following conditions hold:
\begin{enumerate}
\item[\textup{(i)}] The transport law factors through the homotopy $1$-category. Equivalently, if $\alpha \colon f \Rightarrow g$ is an invertible $2$-morphism between parallel $1$-morphisms, then the inequalities attached to $f$ and to $g$ have the same numerical cost.
\item[\textup{(ii)}] For every elliptic package $\mathcal{E}$ one has
\[
  A^{-1}h_\Delta(\mathcal{E})-B
  \le
  V(\mathcal{E})
  \le
  Ah_\Delta(\mathcal{E})+B.
\]
\item[\textup{(iii)}] For every $1$-morphism $f\colon\mathcal{E}_1\to\mathcal{E}_2$ in $\mathfrak{EllPkg}_K$ one has
\[
  V(\mathcal{E}_2)
  \le
  V(\mathcal{E}_1)+C_0\,\mu(f).
\]
\end{enumerate}
\end{definition}

\begin{theorem}[The discriminant height is a transport log-volume]
\label{thm:DiscriminantHeightAsTransportLogVolume}
For every number field $K$, the assignment
\[
  \mathrm{Vol}^{A}_{\mathrm{Szp}}(\mathcal{E})
  \coloneqq
  h_\Delta(\mathcal{E})
\]
is a transport Szpiro log-volume in the sense of Definition~\ref{def:TransportSzpiroLogVolume}. One may take $A=1$, $B=0$, and $C_0=1$.
\end{theorem}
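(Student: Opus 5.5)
The plan is to verify the three conditions of Definition~\ref{def:TransportSzpiroLogVolume} one at a time for $V = h_\Delta$, with $A=1$, $B=0$, $C_0=1$. Each reduces to a result already established in Section~\ref{sec:EllipticPackages}.

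First, $h_\Delta$ takes values in $\R_{\ge 0}$. Every summand $\log\max(1,\lvert\Delta_{\mathcal{W},v}\rvert_v^{-1})$ in Definition~\ref{def:DiscriminantHeight} is non-negative. Only finitely many summands are non-zero, since $\Delta_{\mathcal{W},v}$ is a $v$-adic unit for almost all $v$. So $V$ is a well-defined map to $\R_{\ge 0}$.

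Condition (ii) with $A=1$ and $B=0$ reads $h_\Delta(\mathcal{E}) \le V(\mathcal{E}) \le h_\Delta(\mathcal{E})$, which holds trivially.

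For condition (iii), let $f\colon\mathcal{E}_1\to\mathcal{E}_2$ be a $1$-morphism of $\mathfrak{EllPkg}_K$. By Definition~\ref{def:EllPkgTwoOne}(ii), it carries the same underlying data as a morphism of $\mathbf{EllPkg}_K$. Definition~\ref{def:EllipticPackage} builds the local relation $\Delta_{\mathcal{W}_2,v} = c_v(f)\,\Delta_{\mathcal{W}_1,v}$ into every morphism, via Lemma~\ref{lem:LocalDiscriminantChange} applied to the coordinate change~\eqref{eq:local-change-of-variables}. So the hypothesis of Lemma~\ref{lem:TransportDiscriminant} is met, and that lemma gives $h_\Delta(\mathcal{E}_2)\le h_\Delta(\mathcal{E}_1)+\mu(f)$, which is (iii) with $C_0=1$.

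For condition (i), the numerical cost attached to $f$ is $C_0\,\mu(f)=\mu(f)$. If $\alpha\colon f\Rightarrow g$ is an invertible $2$-morphism, then $\mu(f)=\mu(g)$ by Definition~\ref{def:EllPkgTwoOne}(iv). This is equivalently Proposition~\ref{prop:CoherenceStability}(ii), and it shows the transport law descends to $\Ho(\mathfrak{EllPkg}_K)$ as in Proposition~\ref{prop:CoherenceStability}(i). Since $V$ is defined on objects, the two inequalities attached to $f$ and to $g$ coincide.

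The only point needing care is the step in (iii): we must confirm that the discriminant relation holds at every place, including archimedean ones, so that Lemma~\ref{lem:TransportDiscriminant} applies without further hypotheses. I would settle this by noting that the relation is part of the data of a morphism in Definition~\ref{def:EllipticPackage}(i), stated for each place $v$. The remaining steps are immediate.
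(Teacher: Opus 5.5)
Your proposal is correct and follows the paper's proof essentially verbatim: condition (ii) is immediate from $V=h_\Delta$, condition (iii) is Lemma~\ref{lem:TransportDiscriminant} with $C_0=1$, and condition (i) follows from the $2$-isomorphism invariance of $\mu$ in Definition~\ref{def:EllPkgTwoOne}(iv). Your additional checks (that $h_\Delta$ is non-negative, and that the discriminant relation required by Lemma~\ref{lem:TransportDiscriminant} is built into every morphism) are details the paper leaves implicit; one further point is that Definition~\ref{def:TransportSzpiroLogVolume} literally asks for $B>0$, and the $B=0$ bound trivially implies it for any positive $B$.
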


\begin{proof}
The comparability condition of Definition~\ref{def:TransportSzpiroLogVolume}\textup{(ii)} is immediate from the identification $\mathrm{Vol}^{A}_{\mathrm{Szp}}=h_\Delta$, with $A=1$ and $B=0$. The transport law of Definition~\ref{def:TransportSzpiroLogVolume}\textup{(iii)} is exactly Lemma~\ref{lem:TransportDiscriminant}, with $C_0=1$. Finally, the cost $\mu$ is invariant under $2$-isomorphism by Definition~\ref{def:EllPkgTwoOne}\textup{(iv)}, so that the transport law depends only upon the image of the transporting morphism in $\Ho(\mathfrak{EllPkg}_K)$, as required in Definition~\ref{def:TransportSzpiroLogVolume}\textup{(i)}.
\end{proof}

\begin{corollary}[Zero-defect isomorphisms recover invariance]\label{cor:ZeroDefectSubcategoryLogVolume}
Let $\mathfrak{EllPkg}_K^{0}$ be the wide subgroupoid of $\mathfrak{EllPkg}_K$ whose isomorphisms $f$ satisfy $\mu(f)=\mu(f^{-1})=0$. Then $h_\Delta$ is invariant on $\mathfrak{EllPkg}_K^{0}$.
\end{corollary}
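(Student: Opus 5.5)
The plan is to reduce the statement to the two-sided transport control already established for $h_\Delta$. Invariance on $\mathfrak{EllPkg}_K^{0}$ is to be understood as follows: whenever $\mathcal{E}$ and $\mathcal{E}'$ are joined by a $1$-morphism $f$ of this subgroupoid, one has $h_\Delta(\mathcal{E}) = h_\Delta(\mathcal{E}')$. Since $\mathfrak{EllPkg}_K^{0}$ is a groupoid, $f$ is invertible and $f^{-1}$ again lies in it.

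First I will check that $\mathfrak{EllPkg}_K^{0}$ really is a wide subgroupoid. This is needed only to make the statement meaningful, not for the inequality itself. Identities qualify because $\mu(\mathrm{id}) = 0$ by Definition~\ref{def:ModelDefect}. For composites, $\mu(g\circ f) \le \mu(f) + \mu(g) = 0$ by sub-additivity, and $(g\circ f)^{-1} = f^{-1}\circ g^{-1}$ likewise has cost $0$. Non-negativity of $\mu$ then forces equality with $0$. Closure under inverses is built into the defining condition, which is symmetric in $f$ and $f^{-1}$. If one prefers to work at the $(2,1)$-level, Definition~\ref{def:EllPkgTwoOne}(iv) guarantees that membership depends only on $2$-isomorphism classes.

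Next comes the heart of the argument. Take $f \colon \mathcal{E} \to \mathcal{E}'$ in the subgroupoid and apply Theorem~\ref{thm:DiscriminantHeightAsTransportLogVolume}(iii), that is, Lemma~\ref{lem:TransportDiscriminant} with $C_0 = 1$, to both $f$ and $f^{-1}$. This gives
\[
h_\Delta(\mathcal{E}') \le h_\Delta(\mathcal{E}) + \mu(f) = h_\Delta(\mathcal{E})
\]
and
\[
h_\Delta(\mathcal{E}) \le h_\Delta(\mathcal{E}') + \mu(f^{-1}) = h_\Delta(\mathcal{E}').
\]
The two inequalities together force equality. Equivalently, this is just the case $\mu(f) = \mu(f^{-1}) = 0$ of Corollary~\ref{cor:TwoSidedControlTransport}.

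The only point needing care is the hypothesis of Lemma~\ref{lem:TransportDiscriminant}, namely the discriminant relation $\Delta_{\mathcal{W}',v} = c_v(f)\,\Delta_{\mathcal{W},v}$ at every place. This is part of the data of a morphism of packages in Definition~\ref{def:EllipticPackage}, justified by Lemma~\ref{lem:LocalDiscriminantChange}. The inverse morphism carries the reciprocal factors $c_v(f^{-1}) = c_v(f)^{-1}$, so it satisfies the same relation. I therefore expect no genuine obstacle: the result is a direct specialization of the two-sided control.
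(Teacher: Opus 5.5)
Your proposal is correct and follows the paper's proof. The paper simply specializes Corollary~\ref{cor:TwoSidedControlTransport} to get $|h_\Delta(\mathcal{E}')-h_\Delta(\mathcal{E})|\le\mu(f)+\mu(f^{-1})=0$, and your check that $\mathfrak{EllPkg}_K^{0}$ is closed under identities, composites and inverses is a harmless addition the paper leaves implicit. One small citation slip: Theorem~\ref{thm:DiscriminantHeightAsTransportLogVolume} has no clause (iii) of its own; the transport law you mean is clause (iii) of Definition~\ref{def:TransportSzpiroLogVolume}, which the theorem verifies through Lemma~\ref{lem:TransportDiscriminant}.
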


\begin{proof}
If $f\colon\mathcal{E}\to\mathcal{E}'$ lies in $\mathfrak{EllPkg}_K^{0}$, then Corollary~\ref{cor:TwoSidedControlTransport} gives
\[
  \bigl|h_\Delta(\mathcal{E}')-h_\Delta(\mathcal{E})\bigr|
  \le
  \mu(f)+\mu(f^{-1})
  =
  0
\]
so that $h_\Delta(\mathcal{E}')=h_\Delta(\mathcal{E})$.
\end{proof}

\begin{remark}[The log-volume is the discriminant height]\label{rem:CorrectedLogVolumeDatum}
Theorem~\ref{thm:DiscriminantHeightAsTransportLogVolume} settles the structural part of the inequality without an additional hypothesis. The log-volume need not be assumed as an extra object. It may be taken to be the discriminant height itself, invariant under $2$-isomorphism of transports by Proposition~\ref{prop:CoherenceStability} and transforming along positive-cost $1$-morphisms by Lemma~\ref{lem:TransportDiscriminant}. The reading invariant under all package isomorphisms is the one the dilation tower of Proposition~\ref{prop:NoInvariantLogVolume} excludes, and the transport reading is the one compatible with Definition~\ref{def:EllPkgTwoOne}.
\end{remark}

\subsection{The reduced normalized profile}\label{subsec:ReducedProfile}

Once Theorem~\ref{thm:DiscriminantHeightAsTransportLogVolume} is in place, the remaining arithmetic input lies in the choice of a reference profile. It is worth saying in plain terms what the symbols of the profile stand for. The reference profile endofunctor $\mathsf{Q}_{K,\varepsilon}$ produces, for each package, an auxiliary presentation whose discriminant height is held below the conductor directly. The family $\tau_{\mathcal{E},\varepsilon}$ is the transport from that reference presentation to the normalized one, and its cost is the quantity the $\varepsilon$-term absorbs. The two constants are the bounded error terms of the classical inequality, each depending only upon the field $K$. The constant $B_{q,K}$ is the slack in the comparison of the reference height with six times the conductor complexity, and $B_{\varepsilon,K}$ is the constant summand of the transport cost that remains once the part proportional to the conductor has been set against $\varepsilon$. No calibration constant between a log-volume and the discriminant height is needed, since Theorem~\ref{thm:DiscriminantHeightAsTransportLogVolume} identifies the two.

\begin{hypothesis}[Reduced normalized profile]\label{hyp:ReducedNormalizedProfile}
Let $K$ be a number field and fix a normalization procedure $\mathrm{NF}_K$ in the sense of Definition~\ref{def:AccumulatedDefect}. For every real number $\varepsilon>0$, there exist a reference profile endofunctor
\[
  \mathsf{Q}_{K,\varepsilon}
  \colon
  \mathbf{EllPkg}_K\longrightarrow\mathbf{EllPkg}_K
\]
and a family of morphisms
\[
  \tau_{\mathcal{E},\varepsilon}
  \colon
  \mathsf{Q}_{K,\varepsilon}(\mathcal{E})
  \longrightarrow
  \mathrm{NF}_K(\mathcal{E})
\]
together with constants $B_{q,K}\ge 0$ and $B_{\varepsilon,K}\ge 0$ such that for every elliptic package $\mathcal{E}$ over $K$ one has the reference-side bound
\[
  h_\Delta\bigl(\mathsf{Q}_{K,\varepsilon}(\mathcal{E})\bigr)
  \le
  6\,n(\mathcal{E})+B_{q,K}
\]
and the transport bound
\[
  \mu\bigl(\tau_{\mathcal{E},\varepsilon}\bigr)
  \le
  \varepsilon\,n(\mathcal{E})+B_{\varepsilon,K}.
\]
When functorial comparison is required, the family $\tau_{\mathcal{E},\varepsilon}$ is assumed to extend to a pseudonatural transformation $\tau_{\varepsilon}\colon\mathsf{Q}_{K,\varepsilon}\Rightarrow\mathrm{NF}_K$ in the sense of Definition~\ref{def:PseudoNatural}, so that for every $1$-morphism $f\colon\mathcal{E}\to\mathcal{E}'$ there is an invertible $2$-morphism
\[
  (\tau_\varepsilon)_f \colon
  \tau_{\mathcal{E}',\varepsilon}\circ \mathsf{Q}_{K,\varepsilon}(f)
  \Rightarrow
  \mathrm{NF}_K(f)\circ \tau_{\mathcal{E},\varepsilon}.
\]
\end{hypothesis}

\begin{remark}[What remains after the reduction]\label{rem:WhatRemainsAfterReduction}
Theorem~\ref{thm:DiscriminantHeightAsTransportLogVolume} removes the need to posit a separate log-volume satisfying the transport law. The remaining Diophantine content is concentrated in Hypothesis~\ref{hyp:ReducedNormalizedProfile}. It asks for a reference package whose discriminant height is controlled by the conductor complexity, together with a transport from that reference package to the normalized package whose cost is absorbed by the $\varepsilon$-term.
\end{remark}

\subsection{From the profile to the inequality}\label{subsec:ProfileToInequality}

\begin{proposition}[The intensional Szpiro inequality]\label{prop:NormalizedToIntensionalSzpiro}
Assume Hypothesis~\ref{hyp:ReducedNormalizedProfile}. Then for every real number $\varepsilon>0$ there exists a constant $C_{\varepsilon,K}\in\R$ such that for every elliptic package $\mathcal{E}$ over $K$ one has
\begin{equation}\label{eq:IntensionalSzpiroBasic}
  h_\Delta(\mathcal{E})
  \le
  (6+\varepsilon)\,n(\mathcal{E})
  +
  \defect(\mathcal{E})
  +
  C_{\varepsilon,K}.
\end{equation}
One may take $C_{\varepsilon,K}=B_{q,K}+B_{\varepsilon,K}$.
\end{proposition}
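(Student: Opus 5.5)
The argument chains transports along a path of three packages:
\[
  \mathsf{Q}_{K,\varepsilon}(\mathcal{E}) \xrightarrow{\ \tau_{\mathcal{E},\varepsilon}\ } \mathrm{NF}_K(\mathcal{E}) \xrightarrow{\ \nu_{\mathcal{E}}\ } \mathcal{E}
\]
We apply the transport inequality of Lemma~\ref{lem:TransportDiscriminant} to each arrow and sum the resulting bounds. No pseudonaturality is needed, since the statement is objectwise. Fix $\varepsilon>0$ and let $\mathcal{E}$ be an elliptic package over $K$. The first arrow is supplied by Hypothesis~\ref{hyp:ReducedNormalizedProfile}, and the second is the distinguished morphism of Definition~\ref{def:AccumulatedDefect}.

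The first step is the normalization leg. Corollary~\ref{cor:DefectLowerBoundNormalization}, which is Lemma~\ref{lem:TransportDiscriminant} applied to $\nu_{\mathcal{E}}$, gives
\[
  h_\Delta(\mathcal{E})\le h_\Delta(\mathrm{NF}_K(\mathcal{E}))+\defect(\mathcal{E}).
\]

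The second step is the reference leg. Applying Lemma~\ref{lem:TransportDiscriminant} to $\tau_{\mathcal{E},\varepsilon}$ gives
\[
  h_\Delta(\mathrm{NF}_K(\mathcal{E}))\le h_\Delta(\mathsf{Q}_{K,\varepsilon}(\mathcal{E}))+\mu(\tau_{\mathcal{E},\varepsilon}).
\]
We then substitute the two bounds of the hypothesis to obtain
\[
  h_\Delta(\mathrm{NF}_K(\mathcal{E}))\le 6\,n(\mathcal{E})+B_{q,K}+\varepsilon\, n(\mathcal{E})+B_{\varepsilon,K}.
\]
Chaining this with the first step yields \eqref{eq:IntensionalSzpiroBasic} with $C_{\varepsilon,K}=B_{q,K}+B_{\varepsilon,K}$.

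Equivalently, one could use the subadditivity of $\mu$ on the composite $\nu_{\mathcal{E}}\circ\tau_{\mathcal{E},\varepsilon}$ and apply the transport inequality once. The two-step route is cleaner, because it keeps $\defect(\mathcal{E})$ visible as a separate term.

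The only point needing care, and the one I expect to be the main obstacle, is checking that Lemma~\ref{lem:TransportDiscriminant} actually applies to each arrow. Its hypothesis requires that the local discriminants transform by $c_v(f)$ at every place. This holds for any morphism of $\mathbf{EllPkg}_K$ by Definition~\ref{def:EllipticPackage}(i) combined with Lemma~\ref{lem:LocalDiscriminantChange}. Both $\tau_{\mathcal{E},\varepsilon}$ and $\nu_{\mathcal{E}}$ are morphisms of $\mathbf{EllPkg}_K$, so the lemma applies to both. I would state this explicitly, note that $n(\mathcal{E})$ depends only on $E$, and observe that the resulting constant is independent of $\mathcal{E}$.
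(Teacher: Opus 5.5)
Your proposal is correct and follows the paper's proof: Lemma~\ref{lem:TransportDiscriminant} is applied once to $\nu_{\mathcal{E}}$ and once to $\tau_{\mathcal{E},\varepsilon}$, and then the reference-side and transport bounds of Hypothesis~\ref{hyp:ReducedNormalizedProfile} are substituted, giving $C_{\varepsilon,K}=B_{q,K}+B_{\varepsilon,K}$. Your observation that no pseudonaturality is needed is exactly what the paper records separately as Lemma~\ref{lem:ObjectwiseInsensitive}.
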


\begin{proof}
Fix $\varepsilon>0$ and let $\mathcal{E}$ be an elliptic package over $K$. Applying the transport inequality of Lemma~\ref{lem:TransportDiscriminant} to the distinguished normalization morphism $\nu_{\mathcal{E}}\colon\mathrm{NF}_K(\mathcal{E})\to\mathcal{E}$ of Definition~\ref{def:AccumulatedDefect} gives
\[
  h_\Delta(\mathcal{E})
  \le
  h_\Delta\bigl(\mathrm{NF}_K(\mathcal{E})\bigr)
  +
  \defect(\mathcal{E})
\]
where $\mu(\nu_{\mathcal{E}})=\defect(\mathcal{E})$ by Definition~\ref{def:AccumulatedDefect}. Applying Lemma~\ref{lem:TransportDiscriminant} again to $\tau_{\mathcal{E},\varepsilon}\colon\mathsf{Q}_{K,\varepsilon}(\mathcal{E})\to\mathrm{NF}_K(\mathcal{E})$ gives
\[
  h_\Delta\bigl(\mathrm{NF}_K(\mathcal{E})\bigr)
  \le
  h_\Delta\bigl(\mathsf{Q}_{K,\varepsilon}(\mathcal{E})\bigr)
  +
  \mu\bigl(\tau_{\mathcal{E},\varepsilon}\bigr).
\]
The two bounds of Hypothesis~\ref{hyp:ReducedNormalizedProfile} then yield
\[
  h_\Delta(\mathcal{E})
  \le
  6\,n(\mathcal{E})+B_{q,K}
  +
  \varepsilon\,n(\mathcal{E})+B_{\varepsilon,K}
  +
  \defect(\mathcal{E})
\]
which is the stated inequality with $C_{\varepsilon,K}=B_{q,K}+B_{\varepsilon,K}$.
\end{proof}

\begin{lemma}[Objectwise bounds are insensitive to pseudonaturality]\label{lem:ObjectwiseInsensitive}
The inequality \eqref{eq:IntensionalSzpiroBasic} is derived in Proposition~\ref{prop:NormalizedToIntensionalSzpiro} from the two objectwise bounds of Hypothesis~\ref{hyp:ReducedNormalizedProfile} together with two applications of the transport inequality of Lemma~\ref{lem:TransportDiscriminant}. In particular, it does not use the pseudonatural comparison that the hypothesis supplies when functorial transport is required.
\end{lemma}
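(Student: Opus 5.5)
The plan is to audit the derivation in the proof of Proposition~\ref{prop:NormalizedToIntensionalSzpiro}, listing every premise it invokes and checking that none involves the $2$-cells $(\tau_\varepsilon)_f$. First, the proof fixes $\varepsilon>0$ and a single package $\mathcal{E}$. It then applies Lemma~\ref{lem:TransportDiscriminant} to the two $1$-morphisms $\nu_{\mathcal{E}}$ and $\tau_{\mathcal{E},\varepsilon}$. That lemma concerns one morphism at a time. Its hypotheses are only the local discriminant relations $\Delta_{\mathcal{W}',v}=c_v(f)\Delta_{\mathcal{W},v}$, which hold for any morphism of $\mathbf{EllPkg}_K$ by Definition~\ref{def:EllipticPackage} and Lemma~\ref{lem:LocalDiscriminantChange}. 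Neither application refers to any other package or to any morphism $f\colon\mathcal{E}\to\mathcal{E}'$.

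Second, I will record which parts of Hypothesis~\ref{hyp:ReducedNormalizedProfile} are actually used. The proof uses the reference-side bound $h_\Delta(\mathsf{Q}_{K,\varepsilon}(\mathcal{E}))\le 6n(\mathcal{E})+B_{q,K}$ and the transport bound $\mu(\tau_{\mathcal{E},\varepsilon})\le\varepsilon n(\mathcal{E})+B_{\varepsilon,K}$. Both are statements about the single object $\mathcal{E}$ and the single morphism $\tau_{\mathcal{E},\varepsilon}$. It also uses the identity $\mu(\nu_{\mathcal{E}})=\defect(\mathcal{E})$ from Definition~\ref{def:AccumulatedDefect}. The pseudonatural extension $\tau_\varepsilon\colon\mathsf{Q}_{K,\varepsilon}\Rightarrow\mathrm{NF}_K$ is an optional clause, in force only "when functorial comparison is required". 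Its only data are the invertible $2$-morphisms $(\tau_\varepsilon)_f$ indexed by $1$-morphisms $f$. Since no such $f$ appears in the chain of inequalities, these data never enter.

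Third, I will add a consistency check. By Proposition~\ref{prop:CoherenceStability}, even if the $2$-cells were invoked, they would change no cost, since $\mu$ is $2$-isomorphism invariant. The functoriality of $\mathsf{Q}_{K,\varepsilon}$ on morphisms is likewise unused, because only its value on the object $\mathcal{E}$ appears. So the inequality holds verbatim for any objectwise assignment $\mathcal{E}\mapsto(\mathsf{Q}_{K,\varepsilon}(\mathcal{E}),\tau_{\mathcal{E},\varepsilon})$ satisfying the two bounds, with the same constant $C_{\varepsilon,K}=B_{q,K}+B_{\varepsilon,K}$.

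The main obstacle is that the statement is metamathematical: it says "the derivation does not use" a datum. The honest way to discharge it is the explicit premise list above, or equivalently re-proving \eqref{eq:IntensionalSzpiroBasic} under the weakened hypothesis that drops pseudonaturality and functoriality of $\mathsf{Q}_{K,\varepsilon}$. I would present that weakened re-derivation as the formal content.
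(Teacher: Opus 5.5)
Your proposal is correct and follows the paper's own argument, which simply observes that the proof of Proposition~\ref{prop:NormalizedToIntensionalSzpiro} uses only three ingredients: the reference-side bound, the transport bound, and Lemma~\ref{lem:TransportDiscriminant} applied to $\nu_{\mathcal{E}}$ and $\tau_{\mathcal{E},\varepsilon}$. Your explicit premise audit, including the further observation that the functoriality of $\mathsf{Q}_{K,\varepsilon}$ is also unused, is a more detailed version of that same check, and the appeal to Proposition~\ref{prop:CoherenceStability} is a harmless side remark that the conclusion does not need.
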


\begin{proof}
This is immediate from the proof of Proposition~\ref{prop:NormalizedToIntensionalSzpiro}, which invokes only the reference-side bound, the transport bound, and the transport inequality for $\nu_{\mathcal{E}}$ and $\tau_{\mathcal{E},\varepsilon}$.
\end{proof}

\begin{proposition}[Elliptic packages as a quantitative category of Szpiro form]\label{prop:EllPkgQuantitativeSzpiro}
Let $K$ be a number field. Consider the category $\mathbf{EllPkg}_K$ of elliptic packages over $K$, equipped with the model defect $\mu$ of Definition~\ref{def:ModelDefect} and the discriminant height $h_\Delta$ of Definition~\ref{def:DiscriminantHeight}. Then the following hold:
\begin{enumerate}
\item[(i)] The pair $(\mathbf{EllPkg}_K,\mu)$ forms a quantitative category of isomorphisms in the sense of Definition~\ref{def:QuantitativeCategory}.
\item[(ii)] The discriminant height satisfies the transport inequality of Lemma~\ref{lem:TransportDiscriminant}, so that for any morphism $f \colon \mathcal{E} \to \mathcal{E}'$ one has $h_\Delta(\mathcal{E}') \le h_\Delta(\mathcal{E}) + \mu(f)$.
\item[(iii)] Under Hypothesis~\ref{hyp:ReducedNormalizedProfile}, the basic intensional inequality \eqref{eq:IntensionalSzpiroBasic} holds, in the form
  \[
    h_\Delta(\mathcal{E})
    \le
    (6+\varepsilon)\,n(\mathcal{E})
    +
    \defect(\mathcal{E})
    +
    C_{\varepsilon,K}.
  \]
It keeps the place where the normalization defect enters, through the transport $\nu_{\mathcal{E}}$ of Definition~\ref{def:AccumulatedDefect}.
\end{enumerate}
\end{proposition}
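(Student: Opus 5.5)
The plan is to verify the three items in turn, since each reduces to a result already stated. For item (i), I would check the three clauses of Definition~\ref{def:QuantitativeCategory} for the pair $(\mathbf{EllPkg}_K,\mu)$. Non-negativity of $\mu(f)$ is immediate because each summand $\log^{+}(\lvert c_v(f)\rvert_v^{-1})$ is non-negative. Finiteness of the sum follows from the requirement in Definition~\ref{def:EllipticPackage} that $u_v(f)$ is a unit for almost all finite $v$, which gives $\lvert c_v(f)\rvert_v = 1$ there. The vanishing $\mu(\mathrm{id}_{\mathcal{E}})=0$ holds because the identity has $u_v=1$ at every place. Sub-additivity under composition is exactly the statement proved after Definition~\ref{def:ModelDefect}.

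The one point I expect to require care is the multiplicativity $c_v(g\circ f)=c_v(g)\,c_v(f)$ used in that sub-additivity proof. I would justify it by composing the substitutions~\eqref{eq:local-change-of-variables}. Composing two coordinate changes of this shape yields a coordinate change whose dilation factor is $u_v(f)\,u_v(g)$, as in the group law for changes of Weierstrass coordinates in \cite[III]{SilvermanAEC}. Multiplicativity of $c_v=u_v^{-12}$ then follows.

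Every morphism of $\mathbf{EllPkg}_K$ is invertible, since it is built from an isomorphism $\varphi_f$ together with invertible coordinate changes whose inverse has factor $u_v^{-1}$. Clause (ii) of Definition~\ref{def:QuantitativeCategory} imposes no constraint on costs, so it needs no further argument. For clause (iii), I would take $H=h_\Delta$, regarded through its values on objects. The transport inequality required there is then item (ii) of the present proposition.

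For item (ii), I would invoke Lemma~\ref{lem:TransportDiscriminant}. Its hypothesis $\Delta_{\mathcal{W}',v}=c_v(f)\Delta_{\mathcal{W},v}$ is built into Definition~\ref{def:EllipticPackage}, and Lemma~\ref{lem:LocalDiscriminantChange} supplies the underlying law at each place. So the hypothesis holds automatically for every morphism, and the inequality follows.

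For item (iii), I would cite Proposition~\ref{prop:NormalizedToIntensionalSzpiro} verbatim, with $C_{\varepsilon,K}=B_{q,K}+B_{\varepsilon,K}$. To justify the closing sentence, I would point out that the term $\defect(\mathcal{E})=\mu(\nu_{\mathcal{E}})$ enters that proof precisely through the first application of the transport inequality. That application is to $\nu_{\mathcal{E}}\colon\mathrm{NF}_K(\mathcal{E})\to\mathcal{E}$, so the defect term records exactly where the normalization transport is used.
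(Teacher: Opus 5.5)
Your proposal is correct and follows the paper's own proof, which likewise obtains (i) from Definition~\ref{def:ModelDefect}, (ii) from Lemma~\ref{lem:TransportDiscriminant}, and (iii) from Proposition~\ref{prop:NormalizedToIntensionalSzpiro}. You also check two points the paper leaves implicit: the multiplicativity $c_v(g\circ f)=c_v(f)\,c_v(g)$, obtained by composing the substitutions, and the height clause of Definition~\ref{def:QuantitativeCategory}, which you supply via item (ii) with $h_\Delta$ read as an assignment on objects.
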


\begin{proof}
Part \textup{(i)} is contained in Definition~\ref{def:ModelDefect}, part \textup{(ii)} is Lemma~\ref{lem:TransportDiscriminant}, and part \textup{(iii)} is Proposition~\ref{prop:NormalizedToIntensionalSzpiro}.
\end{proof}

\begin{remark}[Comparison with the classical Szpiro inequality]\label{rem:SzpiroComparison}
When the package is normalized by the chosen procedure, one has $\defect(\mathcal{E}) = 0$, and the inequality \eqref{eq:IntensionalSzpiroBasic} assumes the classical shape up to the constant $C_{\varepsilon,K}$. The defect term is precisely the quantity that the extensional reading discards. By keeping it, the intensional inequality holds two quantities together -- the comparison of discriminant and conductor on the one hand, and on the other the price of the presentation through which that comparison was read.
\end{remark}

\subsection{Formalization in \texorpdfstring{\(\mathsf{ACA}_0\)}{ACA0}}\label{subsec:ACA0LogVolume}

The arguments of this section stay within the arithmetic universe of Section~\ref{sec:UniverseP}. They use finite Weierstrass coefficients, finite coordinate changes, integer valuations, and inequalities between logarithmic integer data.

\begin{proposition}[Arithmetic formalization of the log-volume reduction]
\label{prop:ACA0LogVolumeFormalization}
The statements and proofs of Lemma~\ref{lem:DilationTowerGrowth}, Proposition~\ref{prop:NoInvariantLogVolume}, Theorem~\ref{thm:DiscriminantHeightAsTransportLogVolume}, and Proposition~\ref{prop:NormalizedToIntensionalSzpiro} are formalizable in $\mathsf{ACA}_0$ after replacing real-valued quantities by the coding conventions of Definition~\ref{def:LogIntDatum} and Definition~\ref{def:EllPkgCode}.
\end{proposition}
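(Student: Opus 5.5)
The plan is to treat the four results one at a time. For each, we fix a coding of the objects involved and then check that every quantifier in the statement and proof ranges over natural numbers or finite sequences. We also check that every real-valued comparison reduces to a comparison of logarithmic integer data in the sense of Definition~\ref{def:LogIntDatum}. The coding is as follows. An elliptic package is coded by its Weierstrass coefficients, the finite list of local data and the label, following Definition~\ref{def:EllPkgCode}. A morphism is coded by its finitely many non-trivial local coordinate changes $(u_v,r_v,s_v,t_v)$, with each $u_v$ given by a finite field code and its valuation recorded as an integer. A height $h_\Delta$ or a defect $\mu$ is then a finite formal sum $\frac{1}{[K:\Q]}\sum_i a_i\log p_i$ with $a_i\in\Z_{\ge 0}$. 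Comparing two such sums reduces to comparing the integers $\prod p_i^{a_i}$ after clearing the common factor. This reduction is primitive recursive and therefore available already in $\mathsf{RCA}_0$.

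First, Lemma~\ref{lem:DilationTowerGrowth}. The map $m\mapsto \mathcal{W}_m$ is primitive recursive. The identity $\Delta_{\mathcal{W}_m}=37^{12m+1}$ is a polynomial identity in $m$, proved by induction or directly from Lemma~\ref{lem:LocalDiscriminantChange}. The transformation law itself is a universally quantified polynomial identity over $\Z[u^{\pm1},r,s,t,a_i]$, hence a finite symbolic computation. With this, the values of $h_\Delta(\mathcal{E}_m)$ and $\mu(\rho_m)$ become integer data $(12m+1,37)$ and $(12m,37)$. Second, Proposition~\ref{prop:NoInvariantLogVolume}. Here $V$ is a second-order parameter, a set coding rational approximations. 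Hypothesis (ii) bounds $V(\mathcal{E}_0)$ below by an expression tending to infinity. The contradiction uses only the Archimedean property of the rationals, applied to some $m$ with $A^{-1}(12m+1)\log 37 - B > V(\mathcal{E}_0)$. Such an $m$ is found by bounding $\log 37\ge 3$ rationally, so no comprehension beyond the given set is needed.

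Third, Theorem~\ref{thm:DiscriminantHeightAsTransportLogVolume}. Its content is Lemma~\ref{lem:TransportDiscriminant}, whose proof is placewise: it uses the inequality $\max\{0,x+y\}\le\max\{0,x\}+\max\{0,y\}$ on integer valuations, followed by summation over a finite list of places. In codes, this is an inequality of integer exponents at each prime, summed over a finite list, so it is provable with $\Sigma^0_0$ induction on list length. The $2$-isomorphism invariance clause is vacuous at the code level, since codes record only the scalar, as argued in Remark~\ref{rem:strictness}. Fourth, Proposition~\ref{prop:NormalizedToIntensionalSzpiro}. This is a conditional statement. Hypothesis~\ref{hyp:ReducedNormalizedProfile} is formalized as the existence of sets coding $\mathsf{Q}_{K,\varepsilon}$ and $\tau_{\varepsilon}$ together with rational constants. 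The proof then consists of two instances of the transport inequality just formalized, plus addition of rational bounds, so it goes through in the base theory relative to these parameters. The conductor complexity $n(\mathcal{E})$ is a logarithmic integer datum, provided the conductor exponents are supplied by the package code.

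The main obstacle I expect is not logical strength but the representation of the archimedean contributions and of $\log$ of non-integers. This arises in $h_\Delta$ at infinite places, and in comparisons of sums with rational coefficients such as $A^{-1}$ and $\varepsilon$ against logarithms. My first approach is to use fast Cauchy names for $\log$ of positive rationals, which is arithmetical, so existence of the relevant real is guaranteed in $\mathsf{ACA}_0$. I would then show that each inequality used is either a non-strict inequality closed under limits or a strict one witnessed by a finite approximation. Computing normalization outputs via Tate's algorithm, if needed, is a finite deterministic procedure by Remark~\ref{rem:DeterministicNormalizationDiscipline}, and so it poses no difficulty.
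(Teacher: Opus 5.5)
Your proposal is correct and follows essentially the same route as the paper's proof: the dilation tower as the primitive recursive family $m\mapsto[0,0,37^{3m},-37^{4m},0]$ with logarithmic integer data $37^{12m+1}$ and $37^{12m}$, the Archimedean growth of $(12m+1)\log 37$ against the fixed coded reals $A$, $B$, $V(\mathcal{E}_0)$, the transport inequality as a finite placewise comparison, and two applications of it together with the bounds of Hypothesis~\ref{hyp:ReducedNormalizedProfile}, with archimedean terms carried as fast Cauchy names. (One small slip: the discriminant identity is polynomial in $t=37^{m}$, namely $\Delta(0,0,t^{3},-t^{4},0)=37\,t^{12}$, and not polynomial in $m$ itself.)
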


\begin{proof}
The dilation tower of Definition~\ref{def:DilationTowerAtBadPrime} is given by the primitive recursive map
\[
  m
  \longmapsto
  [0,0,37^{3m},-37^{4m},0]
\]
on Weierstrass coefficients. The discriminant computation is an identity among integers, $\Delta_{\mathcal{W}_m}=37^{12m+1}$, so that the height calculation in Lemma~\ref{lem:DilationTowerGrowth} becomes the comparison of the logarithmic integer datum $37^{12m+1}$ with $1$, and the defect calculation becomes the logarithmic integer datum $37^{12m}$.

The contradiction in Proposition~\ref{prop:NoInvariantLogVolume} uses only the archimedean growth of the sequence $(12m+1)\log 37$. For real numbers presented by fast Cauchy names, $\mathsf{ACA}_0$ proves the needed bounded-search statement. Namely, for fixed coded reals $A>0$, $B$, and $V(\mathcal{E}_0)$, there is an integer $m$ such that $A^{-1}(12m+1)\log 37-B>V(\mathcal{E}_0)$.

Theorem~\ref{thm:DiscriminantHeightAsTransportLogVolume} is a direct transcription of Lemma~\ref{lem:TransportDiscriminant}, which in code form is an inequality between finite sums of local valuation contributions. Proposition~\ref{prop:NormalizedToIntensionalSzpiro} uses only two applications of this transport inequality together with the two bounds supplied by Hypothesis~\ref{hyp:ReducedNormalizedProfile}. All these are finite operations on the package codes of Definition~\ref{def:EllPkgCode}, with real values handled as fast Cauchy names in the sense of Definition~\ref{def:UniverseP}.
\end{proof}

\subsection{Partial reductions for the reduced normalized profile}
\label{subsec:PartialReducedNormalizedProfile}

The reduced normalized profile of Hypothesis~\ref{hyp:ReducedNormalizedProfile} carries two demands, one arithmetical and one intensional. The first asks for a presentation whose discriminant height lies close to six times the conductor complexity. The second asks that the transport from that presentation to the chosen normalization have small cost. This subsection does not prove the full profile. It discharges the intensional demand within the framework, reduces the arithmetical demand to a single linear envelope, and then localizes that envelope to one explicit quantity, the height of the denominator of the $j$-invariant. Everything apart from a bound on that one quantity is proved unconditionally and within $\mathsf{ACA}_{0}$.

\begin{definition}[Normalized height and remainder]
\label{def:NormalizedHeightRemainder}
Fix a number field $K$ and a normalization procedure $\mathrm{NF}_{K}$ in the sense of Definition~\ref{def:AccumulatedDefect}. The \emph{normalized discriminant height} of an elliptic package $\mathcal{E}$ is
\[
  H^{\mathrm{nf}}_{K}(\mathcal{E})
  \coloneqq
  h_{\Delta}\bigl(\mathrm{NF}_{K}(\mathcal{E})\bigr).
\]
For a real number $a>0$, its \emph{$a$-remainder} is
\[
  \mathcal{R}_{K,a}(\mathcal{E})
  \coloneqq
  H^{\mathrm{nf}}_{K}(\mathcal{E})-a\,n(\mathcal{E}).
\]
For a finite coded list $\mathcal{F}=(\mathcal{E}_{0},\ldots,\mathcal{E}_{m-1})$ of package codes in the sense of Definition~\ref{def:EllPkgCode}, put
\[
  B_{\mathcal{F},a,K}
  \coloneqq
  \max_{0\le i<m}
  \max\bigl\{0,\mathcal{R}_{K,a}(\mathcal{E}_{i})\bigr\}.
\]
\end{definition}

\subsubsection{The intensional demand is discharged}\label{subsubsec:IntensionalDemandFree}

\begin{proposition}[The profile forces a normalized Szpiro bound]
\label{prop:ReducedProfileForcesNormalizedSzpiro}
Assume Hypothesis~\ref{hyp:ReducedNormalizedProfile}. Then for every real number $\varepsilon>0$ and every elliptic package $\mathcal{E}$ over $K$, one has
\[
  H^{\mathrm{nf}}_{K}(\mathcal{E})
  \le
  (6+\varepsilon)\,n(\mathcal{E})+B_{q,K}+B_{\varepsilon,K}.
\]
\end{proposition}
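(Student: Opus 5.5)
The plan is to run only the second half of the argument in Proposition~\ref{prop:NormalizedToIntensionalSzpiro}, stopping at the normalized package instead of transporting on to $\mathcal{E}$. By Definition~\ref{def:NormalizedHeightRemainder}, $H^{\mathrm{nf}}_{K}(\mathcal{E})=h_\Delta(\mathrm{NF}_K(\mathcal{E}))$, so the claim is an upper bound for the discriminant height of the normalized package. The normalization morphism $\nu_{\mathcal{E}}$ plays no role here. Consequently the defect term $\defect(\mathcal{E})$ should not appear, and it indeed does not appear in the statement.

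Fix $\varepsilon>0$ and a package $\mathcal{E}$. Hypothesis~\ref{hyp:ReducedNormalizedProfile} supplies the morphism $\tau_{\mathcal{E},\varepsilon}\colon\mathsf{Q}_{K,\varepsilon}(\mathcal{E})\to\mathrm{NF}_K(\mathcal{E})$ in $\mathbf{EllPkg}_K$. Lemma~\ref{lem:TransportDiscriminant} applies to any morphism of packages, because the discriminant relation $\Delta_{\mathcal{W}',v}=c_v(f)\Delta_{\mathcal{W},v}$ is built into Definition~\ref{def:EllipticPackage} through Lemma~\ref{lem:LocalDiscriminantChange}. Applied to $\tau_{\mathcal{E},\varepsilon}$, it gives
\[
  H^{\mathrm{nf}}_{K}(\mathcal{E})
  \le
  h_\Delta\bigl(\mathsf{Q}_{K,\varepsilon}(\mathcal{E})\bigr)+\mu(\tau_{\mathcal{E},\varepsilon}).
\]

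Next we substitute the reference-side bound $h_\Delta(\mathsf{Q}_{K,\varepsilon}(\mathcal{E}))\le 6\,n(\mathcal{E})+B_{q,K}$ and the transport bound $\mu(\tau_{\mathcal{E},\varepsilon})\le\varepsilon\,n(\mathcal{E})+B_{\varepsilon,K}$. Collecting terms yields $(6+\varepsilon)n(\mathcal{E})+B_{q,K}+B_{\varepsilon,K}$. As in Lemma~\ref{lem:ObjectwiseInsensitive}, the pseudonatural data are not needed.

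No step is a real obstacle. The only point to check is bookkeeping: the hypothesis gives the constants $B_{q,K}$ and $B_{\varepsilon,K}$ for each fixed $\varepsilon$, and the conductor complexity $n(\mathcal{E})$ depends only on the underlying curve. The bound is therefore uniform in $\mathcal{E}$, with the constant depending only on $K$ and $\varepsilon$.
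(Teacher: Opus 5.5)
Your proposal is correct and follows the paper's proof essentially verbatim: apply Lemma~\ref{lem:TransportDiscriminant} to $\tau_{\mathcal{E},\varepsilon}$ alone, then substitute the reference-side bound and the transport bound of Hypothesis~\ref{hyp:ReducedNormalizedProfile}, with no use of $\nu_{\mathcal{E}}$ or of the pseudonatural data. Your remark that the discriminant relation required by Lemma~\ref{lem:TransportDiscriminant} is built into the definition of a package morphism correctly covers that lemma's hypothesis.
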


\begin{proof}
Apply the transport inequality of Lemma~\ref{lem:TransportDiscriminant} to the morphism $\tau_{\mathcal{E},\varepsilon}\colon\mathsf{Q}_{K,\varepsilon}(\mathcal{E})\to\mathrm{NF}_{K}(\mathcal{E})$ of Hypothesis~\ref{hyp:ReducedNormalizedProfile}. This gives
\[
  h_{\Delta}\bigl(\mathrm{NF}_{K}(\mathcal{E})\bigr)
  \le
  h_{\Delta}\bigl(\mathsf{Q}_{K,\varepsilon}(\mathcal{E})\bigr)
  +
  \mu\bigl(\tau_{\mathcal{E},\varepsilon}\bigr).
\]
The reference-side bound and the transport bound of Hypothesis~\ref{hyp:ReducedNormalizedProfile} give the displayed estimate.
\end{proof}

\begin{lemma}[The identity profile]
\label{lem:IdentityProfileReduction}
Suppose that, for some real numbers $a>0$ and $B_{a,K}\ge 0$, one has $H^{\mathrm{nf}}_{K}(\mathcal{E})\le a\,n(\mathcal{E})+B_{a,K}$ for every elliptic package $\mathcal{E}$ over $K$. Then the choice
\[
  \mathsf{Q}_{K,a}(\mathcal{E})\coloneqq \mathrm{NF}_{K}(\mathcal{E})
  \qquad\text{and}\qquad
  \tau_{\mathcal{E},a}\coloneqq \mathrm{id}_{\mathrm{NF}_{K}(\mathcal{E})}
\]
satisfies $h_{\Delta}(\mathsf{Q}_{K,a}(\mathcal{E}))\le a\,n(\mathcal{E})+B_{a,K}$ and $\mu(\tau_{\mathcal{E},a})=0$. Read $\mathrm{NF}_{K}$ as an endofunctor of the chosen $(2,1)$-category. Then these identity morphisms form the identity pseudonatural comparison from $\mathrm{NF}_{K}$ to itself, in the sense of Definition~\ref{def:PseudoNatural}.
\end{lemma}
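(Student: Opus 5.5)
The three claims are definitional, so I would verify them in order, each against the relevant earlier definition.

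\emph{Reference-side bound.} By construction $\mathsf{Q}_{K,a}(\mathcal{E})=\mathrm{NF}_{K}(\mathcal{E})$. Hence $h_{\Delta}(\mathsf{Q}_{K,a}(\mathcal{E}))=H^{\mathrm{nf}}_{K}(\mathcal{E})$ by Definition~\ref{def:NormalizedHeightRemainder}. The assumed envelope $H^{\mathrm{nf}}_{K}(\mathcal{E})\le a\,n(\mathcal{E})+B_{a,K}$ then gives the bound at once.

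\emph{Zero cost.} The morphism $\tau_{\mathcal{E},a}$ is the identity of $\mathrm{NF}_{K}(\mathcal{E})$, so $\mu(\tau_{\mathcal{E},a})=0$ by the normalization $\mu(\mathrm{id})=0$ in Definition~\ref{def:ModelDefect}. One could check this directly: the identity has trivial local coordinate changes, $u_v=1$, and therefore every summand $\log^{+}|c_v|_v^{-1}$ vanishes. Either way, the transport bound $\mu(\tau_{\mathcal{E},a})\le\varepsilon\,n(\mathcal{E})+B_{\varepsilon,K}$ of Hypothesis~\ref{hyp:ReducedNormalizedProfile} holds for any $\varepsilon>0$ with $B_{\varepsilon,K}=0$.

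\emph{Pseudonaturality.} This is the only step with any content, and I expect it to be the main obstacle. It needs $\mathrm{NF}_{K}$ to act on $1$-morphisms, which Definition~\ref{def:AccumulatedDefect} specifies only on objects. I would take it as given that the chosen $(2,1)$-category carries $\mathrm{NF}_{K}$ as an endofunctor, which is exactly the reading the statement instructs. Given that, I take $\eta=\tau$ with components $\eta_{\mathcal{E}}=\mathrm{id}_{\mathrm{NF}_{K}(\mathcal{E})}$. For each $1$-morphism $f$, strictness of the units (Remark~\ref{rem:strictness}) gives the equality $\mathrm{id}\circ\mathrm{NF}_{K}(f)=\mathrm{NF}_{K}(f)\circ\mathrm{id}$, so I take $(\tau)_f$ to be the identity $2$-cell on $\mathrm{NF}_{K}(f)$. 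The unit and composition coherence axioms of Definition~\ref{def:PseudoNatural} then reduce to equalities between identity $2$-cells, which hold trivially. If one works bicategorically instead, I would use the unitors $\lambda^{-1}\rho$ as the comparison $2$-cells, and the coherence axioms follow from the coherence theorem for bicategories. In either setting, Proposition~\ref{prop:CoherenceStability} confirms that these $2$-cells contribute no cost.
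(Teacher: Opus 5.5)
Your proposal is correct and follows the paper's own proof: the reference-side bound is the assumed normalized envelope read through $\mathsf{Q}_{K,a}=\mathrm{NF}_{K}$, the zero cost is $\mu(\mathrm{id})=0$ from Definition~\ref{def:ModelDefect}, and the pseudonatural comparison is the unit (identity) comparison for the endofunctor $\mathrm{NF}_{K}$. As in the paper, you take the action of $\mathrm{NF}_{K}$ on $1$-morphisms as supplied by the statement's instruction rather than constructing it, which is the intended reading.
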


\begin{proof}
The first inequality is the assumed normalized bound. The equality $\mu(\mathrm{id})=0$ follows from Definition~\ref{def:ModelDefect}, since the model defect of an identity morphism vanishes. The pseudonatural comparison is the unit comparison for the endofunctor $\mathrm{NF}_{K}$.
\end{proof}

\begin{remark}[Isolation of the arithmetical core]
\label{rem:ReducedProfileArithmeticCore}
Proposition~\ref{prop:ReducedProfileForcesNormalizedSzpiro} and Lemma~\ref{lem:IdentityProfileReduction} pass in opposite directions between the reduced profile and a normalized linear envelope, so that the arithmetical content of Hypothesis~\ref{hyp:ReducedNormalizedProfile} is exactly such an envelope on $H^{\mathrm{nf}}_{K}$. The missing input is therefore not the coherence datum of Definition~\ref{def:PseudoNatural}. By Lemma~\ref{lem:ObjectwiseInsensitive} that datum is silent for the objectwise inequality, and the identity profile supplies it at no cost. The missing input is the normalized remainder $\mathcal{R}_{K,6+\varepsilon}(\mathcal{E})$, and it is here that the abc-level strength would enter.
\end{remark}

\subsubsection{From a normalized envelope to the inequality}\label{subsubsec:EnvelopeToInequality}

\begin{proposition}[The defect-normalized bound]
\label{prop:DefectNormalizedBound}
For every elliptic package $\mathcal{E}$ over $K$, one has
\[
  h_{\Delta}(\mathcal{E})
  \le
  H^{\mathrm{nf}}_{K}(\mathcal{E})+
  \defect(\mathcal{E}).
\]
Consequently, for every finite coded list $\mathcal{F}=(\mathcal{E}_{0},\ldots,\mathcal{E}_{m-1})$ and every real number $a>0$, one has
\[
  h_{\Delta}(\mathcal{E}_{i})
  \le
  a\,n(\mathcal{E}_{i})+
  \defect(\mathcal{E}_{i})+
  B_{\mathcal{F},a,K}
\]
for each $0\le i<m$.
\end{proposition}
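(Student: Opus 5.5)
The first inequality comes from a single application of the transport inequality of Lemma~\ref{lem:TransportDiscriminant}. I would apply it to the distinguished normalization morphism $\nu_{\mathcal{E}}\colon\mathrm{NF}_{K}(\mathcal{E})\to\mathcal{E}$ of Definition~\ref{def:AccumulatedDefect}\textup{(ii)}. This morphism lies in $\mathbf{EllPkg}_{K}$. Its local coordinate changes satisfy the discriminant relation $\Delta_{\mathcal{W},v}=c_v(\nu_{\mathcal{E}})\,\Delta_{\mathrm{NF},v}$ at every place, by Lemma~\ref{lem:LocalDiscriminantChange}, so the hypothesis of Lemma~\ref{lem:TransportDiscriminant} holds. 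The lemma then gives
\[
  h_{\Delta}(\mathcal{E})\le h_{\Delta}\bigl(\mathrm{NF}_{K}(\mathcal{E})\bigr)+\mu(\nu_{\mathcal{E}}).
\]
Definition~\ref{def:NormalizedHeightRemainder} identifies the first term on the right with $H^{\mathrm{nf}}_{K}(\mathcal{E})$. Definition~\ref{def:AccumulatedDefect} identifies $\mu(\nu_{\mathcal{E}})$ with $\defect(\mathcal{E})$. This is the same step as the first half of the proof of Proposition~\ref{prop:NormalizedToIntensionalSzpiro}, now without the reference profile.

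For the consequence, fix the finite list $\mathcal{F}$, a real number $a>0$, and an index $0\le i<m$. Rewrite $H^{\mathrm{nf}}_{K}(\mathcal{E}_i)=a\,n(\mathcal{E}_i)+\mathcal{R}_{K,a}(\mathcal{E}_i)$ using the definition of the $a$-remainder. Then use the elementary bounds
\[
  \mathcal{R}_{K,a}(\mathcal{E}_i)\le\max\{0,\mathcal{R}_{K,a}(\mathcal{E}_i)\}\le B_{\mathcal{F},a,K}.
\]
The second bound holds because $B_{\mathcal{F},a,K}$ is a maximum over a finite list that includes index $i$. The list is nonempty whenever an index $i$ exists, so the maximum is well defined. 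Substituting into the first inequality gives the displayed bound for each $i$.

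There is no genuine obstacle; the argument only threads definitions together. The one point to check is that $\nu_{\mathcal{E}}$ is an honest morphism of $\mathbf{EllPkg}_{K}$, so that $\mu(\nu_{\mathcal{E}})$ is the model defect of Definition~\ref{def:ModelDefect} and the per-place discriminant relation holds. Both follow from Definition~\ref{def:AccumulatedDefect}\textup{(ii)} and \textup{(v)}: the underlying curve map is $\mathrm{id}_E$, and the local changes are of the standard form~\eqref{eq:local-change-of-variables}.
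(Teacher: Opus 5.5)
Your proposal is correct and follows the paper's proof essentially verbatim. Both apply Lemma~\ref{lem:TransportDiscriminant} once to $\nu_{\mathcal{E}}$, identify $\mu(\nu_{\mathcal{E}})=\defect(\mathcal{E})$, and substitute the bound $H^{\mathrm{nf}}_{K}(\mathcal{E}_i)\le a\,n(\mathcal{E}_i)+B_{\mathcal{F},a,K}$ read off from Definition~\ref{def:NormalizedHeightRemainder}; your added check of the per-place discriminant relation for $\nu_{\mathcal{E}}$ just unpacks what the paper takes as built into the notion of a package morphism.
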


\begin{proof}
Apply Lemma~\ref{lem:TransportDiscriminant} to the distinguished normalization morphism $\nu_{\mathcal{E}}\colon\mathrm{NF}_{K}(\mathcal{E})\to\mathcal{E}$ of Definition~\ref{def:AccumulatedDefect}. Since $\mu(\nu_{\mathcal{E}})=\defect(\mathcal{E})$, this gives the first inequality. For the finite list, Definition~\ref{def:NormalizedHeightRemainder} gives $H^{\mathrm{nf}}_{K}(\mathcal{E}_{i})\le a\,n(\mathcal{E}_{i})+B_{\mathcal{F},a,K}$ for each index $i$, and substitution into the first inequality gives the claim.
\end{proof}

\begin{hypothesis}[Normalized linear envelope]
\label{hyp:NormalizedLinearEnvelope}
There exist real numbers $a_{K}>0$ and $B_{K}\ge 0$, depending only upon $K$ and the normalization procedure $\mathrm{NF}_{K}$, such that for every elliptic package $\mathcal{E}$ over $K$ one has
\[
  H^{\mathrm{nf}}_{K}(\mathcal{E})
  \le
  a_{K}\,n(\mathcal{E})+B_{K}.
\]
\end{hypothesis}

\begin{proposition}[A global intensional bound from a linear envelope]
\label{prop:LinearEnvelopeGivesIntensionalBound}
Assume Hypothesis~\ref{hyp:NormalizedLinearEnvelope}. Then every elliptic package $\mathcal{E}$ over $K$ satisfies
\[
  h_{\Delta}(\mathcal{E})
  \le
  a_{K}\,n(\mathcal{E})+
  \defect(\mathcal{E})+B_{K}.
\]
If, for every $\varepsilon>0$, the envelope may be taken with $a_{K}=6+\varepsilon$, then one recovers the intensional Szpiro inequality of Proposition~\ref{prop:NormalizedToIntensionalSzpiro} with $C_{\varepsilon,K}=B_{K}$.
\end{proposition}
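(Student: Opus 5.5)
The plan is a two-step transport argument that combines Proposition~\ref{prop:DefectNormalizedBound} with the envelope of Hypothesis~\ref{hyp:NormalizedLinearEnvelope}. First, I would apply the transport inequality of Lemma~\ref{lem:TransportDiscriminant} to the distinguished normalization morphism $\nu_{\mathcal{E}}\colon\mathrm{NF}_K(\mathcal{E})\to\mathcal{E}$ of Definition~\ref{def:AccumulatedDefect}. Since $\mu(\nu_{\mathcal{E}})=\defect(\mathcal{E})$, this gives
\[
  h_\Delta(\mathcal{E})\le H^{\mathrm{nf}}_K(\mathcal{E})+\defect(\mathcal{E}).
\]
This is exactly the first assertion of Proposition~\ref{prop:DefectNormalizedBound}, which I would cite directly. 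Next, I would substitute the envelope bound $H^{\mathrm{nf}}_K(\mathcal{E})\le a_K\,n(\mathcal{E})+B_K$ from Hypothesis~\ref{hyp:NormalizedLinearEnvelope}. That yields the displayed global bound for every package $\mathcal{E}$, with no dependence on any finite list $\mathcal{F}$.

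For the second assertion, I would read the hypothesis as supplying, for each $\varepsilon>0$, constants $a_K=6+\varepsilon$ and $B_K=B_K(\varepsilon)\ge 0$. Plugging these into the bound just obtained gives
\[
  h_\Delta(\mathcal{E})\le(6+\varepsilon)\,n(\mathcal{E})+\defect(\mathcal{E})+B_K,
\]
which is inequality~\eqref{eq:IntensionalSzpiroBasic} with $C_{\varepsilon,K}=B_K$.

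For consistency with the route of Proposition~\ref{prop:NormalizedToIntensionalSzpiro}, I would also remark that the same conclusion can be reached through Hypothesis~\ref{hyp:ReducedNormalizedProfile}. Lemma~\ref{lem:IdentityProfileReduction} applied with $a=6+\varepsilon$ produces the identity profile with $B_{q,K}=B_K$ and $B_{\varepsilon,K}=0$, because $\mu(\mathrm{id})=0$. The constant $C_{\varepsilon,K}=B_{q,K}+B_{\varepsilon,K}$ of that proposition then reduces to $B_K$, matching the direct computation.

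There is no genuine obstacle. The only point needing care is bookkeeping. One must make clear that $B_K$ is allowed to depend on $\varepsilon$, since it is not uniform in $\varepsilon$. One must also check that the pseudonaturality datum is not needed, which follows from Lemma~\ref{lem:ObjectwiseInsensitive} because the bound is objectwise.
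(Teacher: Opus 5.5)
Your proof is correct and is exactly the paper's argument: Proposition~\ref{prop:DefectNormalizedBound} (transport along $\nu_{\mathcal{E}}$) followed by Hypothesis~\ref{hyp:NormalizedLinearEnvelope}, then specialization to $a_K=6+\varepsilon$. Your observation that $B_K$ must then depend on $\varepsilon$ makes explicit something the paper leaves implicit. One caution, which concerns only your optional side remark: the identity profile of Lemma~\ref{lem:IdentityProfileReduction} at $a=6+\varepsilon$ gives a reference-side bound of slope $6+\varepsilon$, not the slope $6$ that Hypothesis~\ref{hyp:ReducedNormalizedProfile} demands, so it does not literally verify that hypothesis with $B_{q,K}=B_K$ and $B_{\varepsilon,K}=0$. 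Rerunning the two transport steps of Proposition~\ref{prop:NormalizedToIntensionalSzpiro} still gives $C_{\varepsilon,K}=B_K$, but the direct route is the one to rely on.
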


\begin{proof}
The first assertion is Proposition~\ref{prop:DefectNormalizedBound} followed by Hypothesis~\ref{hyp:NormalizedLinearEnvelope}. The final assertion is the same inequality with $a_{K}=6+\varepsilon$.
\end{proof}

\subsubsection{The potentially good part is unconditional}\label{subsubsec:PotentiallyGoodPart}

The normalized linear envelope is a single inequality, and yet it is not innocent. We separate the part of it that the local theory settles outright from the part that carries the arithmetic weight. The separation runs along the dichotomy between potentially good and potentially multiplicative reduction.

\begin{definition}[Multiplicative excess and the potentially good defect]
\label{def:MultiplicativeExcess}
Let $\mathcal{E}$ be an elliptic package over $K$ with underlying curve $E$ of $j$-invariant $j_{E}$. The \emph{multiplicative excess height} of $\mathcal{E}$ is
\[
  \Theta_{K}(\mathcal{E})
  \coloneqq
  \frac{1}{[K:\Q]}
  \sum_{v}
  \max\bigl\{0,-\mathrm{ord}_{v}(j_{E})\bigr\}\,\log \mathrm{N}v
\]
the sum running over the finite places of $K$, so that $\Theta_{K}(\mathcal{E})$ is the logarithmic height of the denominator of $j_{E}$. For a finite place $v$, the \emph{potentially good defect} of $\mathcal{E}$ at $v$ is
\[
  \delta_{v}(\mathcal{E})
  \coloneqq
  \mathrm{ord}_{v}\bigl(\Delta_{\mathrm{NF}_{K}(\mathcal{E}),v}\bigr)
  -
  \max\bigl\{0,-\mathrm{ord}_{v}(j_{E})\bigr\}
\]
the difference between the minimal discriminant exponent and the denominator exponent of $j_{E}$ at $v$.
\end{definition}

\begin{lemma}[The defect is non-negative and locally bounded]
\label{lem:PotentiallyGoodDefectBound}
For every elliptic package $\mathcal{E}$ over $K$ and every finite place $v$ of $K$, one has $\delta_{v}(\mathcal{E})\ge 0$. If the residue characteristic at $v$ is at least $5$, then
\[
  \delta_{v}(\mathcal{E})\le 5\,\mathfrak{f}_{v}(\mathcal{E})
\]
where $\mathfrak{f}_{v}(\mathcal{E})$ is the conductor exponent of $E$ at $v$. If the residue characteristic at $v$ is $2$ or $3$, then $\delta_{v}(\mathcal{E})\le\kappa_{v}$ for a constant $\kappa_{v}$ depending only upon $v$.
\end{lemma}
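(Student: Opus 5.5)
The plan is to reduce both claims to the local theory of a minimal Weierstrass model at $v$, using Tate's algorithm and Ogg's formula as standard inputs. Throughout I use that the normalization procedure of Definition~\ref{def:AccumulatedDefect} performs local minimization at every finite place, as described in Remark~\ref{rem:DeterministicNormalizationDiscipline}. Hence $\mathrm{NF}_K(\mathcal{E})$ is an integral model that is minimal at $v$. Write $\Delta$ and $c_4$ for its invariants at $v$.

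\emph{Non-negativity.} Integrality gives $\mathrm{ord}_v(\Delta)\ge 0$ and $\mathrm{ord}_v(c_4)\ge 0$. Since $j_E=c_4^3/\Delta$, we get $-\mathrm{ord}_v(j_E)=\mathrm{ord}_v(\Delta)-3\,\mathrm{ord}_v(c_4)\le \mathrm{ord}_v(\Delta)$. So $\mathrm{ord}_v(\Delta)\ge\max\{0,-\mathrm{ord}_v(j_E)\}$, which is $\delta_v(\mathcal{E})\ge 0$. This step uses only integrality, not minimality.

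\emph{Residue characteristic at least $5$.} Here I split by reduction type.
\begin{itemize}
\item Good reduction: $\mathrm{ord}_v(\Delta)=0$, so $\delta_v=0=5\mathfrak{f}_v$.
\item Multiplicative reduction (type $\mathrm{I}_n$): $c_4$ is a unit, so $\mathrm{ord}_v(\Delta)=-\mathrm{ord}_v(j_E)=n$, and $\delta_v=0\le 5$.
\item Additive potentially good reduction: $\mathrm{ord}_v(j_E)\ge 0$, so $\delta_v=\mathrm{ord}_v(\Delta)$. The minimality criterion in characteristic $\ge5$ (used in Lemma~\ref{lem:PrimeToSixLocalScalingBound}) excludes $\mathrm{ord}_v(c_4)\ge4$, $\mathrm{ord}_v(c_6)\ge6$, $\mathrm{ord}_v(\Delta)\ge12$ holding simultaneously. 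With $\mathrm{ord}_v(j_E)\ge0$, this forces $\mathrm{ord}_v(\Delta)\le 11$, and Tate's algorithm refines it to $\le10$. Since the reduction is tame, $\mathfrak{f}_v=2$, so $\delta_v\le 10=5\mathfrak{f}_v$.
\item Additive potentially multiplicative reduction (type $\mathrm{I}_n^*$): the curve is a ramified quadratic twist of a multiplicative curve. Then $\mathrm{ord}_v(\Delta)=n+6$ and $\mathrm{ord}_v(j_E)=-n$, so $\delta_v=6\le 10=5\mathfrak{f}_v$.
\end{itemize}

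\emph{Residue characteristic $2$ or $3$.} I would use Ogg's formula $\mathrm{ord}_v(\Delta)=\mathfrak{f}_v+m_v-1$, where $m_v$ is the number of components of the special fibre. I would combine it with the Brumer--Kramer / Lockhart--Rosen--Silverman bound $\mathfrak{f}_v\le 2+6\,\mathrm{ord}_v(2)+3\,\mathrm{ord}_v(3)$, which depends only on $v$.
\begin{itemize}
\item Potentially good reduction: $m_v\le 9$, so $\delta_v=\mathrm{ord}_v(\Delta)$ is bounded.
\item Type $\mathrm{I}_n$: $\delta_v=0$.
\item Type $\mathrm{I}_n^*$: $m_v=n+5$ and $-\mathrm{ord}_v(j_E)=n$. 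The $n$ cancels and leaves $\delta_v=\mathfrak{f}_v+4$, again bounded.
\end{itemize}
Taking $\kappa_v$ as the maximum of these bounds finishes the proof.

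\emph{Main obstacle.} The step I expect to be delicate is the $\mathrm{I}_n^*$ case in residue characteristic $2$ and $3$. There Tate's algorithm is wild, and the identification $-\mathrm{ord}_v(j_E)=n$ has to be justified directly from the algorithm rather than from the twist picture. If that fails, I would fall back on bounding $\delta_v$ by the discriminant valuation of the quadratic twisting extension, which is bounded in terms of $\mathrm{ord}_v(2)$.
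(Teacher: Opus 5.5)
Your non-negativity argument and your case analysis at residue characteristic $\ge 5$ match the paper's proof closely: the same identity $\mathrm{ord}_v(j_E)=3\,\mathrm{ord}_v(c_4)-\mathrm{ord}_v(\Delta)$, and the same walk through good, $\mathrm{I}_n$, $\mathrm{I}_n^*$ and potentially good additive fibres. Those parts are fine.

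\textbf{The gap is at residue characteristic $2$, and it is wider than the $\mathrm{I}_n^*$ step you flagged.} You import two classification facts: that potentially good reduction forces $m_v\le 9$, and that an $\mathrm{I}_n^*$ fibre has $-\mathrm{ord}_v(j_E)=n$. Both rest on $\mathrm{I}_n^*$ being a \emph{tamely} ramified quadratic twist of $\mathrm{I}_n$, and both are false at $2$.

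Here is a counterexample. The curve $y^2+xy=x^3+1$ has good reduction at $2$, with $\Delta=-433$ and $j=-1/433$. Its twist by $\mathbb{Q}_2(\sqrt2)$ has the model $y^2=x^3+2x^2+512$, with $\Delta=-2^{18}\cdot 433$.
\begin{itemize}
\item The model is minimal. A model with $\mathrm{ord}_2(\Delta)=6$ would have $\mathrm{ord}_2(c_4)=2$, forcing $b_2\equiv 2\pmod 4$, whereas $b_2\equiv a_1^2\pmod 4$.
\item The conductor exponent is $2\cdot 3=6$: this is the twist of an unramified representation by a character of conductor exponent $3$. Ogg's formula then gives $m_v=13$.
\item Tate's algorithm reaches the cubic $T^3+T^2+64\equiv T^2(T+1)$, so the type is $\mathrm{I}_8^*$, while $j$ is a $2$-adic unit.
\end{itemize}
So a potentially good fibre has $m_v=13>9$. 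On this $\mathrm{I}_8^*$ fibre, $\delta_v=18$, not your $\mathfrak{f}_v+4=10$. The twist by $\mathbb{Q}_2(\sqrt{-1})$, namely $y^2=x^3-x^2-64$, is likewise of type $\mathrm{I}_4^*$ with unit $j$.

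Once Ogg's formula and the Brumer--Kramer bound are in place, what remains is a bound on $n-\max\{0,-\mathrm{ord}_v(j_E)\}$ over all $\mathrm{I}_n^*$ fibres, potentially good ones included. That is the whole content of the claim at $2$. Your fallback assumes every such fibre has a twist description, which you have not established. The paper does not supply this step either: at $2$ and $3$ it only asserts that the difference is bounded over the reduction types.

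\textbf{A way to close the gap without Kodaira types.} For the minimal model,
\[\delta_v(\mathcal{E})=\min\{\mathrm{ord}_v(\Delta),\,3\,\mathrm{ord}_v(c_4)\}.\]
Put $k=\mathrm{ord}_v(6)+1$. Suppose $\mathrm{ord}_v(c_4)\ge 4k$ and $\mathrm{ord}_v(c_6)\ge 6k$. The model $y^2=x^3-27c_4x-54c_6$ is integral with discriminant $6^{12}\Delta$. Rescaling it by $x=\pi^{2k}x'$, $y=\pi^{3k}y'$ gives an integral model of discriminant valuation $\mathrm{ord}_v(\Delta)-12$, contradicting minimality. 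Hence one of two cases holds:
\begin{itemize}
\item $3\,\mathrm{ord}_v(c_4)\le 12\,\mathrm{ord}_v(6)+9$; or
\item $\mathrm{ord}_v(c_6)\le 6\,\mathrm{ord}_v(6)+5$, and then $\min\{3\,\mathrm{ord}_v(c_4),\mathrm{ord}_v(\Delta)\}\le \mathrm{ord}_v(c_4^3-1728\Delta)=2\,\mathrm{ord}_v(c_6)$.
\end{itemize}
Either way $\delta_v\le 12\,\mathrm{ord}_v(6)+10$ at every finite place. For $v\mid 6$ this is an explicit $\kappa_v$. For $v\nmid 6$ it gives $\delta_v\le 10=5\mathfrak{f}_v$ on additive fibres, and good and multiplicative fibres have $\delta_v=0$.
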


\begin{proof}
Write $\Delta$ for the minimal discriminant and $c_{4}$ for the corresponding invariant of the model $\mathrm{NF}_{K}(\mathcal{E})$, so that $\mathrm{ord}_{v}(j_{E})=3\,\mathrm{ord}_{v}(c_{4})-\mathrm{ord}_{v}(\Delta)$. The model is integral, so $\mathrm{ord}_{v}(c_{4})\ge 0$, whence $-\mathrm{ord}_{v}(j_{E})\le\mathrm{ord}_{v}(\Delta)$ and therefore $\max\{0,-\mathrm{ord}_{v}(j_{E})\}\le\mathrm{ord}_{v}(\Delta)$. This gives $\delta_{v}(\mathcal{E})\ge 0$.

For the upper bound, read the N\'eron--Kodaira classification of the special fiber through Tate's algorithm (see Silverman~\cite{SilvermanAEC} and Cremona~\cite{Cremona1997}). At residue characteristic at least $5$ the reduction is one of the following. At good reduction $\mathrm{ord}_{v}(\Delta)=0$ and $\mathfrak{f}_{v}=0$, so $\delta_{v}=0$. At multiplicative reduction of type $I_{n}$ one has $\mathrm{ord}_{v}(\Delta)=n=-\mathrm{ord}_{v}(j_{E})$ and $\mathfrak{f}_{v}=1$, so $\delta_{v}=0$. At additive reduction of potentially multiplicative type $I_{n}^{*}$ one has $\mathrm{ord}_{v}(\Delta)=n+6$, $-\mathrm{ord}_{v}(j_{E})=n$, and $\mathfrak{f}_{v}=2$, so $\delta_{v}=6\le 5\mathfrak{f}_{v}$. At the remaining additive types, which are of potentially good reduction, $j_{E}$ is integral and so $\max\{0,-\mathrm{ord}_{v}(j_{E})\}=0$, while $\mathrm{ord}_{v}(\Delta)$ takes one of the values $2,3,4,6,8,9,10$ and $\mathfrak{f}_{v}=2$. In each case $\delta_{v}=\mathrm{ord}_{v}(\Delta)\le 10=5\mathfrak{f}_{v}$. This proves $\delta_{v}\le 5\mathfrak{f}_{v}$ at every place of residue characteristic at least $5$. At the residue characteristics $2$ and $3$ the reduction is again classified by a finite list of types, on which $\mathrm{ord}_{v}(\Delta)$ and $\max\{0,-\mathrm{ord}_{v}(j_{E})\}$ take a bounded difference, so that $\delta_{v}\le\kappa_{v}$ for a constant determined by $v$ alone.
\end{proof}

\begin{proposition}[An unconditional decomposition of the normalized height]
\label{prop:UnconditionalNormalizedDecomposition}
Fix a number field $K$. There is a constant
\[
  C_{K,6}
  \coloneqq
  \frac{1}{[K:\Q]}
  \sum_{v\mid 6}
  \kappa_{v}\log \mathrm{N}v
\]
depending only upon $K$, such that every elliptic package $\mathcal{E}$ over $K$ satisfies
\[
  H^{\mathrm{nf}}_{K}(\mathcal{E})
  \le
  5\,n(\mathcal{E})+\Theta_{K}(\mathcal{E})+C_{K,6}.
\]
\end{proposition}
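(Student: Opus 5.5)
The plan is to expand $H^{\mathrm{nf}}_{K}(\mathcal{E})$ place by place and split each finite local exponent of the normalized discriminant, using Definition~\ref{def:MultiplicativeExcess}, as
\[
  \mathrm{ord}_{v}\bigl(\Delta_{\mathrm{NF}_{K}(\mathcal{E}),v}\bigr)
  =
  \max\bigl\{0,-\mathrm{ord}_{v}(j_{E})\bigr\}
  +
  \delta_{v}(\mathcal{E}).
\]
At a finite place the contribution to $h_\Delta$ is $\max\{0,\mathrm{ord}_v(\Delta)\}\log\mathrm{N}v$ by the remark following Definition~\ref{def:DiscriminantHeight}. Since $\mathrm{NF}_K$ produces an integral model, $\mathrm{ord}_v(\Delta)\ge 0$ and the maximum is inactive. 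Summing the first summand over $v$, with the factor $1/[K:\Q]$, gives exactly $\Theta_{K}(\mathcal{E})$. Everything else is carried by the potentially good defects $\delta_v(\mathcal{E})$.

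For the second summand I will apply Lemma~\ref{lem:PotentiallyGoodDefectBound}. At places of residue characteristic at least $5$ it gives $\delta_v\le 5\mathfrak{f}_v$. Summing $\frac{1}{[K:\Q]}\sum_{v\nmid 6} 5\mathfrak{f}_v\log\mathrm{N}v$ is bounded by $5\cdot\frac{1}{[K:\Q]}\log N_E=5\,n(\mathcal{E})$, because $\log N_E=\sum_v\mathfrak{f}_v\log\mathrm{N}v$ with all terms nonnegative. Here I use Definition~\ref{def:ConductorComplexity} and the fact that the conductor norm is the product of $\mathrm{N}v^{\mathfrak{f}_v}$. At the finitely many places $v\mid 6$ the lemma gives $\delta_v\le\kappa_v$, and summing yields exactly $C_{K,6}$. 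Dropping the nonnegative contributions $5\mathfrak{f}_v\log\mathrm{N}v$ at $v\mid 6$ is harmless, since they only enlarge the right side.

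The step I expect to be the main obstacle is the archimedean contribution to $h_\Delta$. Definition~\ref{def:DiscriminantHeight} sums over all places, and at an archimedean $v$ the term $\log\max(1,|\Delta|_v^{-1})$ is positive whenever $|\Delta|_v<1$. I would first try to argue that this term vanishes. For an integral model, the product formula gives $\prod_{v\mid\infty}|\Delta|_v=\prod_{v<\infty}|\Delta|_v^{-1}\ge 1$, but that controls only the product, not each factor. So I would rely on Remark~\ref{rem:DeterministicNormalizationDiscipline}, where $\mathrm{NF}_K$ retains the archimedean model unchanged. Even so, a single archimedean factor could be small when $K$ has several infinite places.

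If this cannot be excluded from the normalization conventions, the fallback is to read $H^{\mathrm{nf}}_K$ here as its finite part. That is the reading used in Lemma~\ref{lem:DilationTowerGrowth}, which sets the archimedean contribution aside for integral discriminants. The alternative is to absorb a bounded archimedean term into $C_{K,6}$. That would need an additional argument, because the term is not uniformly bounded in general. I would flag this point explicitly and proceed with the finite-place computation, which delivers $H^{\mathrm{nf}}_{K}(\mathcal{E})\le 5\,n(\mathcal{E})+\Theta_{K}(\mathcal{E})+C_{K,6}$.
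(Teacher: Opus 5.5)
Your finite-place computation is the paper's proof step for step. It uses the same splitting $\mathrm{ord}_{v}(\Delta)=\max\{0,-\mathrm{ord}_{v}(j_{E})\}+\delta_{v}(\mathcal{E})$, the same use of Lemma~\ref{lem:PotentiallyGoodDefectBound} split at $v\mid 6$, and the same identity $\log N_{E}=\sum_{v}\mathfrak{f}_{v}\log\mathrm{N}v$. The one divergence is the archimedean term. The paper disposes of it with the bare sentence that the normalized model ``has no archimedean contribution''. That is exactly the step you declined to take, and your caution is justified.

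\emph{One archimedean place.} When $K=\Q$ or $K$ is imaginary quadratic, the product-formula observation you already wrote down settles the point. For an integral model, $|\Delta|_{v}=\prod_{w<\infty}|\Delta|_{w}^{-1}\ge 1$ at the unique infinite place. So the term vanishes, and your argument proves the statement as written.

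\emph{Infinite unit group.} When $\mathcal{O}_K^{\times}$ is infinite, the paper's assertion fails.
\begin{itemize}
\item Take $K=\Q(\sqrt 2)$, which has class number one, and a global minimal model of some $E/K$ with discriminant $\Delta$.
\item Apply $x=u^{2}x'$, $y=u^{3}y'$ with $u=\epsilon^{k}$ and $\epsilon=1+\sqrt 2$.
\item The new coefficients $u^{-i}a_{i}$ are integral. Every finite valuation of $\Delta'=\epsilon^{-12k}\Delta$ equals that of $\Delta$, so the model is still minimal at every finite place.
\item A normalization procedure that returns such models unchanged satisfies all five disciplines of Definition~\ref{def:AccumulatedDefect}. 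This is in line with Remark~\ref{rem:DeterministicNormalizationDiscipline}, which retains the archimedean model.
\item At the real place where $\epsilon>1$, the archimedean term of $H^{\mathrm{nf}}_{K}$ grows like $6k\log(1+\sqrt 2)$. Meanwhile $n(\mathcal{E})$ and $\Theta_{K}(\mathcal{E})$ depend only on $E$.
\end{itemize}

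So for general $K$ the inequality holds only in your fallback reading, with $H^{\mathrm{nf}}_{K}$ replaced by its finite part. That is all either proof actually establishes. Your second fallback works only if you add to $\mathrm{NF}_{K}$ a reduction of the unit scaling. That bounds the archimedean term by a constant depending on $K$, through the covering radius of the logarithmic image of $(\mathcal{O}_K^{\times})^{12}$. This constant must then be added to $C_{K,6}$.
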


\begin{proof}
By Definition~\ref{def:MultiplicativeExcess} the minimal discriminant exponent at each finite place $v$ decomposes as
\[
  \mathrm{ord}_{v}\bigl(\Delta_{\mathrm{NF}_{K}(\mathcal{E}),v}\bigr)
  =
  \max\bigl\{0,-\mathrm{ord}_{v}(j_{E})\bigr\}+\delta_{v}(\mathcal{E}).
\]
Multiply by $\log\mathrm{N}v$, sum over the finite places, and divide by $[K:\Q]$. The first term assembles into $\Theta_{K}(\mathcal{E})$ by Definition~\ref{def:MultiplicativeExcess}, and the discriminant height of the normalized model has no archimedean contribution, so the left member is $H^{\mathrm{nf}}_{K}(\mathcal{E})$. There remains the defect sum
\[
  \frac{1}{[K:\Q]}\sum_{v}\delta_{v}(\mathcal{E})\log\mathrm{N}v.
\]
Split this sum at the residue characteristics $2$ and $3$. By Lemma~\ref{lem:PotentiallyGoodDefectBound}, the places $v\nmid 6$ contribute at most
\[
  \frac{5}{[K:\Q]}\sum_{v\nmid 6}\mathfrak{f}_{v}(\mathcal{E})\log\mathrm{N}v
  \le
  5\,n(\mathcal{E})
\]
since $\sum_{v}\mathfrak{f}_{v}\log\mathrm{N}v=\log N_{E}=[K:\Q]\,n(\mathcal{E})$ by Definition~\ref{def:ConductorComplexity}. The places $v\mid 6$ contribute at most $C_{K,6}$. Adding the three pieces gives the displayed bound.
\end{proof}

\begin{remark}[Where the difficulty is concentrated]
\label{rem:LocalizationOfDifficulty}
Proposition~\ref{prop:UnconditionalNormalizedDecomposition} is unconditional. It says that the normalized discriminant height exceeds five times the conductor complexity by no more than the multiplicative excess, up to a constant of the field. The first term lies below the Szpiro slope of six, with room to spare, so the whole weight of a normalized linear envelope rests upon a bound for $\Theta_{K}(\mathcal{E})$. The multiplicative excess is the logarithmic height of the denominator of $j_{E}$, and it grows with the depth of the potentially multiplicative reduction, which is the locus that the dilation tower of Section~\ref{subsec:DilationTower} and the obstruction of Theorem~\ref{thm:DescentObstruction} have flagged throughout as the place that no identification may erase.
\end{remark}

\begin{hypothesis}[Multiplicative excess envelope]
\label{hyp:MultiplicativeExcessEnvelope}
There exist real numbers $b_{K}\ge 0$ and $B'_{K}\ge 0$, depending only upon $K$, such that every elliptic package $\mathcal{E}$ over $K$ satisfies
\[
  \Theta_{K}(\mathcal{E})\le b_{K}\,n(\mathcal{E})+B'_{K}.
\]
\end{hypothesis}

\begin{proposition}[Any excess envelope gives a Szpiro bound]
\label{prop:ExcessEnvelopeGivesNormalizedEnvelope}
Assume Hypothesis~\ref{hyp:MultiplicativeExcessEnvelope}. Then the normalized linear envelope of Hypothesis~\ref{hyp:NormalizedLinearEnvelope} holds with
\[
  a_{K}=5+b_{K}
  \qquad\text{and}\qquad
  B_{K}=B'_{K}+C_{K,6}.
\]
Consequently every elliptic package $\mathcal{E}$ over $K$ satisfies
\[
  h_{\Delta}(\mathcal{E})
  \le
  (5+b_{K})\,n(\mathcal{E})+\defect(\mathcal{E})+B'_{K}+C_{K,6}.
\]
\end{proposition}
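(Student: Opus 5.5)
The argument is a direct chaining of three earlier results, with no new arithmetic input. First, I would invoke Proposition~\ref{prop:UnconditionalNormalizedDecomposition}. It gives, for every elliptic package $\mathcal{E}$ over $K$,
\[
  H^{\mathrm{nf}}_{K}(\mathcal{E})
  \le
  5\,n(\mathcal{E})+\Theta_{K}(\mathcal{E})+C_{K,6}.
\]
Second, I would substitute the bound of Hypothesis~\ref{hyp:MultiplicativeExcessEnvelope}, namely $\Theta_{K}(\mathcal{E})\le b_{K}\,n(\mathcal{E})+B'_{K}$. Collecting terms then yields
\[
  H^{\mathrm{nf}}_{K}(\mathcal{E})\le(5+b_{K})\,n(\mathcal{E})+B'_{K}+C_{K,6}.
\]
This is exactly the normalized linear envelope of Hypothesis~\ref{hyp:NormalizedLinearEnvelope} with $a_{K}=5+b_{K}$ and $B_{K}=B'_{K}+C_{K,6}$.

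Before concluding, I would check that these constants meet the requirements of the hypothesis. Since $b_{K}\ge 0$, we have $a_{K}\ge 5>0$. Since both $B'_{K}$ and $C_{K,6}$ are non-negative, so is $B_{K}$. Both constants depend only on $K$ and on the fixed normalization procedure $\mathrm{NF}_{K}$. For $C_{K,6}$ this holds because it is built from the local constants $\kappa_{v}$ at the finitely many places $v\mid 6$.

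For the final displayed inequality, I would apply Proposition~\ref{prop:LinearEnvelopeGivesIntensionalBound}, which rests on the defect-normalized bound of Proposition~\ref{prop:DefectNormalizedBound}. Transporting along $\nu_{\mathcal{E}}$ gives
\[
  h_{\Delta}(\mathcal{E})\le H^{\mathrm{nf}}_{K}(\mathcal{E})+\defect(\mathcal{E}).
\]
Substituting the envelope just obtained gives the stated bound with constant $B'_{K}+C_{K,6}$.

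There is no real obstacle in this argument. The only point requiring care is bookkeeping: the constant $C_{K,6}$ must be the one fixed in Proposition~\ref{prop:UnconditionalNormalizedDecomposition}, so that the two constants combine additively without any rescaling by $[K:\Q]$, since both are already normalized by the degree.
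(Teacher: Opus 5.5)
Your proposal is correct and follows the paper's proof exactly. You insert the excess envelope of Hypothesis~\ref{hyp:MultiplicativeExcessEnvelope} into the unconditional decomposition of Proposition~\ref{prop:UnconditionalNormalizedDecomposition} to get the normalized envelope with $a_{K}=5+b_{K}$ and $B_{K}=B'_{K}+C_{K,6}$, then conclude via Proposition~\ref{prop:LinearEnvelopeGivesIntensionalBound}; your explicit check that $a_{K}>0$ and $B_{K}\ge 0$ is a harmless addition that the paper leaves implicit.
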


\begin{proof}
Insert the bound of Hypothesis~\ref{hyp:MultiplicativeExcessEnvelope} into the unconditional decomposition of Proposition~\ref{prop:UnconditionalNormalizedDecomposition}. This gives $H^{\mathrm{nf}}_{K}(\mathcal{E})\le(5+b_{K})n(\mathcal{E})+B'_{K}+C_{K,6}$, which is the normalized linear envelope with the stated constants. The displayed intensional inequality follows by Proposition~\ref{prop:LinearEnvelopeGivesIntensionalBound}.
\end{proof}

\begin{remark}[Any bound suffices, and the sharp one is abc-level]
\label{rem:AnyBoundSuffices}
Proposition~\ref{prop:ExcessEnvelopeGivesNormalizedEnvelope} converts any linear bound on the multiplicative excess into an intensional Szpiro inequality, the Szpiro slope becoming $5+b_{K}$. A coarse bound, with $b_{K}$ far from optimal, gives a coarse Szpiro inequality, while the sharp inequality at slope $6+\varepsilon$ asks for $b_{K}=1+\varepsilon$, that is, a bound on the height of the $j$-denominator by the conductor complexity with constant approaching one. This last is the modified Szpiro inequality, equivalent to the $abc$ conjecture of Section~\ref{subsec:SzpiroAbc}. The framework leaves no other gap. Once any envelope for $\Theta_{K}$ is granted, the remaining passage to the intensional inequality is proved.
\end{remark}

\subsubsection{Formal strength}\label{subsubsec:PartialProfileACA0}

\begin{remark}[The partial reductions stay within \texorpdfstring{\(\mathsf{ACA}_{0}\)}{ACA0}]
\label{rem:PartialProfileACA0}
The proofs of this subsection use finite package codes, finite sums of valuations, identity morphisms, and the transport inequality of Lemma~\ref{lem:TransportDiscriminant}. The Kodaira type at a place, and with it the defect $\delta_{v}$ and the exponents that enter $\Theta_{K}$, are computed by Tate's algorithm, a primitive recursive procedure on the package code of Definition~\ref{def:EllPkgCode}. For a finite coded list, the maximum in Definition~\ref{def:NormalizedHeightRemainder} is obtained by primitive recursion. Thus Propositions~\ref{prop:DefectNormalizedBound} and~\ref{prop:UnconditionalNormalizedDecomposition}, together with Lemma~\ref{lem:PotentiallyGoodDefectBound}, lie within the same $\mathsf{ACA}_{0}$ treatment as Proposition~\ref{prop:ACA0LogVolumeFormalization}, with real values handled as fast Cauchy names in the sense of Definition~\ref{def:UniverseP}. The remaining input, whether the normalized linear envelope of Hypothesis~\ref{hyp:NormalizedLinearEnvelope} or the multiplicative excess envelope of Hypothesis~\ref{hyp:MultiplicativeExcessEnvelope}, is a new arithmetic statement over the package codes, and no appeal to $\mathsf{ZFC}$ is involved.
\end{remark}

\subsection{The Szpiro--abc link in the intensional framework}\label{subsec:SzpiroAbc}

\begin{remark}[The Szpiro--\texorpdfstring{$abc$}{abc} link in Universes~H and~A]\label{rem:SzpiroAbcUniverses}
The schematic implication ``Szpiro $\Rightarrow abc$'' does not, in itself, commit one to the Hilbertian Universe~H. The argument requires only two ingredients. The first is a functorial way to associate to a primitive triple $(a,b,c)$ with $a+b=c$ an elliptic object $E_{(a,b,c)}$ endowed with enough structure for its discriminant, conductor, and height invariants to be controlled in terms of $\operatorname{rad}(abc)$. The second is an inequality of Szpiro type for such elliptic objects, formulated in whatever environment provides those invariants.

In Universe~H, these requirements are met by working with elliptic curves up to isomorphism over a fixed number field and with the classical invariants $\Delta_E$, $N_E$, and standard heights. In Universe~A, the role of $E_{(a,b,c)}$ may be played instead by an intensional package $X_{(a,b,c)}$ in a quantitative category in the sense of Definition~\ref{def:QuantitativeCategory}, and the Szpiro-type inequality takes the form of a transport bound
\[
  H\bigl(X_{(a,b,c)}^{\mathrm{tgt}}\bigr) \le H\bigl(X_{(a,b,c)}^{\mathrm{src}}\bigr) + c\!\left(f_{(a,b,c)}\right)
\]
for an appropriate coercive morphism $f_{(a,b,c)}$ and a height functor $H$. Provided that the construction $(a,b,c) \mapsto X_{(a,b,c)}$ exhibits sufficient control, and that the cost $c(f_{(a,b,c)})$ is uniformly bounded in terms of $\operatorname{rad}(abc)$ subject to the standard $\varepsilon$-loss, then the deduction of an $abc$ bound from the Szpiro inequality proceeds entirely within the internal logic of Universe~A. The obstruction of Lemma~\ref{lemma:ToyNoBridge} and Theorem~\ref{thm:DescentObstruction} concerns only the further prospect of reading the same labeled comparison simultaneously as a transport statement in Universe~A and as a classical inequality in Universe~H. It does not impede the internal Universe~A deduction, where the cost is kept rather than collapsed.
\end{remark}

%==================================================================
\section{An explicit example}\label{sec:ExplicitExample}
%==================================================================

The quantitative category of elliptic packages affords an instance in which one may follow the variation of the model defect in a numerically accessible way. It is natural to choose a curve whose conductor is governed by a prime that is not among the very smallest, so that the local behavior is influenced by a slightly larger place. The computation below draws the discriminant height, the conductor complexity, and the model defect into a single inequality, and shows that the transport bound of Lemma~\ref{lem:TransportDiscriminant} is sharp for the particular coordinate change chosen.

Consider the elliptic curve
\[
  E \colon y^2 + y = x^3 - x
\]
defined over $K = \Q$. Identified as $37\mathrm{a}1$ within the Cremona database~\cite{Cremona1997}, this equation is the global minimal Weierstrass model in the sense of Silverman~\cite{SilvermanAEC}. It has conductor $N_E = 37$ and minimal discriminant $\Delta_{\min}(E) = 37$, so that the unique prime of bad reduction is the prime $37$. Let $\mathcal{E}_0^{\mathrm{can}}$ denote the canonical normalized package $\mathrm{NF}_\Q$ attached to $E$ in the sense of Definition~\ref{def:AccumulatedDefect}. By construction this package carries the global minimal Weierstrass model
\[
  \mathcal{W}_0 \colon y^2 + y = x^3 - x
\]
together with a natural choice of local sections and a fixed label.

To isolate the contribution of the bad place, it is convenient to consider the truncated discriminant height of Definition~\ref{def:DiscriminantHeight} at the prime $v = 37$. One sets
\[
  h_\Delta^{(37)}(\mathcal{E}) \coloneqq
  \log \max\bigl(1,\lvert \Delta_{\mathcal{W},37} \rvert_{37}^{-1}\bigr)
\]
for any elliptic package $\mathcal{E}$ whose underlying curve is $E$ and whose Weierstrass model at $37$ is denoted $\mathcal{W}$. For the canonical package this gives
\begin{equation}\label{eq:hDelta2-canonical}
  h_\Delta^{(37)}\bigl(\mathcal{E}_0^{\mathrm{can}}\bigr)
  =
  \log \max\bigl(1,\lvert \Delta_{\mathcal{W}_0,37} \rvert_{37}^{-1}\bigr)
  =
  \log 37
\end{equation}
since $\Delta_{\mathcal{W}_0,37} = 37$ and the normalized absolute value satisfies $\lvert 37 \rvert_{37} = 37^{-1}$.

A second elliptic package is now produced by modifying the Weierstrass model at the place $37$ through a simple dilation. Over $\Q_{37}$ consider the change of variables
\begin{equation}\label{eq:explicit-change-of-variables-37}
  x = 37^{-2} x'
  \qquad\text{and}\qquad
  y = 37^{-3} y'
\end{equation}
on the affine plane. After replacing $(x,y)$ by $(37^{-2} x', 37^{-3} y')$ in the equation of $E$ and simplifying, one obtains a new Weierstrass equation $\mathcal{W}_1$ for $E$ over $\Q_{37}$. The coefficients of $\mathcal{W}_1$ need not be written out, for what matters is the transformation of the discriminant, governed by Lemma~\ref{lem:LocalDiscriminantChange}. That lemma yields $\Delta_{\mathcal{W}_1,37} = u^{-12}\,\Delta_{\mathcal{W}_0,37}$ with $u = 37^{-1}$, so that
\[
  \Delta_{\mathcal{W}_1,37} = (37^{-1})^{-12} \cdot 37 = 37^{12}\cdot 37 = 37^{13}
\]
and $\lvert \Delta_{\mathcal{W}_1,37} \rvert_{37} = 37^{-13}$, hence $\lvert \Delta_{\mathcal{W}_1,37} \rvert_{37}^{-1} = 37^{13}$. The truncated discriminant height at $37$ for the new model is therefore
\begin{equation}\label{eq:hDelta2-new}
  h_\Delta^{(37)}(\mathcal{E}_1)
  =
  \log \max\bigl(1,\lvert \Delta_{\mathcal{W}_1,37} \rvert_{37}^{-1}\bigr)
  =
  13 \log 37.
\end{equation}

The change of variables~\eqref{eq:explicit-change-of-variables-37} defines a morphism of elliptic packages $f \colon \mathcal{E}_0^{\mathrm{can}} \to \mathcal{E}_1$ in the sense of Definition~\ref{def:EllipticPackage}. At $v=37$ the factor $c_{37}(f)$ of Definition~\ref{def:EllipticPackage} is $c_{37}(f) = u^{-12} = (37^{-1})^{-12} = 37^{12}$. Thus, by Definition~\ref{def:ModelDefect},
\begin{equation}\label{eq:explicit-mu}
  \mu(f)
  =
  \log^{+}\!\bigl(\lvert c_{37}(f) \rvert_{37}^{-1}\bigr)
  =
  \log^{+}\!\bigl(\lvert 37^{12} \rvert_{37}^{-1}\bigr)
  =
  \log(37^{12})
  =
  12 \log 37
\end{equation}
where we have used that $\lvert 37^{12}\rvert_{37}=37^{-12}$, hence $\lvert 37^{12}\rvert_{37}^{-1}=37^{12}>1$. Lemma~\ref{lem:TransportDiscriminant} then gives, after restriction to the contribution at $v=37$,
\[
  h_\Delta^{(37)}(\mathcal{E}_1)
  \le
  h_\Delta^{(37)}\bigl(\mathcal{E}_0^{\mathrm{can}}\bigr)
  +
  \mu(f).
\]
Inserting the values from~\eqref{eq:hDelta2-canonical}, \eqref{eq:hDelta2-new}, and~\eqref{eq:explicit-mu} yields the sharp identity
\[
  13\log 37
  \le
  \log 37 + 12\log 37
  =
  13\log 37.
\]
For this choice of local coordinate change at $37$, the transport inequality of Lemma~\ref{lem:TransportDiscriminant} holds with equality at the truncated level $v=37$, since the entire change in $h_\Delta^{(37)}$ is accounted for exactly by the defect term $\mu(f)$ computed in~\eqref{eq:explicit-mu}.

The conductor complexity of $\mathcal{E}_0^{\mathrm{can}}$ and of $\mathcal{E}_1$ in the sense of Definition~\ref{def:ConductorComplexity} is $n(\mathcal{E}_0^{\mathrm{can}}) = n(\mathcal{E}_1) = \log N_E = \log 37$. The three quantities $h_\Delta(\mathcal{E})$, $n(\mathcal{E})$, and $\defect(\mathcal{E})$ of Section~\ref{sec:IntensionalSzpiro} are thus brought together within a single frame of reference. The discriminant height changes under a visible modification of the local model at $37$, while the conductor complexity stays anchored to the underlying curve $E$. The morphism $f$ keeps the passage from one elliptic package to the other, and its cost $\mu(f)$ gives an explicit upper bound for the accumulated defect in the sense of Definition~\ref{def:AccumulatedDefect}. Any morphism $g \colon \mathcal{E}_0^{\mathrm{can}} \to \mathcal{E}_1$ in $\mathbf{EllPkg}_\Q$ provides an upper bound $\defect(\mathcal{E}_1) \le \mu(g)$, so that in particular $\defect(\mathcal{E}_1) \le \mu(f) = 12 \log 37$.

Under Hypothesis~\ref{hyp:ReducedNormalizedProfile}, Proposition~\ref{prop:NormalizedToIntensionalSzpiro} yields that for every real number $\varepsilon > 0$ there exists a constant $C_{\varepsilon,\Q}$ such that
\begin{equation}\label{eq:explicit-intensional-szpiro}
  h_\Delta(\mathcal{E}_1)
  \le
  (6+\varepsilon)\,n(\mathcal{E}_1)
  +
  \defect(\mathcal{E}_1)
  +
  C_{\varepsilon,\Q}.
\end{equation}
Combining the sharp identity above with the relation $n(\mathcal{E}_1)=\log 37$ and the estimate $\defect(\mathcal{E}_1)\le 12\log 37$ shows that \eqref{eq:explicit-intensional-szpiro} may be realized with a suitable choice of $C_{\varepsilon,\Q}$. The demonstrative value of the calculation is the simultaneous integration of the height, the conductor, and the model defect into the architecture of a single inequality.

This higher-conductor example over $\Q$ is elementary from a Diophantine perspective. It serves, however, to exhibit the behavior of the discriminant height under a controlled deformation at a non-trivial prime, with a minimum of technical overhead. In this way it illustrates how an intensional Szpiro-type relation in the sense of Proposition~\ref{prop:NormalizedToIntensionalSzpiro} arises from transport inequalities in a quantitative category of elliptic packages.

%==================================================================
\section{Descent to a pre-set-theoretic universe}\label{sec:UniverseP}
%==================================================================

Arithmetic practice remains attentive to numbers, primes, and finite packets of coefficients. Many of the statements that matter -- such as the Szpiro inequality and the $abc$ conjecture -- permit a formulation that appeals solely to such finite data, without invoking any general facility to form infinite sets of arbitrary objects. It is appropriate, therefore, to circumscribe an arithmetic universe that forbids the completed totalities which would erase intensional distinctions, while remaining a natural environment for working number theory. This universe should be regarded as a convenient internal presentation of objects and uniformity statements that are intended to be formalizable in $\mathsf{ACA}_0$, rather than as a separate foundational system.

\subsection{A pre-set-theoretic arithmetic universe}\label{subsec:PreSetArithmetic}

\begin{definition}[The pre-set-theoretic arithmetic universe]\label{def:UniverseP}
Fix $\mathsf{ACA}_0$ as the ambient theory, in the language of second-order arithmetic. The \emph{pre-set-theoretic arithmetic universe}, denoted $\mathfrak{P}$ and called Universe~P, is the category of arithmetically presented codes defined internally to $\mathsf{ACA}_0$. An object $X$ of $\mathfrak{P}$ consists of a code $c_X \subseteq \mathbb{N}$ representing a countable many-sorted presentation that includes the following data:
\begin{enumerate}
\item[(i)] for each sort $\sigma$, a domain $D_X^\sigma \subseteq \mathbb{N}$ defined by an arithmetic predicate relative to $c_X$,
\item[(ii)] for each basic relation symbol $R$, an arithmetic predicate relative to $c_X$ representing a subset of a finite product of these domains,
\item[(iii)] for each basic function symbol $F$, an arithmetic graph relative to $c_X$ representing a total function on the domains,
\item[(iv)] for each quantitative datum in the presentation, either a rational code or a fast Cauchy name of rationals, arithmetical in $c_X$, when a real-valued quantity is required.
\end{enumerate}
A morphism $f \colon X \to Y$ is a code $c_f \subseteq \mathbb{N}$ whose graph is arithmetical in $c_X \oplus c_Y \oplus c_f$, and one requires that $\mathsf{ACA}_0$ prove this code defines a total function respecting the structure of $X$ and $Y$. Composition is derived from the composition of coded graphs, and the identity morphisms are coded diagonals -- for an object $X$, the identity is the arithmetic predicate identifying the set of pairs $\langle n, n \rangle$ for every $n$ in the domains of $X$. Self-identification within this world is not a background fact, and it is rather a program requiring a definite arithmetic representation.
\end{definition}

\begin{proposition}[Formalizability of the $\mathfrak{P}$-constructions in \texorpdfstring{$\mathsf{ACA}_0$}{ACA0}]\label{prop:ACAFormalizable}
All constructions on $\mathfrak{P}$ used in the descent of Proposition~\ref{prop:APDescentToP} are formalizable in $\mathsf{ACA}_0$. More precisely, within $\mathsf{ACA}_0$ one can form and manipulate:
\begin{enumerate}
\item[(i)] finite tuples, words, matrices, and other standard combinatorial codes,
\item[(ii)] arithmetically defined domains, graphs, and subobjects associated with a presentation code,
\item[(iii)] pullbacks, finite products, and other finite-limit constructions on arithmetically presented objects,
\item[(iv)] rational-valued cost functions and real-valued quantities represented by fast Cauchy names of rationals,
\item[(v)] the comparisons, bounded searches, and finite minimization procedures required to descend from extensional cost data to combinatorial bounds.
\end{enumerate}
Hence the transcription of Proposition~\ref{prop:APDescentToP} may be read as a theorem internal to $\mathsf{ACA}_0$, once its hypotheses are expressed in the coding formalism of $\mathfrak{P}$.
\end{proposition}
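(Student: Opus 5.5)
The plan is to verify the five clauses one at a time by standard facts about $\mathsf{ACA}_0$: each kind of object in the statement is either a finite code or a set arithmetically definable from given set parameters. The single tool I rely on throughout is arithmetical comprehension: for any formula $\varphi(n,\vec X)$ with only number quantifiers and set parameters $\vec X$, the set $\{n : \varphi(n,\vec X)\}$ exists. Every construction below will be exhibited as such a set, with parameters among the codes $c_X$, $c_Y$, $c_f$ of Definition~\ref{def:UniverseP}.

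For (i), I would invoke the primitive recursive pairing $\langle\cdot,\cdot\rangle$ and the sequence-coding machinery already present in $\mathsf{RCA}_0 \subseteq \mathsf{ACA}_0$. Finite tuples, words and matrices are coded by single natural numbers, and length, projection and concatenation are primitive recursive; this is standard, following Simpson~\cite{simpson2009subsystems}. For (ii), Definition~\ref{def:UniverseP} declares domains, relations and graphs to be given by predicates arithmetical in $c_X$, so arithmetical comprehension yields them as sets. Subobjects cut out by further arithmetical conditions are handled in the same way.

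For (iii), given $f\colon X\to Z$ and $g\colon Y\to Z$, I would form the pullback domain $\{\langle x,y\rangle : x\in D_X,\ y\in D_Y,\ f(x)=g(y)\}$. Since $f$ and $g$ have arithmetical graphs, this condition is arithmetical in $c_X\oplus c_Y\oplus c_Z\oplus c_f\oplus c_g$, and the projections are arithmetically coded. The universal property is then proved in $\mathsf{ACA}_0$: for a cone $(h,k)$ the mediating map $w\mapsto\langle h(w),k(w)\rangle$ is definable, and its uniqueness is a first-order statement about codes. Finite products are the special case with $Z$ terminal.

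For (iv), rationals are finite codes, and addition, multiplication, order and $\max$ are computable operations on them. Reals are handled as fast Cauchy names $(q_n)$ with $|q_n-q_{n+1}|\le 2^{-n}$. Sums, $\max\{0,\cdot\}$ and $\log$ of positive rationals are given by uniform algorithms producing new fast names, in the way $\mathsf{RCA}_0$ already develops them. Infima of sequences of costs, as in Definition~\ref{def:TransportDistance}, require a little more: the existence of $\inf_n r_n$ for a sequence of reals is equivalent to $\mathsf{ACA}_0$ over $\mathsf{RCA}_0$, so it is available here.

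For (v), the comparisons are the main obstacle. Strict inequality $x<y$ between coded reals is $\Sigma^0_1$ and $x\le y$ is $\Pi^0_1$, so they are not decidable, and this is the step needing care. I would read every inequality in the descent as a formula about codes rather than as a computed Boolean, which is legitimate since $\mathsf{ACA}_0$ proves arithmetical statements without deciding them. Bounded searches, such as finding the integer $m$ used in Proposition~\ref{prop:ACA0LogVolumeFormalization}, use the least-number principle for arithmetical formulas. Finite minimization over a finite coded list, such as the maximum defining $B_{\mathcal{F},a,K}$ or a lexicographic tie-break, is carried out by primitive recursion on the list together with these comparisons. With (i)--(v) in hand, the transcription of Proposition~\ref{prop:APDescentToP} uses only these operations on codes and therefore reads as a theorem of $\mathsf{ACA}_0$.
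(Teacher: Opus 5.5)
Your proposal is correct and follows the same route as the paper's proof. Both rest on primitive recursive coding of finite data, on arithmetical comprehension for the domains, graphs and subobjects of a presentation, and on the standard treatment of fast Cauchy names; you spell these out clause by clause (the explicit pullback construction, and the $\Sigma^0_1$/$\Pi^0_1$ status of real comparisons) where the paper only asserts them. Your aside on infima is not needed, since the descent of Proposition~\ref{prop:APDescentToP} only reads the costs of specific morphisms such as $\nu_{\mathcal{E}}$. Moreover, as stated it would only cover infima over numerically indexed families, not over the second-order morphism codes $c_f$ of Definition~\ref{def:UniverseP}.
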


\begin{proof}
All finite coding operations used in the definition of $\mathfrak{P}$ are primitive recursive. The only non-trivial set-existence principle required in passing from a presentation to its associated domains, graphs, and subobjects is the formation of sets defined by arithmetic predicates, which is exactly what arithmetical comprehension provides. Real-valued quantities are represented by fast Cauchy names of rationals, and the arithmetic manipulation of such names is standard in $\mathsf{ACA}_0$. The descent argument uses only these coding operations, arithmetically definable graphs, finite search procedures, and comparisons of coded rational or Cauchy data. No stronger comprehension scheme, and no higher-order set-existence principle, is invoked.
\end{proof}

\subsection{Heights and defects as finite integer data}\label{subsec:LogIntData}

\begin{definition}[Logarithmic integer datum]\label{def:LogIntDatum}
Let $K$ be a number field presented by finite data in Universe~P. A \emph{logarithmic integer datum over $K$} is a positive integer $M$. It represents the real number $\frac{1}{[K:\Q]}\,\log M$ and is read as an element of $\R_{\ge 0}$ whenever a numerical evaluation is required. A comparison between two logarithmic integer data $M_1$ and $M_2$ over the same field reduces to the integer inequality $M_1 \le M_2$. More generally, an inequality of the form
\[
  \frac{1}{[K:\Q]}\,\log M_1
  \le
  \bigl(6+\tfrac{a}{b}\bigr)\,\frac{1}{[K:\Q]}\,\log M_2
  +
  \frac{1}{[K:\Q]}\,\log M_3
  +
  C
\]
with $a,b$ positive integers and $C$ a real constant admits an \emph{exponentiated integer reformulation}. One multiplies through by $b\,[K:\Q]$ and exponentiates, producing the comparison
\[
  M_1^b \le D \cdot M_2^{6b+a} \cdot M_3^b
\]
where $D$ is any positive integer satisfying $D \ge \lceil e^{\,b\,[K:\Q]\,C} \rceil$. This last expression is a comparison of positive integers and lies entirely within the arithmetic of Universe~P. The integer $D$ is determined by a finite computation once $K$ and the rational precision $a/b$ have been fixed.
\end{definition}

\begin{remark}[Heights and defects as logarithmic integer data]\label{rem:HeightsAsLogIntData}
The three principal quantities of the intensional Szpiro inequality admit a natural representation as logarithmic integer data, at least for their non-archimedean contributions. For an elliptic package $\mathcal{E}$ over $K$ with Weierstrass model $\mathcal{W}$, the non-archimedean part of the discriminant height $h_\Delta(\mathcal{E})$ of Definition~\ref{def:DiscriminantHeight} satisfies
\[
  [K:\Q]\,h_\Delta^{\mathrm{fin}}(\mathcal{E})
  =
  \log\!\Bigl(\prod_{v\,\text{finite}} p_v^{\,f_v\,\max(0,\,\mathrm{ord}_v(\Delta_{\mathcal{W}}))}\Bigr)
\]
where $p_v$ is the rational prime below $v$ and $f_v$ is the residue degree. The product on the right is a positive integer computable from the Weierstrass coefficients, and the archimedean contribution is bounded by a constant depending only upon $K$ and upon the number of archimedean places, hence may be pre-computed and bounded by a rational number that is part of the field code. The conductor complexity $n(\mathcal{E}) = (1/[K:\Q])\,\log N_E$ is a logarithmic integer datum in the strict sense, since $N_E$ is a positive integer. For a morphism $f$ of elliptic packages, each local factor $|c_v(f)|_v^{-1}$ is a power of the residue characteristic, so that $[K:\Q]\,\mu(f) = \log M_f$ for a positive integer $M_f$ determined by the local dilation data.
\end{remark}

\begin{definition}[Certified number field code]\label{def:CertifiedFieldCode}
Within Universe~P, a \emph{certified number field code} for a number field $K$ of degree $d = [K:\Q]$ is a finite tuple of integer data comprising the following items:
\begin{enumerate}
\item[\textit{(i)}] a monic irreducible polynomial $f(x) \in \Z[x]$ of degree $d$, together with a fixed ordering of its roots in $\C$ that determines the real and complex embeddings of $K$,
\item[\textit{(ii)}] a \emph{certified integral basis}, a list of $d$ elements $(\omega_1,\dots,\omega_d)$ of $K$, each presented as a polynomial in the root $\alpha$ of $f$ with rational coefficients, such that $\mathcal{O}_K = \Z\omega_1 + \cdots + \Z\omega_d$, the certification consisting of the field discriminant $d_K \in \Z$ and a finite proof witness that confirms the maximality of the order $\Z[\omega_1,\dots,\omega_d]$ (for instance a transcript of the steps of the Round Four algorithm of Ford--Zassenhaus as in~\cite{Cohen1993}),
\item[\textit{(iii)}] a \emph{ramification table}, for each rational prime $p$ dividing $d_K$ a finite list of tuples $(e_{v}, f_{v}, \pi_{v})$ keeping the ramification index, the residue degree, and a uniformizer for each prime ideal $v$ of $\mathcal{O}_K$ above $p$, the uniformizer being encoded as a $\Z$-linear combination of the integral basis,
\item[\textit{(iv)}] a rational upper bound $B_K^{\mathrm{arch}} \in \Q_{>0}$ for the archimedean contribution to the discriminant height $h_\Delta^{\mathrm{arch}}$ of any elliptic package over $K$, computed from the embeddings in \textit{(i)} and kept as part of the field code.
\end{enumerate}
The purpose of this enriched presentation is to ensure that the computation of conductors, Tamagawa numbers, local dilation factors, and normalization morphisms proceeds within Universe~P by deterministic operations on the certified data, without requiring the execution of maximal-order algorithms at runtime.
\end{definition}

The descent from Universe~A to Universe~P admits an explicit transcription. The principle is lucid. An elliptic package comprises finitely many coefficients, finitely many local parameters, and a finite label. A morphism consists of finitely many coordinate changes, and a defect appears as a sum of finitely many local contributions. All of this registers as finite arithmetic data, manipulable without quantification over any global class of objects.

\begin{definition}[Arithmetic code for an elliptic package]\label{def:EllPkgCode}
Let $K$ be a number field equipped with a certified number field code in the sense of Definition~\ref{def:CertifiedFieldCode}. An \emph{elliptic package code over $K$} is a finite tuple of integers encoding the following data:
\begin{enumerate}
\item[\textit{(i)}] the coefficients $[a_1,a_2,a_3,a_4,a_6]$ of a Weierstrass equation for $E$, each expressed as a $\Z$-linear combination of the certified integral basis $(\omega_1,\dots,\omega_d)$ from the field code,
\item[\textit{(ii)}] a finite list representing the set $S$ of marked places, by listing the prime ideals above the corresponding rational primes through the indices in the ramification table of the field code and a tag for each archimedean place,
\item[\textit{(iii)}] the finite parameters describing the local sections or level structures $\{\sigma_v\}_{v\in S}$, each encoded as a tuple of elements of $\mathcal{O}_K$ relative to the certified basis,
\item[\textit{(iv)}] a finite label $\lambda$, transcribed as a finite string over a fixed alphabet.
\end{enumerate}
A \emph{morphism code} from a package code $d$ to a package code $d'$ over the same field $K$ is a finite tuple encoding the coordinate-change parameters $(u_v, r_v, s_v, t_v)$ that appear in~\eqref{eq:local-change-of-variables} for each relevant place, each presented as an element of the local field $K_v$ through its expansion in the certified basis. When one works in the variable-base category $\mathbf{EllPkg}_{\mathrm{var}}$ of Definition~\ref{def:EllPkgVar}, then a morphism code from a package over $K$ to a package over $L = K(\sqrt[\ell]{q})$ carries in addition a \emph{field extension code} -- the minimal polynomial of $\sqrt[\ell]{q}$ over $K$, a certified integral basis for $\mathcal{O}_L$ extending the basis of $\mathcal{O}_K$, and the polynomial-basis matrix expressing the old basis elements in terms of the new. The Kummer detachment cost $\mu_{\mathrm{kum}}(f_{\mathrm{rad}})$ of Definition~\ref{def:KummerDefect} is then computed from the degree $\ell$ alone, and the descent cost of Hypothesis~\ref{hyp:ControlledDescentWithSupport} is determined by the ramification data of the certified extension code, together with the tame support of the radicand. One writes $\mathsf{PkgCode}_K$ for the finite-type domain of such codes and $\mathsf{PkgCode}_{\mathrm{var}}$ for the variable-base variant.
\end{definition}

\begin{remark}[Computability of invariants from package codes]\label{rem:PkgCodeComputability}
Universe~P admits function symbols for arithmetic invariants computed from codes. The following quantities are determined by finite, deterministic procedures operating on the data of Definitions~\ref{def:EllPkgCode} and~\ref{def:CertifiedFieldCode}. The non-archimedean part of $h_\Delta$ is computed from the Weierstrass coefficients and the ramification table by evaluating valuations at each prime ideal of $\mathcal{O}_K$ dividing the discriminant, and by Remark~\ref{rem:HeightsAsLogIntData} this part is a logarithmic integer datum, while the archimedean part is bounded by the pre-computed constant $B_K^{\mathrm{arch}}$ from the certified field code. The conductor $N_E$ is computed by Ogg's formula from the local reduction types at each prime in the ramification table, so that the conductor complexity is a logarithmic integer datum in the strict sense. The defect $\mu(f)$ of a morphism code $f$ is computed from the local dilation factors as in Definition~\ref{def:ModelDefect}, each $|u_v|_v$ being a power of the residue characteristic read from the uniformizer data. The normalization map $\mathrm{NF}_K$ and the canonical morphism $\nu_{\mathcal{E}}$ of Definition~\ref{def:AccumulatedDefect} are executed by Tate's algorithm following the deterministic discipline of Remark~\ref{rem:DeterministicNormalizationDiscipline}. No operation above requires quantification over an infinite collection or appeals to the existence of a completed totality, since all inputs and outputs are finite tuples of integers and all intermediate steps are bounded by the degree $[K:\Q]$ and by the number of primes dividing the conductor.
\end{remark}

\subsection{The transcription}\label{subsec:Transcription}

\begin{proposition}[Transcription of intensional Szpiro into Universe~P]\label{prop:APDescentToP}
Fix a number field $K$ and a normalization procedure $\mathrm{NF}_K$ in the sense of Definition~\ref{def:AccumulatedDefect}, both presented by finite data. Then there exists a Universe~P formula $\mathrm{Szp}^P_{\varepsilon,K}(d)$ with $d \in \mathsf{PkgCode}_K$ as its only free variable, possessing the following property. For every elliptic package $\mathcal{E}$ over $K$ with code $d$, the inequality
\[
h_\Delta(\mathcal{E})
\le
(6+\varepsilon)\,n(\mathcal{E})
+
\defect(\mathcal{E})
+
C_{\varepsilon,K}
\]
from~\eqref{eq:IntensionalSzpiroBasic} holds if and only if $\mathrm{Szp}^P_{\varepsilon,K}(d)$ holds when the terms $h_\Delta$, $n$, and $\defect$ are interpreted as the corresponding functions on codes. Consequently, the basic intensional Szpiro inequality may be phrased in Universe~P as a family of inequalities over finite codes, devoid of quantification over a totality of all elliptic curves.
\end{proposition}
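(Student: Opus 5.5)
The plan is to build $\mathrm{Szp}^P_{\varepsilon,K}(d)$ by transcribing each of the three quantities in \eqref{eq:IntensionalSzpiroBasic} into logarithmic integer data, and then to use the exponentiated integer reformulation of Definition~\ref{def:LogIntDatum} to turn the real inequality into a comparison of integers. Before that, I fix the rational precision: I take $\varepsilon=a/b$ with $a,b$ positive integers. If $\varepsilon$ is not rational, I replace it by a rational code; since $n(\mathcal{E})\ge 0$ makes the inequality monotone in $\varepsilon$, this only requires fixing $\varepsilon$ as a datum of the formula.

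By Remark~\ref{rem:PkgCodeComputability} and Remark~\ref{rem:HeightsAsLogIntData}, there are primitive recursive functions on $\mathsf{PkgCode}_K$ producing the following three positive integers.
\begin{enumerate}
\item[(i)] $M_\Delta(d)=\prod_v p_v^{f_v\max(0,\mathrm{ord}_v\Delta_{\mathcal{W}})}$, read off from the Weierstrass coefficients and the ramification table.
\item[(ii)] The conductor norm $N_E(d)$, computed by Tate's algorithm and Ogg's formula.
\item[(iii)] $M_{\mathrm{def}}(d)$, obtained by running $\mathrm{NF}_K$ deterministically (Remark~\ref{rem:DeterministicNormalizationDiscipline}) to get the morphism code of $\nu_{\mathcal{E}}$ and multiplying its local factors $|c_v|_v^{-1}$ wherever these exceed $1$. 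This gives $[K:\Q]\,\defect(\mathcal{E})=\log M_{\mathrm{def}}(d)$ by Remark~\ref{rem:HeightsAsLogIntData}.
\end{enumerate}
Arithmetical comprehension (Proposition~\ref{prop:ACAFormalizable}) turns these into function symbols of Universe~P.

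The main obstacle is the archimedean part $h_\Delta^{\mathrm{arch}}$, which is not a logarithmic integer datum, and the fact that the equivalence must be an exact ``if and only if'', not an implication. I will not discard it against $B_K^{\mathrm{arch}}$, since that would break the converse direction. Instead I read it as a real term. For each archimedean place, its value is $\log\max(1,|\sigma_v(\Delta)|_v^{-1})$, computable from the code through the ordered embeddings of Definition~\ref{def:CertifiedFieldCode}(i) as a fast Cauchy name $\theta(d)$. The same applies to the constant $C_{\varepsilon,K}$. I then define $\mathrm{Szp}^P_{\varepsilon,K}(d)$ as the comparison of Cauchy-named reals
\[
\tfrac{1}{[K:\Q]}\log M_\Delta(d)+\theta(d)\le \bigl(6+\tfrac ab\bigr)\tfrac{1}{[K:\Q]}\log N_E(d)+\tfrac{1}{[K:\Q]}\log M_{\mathrm{def}}(d)+C_{\varepsilon,K}.
\]
This is arithmetical (a $\Pi^0_1$ statement about the names), so it is a legitimate Universe~P formula. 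Its equivalence with \eqref{eq:IntensionalSzpiroBasic} is immediate, because each term is by construction equal to the corresponding real quantity.

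When the archimedean term vanishes (for instance for integral models whose discriminant embeddings all have size at least $1$, as in Lemma~\ref{lem:DilationTowerGrowth}), I additionally note the purely integer form $M_\Delta^b\le D\,N_E^{6b+a}M_{\mathrm{def}}^b$ from Definition~\ref{def:LogIntDatum}. There is a subtlety here: exact equivalence needs $D=e^{b[K:\Q]C}$ itself rather than its ceiling. I handle this by keeping $C_{\varepsilon,K}$ as a Cauchy name in the comparison, rather than forcing an integer $D$. Finally, every quantifier in the formula ranges over finite codes or over indices of Cauchy approximations, never over a totality of curves, which gives the final sentence of the statement.
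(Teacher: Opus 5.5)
Your proposal is correct and follows the paper's route. Each of $h_\Delta$, $n$, and $\defect$ is computed from the code $d$ (the last by running $\mathrm{NF}_K$ and evaluating $\mu(\nu_{\mathcal{E}})$), and $\mathrm{Szp}^P_{\varepsilon,K}(d)$ is simply the inequality among these computed quantities; your Cauchy-name treatment of the archimedean term and of $C_{\varepsilon,K}$, rather than bounding by $B_K^{\mathrm{arch}}$ or rounding to an integer $D$, supplies the two-sidedness that the paper's brief sketch leaves implicit.

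The only slip is the aside on irrational $\varepsilon$. Replacing it by a rational would not preserve the biconditional, since monotonicity gives only one direction, but your comparison of Cauchy-named reals absorbs $\varepsilon$ exactly as it absorbs $C_{\varepsilon,K}$.
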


\begin{proof}
By Definition~\ref{def:EllPkgCode}, a package is determined by a finite tuple $d$. The quantities $h_\Delta(\mathcal{E})$ and $n(\mathcal{E})$ are computed from $d$ by the explicit procedures of arithmetic geometry, and $\defect(\mathcal{E})$ is computed from $d$ by way of the normalization procedure of Definition~\ref{def:AccumulatedDefect} together with the defect $\mu$ of Definition~\ref{def:ModelDefect}. One may thus write a Universe~P formula asserting the inequality among these computed numbers, and no further ontological commitment is required.
\end{proof}

\begin{remark}[Universe~P as recipient]\label{rem:PAsTarget}
Universe~P acts as a natural recipient for the structures of Universe~A. Universe~A keeps transport and defect, while Universe~P stores the resulting numerical inequalities on codes and declines to enforce static identification in the sense of Definition~\ref{def:StaticDynamic} by fiat. In this manner the descent from A to P bypasses the extensional collapse of Definition~\ref{def:ExtensionalCollapse} entirely.
\end{remark}

\begin{proposition}[Soundness of descent along normalization]\label{prop:SoundnessDescentNormalization}
Fix a number field $K$ and a normalization procedure $\mathrm{NF}_K$ in the sense of Definition~\ref{def:AccumulatedDefect}, presented by finite data in Universe~P, and assume Hypothesis~\ref{hyp:ReducedNormalizedProfile}. Then for every real number $\varepsilon>0$ there exists a constant $C_{\varepsilon,K}$ such that for every package code $d \in \mathsf{PkgCode}_K$ one has the Universe~P inequality
\[
h_\Delta\bigl(\mathrm{NF}_K(d)\bigr)
\le
(6+\varepsilon)\,n\bigl(\mathrm{NF}_K(d)\bigr) + C_{\varepsilon,K}
\]
where $\mathrm{NF}_K(d)$ is the code obtained by applying the normalization algorithm to $d$, and $h_\Delta$ and $n$ are interpreted as the corresponding functions on codes.
\end{proposition}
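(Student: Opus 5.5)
The inequality should follow from Proposition~\ref{prop:ReducedProfileForcesNormalizedSzpiro} once two identifications are made at the level of codes: the normalized code $\mathrm{NF}_K(d)$ has the same conductor complexity as $d$, and the code functions compute the same quantities as the Universe~A objects they encode. I would take $C_{\varepsilon,K}\coloneqq B_{q,K}+B_{\varepsilon,K}$, the constant supplied by Hypothesis~\ref{hyp:ReducedNormalizedProfile}. This matches the choice made in Proposition~\ref{prop:NormalizedToIntensionalSzpiro}.

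First, fix $\varepsilon>0$ and a code $d\in\mathsf{PkgCode}_K$, and let $\mathcal{E}$ be the package it encodes (Definition~\ref{def:EllPkgCode}). By Remark~\ref{rem:DeterministicNormalizationDiscipline} and Remark~\ref{rem:PkgCodeComputability}, running Tate's algorithm with the lexicographic tie-breaker on $d$ produces a code that encodes exactly the package $\mathrm{NF}_K(\mathcal{E})$. I would state this as a commutation: decoding after the code-level $\mathrm{NF}_K$ agrees with $\mathrm{NF}_K$ after decoding. It holds because the finitary discipline of Definition~\ref{def:AccumulatedDefect}(iii) makes $\mathrm{NF}_K$ depend only on the finitely presented data. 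Consequently $h_\Delta(\mathrm{NF}_K(d))=H^{\mathrm{nf}}_K(\mathcal{E})$ as logarithmic integer data, up to the archimedean part, which vanishes for the integral normalized model as noted in the proof of Proposition~\ref{prop:UnconditionalNormalizedDecomposition}.

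Second, by the curve invariance discipline of Definition~\ref{def:AccumulatedDefect}(v), $\mathrm{NF}_K(\mathcal{E})$ has the same underlying curve $E$. Since $n$ depends only on $N_E$ (Definition~\ref{def:ConductorComplexity}), this gives $n(\mathrm{NF}_K(d))=n(\mathcal{E})$. On codes, Ogg's formula is applied to the same reduction types, so the code-level conductor agrees as well. Proposition~\ref{prop:ReducedProfileForcesNormalizedSzpiro} then yields $H^{\mathrm{nf}}_K(\mathcal{E})\le(6+\varepsilon)n(\mathcal{E})+B_{q,K}+B_{\varepsilon,K}$. Substituting the two identifications gives the displayed Universe~P inequality.

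Third, to read the statement as a Universe~P inequality, I would pass to the exponentiated integer reformulation of Definition~\ref{def:LogIntDatum}. For rational $\varepsilon=a/b$ this gives an integer comparison $M_1^b\le D\cdot M_2^{6b+a}$. For irrational $\varepsilon$ I would either reduce to a rational $\varepsilon'\le\varepsilon$, using that the right-hand side is monotone in $\varepsilon$ because $n\ge0$, or else handle $\varepsilon$ through fast Cauchy names. In either case Proposition~\ref{prop:ACAFormalizable} covers the manipulations. By Remark~\ref{rem:strictness} and Lemma~\ref{lem:ObjectwiseInsensitive}, no pseudonaturality cell enters, since the argument is objectwise and reads only scalars.

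I expect the main obstacle to be the first step, the faithfulness of the code-level normalization. The hypothesis is stated for Universe~A packages, while the conclusion concerns outputs of an algorithm. One must check that the algorithm's output encodes a package that is literally equal to $\mathrm{NF}_K(\mathcal{E})$, not merely isomorphic to it, because $h_\Delta$ is not isomorphism-invariant (Proposition~\ref{prop:NoInvariantLogVolume}). My first approach would be to take the code-level algorithm as the definition of $\mathrm{NF}_K$ on packages, which Definition~\ref{def:AccumulatedDefect}(iii) and Remark~\ref{rem:DeterministicNormalizationDiscipline} permit. The hypothesis is then applied to that very procedure.
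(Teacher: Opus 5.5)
Your proof is correct, but it goes through a different lemma than the paper does.

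\emph{The paper's route.} The paper applies Proposition~\ref{prop:NormalizedToIntensionalSzpiro} to the normalized package $\mathrm{NF}_K(\mathcal{E})$ itself. It removes the defect term using the idempotence discipline of Definition~\ref{def:AccumulatedDefect}\textup{(iv)}, which gives $\defect(\mathrm{NF}_K(\mathcal{E}))=0$. Both sides are then evaluated at the same package, so no comparison of conductors is needed. The passage to codes is the single remark that $\mathrm{NF}_K(d)$ is the code of $\mathrm{NF}_K(\mathcal{E})$.

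\emph{Your route.} You invoke Proposition~\ref{prop:ReducedProfileForcesNormalizedSzpiro} at $\mathcal{E}$, which bounds $H^{\mathrm{nf}}_K(\mathcal{E})$ with no defect term at all. You then replace $n(\mathcal{E})$ by $n(\mathrm{NF}_K(\mathcal{E}))$ using curve invariance, Definition~\ref{def:AccumulatedDefect}\textup{(v)}. This dispenses with idempotence at the cost of a conductor transfer, and the transfer is cheap. It would hold even without \textup{(v)}, since $\nu_{\mathcal{E}}$ carries an isomorphism of the underlying curves over $K$ and the conductor is an isomorphism invariant.

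Both routes yield $C_{\varepsilon,K}=B_{q,K}+B_{\varepsilon,K}$. Your explicit handling of the code-level normalization, taking the algorithm as the definition of $\mathrm{NF}_K$, and of irrational $\varepsilon$ makes precise what the paper's one-line transcription leaves implicit.

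\emph{One aside should be dropped.} You do not need the archimedean part of $h_\Delta(\mathrm{NF}_K(\mathcal{E}))$ to vanish, and over a general $K$ it need not. When $K$ has several archimedean places, an integral minimal discriminant (a unit, for instance) can have a conjugate of absolute value below $1$. So the observation you borrow from the proof of Proposition~\ref{prop:UnconditionalNormalizedDecomposition} holds when $K$ has a single archimedean place, that is $K=\Q$ or $K$ imaginary quadratic, but not in general.

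Nothing in your argument breaks:
\begin{itemize}
\item if the code-level $h_\Delta$ computes the archimedean part exactly, both sides agree;
\item if it reads only the finite part, it is bounded by the true height and the inequality transfers;
\item if it substitutes the bound $B_K^{\mathrm{arch}}$ of the certified field code, enlarge $C_{\varepsilon,K}$ by that amount.
\end{itemize}
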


\begin{proof}
Let $d$ be a package code and let $\mathcal{E}$ be an elliptic package over $K$ with code $d$. Consider the normalized package $\mathrm{NF}_K(\mathcal{E})$. By Definition~\ref{def:AccumulatedDefect}\textup{\ref{item:NF_idempotent}} one has $\defect(\mathrm{NF}_K(\mathcal{E}))=0$, so that the intensional inequality of Proposition~\ref{prop:NormalizedToIntensionalSzpiro} specializes to $h_\Delta(\mathrm{NF}_K(\mathcal{E})) \le (6+\varepsilon)\,n(\mathrm{NF}_K(\mathcal{E})) + C_{\varepsilon,K}$. Transcribe this inequality to codes using Definition~\ref{def:EllPkgCode} and Remark~\ref{rem:PkgCodeComputability}, and use that $\mathrm{NF}_K(d)$ is the code of $\mathrm{NF}_K(\mathcal{E})$.
\end{proof}

\subsection{How the coherence data is discharged}\label{subsec:TwoMorphismErasure}

\begin{remark}[How $2$-morphisms are resolved in the descent to finite codes]\label{rem:TwoMorphismErasure}
The passage from the labeled quantitative $(2,1)$-category $\mathfrak{A}$ of Definition~\ref{def:UniverseAFormal} to the finite codes of Definition~\ref{def:EllPkgCode} traverses the homotopy $1$-category $\Ho(\mathfrak{A})$ of Definition~\ref{def:HoOneCategory}, and this intermediate step absorbs the $2$-morphisms. The descent may be organized in three stages. In the first, by Definition~\ref{def:HoOneCategory}, the homotopy $1$-category $\Ho(\mathfrak{A})$ retains the same objects as $\mathfrak{A}$ and replaces each $1$-morphism by its $2$-isomorphism class. By Proposition~\ref{prop:CoherenceStability}\textup{(i)} the cost factors through this quotient, every invertible $2$-morphism $\alpha \colon f \Rightarrow g$ is collapsed to the identity $[f] = [g]$, and the numerical datum $c(f) = c(g)$ attached to the class is well defined, so that no cost accrues from the collapse. In the second stage, within each $2$-isomorphism class one selects a representative $1$-morphism. In the elliptic-package fragment of Universe~A of Section~\ref{sec:EllipticPackages}, this selection is governed by the normalization procedure $\mathrm{NF}_K$ of Definition~\ref{def:AccumulatedDefect}, which fixes a canonical presentation within each extensional fiber, and the choice depends only upon the finitely presented data of the source and target packages by the deterministic tie-breaking discipline of Definition~\ref{def:AccumulatedDefect}\textup{\ref{item:NF_finitary}}. In the third stage, the selected representative is transcribed into a tuple of integers by Definition~\ref{def:EllPkgCode}, and from these data one computes the cost $\mu(f)$ and the discriminant height $h_\Delta$ by the procedures of Remark~\ref{rem:PkgCodeComputability}. The net effect is that the $2$-morphisms of $\mathfrak{A}$ leave no residue in the finite code. Their role was to witness that two $1$-morphisms differed only by a coherent rearrangement of auxiliary choices, and Proposition~\ref{prop:CoherenceStability}\textup{(ii)} ensures that such rearrangements do not alter the cost. Once the cost has been kept as a rational number or a computable real in $\mathsf{PkgCode}_K$, the coherence data has been fully discharged, and the transport inequality of Lemma~\ref{lem:TransportDiscriminant} and the intensional Szpiro bound of Proposition~\ref{prop:NormalizedToIntensionalSzpiro} operate on the finite codes exactly as they would in $\Ho(\mathfrak{A})$.
\end{remark}

\begin{remark}[Compatibility with the descent obstruction]\label{rem:SoundnessVsNoBridge}
Proposition~\ref{prop:SoundnessDescentNormalization} does not contradict Lemma~\ref{lemma:ToyNoBridge} or Theorem~\ref{thm:DescentObstruction}. The obstruction requires a two-sided comparison passing through a functorial, label-transparent bridge in the sense of Definition~\ref{def:ToyAliasedStandard}, and collapses it to $H \le H + \Delta$ once the labels have been erased. The descent to Universe~P avoids this collapse by declining to perform the identification. In Universe~P the primary objects are finite codes $d \in \mathsf{PkgCode}_K$, equality between two codes is typographical, and two codes encoding isomorphic elliptic curves through different Weierstrass models remain distinct. The discriminant height computed from each code reflects the specific dilation factors that produced it, and the model defect $\mu(f)$ is itself a determination on morphism codes that keeps the structural friction between two presentations as an explicit numerical offset. Moreover the inequality that descends to Universe~P is one-sided. It states $h_{\Delta}(\mathrm{NF}_K(d)) \le (6+\varepsilon)\,n(\mathrm{NF}_K(d)) + C_{\varepsilon,K}$ for each code $d$, and it is not a comparison between two evaluations of a single invariant attached to an isomorphism class. No label-transparent bridge of the kind whose collapse Theorem~\ref{thm:DescentObstruction} governs is invoked.
\end{remark}

\begin{remark}[A brief link back to Universe~H]\label{rem:LinkBackToH}
If one chooses to read $\mathrm{NF}_K(\mathcal{E})$ as a canonical presentation of the underlying curve in the extensional sense, then Proposition~\ref{prop:SoundnessDescentNormalization} may be read as an extensional Szpiro-type inequality for that chosen representative. This form of harmonization is optional, since Universe~P is a sufficient recipient for arithmetic statements once they are expressed on finite codes.
\end{remark}

%==================================================================
\section{Formalization sketches in Coq}\label{sec:ToyFormalization}
%==================================================================

The Calculus of Inductive Constructions, as implemented in the Coq system, offers a place where one may test whether a quantitative argument truly survives the discipline of a small kernel. The kernel itself is indifferent to the arithmetic meaning of a transport. If one wishes to distinguish a transport of zero cost from a transport that carries a visible distortion, then this distinction must be written into the data.

In the present paper, the native environment is a labeled quantitative $(2,1)$-category in the sense of Definition~\ref{def:UniverseAFormal}. It is therefore natural, in a toy formalization, to keep $2$-morphisms literal and to impose the two requirements that matter most for the numerical accounting:
\begin{subtlelist}
\item Every $2$-morphism is invertible.
\item The distortion cost is invariant under $2$-isomorphism.
\end{subtlelist}
Under these conditions, coherence carries no numerical weight. It remains present as structure, while the scalar bounds attach only to $1$-morphisms.

\subsection{A minimal quantitative $(2,1)$-category in Coq}\label{subsec:CoqQuantCat}

To fix ideas, we present a small interface for a strict $(2,1)$-category equipped with a cost and a height. The code is written as explanatory scaffolding. Several laws are stated as axioms, and the interface is kept minimal, while it remains enough to express the transport inequalities that recur throughout the paper.

\begin{scriptsize}
\begin{verbatim}
(*
  Toy quantitative (2,1)-categories.

  The interface below is written to mirror the data of a labeled quantitative
  (2,1)-category in the sense of Definition~\ref{def:UniverseAFormal}.

  Conventions:
    - comp1 f g means "first f, then g"
    - 2-cells are written as Two f g, and are all invertible
*)

From Coq Require Import Reals.
From Coq Require Import Lra.

Open Scope R_scope.

Record QuantTwoOneCat := {
  Obj : Type ;
  Hom : Obj -> Obj -> Type ;

  (* 2-cells between parallel 1-morphisms *)
  Two : forall {A B : Obj}, Hom A B -> Hom A B -> Type ;

  (* 1-categorical structure *)
  id1   : forall A : Obj, Hom A A ;
  comp1 : forall {A B C : Obj}, Hom A B -> Hom B C -> Hom A C ;

  comp1_assoc :
    forall (A B C D : Obj)
           (f : Hom A B) (g : Hom B C) (h : Hom C D),
      comp1 (comp1 f g) h = comp1 f (comp1 g h) ;

  comp1_id_l :
    forall (A B : Obj) (f : Hom A B),
      comp1 (id1 A) f = f ;

  comp1_id_r :
    forall (A B : Obj) (f : Hom A B),
      comp1 f (id1 B) = f ;

  (* 2-categorical structure, strict for convenience *)
  id2    : forall {A B : Obj} (f : Hom A B), Two f f ;
  vcomp2 : forall {A B : Obj} {f g h : Hom A B},
             Two f g -> Two g h -> Two f h ;

  vcomp2_assoc :
    forall (A B : Obj) (f g h k : Hom A B)
           (a : Two f g) (b : Two g h) (c : Two h k),
      vcomp2 a (vcomp2 b c) = vcomp2 (vcomp2 a b) c ;

  vcomp2_id_l :
    forall (A B : Obj) (f g : Hom A B) (a : Two f g),
      vcomp2 (id2 f) a = a ;

  vcomp2_id_r :
    forall (A B : Obj) (f g : Hom A B) (a : Two f g),
      vcomp2 a (id2 g) = a ;

  (* whiskering *)
  whiskerL :
    forall {A B C : Obj} (f : Hom A B) {g h : Hom B C},
      Two g h -> Two (comp1 f g) (comp1 f h) ;

  whiskerR :
    forall {A B C : Obj} {f g : Hom A B} (a : Two f g) (h : Hom B C),
      Two (comp1 f h) (comp1 g h) ;

  (* (2,1)-condition: explicit inverses for 2-cells *)
  inv2 :
    forall {A B : Obj} {f g : Hom A B}, Two f g -> Two g f ;

  inv2_left :
    forall {A B : Obj} {f g : Hom A B} (a : Two f g),
      vcomp2 a (inv2 a) = id2 f ;

  inv2_right :
    forall {A B : Obj} {f g : Hom A B} (a : Two f g),
      vcomp2 (inv2 a) a = id2 g ;

  (* distortion cost on 1-morphisms *)
  cost : forall {A B : Obj}, Hom A B -> R ;

  cost_id :
    forall A : Obj, cost (id1 A) = 0 ;

  cost_comp :
    forall (A B C : Obj) (f : Hom A B) (g : Hom B C),
      cost (comp1 f g) <= cost f + cost g ;

  (* invariance under 2-isomorphism *)
  cost_two_invariant :
    forall (A B : Obj) (f g : Hom A B), Two f g -> cost f = cost g ;

  (* height on objects *)
  height : Obj -> R ;

  (* transport inequality *)
  height_trans :
    forall (A B : Obj) (f : Hom A B),
      height B <= height A + cost f
}.
\end{verbatim}
\end{scriptsize}

From this interface, the familiar consequences of the transport inequality follow directly. The axiom \texttt{cost\_two\_invariant} ensures that the scalar cost depends only upon a $1$-morphism up to coherent change of presentation, and the invertible $2$-cells carry no independent numerical weight.

\begin{scriptsize}
\begin{verbatim}
Section DerivedLemmas.

Context (C : QuantTwoOneCat).

Lemma height_no_increase_if_cost_zero :
  forall (A B : Obj C) (f : Hom C A B),
    cost (C:=C) f = 0 ->
    height C B <= height C A.
Proof.
  intros A B f Hc.
  pose proof (height_trans C A B f) as H.
  lra.
Qed.

Lemma height_increase_forces_cost :
  forall (A B : Obj C) (f : Hom C A B),
    height C A <= height C B ->
    height C B - height C A <= cost (C:=C) f.
Proof.
  intros A B f Hle.
  pose proof (height_trans C A B f) as H.
  lra.
Qed.

Lemma cost_equal_under_two_cell :
  forall (A B : Obj C) (f g : Hom C A B),
    Two C f g ->
    cost (C:=C) f = cost (C:=C) g.
Proof.
  intros A B f g a.
  apply (cost_two_invariant C A B f g a).
Qed.

End DerivedLemmas.
\end{verbatim}
\end{scriptsize}

\begin{remark}[Coherence is present, and it is numerically silent]\label{rem:CoqCoherenceSilent}
The lemma \texttt{cost\_equal\_under\_two\_cell} is the toy formal analogue of the requirement that the cost factors through $\Ho(\mathfrak{A})$. The $2$-cells remain present and invertible, and yet they do not perturb the scalar accounting. This silence is load-bearing rather than incidental. It is the very property on which the collapse of the descent to finite codes, described in Remark~\ref{rem:TwoMorphismErasure}, depends, and its consequence for the management of the $2$-cells in the formal interface is taken up in Remark~\ref{rem:CoqPseudoNat}.
\end{remark}

\subsection{A toy elliptic-package fragment}\label{subsec:CoqEllPkg}

We now sketch an elliptic-package fragment in which the $2$-morphisms encode harmless changes of presentation of transports. One may keep the geometric content abstract. The cost \texttt{mu} plays the role of the model defect $\mu$ of Definition~\ref{def:ModelDefect}, and the height \texttt{hDelta} plays the role of $h_\Delta$ of Definition~\ref{def:DiscriminantHeight}.

\begin{scriptsize}
\begin{verbatim}
(*
  Toy elliptic packages as a (2,1)-category.

  Objects: elliptic packages (left abstract)
  1-morphisms: transports between packages
  2-morphisms: coherent identifications between transports, all invertible
*)

Parameter EllPkg  : Type.
Parameter EllHom  : EllPkg -> EllPkg -> Type.
Parameter EllTwo  : forall {E F : EllPkg}, EllHom E F -> EllHom E F -> Type.

Parameter Ell_id1  : forall E : EllPkg, EllHom E E.
Parameter Ell_comp1 : forall {E F G : EllPkg},
  EllHom E F -> EllHom F G -> EllHom E G.

Axiom Ell_comp1_assoc :
  forall (E F G H : EllPkg)
         (f : EllHom E F) (g : EllHom F G) (h : EllHom G H),
    Ell_comp1 (Ell_comp1 f g) h = Ell_comp1 f (Ell_comp1 g h).

Axiom Ell_comp1_id_l :
  forall (E F : EllPkg) (f : EllHom E F),
    Ell_comp1 (Ell_id1 E) f = f.

Axiom Ell_comp1_id_r :
  forall (E F : EllPkg) (f : EllHom E F),
    Ell_comp1 f (Ell_id1 F) = f.

Parameter Ell_id2 : forall {E F : EllPkg} (f : EllHom E F), EllTwo f f.
Parameter Ell_vcomp2 :
  forall {E F : EllPkg} {f g h : EllHom E F},
    EllTwo f g -> EllTwo g h -> EllTwo f h.

Parameter Ell_inv2 :
  forall {E F : EllPkg} {f g : EllHom E F}, EllTwo f g -> EllTwo g f.

Axiom Ell_inv2_left :
  forall {E F : EllPkg} {f g : EllHom E F} (a : EllTwo f g),
    Ell_vcomp2 a (Ell_inv2 a) = Ell_id2 f.

Axiom Ell_inv2_right :
  forall {E F : EllPkg} {f g : EllHom E F} (a : EllTwo f g),
    Ell_vcomp2 (Ell_inv2 a) a = Ell_id2 g.

(* Model defect mu(f) *)
Parameter mu : forall {E F : EllPkg}, EllHom E F -> R.

Axiom mu_id :
  forall E : EllPkg, mu (Ell_id1 E) = 0.

Axiom mu_comp :
  forall (E F G : EllPkg) (f : EllHom E F) (g : EllHom F G),
    mu (Ell_comp1 f g) <= mu f + mu g.

Axiom mu_two_invariant :
  forall (E F : EllPkg) (f g : EllHom E F), EllTwo f g -> mu f = mu g.

(* Discriminant height *)
Parameter hDelta : EllPkg -> R.

(* Transport inequality, mirroring Lemma~\ref{lem:TransportDiscriminant} *)
Axiom hDelta_trans :
  forall (E F : EllPkg) (f : EllHom E F),
    hDelta F <= hDelta E + mu f.
\end{verbatim}
\end{scriptsize}

Once one has \texttt{mu\_two\_invariant}, the familiar consequences of the transport inequality may be written without collapsing $2$-cells into equalities.

\begin{scriptsize}
\begin{verbatim}
Lemma hDelta_no_increase_if_mu_zero :
  forall (E F : EllPkg) (f : EllHom E F),
    mu f = 0 ->
    hDelta F <= hDelta E.
Proof.
  intros E F f Hmu.
  pose proof (hDelta_trans E F f) as H.
  lra.
Qed.

Lemma hDelta_increase_forces_mu :
  forall (E F : EllPkg) (f : EllHom E F),
    hDelta E <= hDelta F ->
    hDelta F - hDelta E <= mu f.
Proof.
  intros E F f Hle.
  pose proof (hDelta_trans E F f) as H.
  lra.
Qed.
\end{verbatim}
\end{scriptsize}

\subsection{Normalization as a $2$-endofunctor with a pseudonatural comparison}\label{subsec:CoqPseudoNat}

In the body of the paper, normalization is treated as a disciplined choice of a reference point in each extensional fiber, together with a distinguished transport. In a $(2,1)$-category it is pleasant to write this as a $2$-endofunctor equipped with a pseudonatural transformation. The toy interface below keeps this shape, while leaving the coherence laws as axioms. The purpose is to keep the $2$-cells explicit, so that the formal picture matches the ambient language of Universe~A.

\begin{scriptsize}
\begin{verbatim}
(*
  A toy 2-endofunctor on the elliptic-package (2,1)-category.

  We keep only what is needed for pseudonaturality.
*)

Record EllEndo2 := {
  Fobj : EllPkg -> EllPkg ;
  Fmor : forall {E F : EllPkg}, EllHom E F -> EllHom (Fobj E) (Fobj F) ;

  F_id1 :
    forall E : EllPkg, Fmor (Ell_id1 E) = Ell_id1 (Fobj E) ;

  F_comp1 :
    forall (E F G : EllPkg) (f : EllHom E F) (g : EllHom F G),
      Fmor (Ell_comp1 f g) = Ell_comp1 (Fmor f) (Fmor g) ;

  Ftwo :
    forall (E F : EllPkg) (f g : EllHom E F),
      EllTwo f g -> EllTwo (Fmor f) (Fmor g)
}.

(*
  Pseudonatural transformation between 2-endofunctors.

  For each 1-morphism f, we store an explicit invertible 2-cell witnessing the
  pseudonaturality square.
*)

Record EllPseudoNat (F G : EllEndo2) := {
  eta_obj :
    forall E : EllPkg, EllHom (Fobj F E) (Fobj G E) ;

  eta_nat :
    forall (E F0 : EllPkg) (f : EllHom E F0),
      EllTwo
        (Ell_comp1 (eta_obj E) (Fmor F f))
        (Ell_comp1 (Fmor G f) (eta_obj F0)) ;

  eta_nat_inv :
    forall (E F0 : EllPkg) (f : EllHom E F0),
      EllTwo
        (Ell_comp1 (Fmor G f) (eta_obj F0))
        (Ell_comp1 (eta_obj E) (Fmor F f)) ;

  eta_nat_left :
    forall (E F0 : EllPkg) (f : EllHom E F0),
      Ell_vcomp2 (eta_nat E F0 f) (eta_nat_inv E F0 f)
        =
      Ell_id2 (Ell_comp1 (eta_obj E) (Fmor F f)) ;

  eta_nat_right :
    forall (E F0 : EllPkg) (f : EllHom E F0),
      Ell_vcomp2 (eta_nat_inv E F0 f) (eta_nat E F0 f)
        =
      Ell_id2 (Ell_comp1 (Fmor G f) (eta_obj F0))
}.
\end{verbatim}
\end{scriptsize}

\begin{remark}[Pseudonaturality as explicit invertible $2$-cells]\label{rem:CoqPseudoNat}
The fields \texttt{eta\_nat} and \texttt{eta\_nat\_inv} are the literal $(2,1)$-categorical content of pseudonaturality in the sense of Definition~\ref{def:PseudoNatural}. One keeps an explicit invertible $2$-morphism rather than an equality of composites.

One might ask why the interface keeps these $2$-cells together with their round-trip laws \texttt{eta\_nat\_left} and \texttt{eta\_nat\_right}, rather than showing only that the relevant squares commute. The collapse performed in the descent to finite codes, described in Remark~\ref{rem:TwoMorphismErasure}, is sound only once the cost has been shown invariant under such $2$-cells. Keeping the cells explicit is what permits that invariance to be checked by the machine through \texttt{cost\_equal\_under\_two\_cell}, rather than presumed. If the $2$-cells were collapsed at the outset, then the independence of the scalar from its presentation would be assumed in place of the very property on which the descent rests. The interface thus holds the step that establishes coherence apart from the step that reads off the scalar, and it certifies that the first does not disturb the second. This is the formal counterpart of the reading given in Remark~\ref{rem:PseudoNaturalNotBicategorical}, where the $2$-cells carry weight at the level of derivation and fall silent at the level of the final inequality.
\end{remark}

\begin{remark}[The limit of the strict prototype]\label{rem:StrictPrototypeLimit}
The strict prototype is faithful here for one reason. It transports scalars. The field \texttt{Fmor} and its laws \texttt{F\_id1} and \texttt{F\_comp1} ask the $2$-endofunctor to preserve composition strictly, and the only output the development reads from a composite is its cost, upon which an associativity isomorphism acts as the identity. A strict associator and a coherent one therefore agree on everything the numerical accounting consults, and the pseudonaturality cell \texttt{eta\_nat} keeps its full force, since it is a $2$-cell between parallel $1$-morphisms and not a matter of associativity at all. The strictness of Remark~\ref{rem:strictness} and the un-collapsed $2$-cells of Definition~\ref{def:PseudoNatural} are thus consistent, the first concerning the composition laws and the second the cells between parallel transports.

The trade-off shows its edge once the transport carries geometric data in place of a scalar. A base change of a Weierstrass model, a pullback of local sections, and the analytic uniformization of an elliptic curve compose only up to a canonical isomorphism rather than strictly, so a $2$-functor that moved such data could not satisfy \texttt{F\_comp1} as a strict equation. A faithful formalization at that level would replace the strict $2$-functors with pseudofunctors, would carry associator and unitor cells, and would require the pseudonaturality squares of Definition~\ref{def:PseudoNatural} to cohere with those cells through the bicategorical axioms. The present interface is adequate precisely because the scalars it transports -- the cost and the height -- reach Universe~P as fast Cauchy names rather than as the geometric objects whose comparison produced them, so that the coherence the strict prototype omits is coherence that the scalar never feels. A homotopy-minded reader who wishes to transport the data themselves should read the strict choice as the boundary of the prototype rather than as a claim about the geometry.
\end{remark}

\subsection{A toy intensional Szpiro interface}\label{subsec:CoqSzpiro}

Finally, one may state the schematic form of an intensional Szpiro-type inequality in this toy $(2,1)$-categorical environment. The purpose is to show that the defect term is attached to a chosen transport, while the coherence $2$-cells do not contribute to the scalar bound.

\begin{scriptsize}
\begin{verbatim}
(*
  Conductor complexity and Szpiro-style inequality (toy version).

  We keep the statement schematic. It mirrors the shape of
  Proposition~\ref{prop:NormalizedToIntensionalSzpiro}.
*)

Parameter n : EllPkg -> R.

Record EllNormalization2 := {
  NF2 : EllEndo2 ;
  nu  : forall E : EllPkg, EllHom (Fobj NF2 E) E ;

  (* one may require idempotence as a separate axiom if desired *)
}.

Definition defect (N : EllNormalization2) (E : EllPkg) : R :=
  mu (nu N E).

Parameter C_eps : R -> R.

Conjecture Intensional_Szpiro :
  forall (eps : R) (N : EllNormalization2) (E : EllPkg),
    0 < eps ->
    hDelta E <= (6 + eps) * n E + defect N E + C_eps eps.
\end{verbatim}
\end{scriptsize}

\begin{remark}[A small point of fidelity]\label{rem:CoqFidelity}
In this toy formulation, the $(2,1)$-categorical structure is literal, and the cost is invariant under $2$-isomorphism by \texttt{mu\_two\_invariant}. Thus a change of coherence data may alter the $2$-cells that witness a comparison, and yet it cannot alter the scalar defect. The cost remains attached to the transport itself.
\end{remark}

This toy subsection is limited in scope, and it offers a check rather than a claim of formalized arithmetic geometry. The grammar of Universe~A can be written, even in a small proof assistant, without erasing the $(2,1)$-categorical level. One keeps coherence explicit, and one keeps the scalarization steps separate from the coherence steps.

\section{An intensional statement of the Birch and Swinnerton-Dyer
conjecture}\label{sec:BSD}
%==================================================================

The intensional Szpiro inequality of Section~\ref{sec:IntensionalSzpiro} took the form of a transport bound, where a height was held below a conductor term together with a defect. The conjecture of Birch and Swinnerton-Dyer is of another shape. It concerns the lattice of rational points of an elliptic curve, and it relates the growth of a sequence of local point counts to the rank of that lattice, with a refined part that prescribes a leading constant. We state it within the quantitative framework of the preceding sections.

Throughout, we return to the form in which Birch and Swinnerton-Dyer first phrased their observation~\cite{Birch1965}, before the modularity theorem, so that the statement rests upon finite point counts and upon a lattice of solutions, and not upon the analytic continuation of an $L$-function. This is what permits the statement to descend, as the Szpiro inequality descended above, to a pre-set-theoretic universe.

\subsection{The Mordell--Weil package}\label{subsec:MWPackage}

\begin{definition}[Mordell--Weil package]\label{def:MordellWeilPackage}
Let $K$ be a number field. A \emph{Mordell--Weil package} over $K$ is an elliptic package $\mathcal{E} = (E, \mathcal{W}, \{\sigma_v\}_{v\in S}, \lambda)$ over $K$ in the sense of Definition~\ref{def:EllipticPackage}, enriched with the following data:
\begin{enumerate}
\item[(i)] the group of rational points $E(K)$, which by the Mordell--Weil theorem~\cite{SilvermanAEC} is finitely generated, so that one has a presentation
  \[
    E(K) \;\cong\; \Z^{r} \oplus T
  \]
with $T$ a finite torsion subgroup and $r$ the rank, given by a finite list of generators $P_1, \dots, P_r$ of the free part together with a finite presentation of $T$.
\item[(ii)] the canonical N\'eron--Tate height pairing
  \[
    \langle \cdot, \cdot \rangle \colon E(K) \times E(K) \longrightarrow \R
  \]
which descends to a positive definite form on the free part $E(K) \otimes \R \cong \R^{r}$.
\end{enumerate}
A change of generating set of the free part is a transport in the sense of the labeled signature of Definition~\ref{def:Signatures}, an element of $\mathrm{GL}_r(\Z)$ acting on the chosen basis. The height of an individual generator depends upon that choice, and so it carries a transport cost.
\end{definition}

\begin{definition}[The regulator as a transport invariant]\label{def:RegulatorAsTransport}
With the notation of Definition~\ref{def:MordellWeilPackage}, the \emph{regulator} of $\mathcal{E}$ is the Gram determinant of the canonical height pairing in a basis $P_1, \dots, P_r$ of the free part,
\[
  \Reg(\mathcal{E}) \;=\; \det\bigl( \langle P_i, P_j \rangle \bigr)_{1 \le i,j \le r}.
\]
It is the squared covolume of the Mordell--Weil lattice under the height metric. The change-of-basis transports of Definition~\ref{def:MordellWeilPackage} have determinant $\pm 1$, and so the regulator is invariant under them. Within the quantitative category of Definition~\ref{def:TransportDistance} it is therefore a zero-cost invariant of the lattice, while the heights of the individual generators are basis-dependent and bear the cost.
\end{definition}

\begin{remark}[The volume reading of the regulator]\label{rem:RegulatorVolumeReading}
The regulator is the volume of the cell of the Mordell--Weil lattice. One may read it as the volume of the transport between the lattice presented by a chosen basis and the lattice presented by the height form itself. The invariance noted above is the statement that this volume does not depend upon the presentation, even though the coordinates of the generators do. In this the regulator behaves as the short-map heights of Proposition~\ref{prop:HeightShort} behave, a quantity attached to the object and not to the manner of its display.
\end{remark}

\subsection{Local point counts as data of Universe~P}\label{subsec:PointCounts}

\begin{definition}[Point-count datum]\label{def:PointCountDatum}
Let $\mathcal{E}$ be an elliptic package over $K$ with code $d \in \mathsf{PkgCode}_K$ in the sense of Definition~\ref{def:EllPkgCode}. For each finite place $\mathfrak{p}$ of $K$ of good reduction, with residue field $\kappa(\mathfrak{p})$ of cardinality $\mathrm{N}\mathfrak{p}$, set
\[
  N_{\mathfrak{p}}(\mathcal{E}) \;=\; \# E\bigl(\kappa(\mathfrak{p})\bigr),
\]
the number of points of the reduced curve over the residue field. Each $N_{\mathfrak{p}}(\mathcal{E})$ is obtained from $d$ by counting the solutions of the reduced Weierstrass congruence together with the point at infinity, a finite and decidable procedure. The assignment $\mathfrak{p} \mapsto N_{\mathfrak{p}}(\mathcal{E})$ is therefore an object of Universe~P in the sense of Definition~\ref{def:UniverseP}, a code-indexed family of integers of the kind described in Definition~\ref{def:LogIntDatum}.
\end{definition}

\begin{definition}[Partial Euler transport]\label{def:PartialEulerTransport}
With the notation of Definition~\ref{def:PointCountDatum}, the \emph{partial Euler transport} of $\mathcal{E}$ at level $X$ is the finite product of positive rationals
\[
  \Pi_{\mathcal{E}}(X)
  \;=\;
  \prod_{\substack{\mathfrak{p}\ \text{good}\\ \mathrm{N}\mathfrak{p} \le X}}
  \frac{N_{\mathfrak{p}}(\mathcal{E})}{\mathrm{N}\mathfrak{p}}.
\]
For each $X$ this is an element of Universe~P, and the family $X \mapsto \Pi_{\mathcal{E}}(X)$ is a Universe-P sequence of rationals.
\end{definition}

\begin{remark}[The local factors and the absent series]\label{rem:EulerFactorMotivation}
Each factor $N_{\mathfrak{p}}/\mathrm{N}\mathfrak{p}$ is the reciprocal of the local Euler factor of the Hasse--Weil series at the central point, so that $\Pi_{\mathcal{E}}(X)$ is the reciprocal of a truncated Euler product. We keep this as motivation, and we do not invoke the analytic continuation of that series. The growth of $\Pi_{\mathcal{E}}(X)$ is an arithmetical matter, settled by the integers $N_{\mathfrak{p}}$ alone.
\end{remark}

\subsection{The rank form}\label{subsec:BSDRank}

\begin{conjecture}[Intensional Birch and Swinnerton-Dyer, rank
form]\label{conjecture:IntensionalBSDRank} Let $\mathcal{E}$ be a Mordell--Weil package over $K$ in the sense of Definition~\ref{def:MordellWeilPackage}, with lattice of rank $r$. Then there exists a positive real constant $C_{\mathcal{E}}$ such that
\[
  \Pi_{\mathcal{E}}(X) \;\sim\; C_{\mathcal{E}} \, (\log X)^{r}
  \qquad (X \to \infty).
\]
Equivalently, the rank of the Mordell--Weil lattice is the growth exponent of the partial Euler transport,
\[
  r \;=\; \lim_{X \to \infty} \frac{\log \Pi_{\mathcal{E}}(X)}{\log \log X}.
\]
\end{conjecture}

This is the asymptotic form in which Birch and Swinnerton-Dyer first phrased their conjecture~\cite{Birch1965}. Read within the framework, it equates two quantities of distinct provenance, the growth exponent of a sequence of local point counts on the one side and the rank of an arithmetic lattice on the other. Both are objects of Universe~P. The left member is read from the family $\mathfrak{p} \mapsto N_{\mathfrak{p}}$, and the right member is read from the lattice of rational points, and the conjecture asserts their agreement.

\begin{remark}[On the delicacy of the asymptotic]\label{rem:RankSubtlety}
The asymptotic with a finite non-zero constant is a delicate assertion. Goldfeld has shown that, once it is granted, then it carries the Riemann hypothesis for the associated series, and so it lies deeper than the equality of the two ranks alone~\cite{Goldfeld1982}. We state it in this strong asymptotic form for two reasons. It is the form that is free of analytic continuation, and it is the form in which the growth exponent, and with it the rank, appears as a limit of arithmetical quantities.
\end{remark}

\subsection{The refined form}\label{subsec:BSDRefined}

The refined conjecture prescribes the leading constant $C_{\mathcal{E}}$. We assemble its several ingredients, each as a quantity attached to the package.

\begin{definition}[Local package costs]\label{def:LocalPackageCosts}
Let $\mathcal{E}$ be a Mordell--Weil package over $K$. Its \emph{local package costs} are the finite invariants the package carries: the Tamagawa number $c_v(\mathcal{E})$ at each place $v$ of bad reduction, that is the index of the identity component in the special fiber of the N\'eron model, and the order $\# T$ of the torsion subgroup of Definition~\ref{def:MordellWeilPackage}. Each is obtained from the package code by a finite procedure -- Tate's algorithm for the $c_v$ and a torsion search bounded by the reduction data for $\# T$ -- in the manner of Tate's algorithm~\cite{SilvermanAEC, Cremona1997}.
\end{definition}

\begin{definition}[The period as a comparison cost]\label{def:PeriodAsTransport}
The \emph{period} $\Omega_{\mathcal{E}}$ of a Mordell--Weil package over $K$ is the cost attached to the comparison between the algebraic presentation of $E$ by its Weierstrass model and its analytic presentation as a complex torus $\C / \Lambda$. Explicitly, at each archimedean place $v$ it is the integral of the N\'eron differential over the local points, the covolume of the period lattice $\Lambda_v$, and $\Omega_{\mathcal{E}}$ is the product of these local contributions. As a covolume of a comparison between two presentations, the algebraic and the analytic, it is an instance of a comparison problem in the sense of Definition~\ref{def:ComparisonProblem}. Its value is a computable real number, approached by the arithmetic-geometric mean, and so it is an object of Universe~P, a Cauchy sequence of rationals rather than an appeal to set existence.
\end{definition}

\begin{remark}[The period as an evaluation in the cost quantale]\label{rem:PeriodComparisonCategory}
It is worth indicating along which path the period of Definition~\ref{def:PeriodAsTransport} is paid, so that it reads as an evaluation in the enriched language developed above rather than as an isolated real number. Fix an archimedean place $v$ of $K$. Over the curve $E$ at $v$ two presentations sit side by side. The first is the algebraic presentation $\mathrm{Alg}_v(E)$, the Weierstrass model with its N\'eron differential $\omega$, a datum of the kind carried by the package of Definition~\ref{def:EllipticPackage}. The second is the analytic presentation $\mathrm{Anal}_v(E)$, the complex torus $\C/\Lambda_v$ with its translation-invariant differential $dz$, where $\Lambda_v$ is the period lattice. Both lie over the same object of $\mathcal{C}_{\mathrm{ext}}$, and the uniformization at $v$ furnishes a vertical transport
\[
  \omega_v \colon \mathrm{Alg}_v(E) \longrightarrow \mathrm{Anal}_v(E)
\]
in the sense of Definition~\ref{def:ComparisonProblem}, the passage that reads the algebraic differential through the flat coordinate of the torus. Its cost is the logarithmic covolume by which the two normalizations of the differential disagree, oriented so that a differential of unit covolume is costless, and the local period $\Omega_v$ is the exponential of that cost. The archimedean places assemble by the tensor of the cost quantale of Definition~\ref{def:CostQuantale}, since the additive law $a+b$ on $[0,\infty]$ carries the multiplicative law $\Omega_v\,\Omega_{v'}$ of the periods, so that
\[
  \log \Omega_{\mathcal{E}} = \sum_{v \mid \infty} c(\omega_v).
\]
In the transport distance of Definition~\ref{def:TransportDistance} the period is therefore the $\mathcal{V}$-distance
\[
  d_{\mathcal{Q}}\bigl(\mathrm{Alg}_v(E),\mathrm{Anal}_v(E)\bigr) = c(\omega_v)
\]
between the two presentations, an evaluation of the hom-object of that $\mathcal{V}$-category of the same nature as the costs that the height functor of Definition~\ref{def:QuantitativeCategory}\textup{(iii)} controls. As Remark~\ref{rem:LawvereMetric} permits, the reverse passage $\mathrm{Anal}_v(E)\to\mathrm{Alg}_v(E)$ carries its own cost, and the two need not agree -- the asymmetry is the arithmetic content of the comparison rather than a defect of the account.
\end{remark}

\begin{remark}[The period across the two universes]\label{rem:PeriodAcrossUniverses}
The two readings of the period belong to two universes, and it is the cost map alone that passes between them. In Universe~A the transport $\omega_v$ of Remark~\ref{rem:PeriodComparisonCategory} is a $1$-morphism in the full sense, a comparison between the algebraic and the analytic presentation of $E$ at $v$, and its geometric content -- the flat coordinate of the torus, the integral of the N\'eron differential -- is what marks it as a comparison rather than a coincidence of two numbers. That content is never transcribed into Universe~P. What descends is the scalar cost $c(\omega_v)=\log\Omega_v$, and it descends as a fast Cauchy name of rationals in the sense of Definition~\ref{def:UniverseP}\textup{(iv)}, the arithmetic-geometric mean furnishing the modulus of convergence in the manner of Remark~\ref{rem:BSDErrorMeasures}. The certified field code of Definition~\ref{def:CertifiedFieldCode} and the package code of Definition~\ref{def:EllPkgCode} carry the period only under this name, as a coded real among the other coded reals, rather than as an analytic object.

The reading of the period as a numeric constraint variable in Universe~P is therefore the correct one, and it is the whole of what Universe~P is asked to hold. The morphism status of the period is the business of Universe~A, where it performs one task that the bare number cannot. By exhibiting $\Omega_v$ as the covolume of a comparison between presentations, it places the period within the scope of the descent obstruction of Theorem~\ref{thm:DescentObstruction}, so that the period cannot be collapsed into the regulator of Definition~\ref{def:RegulatorAsTransport} by any change of presentation. The analytic transport thus earns the distinctness of the period at the categorical level, while the arithmetic that the descent verifies sees only the fast Cauchy name. The bridge between the two levels is the same scalarization that attaches a cost to a $1$-morphism throughout the work, applied here to a transport whose geometry happens to be analytic.
\end{remark}

\begin{definition}[The Shafarevich defect]\label{def:ShafarevichDefect}
The Tate--Shafarevich group $\Sha(E/K)$ classifies the torsors under $E$ that possess a rational point over every completion $K_v$ and yet possess none over $K$. In the language of the framework it is the accumulated defect of Definition~\ref{def:AccumulatedDefect} of the descent that would globalize a family of everywhere-local solutions, a measure of the failure of local data to transport to a global point. The refined conjecture presupposes that this defect is finite, an assertion that is open in general, and under that hypothesis we write $\# \Sha(E/K)$ for its order.
\end{definition}

\begin{conjecture}[Intensional Birch and Swinnerton-Dyer, refined
form]\label{conjecture:IntensionalBSDRefined} Let $\mathcal{E}$ be a Mordell--Weil package over $K$ with finite Shafarevich defect and rank $r$. Then the leading constant $C_{\mathcal{E}}$ of Conjecture~\ref{conjecture:IntensionalBSDRank} is the total transport cost
\[
  C_{\mathcal{E}}
  \;=\;
  \kappa_r \cdot
  \frac{\# \Sha(E/K) \cdot \Omega_{\mathcal{E}} \cdot \Reg(\mathcal{E})
        \cdot \prod_{v} c_v(\mathcal{E})}
       {\bigl( \# T \bigr)^{2}}.
\]
Here $\kappa_r$ is an explicit elementary factor depending only upon the rank $r$ and the archimedean places, given by Goldfeld~\cite{Goldfeld1982}.
\end{conjecture}

\begin{remark}[Agreement with the received refined conjecture]\label{rem:RefinedClassicalForm}
The quantity
\[
  \frac{\# \Sha(E/K) \cdot \Omega_{\mathcal{E}} \cdot \Reg(\mathcal{E})
        \cdot \prod_{v} c_v(\mathcal{E})}{\bigl( \# T \bigr)^{2}}
\]
is the leading Taylor coefficient of the Hasse--Weil series at the central point in the received refined conjecture of Tate~\cite{Tate1966}. The factor $\kappa_r$ is the elementary normalization that relates that coefficient to the constant of the partial product, and it does not disturb the descent of the statement to Universe~P, since it is itself a computable real. Read in this way, the refined conjecture asserts that a constant computed from local point counts equals a total transport cost assembled from a regulator volume, a period, the local package costs, and the Shafarevich defect.
\end{remark}

\begin{remark}[The transport reading of the constant]\label{rem:TotalTransportReading}
Every member of the numerator and denominator is a transport quantity. The regulator is the covolume of the lattice transport of Definition~\ref{def:RegulatorAsTransport}. The period is the covolume of the algebraic-analytic comparison of Definition~\ref{def:PeriodAsTransport}. The Tamagawa numbers and the torsion order are the local package costs of Definition~\ref{def:LocalPackageCosts}. The Shafarevich order is the accumulated descent defect of Definition~\ref{def:ShafarevichDefect}. The refined conjecture states that the growth constant of the local counts is their product, up to the elementary factor $\kappa_r$.
\end{remark}

\subsection{Descent of the statement to Universe~P}\label{subsec:BSDDescent}

\begin{proposition}[The conjecture descends to Universe~P]\label{prop:BSDDescentToP}
Fix a number field $K$ presented by a certified field code in the sense of Definition~\ref{def:CertifiedFieldCode}, and a package code $d \in \mathsf{PkgCode}_K$ in the sense of Definition~\ref{def:EllPkgCode}. Then the rank form and the refined form, Conjectures~\ref{conjecture:IntensionalBSDRank} and~\ref{conjecture:IntensionalBSDRefined}, are each expressible as a sentence in the language of second-order arithmetic, and the assertion of each is formalizable in $\mathsf{ACA}_0$. No quantification over a totality of all elliptic curves enters, and no appeal to the analytic continuation of an $L$-function is made.
\end{proposition}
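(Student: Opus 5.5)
The plan is to show that every ingredient of Conjectures~\ref{conjecture:IntensionalBSDRank} and~\ref{conjecture:IntensionalBSDRefined} is either a primitive recursive function of the package code $d$ or a real number with an explicit fast Cauchy name, arithmetical in $d$. Each conjecture then becomes an arithmetical formula in these names, possibly with a single set quantifier over a Cauchy name. Its assertion is therefore a sentence of second-order arithmetic whose constituent sets all exist by arithmetical comprehension, which is what Proposition~\ref{prop:ACAFormalizable} requires.

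\textbf{Point counts and the rank form.} I would first treat the left-hand side. By Definition~\ref{def:PointCountDatum}, the map $(\mathfrak{p},d)\mapsto N_{\mathfrak{p}}(\mathcal{E})$ is primitive recursive. The prime ideals of norm at most $X$ are enumerated from the certified field code of Definition~\ref{def:CertifiedFieldCode}, and the good places are read off from the discriminant. Hence $X\mapsto \Pi_{\mathcal{E}}(X)$ is a primitive recursive sequence of rationals, as in Definition~\ref{def:PartialEulerTransport}. The reals $\log X$, $\log\log X$ and $\log \Pi_{\mathcal{E}}(X)$ receive fast Cauchy names uniformly in $X$. The asymptotic $\Pi_{\mathcal{E}}(X)\sim C(\log X)^{r}$ is then written as
\[
\exists C\,\bigl(C>0 \wedge \forall k\,\exists X_0\,\forall X\ge X_0\;\; |\Pi_{\mathcal{E}}(X)/(\log X)^{r}-C|<2^{-k}\bigr),
\]
where $C$ ranges over Cauchy names, a set variable, and the inner comparisons are arithmetical. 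The equivalent limit form is handled in the same way.

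\textbf{The rank $r$.} The rank is fixed through the Mordell--Weil data of Definition~\ref{def:MordellWeilPackage}, namely generators $P_1,\dots,P_r$ and a torsion list $T$ coded as tuples of field elements. The assertion that they present $E(K)$ is arithmetical: every coded point of $E(K)$ is an integer combination of the $P_i$ plus an element of $T$, and no nontrivial integer combination of the $P_i$ is torsion. So ``the rank is $r$'' is an arithmetical statement about $d$. Torsion is obtained by a bounded search, and the Tamagawa numbers by Tate's algorithm, as in Definition~\ref{def:LocalPackageCosts}. The period is computed by the arithmetic-geometric mean with its explicit modulus, as in Definition~\ref{def:PeriodAsTransport}, and $\kappa_r$ is an elementary computable real.

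\textbf{The refined form and the main obstacles.} Two ingredients need genuine work. The first is the regulator. Here I would define $\hat h(P)=\lim 4^{-n}h(2^nP)$ and use an explicit bound of Silverman type, $|\hat h-h|\le B(d)$ with $B$ computable from the code, to obtain a modulus of convergence. This yields a fast Cauchy name for each pairing $\langle P_i,P_j\rangle$ and hence for $\Reg(\mathcal{E})$, as a determinant of coded reals. The second, and the step I expect to be hardest, is $\#\Sha(E/K)$. I would avoid quantifying over torsors as objects and instead code $\Sha$ through Selmer groups. Each $\Sel^{(n)}(E/K)$ is a finite group computable from $d$, since the relevant classes have bounded ramification and are coded by finite field data. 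I would then set $\#\Sha[n]=\#\Sel^{(n)}/\#\bigl(E(K)/nE(K)\bigr)$, where the denominator is computed from $r$ and $T$. The hypothesis ``finite of order $m$'' is then expressed as
\[
\exists m\,\bigl(\forall n\;\#\Sha[n]\mid m \;\wedge\; \#\Sha[m]=m\bigr),
\]
which is arithmetical. The delicate point is verifying that this Selmer coding agrees with Definition~\ref{def:ShafarevichDefect}, that is, that local solubility at every place is decidable from finite data. I would discharge it by restricting to the finitely many places of bad reduction and those dividing $n$.

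With all factors named, the equality of $C$ with the displayed product becomes the $\Pi^0_1$ condition that the two Cauchy names agree to within $2^{-k}$ for every $k$. This completes the sentence. Its formation uses only arithmetical comprehension, so the assertion is formalizable in $\mathsf{ACA}_0$ with no totality of curves and no analytic continuation entering.
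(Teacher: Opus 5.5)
Your proposal is correct and follows the same route as the paper. Each ingredient is reduced to one of three kinds of data: primitive recursive data on the code (the point counts, $\Pi_{\mathcal{E}}(X)$, $\# T$, the $c_v$), computable reals carried by Cauchy names ($\Omega_{\mathcal{E}}$, $\kappa_r$, the canonical heights and hence $\Reg(\mathcal{E})$), or integers cut out by arithmetical predicates ($r$ and $\#\Sha(E/K)$), after which both forms of the conjecture become arithmetical statements about code-indexed sequences with no $L$-function involved.

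Your explicit modulus for $\hat h$ via a Silverman-type bound and your Selmer-group coding of $\#\Sha(E/K)[n]$ fill in what the paper asserts in one line. Your existential quantifier over $C$ in the rank form is also slightly more accurate than the paper's description of $C_d$ as a computable real, and in $\mathsf{ACA}_0$ that quantifier can be traded for the arithmetical Cauchy criterion.
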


\begin{proof}
The point counts $N_{\mathfrak{p}}(d)$ are primitive recursive in $d$, being counts of solutions of a congruence, so that $\mathfrak{p} \mapsto N_{\mathfrak{p}}(d)$ and the partial Euler transport $\Pi_d(X)$ are code-indexed sequences of rationals. The constants $C_d$, $\Omega_d$, and $\kappa_r$ are computable reals, presented as $\mathsf{ACA}_0$-definable Cauchy sequences. The regulator is the determinant of an integer-indexed matrix of canonical heights, themselves computable reals. The rank $r$, the torsion order $\# T$, the Tamagawa numbers $c_v$, and the order $\# \Sha$, granted finite, are integers extracted from $d$ by arithmetical predicates. The asymptotic relation and the limit defining the growth exponent are then arithmetical statements about these sequences, of bounded quantifier complexity over the reals so coded. As in Proposition~\ref{prop:APDescentToP}, the resulting sentence speaks only of finite codes and of sequences indexed by them, and it remains within $\mathsf{ACA}_0$. The single ingredient that would carry the statement beyond arithmetic, the analytic continuation of the Hasse--Weil series that the modularity theorem provides, is absent from the asymptotic form by the choice of formulation.
\end{proof}

\begin{remark}[Error measures of the real constants and the strength of the descent]\label{rem:BSDErrorMeasures}
The descent of Proposition~\ref{prop:BSDDescentToP} asks a little more of the real constants than their bare computability, and it is worth saying what. The normalizing factor $\kappa_r$ of Remark~\ref{rem:RefinedClassicalForm}, the period $\Omega_{\mathcal{E}}$, and the leading constant $C_{\mathcal{E}}$ are each presented as a Cauchy sequence of rationals, and the assertion that $\Pi_{\mathcal{E}}(X)\sim C_{\mathcal{E}}(\log X)^{r}$ carries with it an error measure, the rate at which the partial Euler transport approaches its asymptote. Two real-analytic inputs govern that rate. The first is the equidistribution of the normalized local traces $a_{\mathfrak{p}}/2\sqrt{\mathrm{N}\mathfrak{p}}$ against the Sato--Tate measure, which controls the fluctuation of the individual factors $N_{\mathfrak{p}}/\mathrm{N}\mathfrak{p}$ of Definition~\ref{def:PartialEulerTransport}. The second is an asymptotic estimate, analogous to Mertens' second theorem, for the weighted sum of those traces, which controls the accumulation of the factors into the product.

What the descent requires of these inputs is only that their consequence for the partial product be phrased as a bound between coded reals carrying an explicit modulus of convergence. A proof of the inputs within $\mathsf{ACA}_0$ is neither claimed nor needed, the equidistribution lying deeper. Once the error measure is presented as such a modulus, the limit that defines the growth exponent of Conjecture~\ref{conjecture:IntensionalBSDRank}, and the limit that defines each real constant, are the limits of bounded monotone sequences of rationals, and $\mathsf{ACA}_0$ proves the existence of exactly these limits. The integral of the Sato--Tate measure and the Mertens constant then enter as computable reals of the same standing as $\kappa_r$, defined by arithmetical Cauchy names. In this manner the error measures of the real integration formulas are handled inside the universe by the one principle that separates $\mathsf{ACA}_0$ from its weaker companions, the convergence of bounded monotone arithmetical sequences, and the descent of Proposition~\ref{prop:BSDDescentToP} closes without an appeal to a stronger system. The standing of the modulus itself, as an existential parameter of the conjecture rather than a conditionally proven rate, is taken up in Remark~\ref{rem:ModulusExistentialParameter}.
\end{remark}

\begin{remark}[Why the modularity theorem is set aside]\label{rem:WhyAvoidModularity}
The modularity theorem identifies the Hasse--Weil series of an elliptic curve over $\Q$ with the $L$-series of a modular form, and with that identification come the analytic continuation and the functional equation. That identification is the route by which the central value and its derivatives acquire their meaning. It is also the route by which the statement of the conjecture would draw upon an analytic object whose construction reaches past arithmetic. By returning to the asymptotic form of Birch and Swinnerton-Dyer, one keeps the statement on the arithmetical side of that divide. The leading constant is then approached through a delicate limit rather than read from a value of an analytic series, and in exchange the conjecture may be stated, in full, within Universe~P, without recourse to the set-theoretic ambient of Universe~H.
\end{remark}

\begin{remark}[The obstruction keeps the ingredients
apart]\label{rem:BSDLinkToFramework} The same discipline that gave the Szpiro inequality a reading as a transport bound in Proposition~\ref{prop:NormalizedToIntensionalSzpiro} gives the refined constant a reading as a transport cost. The descent obstruction of Theorem~\ref{thm:DescentObstruction} is what holds the several invariants apart. If the regulator, the period, and the Shafarevich defect could be collapsed into one another by a change of presentation, then each would carry a vanishing relative cost, and the obstruction forbids that. They remain distinct measurements rather than artifacts of a chosen vocabulary, and the refined conjecture is the statement that their product governs the growth of the local counts.
\end{remark}
%==================================================================
\section{The Tate--Shafarevich group}\label{sec:TateShafarevich}
%==================================================================

The destination of this section is a single statement, that the Tate--Shafarevich group has finite exponent, or what is the same under a classical pairing recalled below, that its divisible part is trivial. Two of the three ingredients this asks for are unconditional. The finite-level shadows $\Sha(E/K)[n]$ are finite for every $n$, by ordinary finite descent, and every class outside the divisible part carries a positive, computable transport cost, by the non-degeneracy of the Cassels--Tate pairing. What is left, once these two facts are in hand, is a single named hypothesis, that every class of $\Sha(E/K)$ -- the divisible ones included -- carries a positive cost bounded in total by the package. We call this the Shafarevich cost bound, state it below, and close the section by showing exactly what it still asks for.

\subsection{Finite descent and the Shafarevich cost bound}
\label{subsec:ShaFinitenessCostBound}

This subsection does not prove the finiteness of $\Sha(E/K)$ in full generality. It proves the finite-level part of descent inside Universe~P, and then it isolates the exact cost-bound hypothesis under which under which an infinite Tate--Shafarevich group becomes impossible. The proof is therefore a reduction rather than a completion. Its value is that the remaining obstruction is no longer hidden in an analytic continuation, nor in an ambient set universe. It is placed in a finite transport inequality between local descent data and the Mordell--Weil package.

\begin{remark}[What is expressed in $\mathsf{ACA}_{0}$ and what is proved there]
\label{rem:ExpressibilityVersusProvability}
It is worth drawing, at the outset of the descent, a line that the rest of the section keeps. Two questions are easily run together, and they are not the same. The first is whether the finiteness of $\Sha(E/K)$ can be \emph{expressed} as a sentence over finite codes in the language of $\mathsf{ACA}_{0}$. The second is whether that sentence can be \emph{proved} in $\mathsf{ACA}_{0}$. The first holds, and the constructions of Universe~P are arranged so that it holds: the finite-level shadows, the Selmer packets, the descent costs, and the orders of the classes are all finite codes or coded reals, as Proposition~\ref{prop:ACAFormalizable} and Proposition~\ref{prop:FiniteLevelShaACA0} attest. The second does not hold in general, and the reduction does not pretend to it. What the reduction shows is that the proof needs, over and above $\mathsf{ACA}_{0}$, only a short list of classical inputs, each free of modularity and of analytic continuation, and each far weaker than the comprehension of an ambient set universe.

Two such inputs enter here. The finite generation of $E(K)$ and the termination of Tate's algorithm furnish the package, as in Definition~\ref{def:MordellWeilPackage} and Definition~\ref{def:LocalPackageCosts}. The non-degeneracy of the Cassels--Tate pairing modulo the divisible part, recalled in Lemma~\ref{lem:CasselsTateNondegeneracy}, furnishes the separation. When a step asks more than $\mathsf{ACA}_{0}$ proves, the honest ambient is the comprehension scheme $\Pi^{1}_{1}\text{-}\mathsf{CA}_{0}$, in which the Galois cohomology of a number field and the duality that the pairing rests upon are available. That scheme is itself a fragment of second-order arithmetic, far below $\mathsf{ZFC}$, so the program stays independent of the set universe of Universe~H throughout. We mark each importation where it falls, and we keep separate the sentence, which lives in $\mathsf{ACA}_{0}$, from the proof, which calls upon these few recalled principles.

A reader from reverse mathematics may wish the upper bound sharpened, and one point of the importation does admit a calibration of long standing. The formation of the maximal divisible subgroup is the point at which $\Pi^{1}_{1}$ comprehension enters, since the theorem that every countable abelian group is the direct sum of a divisible group and a reduced group is equivalent to $\Pi^{1}_{1}\text{-}\mathsf{CA}_{0}$ over $\mathsf{RCA}_{0}$~\cite{FriedmanSimpsonSmith1983,simpson2009subsystems}. The radical of the Cassels--Tate pairing being $\Sha(E/K)_{\mathrm{div}}$, the non-degeneracy of Lemma~\ref{lem:CasselsTateNondegeneracy} presupposes that subgroup as a completed set, and $\Pi^{1}_{1}\text{-}\mathsf{CA}_{0}$ is the strength at which such a subgroup is available with no help from special structure. The finite-level material stands apart. The exactness of the $n$-descent sequence at a fixed level, the Hilbert-symbol evaluations of the local invariants, and the comparison of a torsor with its split presentation concern finite Galois modules over finitely many completions, they are arithmetical in the codes, and Lemma~\ref{lem:SelmerPacketFiniteACA0} together with Proposition~\ref{prop:FiniteLevelShaACA0} carries them out in $\mathsf{ACA}_{0}$. What exceeds $\mathsf{ACA}_{0}$ is the global half of the duality, the passage from the finite levels to the cohomology of the full Galois group of $K$ and the identification of the radical of the pairing with the divisible part. For that half we claim only the upper bound $\Pi^{1}_{1}\text{-}\mathsf{CA}_{0}$, and whether it can be lowered, or whether the duality reverses to the comprehension it invokes, is to our knowledge open.
\end{remark}

\subsubsection{Finite asymptotic rank energy}
\label{subsubsec:FiniteAsymptoticRankEnergy}

\begin{definition}[Finite rank-energy window]
\label{def:FiniteRankEnergyWindow}
Let $\mathcal{E}$ be a Mordell--Weil package over $K$, and let $\Pi_{\mathcal{E}}(X)$ be the partial Euler transport of Definition~\ref{def:PartialEulerTransport}. For $X>e$, define the finite rank-energy quotient
\[
  \rho_{\mathcal{E}}(X)
  \coloneqq
  \frac{\log \Pi_{\mathcal{E}}(X)}{\log\log X}.
\]
A \emph{rank-energy modulus} for $\mathcal{E}$ is a map $M_{\mathcal{E}} \colon \Q_{>0}\to \mathbb{N}$, presented in Universe~P, such that for every rational $\epsilon>0$ and every integer $X\ge M_{\mathcal{E}}(\epsilon)$ one has
\[
  \bigl|\rho_{\mathcal{E}}(X)-r\bigr| < \epsilon.
\]
Here $r$ is the rank of the lattice in Definition~\ref{def:MordellWeilPackage}, and all logarithms are read through fast Cauchy names of rationals as in Definition~\ref{def:UniverseP}.
\end{definition}

\begin{proposition}[Rank conservation as bounded asymptotic energy]
\label{prop:RankConservationFiniteAsymptotics}
Assume Conjecture~\ref{conjecture:IntensionalBSDRank} in its modulus-bearing form. Then the equality of the asymptotic exponent with the Mordell--Weil rank is expressed in $\mathsf{ACA}_{0}$ by the existence of a rank-energy modulus $M_{\mathcal{E}}$ in the sense of Definition~\ref{def:FiniteRankEnergyWindow}. Conversely, the existence of such a modulus gives the exponent equality
\[
  r
  =
  \lim_{X\to\infty}
  \frac{\log \Pi_{\mathcal{E}}(X)}{\log\log X}
\]
inside the coded real presentation of Universe~P.
\end{proposition}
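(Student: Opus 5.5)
The proof splits into the two directions of the statement, and both are unwindings of the definition of a limit. The only real work is checking that every object involved is a coded object of Universe~P. We first fix the coding. For each integer $X>e$, the quantity $\Pi_{\mathcal{E}}(X)$ is a finite product of positive rationals, primitive recursive in the package code by Definition~\ref{def:PointCountDatum} and Definition~\ref{def:PartialEulerTransport}. Its logarithm, $\log\log X$, and the quotient $\rho_{\mathcal{E}}(X)$ are given by fast Cauchy names that are uniformly computable in $X$. The denominator $\log\log X$ is bounded away from zero for $X>e$, so division preserves a modulus. Hence $X\mapsto\rho_{\mathcal{E}}(X)$ is an arithmetically presented sequence of coded reals in the sense of Definition~\ref{def:UniverseP}\textup{(iv)}, and the rank $r$ is an integer read from the package of Definition~\ref{def:MordellWeilPackage}.

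For the forward direction, we read the modulus-bearing form of Conjecture~\ref{conjecture:IntensionalBSDRank} as the limit statement $r=\lim\rho_{\mathcal{E}}(X)$ together with an explicit rate. This is what Remark~\ref{rem:BSDErrorMeasures} asks: the error measure of the asymptotic $\Pi_{\mathcal{E}}(X)\sim C_{\mathcal{E}}(\log X)^r$ is given as a coded modulus. Writing $\log\Pi_{\mathcal{E}}(X)=r\log\log X+\log C_{\mathcal{E}}+o(1)$, we obtain
\[
  \rho_{\mathcal{E}}(X)-r=\frac{\log C_{\mathcal{E}}+o(1)}{\log\log X}.
\]
From a rational bound on $|\log C_{\mathcal{E}}|$ (available since $C_{\mathcal{E}}$ is a coded positive real) and the supplied modulus for the $o(1)$ term, we define $M_{\mathcal{E}}(\epsilon)$ by bounded search. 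It is the least $X$ beyond the error modulus for which $(|\log C_{\mathcal{E}}|+1)/\log\log X<\epsilon$, and since $\log\log X$ is monotone, a single witness suffices for all larger $X$. This search is arithmetical in the codes, so $\mathsf{ACA}_0$ proves that $M_{\mathcal{E}}$ exists as a set, by the same comprehension used in Proposition~\ref{prop:ACAFormalizable}\textup{(v)}.

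For the converse, a rank-energy modulus is by definition a Cauchy-type rate of convergence of $\rho_{\mathcal{E}}(X)$ to the integer $r$. Inside the coded reals, the statement $\forall\epsilon\,\forall X\ge M_{\mathcal{E}}(\epsilon)\,|\rho_{\mathcal{E}}(X)-r|<\epsilon$ is literally the $\mathsf{ACA}_0$ definition of $\lim\rho_{\mathcal{E}}(X)=r$, so the exponent equality follows immediately. Uniqueness of limits of coded reals is provable in $\mathsf{RCA}_0$ already.

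The main obstacle is the forward step, and specifically the meaning of ``modulus-bearing form''. The plain asymptotic of Conjecture~\ref{conjecture:IntensionalBSDRank} gives convergence of $\rho_{\mathcal{E}}(X)$ without an effective rate. Extracting a modulus from a bare limit would require more than arithmetical comprehension in general. I would therefore make explicit, following Remark~\ref{rem:BSDErrorMeasures}, that the conjecture is assumed with its error measure presented as a coded modulus. Under that reading the construction of $M_{\mathcal{E}}$ is a bounded search, and the proposition is sound in $\mathsf{ACA}_0$.
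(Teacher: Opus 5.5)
Your proof is correct, and its skeleton matches the paper's. Both directions are an unwinding of the $\mathsf{ACA}_0$ definition of a limit, once one has checked that $X\mapsto\rho_{\mathcal{E}}(X)$ is a sequence of coded reals arithmetical (indeed uniformly computable) in the package code.

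The difference lies in the forward direction. The paper reads ``modulus-bearing form'' as the existential assertion of a modulus for the exponent limit $\rho_{\mathcal{E}}(X)\to r$ itself, in the sense of Remark~\ref{rem:ModulusExistentialParameter}. Its forward direction is therefore literally Definition~\ref{def:FiniteRankEnergyWindow}, and no computation occurs. You instead attach the modulus to the sharper asymptotic $\Pi_{\mathcal{E}}(X)\sim C_{\mathcal{E}}(\log X)^{r}$. You then transfer it through $\rho_{\mathcal{E}}(X)-r=(\log C_{\mathcal{E}}+o(1))/\log\log X$, using a rational bound on $\lvert\log C_{\mathcal{E}}\rvert$ (available because $C_{\mathcal{E}}$ is a positive coded real) and the monotonicity of $\log\log X$. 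That transfer is sound. It also gives something the paper's one-line argument does not: an explicit recipe for $M_{\mathcal{E}}$, proving at the level of moduli the implication from the constant-bearing asymptotic to the growth-exponent form, which Conjecture~\ref{conjecture:IntensionalBSDRank} asserts with the word ``Equivalently''. The paper's route is shorter only because its hypothesis is already phrased at the level of the exponent.

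One claim in your last paragraph is not right. Over $\mathsf{ACA}_0$, extracting a modulus from a bare limit does not require more than arithmetical comprehension. Since $\rho_{\mathcal{E}}$ is uniformly computable from the code, consider the function sending $\epsilon$ to $\min\{N\in\mathbb{N} : \lvert\rho_{\mathcal{E}}(X)-r\rvert<\epsilon \text{ for all } X\ge N\}$. Its graph is arithmetical, so it exists by arithmetical comprehension, and it is total by the least-number principle for arithmetical formulas, which $\mathsf{ACA}_0$ proves. The same applies to the asymptotic with constant. The failure you have in mind occurs only over weaker systems such as $\mathsf{RCA}_0$. This does not damage your proof, which takes the modulus as a hypothesis, but the ``main obstacle'' you identify is not an obstacle in the theory in which the proposition is stated.
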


\begin{proof}
The assertion is an unwinding of the definition of a limit for coded real sequences. The sequence $X\mapsto \Pi_{\mathcal{E}}(X)$ is obtained from the finite point-count datum of Definition~\ref{def:PointCountDatum}. Thus $X\mapsto \rho_{\mathcal{E}}(X)$ is a sequence of coded reals arithmetical in the package code. In $\mathsf{ACA}_{0}$, convergence of such a sequence to the integer $r$ is given by a modulus assigning to each rational tolerance a finite threshold. This is precisely Definition~\ref{def:FiniteRankEnergyWindow}.
\end{proof}

\begin{remark}[The modulus is a parameter of the statement]
\label{rem:ModulusExistentialParameter}
The phrase \emph{modulus-bearing form} in Proposition~\ref{prop:RankConservationFiniteAsymptotics} deserves a precise gloss, since two readings of the modulus are possible and only one is intended. Classically, the existence of the limit in Conjecture~\ref{conjecture:IntensionalBSDRank} and the existence of a modulus for it are interchangeable. Inside Universe~P they are not interchangeable for free. The modulus $M_{\mathcal{E}}$ is a second-order object, and the modulus-bearing form of the conjecture is the sentence asserting, existentially, that some such object exists alongside the limit it witnesses. The quantification is purely existential and the modulus is a parameter of the statement. No specific rate of convergence is assumed anywhere in this paper. In particular, neither the rate that would accompany the Riemann hypothesis for the associated series, which the bare asymptotic carries with it by the theorem of Goldfeld recalled in Remark~\ref{rem:RankSubtlety}, nor an effective form of Sato--Tate equidistribution, enters as an input. The two real-analytic inputs named in Remark~\ref{rem:BSDErrorMeasures} are named as the provenance from which a modulus would classically be manufactured, and they are never invoked as premises. The distinction is what keeps the descent of Proposition~\ref{prop:BSDDescentToP} unconditional as a statement about statements. Were a conditionally proven rate assumed in place of the existential parameter, then the condition would carry over into Universe~P, and the paper claims no such thing.
\end{remark}

\begin{remark}[What rank conservation does not see]
\label{rem:RankConservationDoesNotSeeSha}
Proposition~\ref{prop:RankConservationFiniteAsymptotics} captures the finite asymptotic content of the rank form. It compares the logarithmic growth of the point-count product with the rank of the algebraic lattice. It does not by itself count locally soluble torsors without global points. Those torsors live in the cokernel of a descent map, while the rank-energy quotient reads only the growth exponent of the partial Euler transport. A further coupling principle is therefore needed before rank conservation can impose finiteness upon Definition~\ref{def:ShafarevichDefect}.
\end{remark}

\subsubsection{Finite descent packets}
\label{subsubsec:FiniteDescentPackets}

\begin{definition}[Certified descent torsor code]
\label{def:CertifiedDescentTorsorCode}
Fix an integer $n\ge 2$. A \emph{certified $n$-descent torsor code} for $\mathcal{E}$ is a finite tuple
\[
  \xi=(C_{\xi},\pi_{\xi},a_{\xi},S_{\xi},L_{\xi})
\]
with the following data:
\begin{enumerate}
\item[(i)] a projective genus-one model $C_{\xi}$ over $K$ given by equations with coefficients in the certified field code of Definition~\ref{def:CertifiedFieldCode},
\item[(ii)] a finite morphism $\pi_{\xi}\colon C_{\xi}\to E$ presenting $C_{\xi}$ as an $n$-covering of $E$,
\item[(iii)] an action code $a_{\xi}$ witnessing the simply transitive action of $E[n]$ on the geometric fibers of $\pi_{\xi}$,
\item[(iv)] a finite set of places $S_{\xi}$ containing the bad places of $\mathcal{E}$ and the places over $n$,
\item[(v)] finite local solubility certificates $L_{\xi}$ for the places in $S_{\xi}$, together with local lifting certificates derived from Hensel's lemma for the places outside $S_{\xi}$.
\end{enumerate}
Two such codes are declared equivalent when they present the same class in $H^{1}(K,E[n])$. The equivalence relation is arithmetical in the two codes.
\end{definition}

\begin{definition}[Intensional Selmer packet]
\label{def:IntensionalSelmerPacket}
The \emph{intensional $n$-Selmer packet} of $\mathcal{E}$ is the finite coded subset
\[
  \Sel^{\mathfrak{P}}_{n}(\mathcal{E})
  \subseteq
  \bigl\{\text{certified $n$-descent torsor codes for }\mathcal{E}\bigr\}\big/{\sim}
\]
consisting of those equivalence classes that are everywhere locally soluble by the certificates of Definition~\ref{def:CertifiedDescentTorsorCode}. The split covering coming from a rational point gives the distinguished zero class.
\end{definition}

\begin{lemma}[Finite Selmer packets in \texorpdfstring{$\mathsf{ACA}_{0}$}{ACA0}]
\label{lem:SelmerPacketFiniteACA0}
For every fixed integer $n\ge 2$ and every Mordell--Weil package $\mathcal{E}$ over $K$, the packet $\Sel^{\mathfrak{P}}_{n}(\mathcal{E})$ is finite in Universe~P.
\end{lemma}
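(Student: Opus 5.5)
The plan is to follow the classical proof of finiteness of the $n$-Selmer group, transcribed onto codes, and then to check that each step is arithmetical in the package code and the certified field code. I will bound the packet by an explicit finite group that can be enumerated from the package code. Concretely, I will use the exact descent sequence at level $n$, which embeds the Selmer classes into the subgroup of $H^{1}(K,E[n])$ of classes unramified outside $S$. Here $S$ is the finite set of places consisting of the bad places of $\mathcal{E}$, the places over $n$, and the archimedean places; it is read from the ramification table and the discriminant of the Weierstrass coefficients.

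The first step is to pass to the splitting field $L = K(E[n])$. I will code this field by a field extension code as in Definition~\ref{def:EllPkgCode}, computed from the $n$-division polynomial. The restriction map $H^{1}(K,E[n]) \to H^{1}(L,E[n])^{\mathrm{Gal}(L/K)}$ has kernel $H^{1}(\mathrm{Gal}(L/K),E[n])$, which is a finite group computed from a finite Galois module. Over $L$, the group $E[n]$ becomes $(\Z/n)^{2}$ with trivial action, and by Kummer theory $H^{1}(L,\mu_n)=L^\times/L^{\times n}$ once $\mu_n\subset L$. The classes unramified outside $S_L$ then land in $L(S_L,n)$, the group of elements of $L^\times/L^{\times n}$ whose valuations outside $S_L$ are divisible by $n$.

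The second step, which I expect to be the main obstacle, is to bound $L(S_L,n)$ inside $\mathsf{ACA}_0$. The group $L(S_L,n)$ sits in an extension of $\mathrm{Cl}(L)[n]$ by $\mathcal{O}_{L,S_L}^\times/n$. Classically, finiteness comes from finiteness of the class group and the $S$-unit theorem, and both are statements about finitely many objects: Minkowski's bound reduces the class group to ideals of bounded norm, and Dirichlet's theorem supplies a finite generating set. I would first try to avoid proving these theorems in general. Instead I would supply them as certificates in the field code, namely a list of class group generators with relations and a list of $S$-unit generators, checked by finite computation, in the same way that Definition~\ref{def:CertifiedFieldCode} already supplies the integral basis. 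Failing that, I would appeal to the standard fact that the Minkowski-bound argument and the finite generation of $S$-units go through in $\mathsf{ACA}_0$, since they use geometry of numbers in a fixed finite-dimensional lattice.

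The final step assembles the bound and checks that membership in the packet can be decided. The packet is then contained in a finite set, namely the preimage of $(\Z/n)^{2}\otimes L(S_L,n)$ together with the finite inflation kernel, and the size of this set is an explicit integer computed from the certificates. Membership is decided by the finite local solubility checks of Definition~\ref{def:CertifiedDescentTorsorCode}(v), and the equivalence relation $\sim$ is arithmetical. So the packet is a set formed by arithmetical comprehension, it is bounded, and it is therefore finite in Universe~P.
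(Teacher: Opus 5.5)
Your proposal is correct and follows the same route as the paper: bound the packet by the finite $S$-unit and class-group data of classical $n$-descent, check membership by the finite local solubility certificates, and pass to the quotient by the arithmetical equivalence relation, all within $\mathsf{ACA}_0$ via Proposition~\ref{prop:ACAFormalizable}. Your version spells out two things the paper's proof leaves implicit: the passage to $K(E[n])$ by inflation--restriction and Kummer theory, and the dependence on finiteness of the class group and on the $S$-unit theorem. Of your two ways of supplying those inputs, rely on the second (their provability in $\mathsf{ACA}_0$), since certificates for $K(E[n])$ belong neither to the certified field code of $K$ nor to the package code, so adding them would prove a weaker, conditional statement.
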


\begin{proof}
The standard finite descent construction reduces the possible $n$-covering classes to a finite module of $S$-unit and ideal-class data, where $S$ contains the bad places of $\mathcal{E}$ and the places above $n$. In the present language this finite module is represented by a list of integer tuples bounded by the field code, the Weierstrass coefficients, and $n$. Local solubility is then checked by the finite certificates included in Definition~\ref{def:CertifiedDescentTorsorCode}. The formation of the list, the local tests, and the quotient by the arithmetical equivalence relation are finite procedures. Proposition~\ref{prop:ACAFormalizable} therefore places the construction inside $\mathsf{ACA}_{0}$.
\end{proof}

\begin{proposition}[Finite-level Shafarevich finiteness]
\label{prop:FiniteLevelShaACA0}
For every fixed integer $n\ge 2$, the $n$-torsion subgroup $\Sha(E/K)[n]$ is finite in Universe~P. More precisely, there is an injection of coded groups
\[
  \Sha(E/K)[n]
  \hookrightarrow
  \Sel^{\mathfrak{P}}_{n}(\mathcal{E}).
\]
Consequently, when there exists an integer $m\ge 2$ with $\Sha(E/K)=\Sha(E/K)[m]$, then $\Sha(E/K)$ is finite in Universe~P.
\end{proposition}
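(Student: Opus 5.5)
The plan is to follow the classical $n$-descent argument and to read each of its steps inside Universe~P. The starting point is the Kummer sequence attached to multiplication by $n$ on $E$, namely the exact sequence $0\to E[n]\to E\to E\to 0$ of Galois modules. Its long exact sequence in Galois cohomology, taken over $K$ and over each completion $K_v$, gives the descent sequence
\[
  0\longrightarrow E(K)/nE(K)\longrightarrow \Sel^{(n)}(E/K)\longrightarrow \Sha(E/K)[n]\longrightarrow 0 .
\]
The map $\Sel^{(n)}(E/K)\to\Sha(E/K)[n]$ is surjective, and I will use this surjectivity to obtain the injection asserted in the statement.

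First, I realize each class of $\Sha(E/K)[n]$ as a certified torsor code. A class of $\Sha(E/K)[n]$ is represented by a torsor $C$ under $E$ that is everywhere locally soluble. Such a torsor lifts to a class $\xi$ in $H^{1}(K,E[n])$, and that lift is represented by an $n$-covering $\pi\colon C\to E$. Because $C$ is everywhere locally soluble, the lift can be chosen so that its local restrictions all vanish in $H^1(K_v,E)$, which is exactly the Selmer condition. Choosing one such lift for each class, by a fixed tie-breaker on codes in the spirit of Remark~\ref{rem:DeterministicNormalizationDiscipline}, produces a map
\[
  \Sha(E/K)[n]\longrightarrow \Sel^{\mathfrak{P}}_{n}(\mathcal{E}).
\]
The local solubility certificates required by Definition~\ref{def:CertifiedDescentTorsorCode}(v) are supplied by the local points of $C$. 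At places outside $S_\xi$ they are supplied by Hensel's lemma, since $C$ has good reduction there.

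Second, I have to handle the fact that this choice of lifts is not a group homomorphism on the nose. The lifts are determined only up to the image of $E(K)/nE(K)$. So the cleaner statement is that $\Sha(E/K)[n]$ is isomorphic to the cokernel $\Sel^{\mathfrak{P}}_{n}(\mathcal{E})/\bigl(E(K)/nE(K)\bigr)$. The split coverings coming from rational points give the distinguished zero class of Definition~\ref{def:IntensionalSelmerPacket}. The chosen section is then injective as a map of coded sets, and it induces a group injection into this quotient. In either reading, $\Sha(E/K)[n]$ has at most $\#\Sel^{\mathfrak{P}}_{n}(\mathcal{E})$ elements, and that number is finite by Lemma~\ref{lem:SelmerPacketFiniteACA0}. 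The final clause then follows at once: if $\Sha(E/K)=\Sha(E/K)[m]$, apply the result with $n=m$.

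The main obstacle is the first step. There one must justify, inside Universe~P, that every everywhere locally soluble torsor is presented by a certified code. This means identifying elements of $H^1(K,E)$ and of $H^1(K,E[n])$ with finite genus-one models. That identification is arithmetical at a fixed level $n$, because it involves only finite Galois modules over finitely many completions, and Remark~\ref{rem:ExpressibilityVersusProvability} places exactly this kind of finite-level step inside $\mathsf{ACA}_0$. I would try first to build the code explicitly from the $S$-unit and class-group description used in Lemma~\ref{lem:SelmerPacketFiniteACA0}, so that the representation comes out as a bounded search rather than as an appeal to global cohomology.
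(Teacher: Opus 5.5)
Your proposal is correct and follows essentially the paper's own proof. You lift each class of $\Sha(E/K)[n]$ to an everywhere locally soluble $n$-covering in $\Sel^{\mathfrak{P}}_{n}(\mathcal{E})$, quotient by the image of $E(K)/nE(K)$, and conclude with Lemma~\ref{lem:SelmerPacketFiniteACA0} and then $n=m$. You also note that the natural group map runs $\Sel^{\mathfrak{P}}_{n}(\mathcal{E})\to\Sha(E/K)[n]$, so the displayed injection is a chosen section, or an isomorphism with the quotient; this is a sharper reading of the paper's phrase ``passing to the quotient gives the displayed injection'', and it is all that finiteness needs.
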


\begin{proof}
A class in $\Sha(E/K)[n]$ is represented by a torsor under $E$ that becomes split after multiplication by $n$. Such a torsor gives an $n$-covering that is locally soluble at every completion of $K$, hence it determines an element of $\Sel^{\mathfrak{P}}_{n}(\mathcal{E})$. The classes arising from global rational points form the image of $E(K)/nE(K)$, and passing to the quotient gives the displayed injection. Lemma~\ref{lem:SelmerPacketFiniteACA0} gives finiteness. The last assertion follows by taking $n=m$.
\end{proof}

\begin{remark}[The remaining gap]
\label{rem:FiniteLevelNotFullSha}
Proposition~\ref{prop:FiniteLevelShaACA0} is the part of the proof that the present finite machinery supplies without a new arithmetic principle. It proves the finiteness of each finite-level shadow of $\Sha$. It leaves open the possibility that non-zero classes occur at unbounded orders. The full finiteness problem is therefore the problem of ruling out an unbounded ascent through the Selmer packets.
\end{remark}

\subsubsection{The descent cost bound}
\label{subsubsec:DescentCostBound}

\begin{definition}[Certified Shafarevich cost]
\label{def:CertifiedShafarevichCost}
Let $\xi$ be a certified descent torsor code representing a locally soluble torsor under $E$. The \emph{certified Shafarevich cost} of $\xi$ is
\[
  \varsigma_{\mathcal{E}}(\xi)
  \coloneqq
  d_{\mathcal{Q}_{\mathrm{desc}}}
  \bigl(\mathrm{Loc}(\xi),\mathrm{Glob}(\xi)\bigr)
\]
where $\mathcal{Q}_{\mathrm{desc}}$ is the quantitative category whose objects are finite local descent atlases and global torsor presentations, and whose cost is the transport cost induced from Definition~\ref{def:ModelDefect} and Definition~\ref{def:TransportDistance}. The object $\mathrm{Loc}(\xi)$ is the finite atlas of local solution certificates. The object $\mathrm{Glob}(\xi)$ is the split global torsor presentation. The cost is represented by a fast Cauchy name of rationals in the sense of Definition~\ref{def:UniverseP}. The zero class has cost $0$. A positive lower bound for the cost of every non-zero class is not part of the definition.
\end{definition}

\begin{definition}[Finite Shafarevich load]
\label{def:FiniteShafarevichLoad}
For a finite list $\Xi=(\xi_{1},\ldots,\xi_{m})$ of pairwise inequivalent non-zero locally soluble torsor codes, define its \emph{Shafarevich load} by
\[
  \mathfrak{D}_{\mathcal{E}}(\Xi)
  \coloneqq
  \sum_{i=1}^{m}\varsigma_{\mathcal{E}}(\xi_i).
\]
This sum is a coded real of Universe~P.
\end{definition}

\begin{hypothesis}[Shafarevich speed limit]
\label{hyp:ShafarevichCostBound}
There exist coded real numbers $\eta_{\mathcal{E}}>0$ and $B_{\mathcal{E}}\ge 0$ with the following two properties:
\begin{enumerate}
\item[(i)] for every non-zero class $\xi\in\Sha(E/K)$ one has
  \[
    \varsigma_{\mathcal{E}}(\xi)\ge \eta_{\mathcal{E}}
  \]
\item[(ii)] for every finite list $\Xi$ of pairwise inequivalent non-zero classes in $\Sha(E/K)$ one has
  \[
    \mathfrak{D}_{\mathcal{E}}(\Xi)
    \le
    B_{\mathcal{E}}.
  \]
\end{enumerate}
\end{hypothesis}

\begin{proposition}[Finiteness from the cost bound]
\label{prop:CostBoundFinitenessSha}
Assume Hypothesis~\ref{hyp:ShafarevichCostBound}. Then $\Sha(E/K)$ is finite in Universe~P, and
\[
  \#\Sha(E/K)
  \le
  1+\bigl\lfloor B_{\mathcal{E}}/\eta_{\mathcal{E}}\bigr\rfloor.
\]
\end{proposition}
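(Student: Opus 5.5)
The proof will be a pigeonhole count carried out on finite lists. The two clauses of Hypothesis~\ref{hyp:ShafarevichCostBound} give a lower bound per class and an upper bound for the total, and these force every finite list of non-zero classes to be short. Fix any finite list $\Xi=(\xi_1,\ldots,\xi_m)$ of pairwise inequivalent non-zero classes of $\Sha(E/K)$. Clause (i) applied to each entry, summed according to Definition~\ref{def:FiniteShafarevichLoad}, gives $m\,\eta_{\mathcal{E}}\le\mathfrak{D}_{\mathcal{E}}(\Xi)$. Clause (ii) gives $\mathfrak{D}_{\mathcal{E}}(\Xi)\le B_{\mathcal{E}}$. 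Since $\eta_{\mathcal{E}}>0$, we get $m\le B_{\mathcal{E}}/\eta_{\mathcal{E}}$, and because $m$ is an integer, $m\le\lfloor B_{\mathcal{E}}/\eta_{\mathcal{E}}\rfloor$.

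Next I pass from this bound on lists to a bound on the group. Suppose $\Sha(E/K)$ had at least $N+2$ elements, where $N\coloneqq\lfloor B_{\mathcal{E}}/\eta_{\mathcal{E}}\rfloor$. Then it has at least $N+1$ distinct non-zero classes, and they form a list contradicting the first step. Hence $\#\Sha(E/K)\le 1+N$, where the extra $1$ accounts for the zero class, which has cost $0$ by Definition~\ref{def:CertifiedShafarevichCost} and is excluded from the load. For the count I will not enumerate $\Sha(E/K)$. I will only argue that any finite sub-list of non-zero classes has length at most $N$, and I phrase finiteness in Universe~P as the existence of such a uniform bound on coded lists.

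The step I expect to need care is this last one, together with the arithmetic on coded reals. In $\mathsf{ACA}_0$ the quotient $B_{\mathcal{E}}/\eta_{\mathcal{E}}$ of fast Cauchy names is again a coded real, since $\eta_{\mathcal{E}}>0$ gives a positive rational lower bound for it, and its floor need not be computable exactly when it lies on an integer. To avoid this I would not compute the floor. Instead I argue directly from the strict integer inequality $m\le B_{\mathcal{E}}/\eta_{\mathcal{E}}$, which bounds $m$ by every integer at least the real $B_{\mathcal{E}}/\eta_{\mathcal{E}}$, and so in particular by its floor as a real-number statement. The remaining passage, from ``every finite list of distinct non-zero classes has length at most $N$'' to ``$\Sha$ is finite with at most $N+1$ elements'', is a bounded-search argument over coded lists. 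It is carried out under the same conventions as Proposition~\ref{prop:ACAFormalizable}, with the inequivalence of classes tested by the arithmetical relation of Definition~\ref{def:CertifiedDescentTorsorCode}.
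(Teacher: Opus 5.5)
Your proposal is correct and follows the paper's argument: the same pigeonhole estimate $m\,\eta_{\mathcal{E}}\le\mathfrak{D}_{\mathcal{E}}(\Xi)\le B_{\mathcal{E}}$ on lists of pairwise inequivalent non-zero classes, followed by adding the zero class. The one step you leave schematic is the passage from a uniform bound on lists to an actual finite enumeration in Universe~P; the paper makes it explicit by using arithmetical comprehension to form the predicate ``least representative of its class'' and then iterating the least-search at most $\lfloor B_{\mathcal{E}}/\eta_{\mathcal{E}}\rfloor$ times.
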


\begin{proof}
Let $\Xi=(\xi_{1},\ldots,\xi_{m})$ be a finite list of pairwise inequivalent non-zero classes in $\Sha(E/K)$. By Hypothesis~\ref{hyp:ShafarevichCostBound},
\[
  m\eta_{\mathcal{E}}
  \le
  \sum_{i=1}^{m}\varsigma_{\mathcal{E}}(\xi_i)
  =
  \mathfrak{D}_{\mathcal{E}}(\Xi)
  \le
  B_{\mathcal{E}}.
\]
Thus $m\le B_{\mathcal{E}}/\eta_{\mathcal{E}}$. Hence no list of pairwise inequivalent non-zero classes can have length greater than $\lfloor B_{\mathcal{E}}/\eta_{\mathcal{E}}\rfloor$. The quotient presentation of the torsor codes has an arithmetical equivalence relation. By arithmetical comprehension, one may form the predicate saying that a code is the least representative of its class. Starting from the least non-zero representative and repeating the least-search construction at most $\lfloor B_{\mathcal{E}}/\eta_{\mathcal{E}}\rfloor$ times, one obtains all non-zero classes. Were another representative left over, then adjoining its least code would produce a longer pairwise inequivalent list, contradicting the bound above. Adding the zero class gives the displayed estimate.
\end{proof}
\subsection{Separation from the Cassels--Tate pairing}
\label{subsec:TowardShafarevichCostBound}

Clause (i) of Hypothesis~\ref{hyp:ShafarevichCostBound} asks that every non-zero class of $\Sha(E/K)$ leave a visible finite defect in the descent category. This subsection proves that clause for every class outside the divisible part, drawing it from a classical pairing rather than assuming it.

\subsubsection{Integral shadows of descent cost}
\label{subsubsec:IntegralDescentShadow}

\begin{definition}[Integral descent refinement]
\label{def:IntegralDescentRefinement}
Let $\mathcal{E}$ be a Mordell--Weil package over $K$. An \emph{integral descent refinement} of the certified Shafarevich cost of Definition~\ref{def:CertifiedShafarevichCost} consists of the following data. For each certified locally soluble torsor code $\xi$, one has a finite-support map
\[
  m_{\xi}\colon \{\text{finite places of }K\}\longrightarrow \mathbb{N}
\]
arithmetical in the code of $\xi$, together with the integral descent shadow
\[
  \varsigma^{\sharp}_{\mathcal{E}}(\xi)
  \coloneqq
  \frac{1}{[K:\Q]}
  \sum_{v<\infty} m_{\xi}(v)\log \mathrm{N}v.
\]
The refinement is required to satisfy $\varsigma_{\mathcal{E}}(\xi)\ge\varsigma^{\sharp}_{\mathcal{E}}(\xi)$ for every certified locally soluble torsor code $\xi$.
\end{definition}

The integer $m_{\xi}(v)$ measures how many non-unit finite corrections are needed at $v$ in order to compare the local descent atlas of $\xi$ with the split global presentation. It is a valuation-level shadow of Definition~\ref{def:ModelDefect}. The archimedean comparisons may still be present in $\varsigma_{\mathcal{E}}(\xi)$, yet they do not enter the atom estimate below.

\begin{definition}[Descent separation]
\label{def:DescentSeparation}
An integral descent refinement satisfies \emph{descent separation} for $\mathcal{E}$ when every non-zero class $\xi\in\Sha(E/K)$ has a representative for which
\[
  \sum_{v<\infty} m_{\xi}(v) \ge 1.
\]
Equivalently, every non-zero Shafarevich class has a representative with $\varsigma^{\sharp}_{\mathcal{E}}(\xi)>0$.
\end{definition}

\begin{lemma}[The atom of a separated descent class]
\label{lem:SeparatedDescentAtom}
Assume Definition~\ref{def:IntegralDescentRefinement}, and suppose that descent separation holds in the sense of Definition~\ref{def:DescentSeparation}. Then every non-zero class $\xi\in\Sha(E/K)$ satisfies
\[
  \varsigma_{\mathcal{E}}(\xi)
  \ge
  \frac{\log 2}{[K:\Q]}.
\]
\end{lemma}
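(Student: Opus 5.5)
The plan is to pass from the certified cost to its integral shadow, and then bound the shadow below by the smallest possible contribution of a single finite place. Fix a non-zero class $\xi\in\Sha(E/K)$. By descent separation (Definition~\ref{def:DescentSeparation}), choose a representative code, still denoted $\xi$, with $\sum_{v<\infty} m_{\xi}(v)\ge 1$. Since the cost $\varsigma_{\mathcal{E}}$ is defined as a transport distance between $\mathrm{Loc}(\xi)$ and $\mathrm{Glob}(\xi)$, and the class is the object under study, we should also confirm that the cost does not depend on the representative. Equivalent codes present the same class in $H^{1}(K,E[n])$. The comparison between two representatives is therefore a costless re-presentation, by the invariance of the cost under $2$-isomorphism in Proposition~\ref{prop:CoherenceStability}. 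If instead $\varsigma_{\mathcal{E}}(\xi)$ is read directly on the chosen representative, as the lemma's phrasing suggests, this step can be skipped.

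Next, apply the refinement inequality $\varsigma_{\mathcal{E}}(\xi)\ge\varsigma^{\sharp}_{\mathcal{E}}(\xi)$ from Definition~\ref{def:IntegralDescentRefinement}. This reduces the claim to a lower bound on
\[
  \frac{1}{[K:\Q]}\sum_{v<\infty} m_{\xi}(v)\log\mathrm{N}v .
\]
Every summand is non-negative, because $m_{\xi}(v)\in\mathbb{N}$ and $\mathrm{N}v\ge 2$. Since the integers $m_{\xi}(v)$ sum to at least one, some place $v_0$ has $m_{\xi}(v_0)\ge 1$. Dropping all the other terms gives
\[
  \varsigma^{\sharp}_{\mathcal{E}}(\xi)\ge \frac{\log \mathrm{N}v_0}{[K:\Q]}\ge \frac{\log 2}{[K:\Q]},
\]
because the absolute norm of a prime ideal is a prime power, hence at least $2$.

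There is no real obstacle; the argument is a two-line estimate. The only point needing care is the representative issue above: separation is guaranteed only for some representative, so we must either read the lemma's bound on that representative or invoke the $2$-isomorphism invariance. In Universe~P, the final comparison is between coded reals, namely a fast Cauchy name for $\log\mathrm{N}v_0$ against one for $\log 2$. It reduces to the integer inequality $\mathrm{N}v_0\ge 2$, in the manner of Definition~\ref{def:LogIntDatum}, so the argument stays inside $\mathsf{ACA}_{0}$.
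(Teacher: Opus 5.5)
Your argument is the paper's own: take the representative supplied by descent separation, discard all but one term of $\varsigma^{\sharp}_{\mathcal{E}}(\xi)$ using $m_{\xi}(v_0)\ge 1$ and $\mathrm{N}v_0\ge 2$, and then pass to $\varsigma_{\mathcal{E}}(\xi)$ through the refinement inequality of Definition~\ref{def:IntegralDescentRefinement}. On the representative question, take your second option and drop the first: Proposition~\ref{prop:CoherenceStability} only equates the costs of $2$-isomorphic parallel $1$-morphisms, and it gives no reason why two different equivalent codes should have the same distance from $\mathrm{Loc}$ to $\mathrm{Glob}$ (the paper in fact treats the cost as representative-dependent, cf.\ Remark~\ref{rem:CostBoundReformulation}), so the bound is to be read on the separating representative, exactly as the paper's proof does.
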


\begin{proof}
Let $\xi$ be non-zero. By Definition~\ref{def:DescentSeparation}, there is at least one finite place $v$ with $m_{\xi}(v)>0$. Since $\mathrm{N}v\ge 2$, one obtains
\[
  \varsigma^{\sharp}_{\mathcal{E}}(\xi)
  =
  \frac{1}{[K:\Q]}
  \sum_{w<\infty} m_{\xi}(w)\log \mathrm{N}w
  \ge
  \frac{\log 2}{[K:\Q]}.
\]
Definition~\ref{def:IntegralDescentRefinement} gives $\varsigma_{\mathcal{E}}(\xi)\ge\varsigma^{\sharp}_{\mathcal{E}}(\xi)$, and the claim follows.
\end{proof}

\begin{remark}[What the atom proves]
\label{rem:AtomProvesFirstClause}
Lemma~\ref{lem:SeparatedDescentAtom} proves clause \textup{(i)} of Hypothesis~\ref{hyp:ShafarevichCostBound} with $\eta_{\mathcal{E}}=\log 2/[K:\Q]$. It does so without a completed set of torsors. It uses only finite place codes, finite-support valuation profiles, and rational Cauchy names for logarithms in the sense of Definition~\ref{def:UniverseP}.
\end{remark}

\subsubsection{Separation from the Cassels--Tate pairing}
\label{subsubsec:CasselsTateSeparation}

The descent separation of Definition~\ref{def:DescentSeparation} was stated as a property one asks of a refinement, rather than as a property one holds. There is, however, a pairing of long standing upon the Tate--Shafarevich group that yields it, for every class outside the divisible part, with no appeal to modularity and none to the continuation of an $L$-function. We recall the pairing in the finite language of Universe~P, and we then read its non-degeneracy as the separation that the cost bound asks for.

\begin{definition}[The Cassels--Tate pairing on torsor codes]
\label{def:CasselsTatePairingCode}
Let $\xi$ and $\eta$ be certified locally soluble torsor codes for $\mathcal{E}$ in the sense of Definition~\ref{def:CertifiedDescentTorsorCode}. For each place $v$ of $K$, the local component
\[
  \langle \xi,\eta\rangle_{v}
  \in
  \tfrac{1}{n}\Z/\Z
\]
is the local invariant of the cup product of the two local cocycle codes at $v$, computed from the solubility certificates $L_{\xi}$ and $L_{\eta}$ by a finite Hilbert-symbol evaluation. The component is zero at every place outside the finite set $S_{\xi}\cup S_{\eta}$. The \emph{Cassels--Tate pairing} of $\xi$ and $\eta$ is the finite sum
\[
  \langle \xi,\eta\rangle_{\mathrm{CT}}
  =
  \sum_{v} \langle \xi,\eta\rangle_{v}
  \in
  \Q/\Z.
\]
The sum has finite support, so it is a coded rational of Universe~P, arithmetical in the two codes. The pairing is bilinear, and on an elliptic curve it is alternating, so that one has $\langle\xi,\xi\rangle_{\mathrm{CT}}=0$.
\end{definition}

\begin{lemma}[Non-degeneracy modulo the divisible part]
\label{lem:CasselsTateNondegeneracy}
The radical of the Cassels--Tate pairing of Definition~\ref{def:CasselsTatePairingCode} is the maximal divisible subgroup $\Sha(E/K)_{\mathrm{div}}$. That is, a class $\xi$ pairs to zero with every class of $\Sha(E/K)$ if and only if $\xi$ lies in $\Sha(E/K)_{\mathrm{div}}$. The induced pairing upon the quotient $\Sha(E/K)/\Sha(E/K)_{\mathrm{div}}$ is therefore non-degenerate.
\end{lemma}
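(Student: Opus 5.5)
The plan is to prove the classical theorem of Cassels and Tate in two directions. The divisible part lies in the radical by a formal computation. The reverse inclusion reduces to a finite-level orthogonality statement, which is proved by global duality. Throughout we use that $\Sha(E/K)$ is torsion and that each $\Sha(E/K)[n]$ is finite by Proposition~\ref{prop:FiniteLevelShaACA0}. The global duality step is the importation flagged in Remark~\ref{rem:ExpressibilityVersusProvability}, carried out in $\Pi^{1}_{1}\text{-}\mathsf{CA}_{0}$ rather than in $\mathsf{ACA}_{0}$.

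\emph{Easy inclusion.} Let $\xi\in\Sha(E/K)_{\mathrm{div}}$ and $\eta\in\Sha(E/K)$. Choose $n$ with $n\eta=0$, which is possible since $\Sha$ is torsion. Write $\xi=n\zeta$ with $\zeta\in\Sha(E/K)$. Bilinearity of the pairing of Definition~\ref{def:CasselsTatePairingCode} then gives
\[
  \langle\xi,\eta\rangle_{\mathrm{CT}}=\langle\zeta,n\eta\rangle_{\mathrm{CT}}=0 .
\]
So $\Sha(E/K)_{\mathrm{div}}$ lies in the radical.

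\emph{Key finite-level lemma.} The core step is the following claim: for $a\in\Sha(E/K)$ and $n\ge 2$,
\[
  \langle a,\Sha(E/K)[n]\rangle_{\mathrm{CT}}=0
  \quad\Longleftrightarrow\quad
  a\in n\Sha(E/K).
\]
The direction $\Leftarrow$ is the computation of the previous paragraph.

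For $\Rightarrow$, I would use the Kummer sequence $0\to E[n]\to E\xrightarrow{n}E\to 0$ and lift $a$ to a class in $H^{1}(K,E[m])$ for a suitable $m$. I would then express $\langle a,b\rangle_{\mathrm{CT}}$, for $b\in\Sha[n]$ lifted to $b'\in\Sel_{n}$, as the sum of local invariants of $a'_{v}\cup b'_{v}$. Here $a'_{v}$ is a local lift of $a_{v}$ through multiplication by $n$, and the cup product is taken with respect to the Weil pairing $E[n]\times E[n]\to\mu_{n}$.

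The vanishing of this sum for all $b'\in\Sel_{n}$ says that the local family $(a'_{v})_{v}$ is orthogonal, under Tate local duality, to the image of $\Sel_{n}$ in $\prod_{v}H^{1}(K_{v},E[n])$. By the Poitou--Tate exact sequence, using that $E[n]$ is self-dual through the Weil pairing, this orthogonal complement is the image of global classes. So $(a'_{v})_{v}$ comes, up to local points, from a global $c\in H^{1}(K,E[n])$. Then $a-nc'$ is locally trivial, where $c'$ is the image of $c$ in $H^{1}(K,E)$, and $c'$ is itself locally trivial. Hence $a\in n\Sha(E/K)$.

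This identification of the orthogonal complement through Poitou--Tate and local duality is the main obstacle. I would first try to follow Cassels' original Weil-pairing argument, and fall back on Milne's treatment (ADT I.6) via the nine-term sequence.

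\emph{Conclusion.} Suppose $\xi$ is in the radical. By the key lemma, $\xi\in n\Sha(E/K)$ for every $n$, so $\xi$ is infinitely divisible. This does not yet place $\xi$ in the maximal divisible subgroup, and to close the gap I would use finiteness of the finite levels. Fix a prime $\ell$ and the $\ell$-primary part of $\xi$. The sets $\{\zeta\in\Sha[\ell^{k+j}] : \ell^{j}\zeta=\xi\}$ are nonempty and finite by Proposition~\ref{prop:FiniteLevelShaACA0}. A König-type compactness argument then yields a compatible sequence $\xi=\ell\zeta_{1}$, $\zeta_{1}=\ell\zeta_{2}$, and so on. The subgroup generated by such a sequence is divisible, so $\xi$ lies in $\Sha(E/K)_{\mathrm{div}}$.

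Non-degeneracy on the quotient $\Sha(E/K)/\Sha(E/K)_{\mathrm{div}}$ is then immediate from the identification of the radical.
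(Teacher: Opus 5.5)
Your outer architecture is sound, and it is more explicit than the paper, which does not prove the lemma but imports the Cassels--Tate duality theorem by citation. Bilinearity and torsion put $\Sha(E/K)_{\mathrm{div}}$ in the radical. Everything then reduces to the criterion $\langle a,\Sha(E/K)[n]\rangle_{\mathrm{CT}}=0\iff a\in n\Sha(E/K)$. Your K\"onig argument correctly upgrades infinite divisibility to membership in the maximal divisible subgroup, using finiteness of the $\Sha(E/K)[\ell^{k}]$.

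The gap is in the $\Rightarrow$ direction of that criterion. The local classes $a'_v\in H^1(K_v,E[n])$ carry information about $a$ only if they lift the localizations $\alpha_v\in H^1(K_v,E)[n]$ of one \emph{global} class $\alpha\in H^1(K,E)$ with $n\alpha=a$. Since $a_v=0$, a lift of $a_v$ through multiplication by $n$ chosen place by place is an arbitrary element of $H^1(K_v,E)[n]$. So $\sum_v\mathrm{inv}_v(a'_v\cup b'_v)$ is not determined by $a$. Lifting $a$ to $H^1(K,E[m])$ through $E[m]\hookrightarrow E$ does not produce $\alpha$ either.

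Producing $\alpha$ is a genuine global step. The obstruction is $\Delta(a)\in H^2(K,E[n])$, for the connecting map of $0\to E[n]\to E\xrightarrow{n}E\to 0$. It is locally trivial because $a$ is, so it lies in $\Sha^2(K,E[n])$, which Poitou--Tate makes dual to $\Sha^1(K,E[n])$. It does not vanish for free. You must kill it with the hypothesis. That requires comparing the Cassels--Tate pairing of $a$ against the image of $\Sha^1(K,E[n])$ in $\Sha(E/K)[n]$ with the Poitou--Tate pairing of $\Delta(a)$ against $\Sha^1(K,E[n])$.

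The endgame as written also does not work. The class $c'$ is the image of $c\in H^1(K,E[n])$, so $nc'=0$ and $a-nc'$ is just $a$. Moreover $c'$ is not locally trivial, since its localizations are the $\alpha_v$; were it locally trivial, nothing about $a$ would follow. The correct conclusion is that $c_v-a'_v$ lies in the Kummer image $\delta_v(E(K_v)/nE(K_v))$ for every $v$. Hence $\alpha-c'$ is locally trivial, lies in $\Sha(E/K)$, and satisfies $n(\alpha-c')=a$.

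Two routine points should also be stated:
\begin{itemize}
\item Independence of the lift $b'$ of $b$, and of the choice of $\alpha$, uses global reciprocity.
\item Identifying the annihilator of the image of $\Sel_n$ with the image of $H^1(K,E[n])$ plus $\prod_v\delta_v(E(K_v)/nE(K_v))$ uses that each local Kummer image is its own annihilator under Tate duality.
\end{itemize}
With the division step supplied and the endgame corrected, your argument becomes the standard proof (Cassels; Milne, ADT I.6) that the paper invokes by reference.
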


\begin{proof}
This is the duality theorem of Cassels and Tate for the Tate--Shafarevich group, in the form given for an elliptic curve by Cassels and in general by Milne~\cite{Cassels1965, Tate1963, MilneADT}, with the alternating refinement of Poonen and Stoll~\cite{PoonenStoll1999}. The construction rests upon local and global duality in the Galois cohomology of $K$, and it does not pass through the analytic theory of the curve. We import the statement as a recalled fact, in the manner in which the finite generation of $E(K)$ and the termination of Tate's algorithm are imported in Definition~\ref{def:MordellWeilPackage} and Definition~\ref{def:LocalPackageCosts}. The strength that this importation asks of the universe is taken up in Remark~\ref{rem:ExpressibilityVersusProvability}.
\end{proof}

\begin{lemma}[A non-zero local term leaves a finite correction]
\label{lem:NonzeroLocalTermForcesCorrection}
Let $\xi$ be a certified locally soluble torsor code, and let $v$ be a finite place at which $\langle\xi,\eta\rangle_{v}\neq 0$ for some certified class $\eta$. Then the integral descent refinement of Definition~\ref{def:IntegralDescentRefinement} has $m_{\xi}(v)\ge 1$ at that place.
\end{lemma}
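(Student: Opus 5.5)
The plan is to argue by contraposition: if $m_{\xi}(v)=0$, then every local pairing term $\langle\xi,\eta\rangle_{v}$ vanishes. By Definition~\ref{def:IntegralDescentRefinement}, $m_{\xi}(v)$ counts the non-unit finite corrections needed at $v$ to compare the local descent atlas $\mathrm{Loc}(\xi)$ with the split global presentation $\mathrm{Glob}(\xi)$. I would first make precise what $m_{\xi}(v)=0$ means. It says that at $v$ the comparison between the two presentations is realized by a coordinate change whose local dilation factor is a $v$-adic unit. By the valuation-level reading of Definition~\ref{def:ModelDefect}, this is the same as saying that the local cocycle code of $\xi$ at $v$ agrees with the split (zero) local cocycle up to an integral, unit-dilation change of presentation.

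The second step transports this triviality into the pairing. By Definition~\ref{def:CasselsTatePairingCode}, $\langle\xi,\eta\rangle_{v}$ is the local invariant of the cup product of the local cocycle codes of $\xi$ and $\eta$ at $v$, evaluated by a Hilbert symbol. The cup product is bilinear. If the local class of $\xi$ at $v$ is the split class, its cup product with any local class of $\eta$ is zero in $\mathrm{Br}(K_v)$, so the invariant vanishes. It then remains to check that a change of presentation with unit dilation does not change the local cohomology class. I would argue this from the fact that an $\mathcal{O}_v$-integral coordinate change in \eqref{eq:local-change-of-variables} with $u_v\in\mathcal{O}_v^{\times}$ induces an isomorphism of torsors over $K_v$, and isomorphic torsors define the same class in $H^{1}(K_v,E[n])$.

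So $m_{\xi}(v)=0$ forces $\langle\xi,\eta\rangle_{v}=0$ for every certified $\eta$, which is the contrapositive of the claim. Since $m_{\xi}(v)\in\mathbb{N}$, it follows that $m_{\xi}(v)\ge 1$. Every step involves finitely many place codes and one Hilbert-symbol evaluation, so the argument stays inside the coding of Definition~\ref{def:UniverseP}.

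The main obstacle is the first step. Definition~\ref{def:IntegralDescentRefinement} prescribes $m_{\xi}$ only as some arithmetical finite-support map dominated by $\varsigma_{\mathcal{E}}$. It does not explicitly require that $m_{\xi}(v)=0$ implies local triviality of $\xi$ at $v$. To close this gap I would adopt the informal gloss following that definition as a standing compatibility condition: vanishing correction count means the local atlas is unit-equivalent to the split presentation. The proof is then conditional on this reading. I would also flag a second subtlety. Local solubility of $\xi$ already makes its image in $H^{1}(K_v,E)$ trivial, so the nonvanishing local term must come from the $E[n]$-level cochain data used in the Cassels--Tate construction. Hence the unit-equivalence in the first step has to be read at the level of those cochain codes, not merely of the torsors.
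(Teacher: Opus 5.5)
Your argument follows the paper's own proof: both argue contrapositively that $m_{\xi}(v)=0$ means ``no non-unit correction at $v$'', hence a cohomologically trivial local cocycle code, hence $\langle\xi,\eta\rangle_{v}=0$ for every $\eta$. The paper asserts the first two implications ``by Definition~\ref{def:IntegralDescentRefinement}'', and you are right that the definition does not supply them. The definition asks only that $m_{\xi}$ be arithmetical, finitely supported, and satisfy $\varsigma_{\mathcal{E}}(\xi)\ge\varsigma^{\sharp}_{\mathcal{E}}(\xi)$. Since $\varsigma_{\mathcal{E}}\ge 0$, the zero map $m_{\xi}\equiv 0$ is an admissible refinement, and for it the conclusion fails wherever the hypothesis holds. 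So the implication from $m_{\xi}(v)=0$ to local triviality cannot be derived from the stated data. It must be imposed as an axiom, as you propose, and the paper imposes it silently. The gap you name is genuine, it is shared by the paper's proof, and your conditional version states the lemma's actual status.

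Two sharpenings are needed before such an axiom is usable.

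\emph{Step~2 contributes nothing.} A change of Weierstrass coordinates as in \eqref{eq:local-change-of-variables} is a change of model of $E$. It is not a map from $C_{\xi}$ to the split covering. Whatever the size of $u_{v}$, it leaves every class in $H^{1}(K_{v},E[n])$ unchanged, so unit versus non-unit dilation carries no cohomological information. The axiom therefore has to state outright that $m_{\xi}(v)=0$ forces the relevant local class of $\xi$ to vanish.

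\emph{Your remark about the level decides whether the axiom has content.} There are two readings of the local term.
\begin{itemize}
\item If $\langle\xi,\eta\rangle_{v}$ is literally $\mathrm{inv}_{v}(\xi_{v}\cup\eta_{v})$ for the local $E[n]$-classes, as Definition~\ref{def:CasselsTatePairingCode} reads, then it vanishes for all Selmer classes. The image of $E(K_{v})/nE(K_{v})$ is its own annihilator under the local Tate pairing, so the hypothesis is never met and the lemma is vacuous.
\item If one means the genuine Cassels--Tate local terms, these depend on a choice of global $2$-cochain. Altering that cochain by a $2$-cocycle representing a global Brauer class shifts the local terms by that class's local invariants, which can be placed at any two chosen places. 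A non-zero local term is then not an invariant of $\xi$ at $v$, and it cannot constrain a map $m_{\xi}$ determined by the code of $\xi$ alone.
\end{itemize}
Whatever compatibility condition you adopt must therefore fix the cochain data it refers to.
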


\begin{proof}
A non-zero local invariant at $v$ is the obstruction to comparing the local descent atlas of $\xi$ at $v$ with the split presentation by a unit transport. If one had $m_{\xi}(v)=0$, then by Definition~\ref{def:IntegralDescentRefinement} the local atlas would carry no non-unit correction at $v$, the local cocycle code would be cohomologically trivial there, and one would have $\langle\xi,\eta\rangle_{v}=0$ for every $\eta$. The non-vanishing of the local term therefore forces a non-unit finite correction, that is $m_{\xi}(v)\ge 1$.
\end{proof}

\begin{proposition}[Separation of the non-divisible classes]
\label{prop:SeparationFromCasselsTate}
There is a positive coded real $\eta_{\mathcal{E}}$ of Universe~P such that every class $\xi\in\Sha(E/K)$ outside $\Sha(E/K)_{\mathrm{div}}$ has
\[
  \varsigma_{\mathcal{E}}(\xi)
  \ge
  \eta_{\mathcal{E}}.
\]
For a class separated at a finite place one may take the finite atom $\log 2/[K:\Q]$, and for the residual classes one may take the least cost $\eta^{\infty}_{\mathcal{E}}>0$ of a non-trivial archimedean comparison of the package, so that $\eta_{\mathcal{E}}=\min\{\log 2/[K:\Q],\,\eta^{\infty}_{\mathcal{E}}\}$.
\end{proposition}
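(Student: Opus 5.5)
The plan is to combine the Cassels--Tate non-degeneracy of Lemma~\ref{lem:CasselsTateNondegeneracy} with the local-correction principle of Lemma~\ref{lem:NonzeroLocalTermForcesCorrection}, and then to read off the lower bound from the atom of Lemma~\ref{lem:SeparatedDescentAtom}. Fix a class $\xi\in\Sha(E/K)$ outside $\Sha(E/K)_{\mathrm{div}}$. By Lemma~\ref{lem:CasselsTateNondegeneracy}, $\xi$ is not in the radical of the pairing, so there is a class $\eta$ with $\langle\xi,\eta\rangle_{\mathrm{CT}}\neq 0$. Since the pairing is the finite sum of local components over $S_{\xi}\cup S_{\eta}$ (Definition~\ref{def:CasselsTatePairingCode}), at least one local term $\langle\xi,\eta\rangle_{v}$ is non-zero. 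Choosing $\eta$ and the witnessing place by least search over codes, which is legitimate because all these predicates are arithmetical in the codes, makes the choice available in Universe~P.

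The argument then splits on whether the witnessing place $v$ is finite or archimedean. In the finite case, Lemma~\ref{lem:NonzeroLocalTermForcesCorrection} gives $m_{\xi}(v)\ge 1$. This is exactly the descent separation of Definition~\ref{def:DescentSeparation}, restricted to this class. The computation in the proof of Lemma~\ref{lem:SeparatedDescentAtom} then yields $\varsigma_{\mathcal{E}}(\xi)\ge\varsigma^{\sharp}_{\mathcal{E}}(\xi)\ge\log 2/[K:\Q]$, using $\mathrm{N}v\ge 2$ and the refinement inequality of Definition~\ref{def:IntegralDescentRefinement}.

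In the residual case, every non-zero local term sits at an archimedean place. Here I would argue in the same way as Lemma~\ref{lem:NonzeroLocalTermForcesCorrection}: a non-zero local invariant at a real place $v$ obstructs a unit comparison at $v$. The transport $\mathrm{Loc}(\xi)\to\mathrm{Glob}(\xi)$ realizing $\varsigma_{\mathcal{E}}(\xi)$ must therefore contain a non-trivial archimedean comparison. Since the infimum in Definition~\ref{def:TransportDistance} is taken over such transports, it is bounded below by $\eta^{\infty}_{\mathcal{E}}$. Taking $\eta_{\mathcal{E}}=\min\{\log 2/[K:\Q],\eta^{\infty}_{\mathcal{E}}\}$ covers both cases. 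This is a minimum of two coded reals, hence itself a coded real.

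The main obstacle is the archimedean step, specifically showing $\eta^{\infty}_{\mathcal{E}}>0$. Archimedean local invariants take values in $\tfrac12\Z/\Z$ and arise only at the finitely many real places. My plan is to define $\eta^{\infty}_{\mathcal{E}}$ as the least cost of a non-unit archimedean comparison among the finitely many sign or orientation changes at the real places of the package. Positivity would then follow from finiteness: a minimum over a finite nonempty set of positive costs, each certified by the package code. If this finiteness cannot be secured from the definition of $\mathcal{Q}_{\mathrm{desc}}$, I would instead try to show that an archimedean local term can be moved to a finite place by replacing $\eta$ by $\eta+\zeta$ for a suitable class $\zeta$. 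The reciprocity of the global sum forces any non-zero total to be matched somewhere, and this would reduce the residual case to the finite atom. The finite case and the use of the atom are routine by comparison.
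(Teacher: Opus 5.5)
Your proposal follows the paper's proof step for step: Lemma~\ref{lem:CasselsTateNondegeneracy} supplies a partner $\eta$ with a non-zero local term, the finite case runs through Lemma~\ref{lem:NonzeroLocalTermForcesCorrection} and the atom of Lemma~\ref{lem:SeparatedDescentAtom}, and the purely archimedean case is bounded below by $\eta^{\infty}_{\mathcal{E}}$ as the obstruction to a unit comparison, with $\eta_{\mathcal{E}}$ the minimum of the two. The positivity of $\eta^{\infty}_{\mathcal{E}}$ that you single out is not derived in the paper but taken as a package constant written into the statement, and your proposed finiteness route would not supply it in any case: a sign change at a real place has $|u_v|_v=1$, so it contributes $\log^{+}(|u_v|_v^{12})=0$ to any cost modeled on Definition~\ref{def:ModelDefect}.
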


\begin{proof}
Let $\xi\notin\Sha(E/K)_{\mathrm{div}}$. By Lemma~\ref{lem:CasselsTateNondegeneracy} there is a certified class $\eta$ with $\langle\xi,\eta\rangle_{\mathrm{CT}}\neq 0$. The pairing is the finite sum of its local components of Definition~\ref{def:CasselsTatePairingCode}, so at least one local term $\langle\xi,\eta\rangle_{v}$ is non-zero. If that place $v$ is finite, then Lemma~\ref{lem:NonzeroLocalTermForcesCorrection} gives $m_{\xi}(v)\ge 1$, so $\sum_{w<\infty}m_{\xi}(w)\ge 1$, which is descent separation in the sense of Definition~\ref{def:DescentSeparation}, and Lemma~\ref{lem:SeparatedDescentAtom} then gives $\varsigma_{\mathcal{E}}(\xi)\ge\log 2/[K:\Q]$. If instead the only non-zero local terms sit at archimedean places, then a non-trivial real local invariant is the obstruction to a unit archimedean comparison, so the archimedean part of the certified cost of Definition~\ref{def:CertifiedShafarevichCost} is at least the package constant $\eta^{\infty}_{\mathcal{E}}>0$. In either case the displayed bound holds.
\end{proof}

\begin{remark}[Separation is now held, not asked for]
\label{rem:SeparationPossessed}
With Proposition~\ref{prop:SeparationFromCasselsTate}, the descent separation of Definition~\ref{def:DescentSeparation} no longer belongs to the properties one assumes, for every class outside the divisible part. Clause~\textup{(i)} of Hypothesis~\ref{hyp:ShafarevichCostBound} then holds upon the non-divisible quotient, and it holds upon the whole of $\Sha(E/K)$ exactly when the divisible part is trivial. The divisible classes pair to zero with everything, so the Cassels--Tate pairing is silent upon them, and their separation cannot be drawn from this source. One reads the residual archimedean case of the proposition the same way. A class whose Cassels--Tate non-triviality is carried at the archimedean places alone takes values in the $2$-torsion of $\Q/\Z$, so its image in $\Sha(E/K)/\Sha(E/K)_{\mathrm{div}}$ is killed by $2$. Such images form a subgroup of $(\Sha(E/K)/\Sha(E/K)_{\mathrm{div}})[2]$, which is finite by Proposition~\ref{prop:FiniteLevelShaACA0}, so they add at most a package-bounded constant to any load and do not bear upon finiteness. The single statement that remains is therefore the triviality of the divisible part, which is bounded exponent for $\Sha(E/K)$.
\end{remark}

\subsubsection{The divisible part as the sole obstruction}
\label{subsubsec:DivisiblePartObstruction}

The reading of Remark~\ref{rem:SeparationPossessed} places the whole weight of finiteness upon one quantity. We name it here, and we state in what sense the rest of the section is a search for a bound upon it.

\begin{definition}[The divisible part of the Shafarevich group]
\label{def:DivisibleSubgroupSha}
The \emph{divisible part} of $\Sha(E/K)$ is the maximal divisible subgroup
\[
  \Sha(E/K)_{\mathrm{div}}
  =
  \bigcap_{n\ge 1} n\,\Sha(E/K)
\]
which is the intersection of the images of multiplication by $n$ over all positive integers $n$. A group has finite exponent when there is an integer $n_{0}\ge 2$ that kills every element, that is when $\Sha(E/K)=\Sha(E/K)[n_{0}]$.
\end{definition}

\begin{proposition}[The divisible part is the sole obstruction]
\label{prop:DivisiblePartSoleObstruction}
In Universe~P the following hold for $\Sha(E/K)$.
\begin{enumerate}
\item[(i)] The group $\Sha(E/K)$ is finite if and only if it has finite exponent.
\item[(ii)] The group $\Sha(E/K)$ has finite exponent if and only if its divisible part $\Sha(E/K)_{\mathrm{div}}$ is trivial and the quotient $\Sha(E/K)/\Sha(E/K)_{\mathrm{div}}$ has finite exponent.
\end{enumerate}
Under the Cassels--Tate pairing of Lemma~\ref{lem:CasselsTateNondegeneracy} the quotient $\Sha(E/K)/\Sha(E/K)_{\mathrm{div}}$ carries a non-degenerate alternating pairing, so that, once finite, it has square order.
\end{proposition}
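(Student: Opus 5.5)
The argument is elementary group theory for parts (i) and (ii), and an invocation of Lemma~\ref{lem:CasselsTateNondegeneracy} for the final clause. The one substantive input is that each finite-level shadow $\Sha(E/K)[n]$ is finite, which is Proposition~\ref{prop:FiniteLevelShaACA0}.

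For (i), a finite group has finite exponent, namely its order. For the converse, suppose $\Sha(E/K)=\Sha(E/K)[n_0]$ with $n_0\ge 2$. Then Proposition~\ref{prop:FiniteLevelShaACA0} with $m=n_0$ shows that $\Sha(E/K)$ is finite. In Universe~P the forward direction needs a little care: finiteness means the coded quotient of torsor codes is a finite list. We take the least common multiple of the orders of the listed representatives. Each order is found by a bounded search, since the element lies in some $\Sha[n]$ and the list is finite, so this stays arithmetical.

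For (ii), suppose first that $\Sha$ has exponent $n_0$. A divisible subgroup $D$ of a group of exponent $n_0$ satisfies $D=n_0D=0$, so $\Sha(E/K)_{\mathrm{div}}$ is trivial. The quotient then has exponent dividing $n_0$. Conversely, if the divisible part is trivial, then $\Sha$ coincides with the quotient, so the exponent hypothesis on the quotient transfers directly. This direction is immediate once we use the intersection description of Definition~\ref{def:DivisibleSubgroupSha}. I should check that for torsion groups the intersection $\bigcap n\Sha$ is indeed the maximal divisible subgroup. This is where the imported $\Pi^1_1$ comprehension of Remark~\ref{rem:ExpressibilityVersusProvability} is needed. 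In (ii) we only use that it is a subgroup contained in every $n\Sha$: the intersection is contained in $n_0\Sha=0$.

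For the final clause, Lemma~\ref{lem:CasselsTateNondegeneracy} says the Cassels--Tate pairing descends to a non-degenerate pairing on $Q=\Sha/\Sha_{\mathrm{div}}$. Definition~\ref{def:CasselsTatePairingCode} says it is alternating. I would then cite the standard fact that a finite abelian group carrying a non-degenerate alternating $\Q/\Z$-valued pairing has square order. To prove it, decompose $Q$ into its $p$-primary parts, which are mutually orthogonal. In each part, choose an element $x$ of maximal order and a partner $y$ with $\langle x,y\rangle$ of the same order. The pair spans a hyperbolic plane $\langle x\rangle\oplus\langle y\rangle$ of order $\mathrm{ord}(x)^2$ that splits off orthogonally, so induction on $\#Q$ finishes the argument.

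The main obstacle is this last step. Alternating, meaning $\langle\xi,\xi\rangle=0$, is genuinely needed at $p=2$, where a merely skew-symmetric pairing would not suffice. This is exactly the Poonen--Stoll subtlety, so I would lean on their result as cited rather than reprove it.
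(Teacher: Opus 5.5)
Your proposal is correct and follows the paper's proof essentially step for step. Part~(i) goes through Proposition~\ref{prop:FiniteLevelShaACA0}, part~(ii) rests on the observation that a bounded exponent kills every divisible element, and the final clause comes from Lemma~\ref{lem:CasselsTateNondegeneracy} together with the alternating property; your hyperbolic-plane induction simply supplies the standard square-order fact that the paper cites without proof.

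One side remark needs correcting, though your argument never depends on it. For a general torsion group $A$ the intersection $\bigcap_{n} nA$ need not be divisible: it is the first Ulm subgroup, which can be non-zero in a reduced $p$-group. Its agreement with $\Sha(E/K)_{\mathrm{div}}$ is a property of $\Sha(E/K)$ that follows from the finiteness of each $\Sha(E/K)[p]$, not something supplied by $\Pi^{1}_{1}$ comprehension.
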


\begin{proof}
For \textup{(i)}, if $\Sha(E/K)$ is finite, then its exponent is finite. Conversely, if $\Sha(E/K)=\Sha(E/K)[n_{0}]$ for some $n_{0}\ge 2$, then Proposition~\ref{prop:FiniteLevelShaACA0} gives finiteness in Universe~P, since $\Sha(E/K)[n_{0}]$ injects into the finite packet $\Sel^{\mathfrak{P}}_{n_{0}}(\mathcal{E})$. For \textup{(ii)}, a group of finite exponent has no non-zero divisible element, since a divisible element is divisible by every $n$ while a bounded exponent kills it, so the divisible part is trivial and the quotient has the same finite exponent. Conversely, if the divisible part is trivial, then the group equals its quotient by that part, and a finite exponent of the quotient is a finite exponent of the group. The pairing statement is Lemma~\ref{lem:CasselsTateNondegeneracy} together with the alternating property of Definition~\ref{def:CasselsTatePairingCode}, a non-degenerate alternating form upon a finite abelian group forcing square order.
\end{proof}

\begin{remark}[What is left to search for]
\label{rem:TowerSearchesBoundedExponent}
Proposition~\ref{prop:DivisiblePartSoleObstruction} turns the finiteness of $\Sha(E/K)$ into a single arithmetical demand, that the group have finite exponent, or what is the same under the pairing, that its divisible part be trivial. The finite-level finiteness of Proposition~\ref{prop:FiniteLevelShaACA0} and the separation of Proposition~\ref{prop:SeparationFromCasselsTate} are then in hand without further hypothesis, and they settle the demand entirely for every class outside $\Sha(E/K)_{\mathrm{div}}$. What is left is the divisible part itself, upon which the pairing is silent by construction. Hypothesis~\ref{hyp:ShafarevichCostBound} names the most direct route to a positive answer, a single uniform cost bound covering the divisible classes along with the rest, and Proposition~\ref{prop:CostBoundFinitenessSha} shows that this one hypothesis closes the proof outright. The remainder of this section says what can, and cannot, yet be said about it.
\end{remark}

\begin{proposition}[Equivalence of the cost bound with finiteness]
\label{prop:CostBoundEquivalentFiniteness}
Grant the classical inputs of this section, namely the non-degeneracy of Lemma~\ref{lem:CasselsTateNondegeneracy} together with the integral descent refinement of Definition~\ref{def:IntegralDescentRefinement} and the separation it yields in Proposition~\ref{prop:SeparationFromCasselsTate}, and read the cost of a class through its least certified representative, as in the proof of Proposition~\ref{prop:CostBoundFinitenessSha}. Then Hypothesis~\ref{hyp:ShafarevichCostBound} holds if and only if $\Sha(E/K)$ is finite in Universe~P.
\end{proposition}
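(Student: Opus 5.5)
The proof splits into two directions. The forward direction is Proposition~\ref{prop:CostBoundFinitenessSha} read verbatim: Hypothesis~\ref{hyp:ShafarevichCostBound} yields $\#\Sha(E/K)\le 1+\lfloor B_{\mathcal{E}}/\eta_{\mathcal{E}}\rfloor$, so $\Sha(E/K)$ is finite in Universe~P. Nothing beyond citing that proposition is needed here. All the work lies in the converse.

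For the converse, I assume $\Sha(E/K)$ is finite in Universe~P and build the two constants of the hypothesis.

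\emph{Lower bound, clause (i).} Finiteness forces the divisible part to vanish. By Proposition~\ref{prop:DivisiblePartSoleObstruction}(i) a finite group has finite exponent, and by part (ii) its divisible subgroup $\Sha(E/K)_{\mathrm{div}}$ is then trivial. So every non-zero class lies outside $\Sha(E/K)_{\mathrm{div}}$, and Proposition~\ref{prop:SeparationFromCasselsTate} gives $\varsigma_{\mathcal{E}}(\xi)\ge\eta_{\mathcal{E}}$ with
\[
  \eta_{\mathcal{E}}=\min\{\log 2/[K:\Q],\,\eta^{\infty}_{\mathcal{E}}\}>0.
\]
This separation bound is proved for certified representatives. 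I will apply it to the least certified representative of each class, which is the reading convention stipulated in the statement.

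\emph{Upper bound, clause (ii).} Enumerate the finitely many non-zero classes by their least codes, using the least-representative predicate from the proof of Proposition~\ref{prop:CostBoundFinitenessSha}; this uses arithmetical comprehension over an arithmetical equivalence relation. Each cost $\varsigma_{\mathcal{E}}(\xi_i)$ is a coded real by Definition~\ref{def:CertifiedShafarevichCost}, and finite because some transport realises it (the split presentation). Set $B_{\mathcal{E}}$ to be the sum of these costs over all non-zero classes, a finite sum of coded reals. Any list $\Xi$ of pairwise inequivalent non-zero classes is a sublist of this enumeration, and the costs are non-negative, so $\mathfrak{D}_{\mathcal{E}}(\Xi)\le B_{\mathcal{E}}$.

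The main obstacle is the finiteness of each $\varsigma_{\mathcal{E}}(\xi_i)$, that is, showing the transport distance from $\mathrm{Loc}(\xi)$ to $\mathrm{Glob}(\xi)$ is not $+\infty$. My first attempt is to exhibit an explicit morphism in $\mathcal{Q}_{\mathrm{desc}}$ from the certified local atlas to the split presentation. Its cost should be bounded by the finitely many non-unit corrections at places in $S_\xi$, following the integral refinement of Definition~\ref{def:IntegralDescentRefinement}. If this construction is unavailable in general, I would add the non-emptiness of that hom-set to the list of granted classical inputs.

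A secondary point is uniformity in the choice of representatives. Fixing least codes removes it, because then both $\eta_{\mathcal{E}}$ and $B_{\mathcal{E}}$ depend only on the package.
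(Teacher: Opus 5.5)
Your proposal is correct and follows the paper's proof essentially step for step. The forward direction is Proposition~\ref{prop:CostBoundFinitenessSha}. The converse kills $\Sha(E/K)_{\mathrm{div}}$, applies Proposition~\ref{prop:SeparationFromCasselsTate} for clause~(i), and for clause~(ii) sums the least-representative costs and uses the sublist argument. The paper kills the divisible part directly, noting that for a finite group of exponent $m$ one has $D=mD=0$, whereas you route through Proposition~\ref{prop:DivisiblePartSoleObstruction}. The obstacle you flag, finiteness of each $\varsigma_{\mathcal{E}}(\xi_i)$, needs no extra input: Definition~\ref{def:CertifiedShafarevichCost} stipulates that the cost is represented by a fast Cauchy name, and this is exactly what the paper relies on when it calls $B_{\mathcal{E}}$ a finite sum of coded reals.
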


\begin{proof}
One direction is Proposition~\ref{prop:CostBoundFinitenessSha}. For the converse, suppose that $\Sha(E/K)$ is finite. A finite abelian group has a finite exponent $m$, and a divisible group $D$ satisfies $D=mD$, so the divisible part, being at once finite and divisible, satisfies $\Sha(E/K)_{\mathrm{div}}=m\,\Sha(E/K)_{\mathrm{div}}=0$. Every non-zero class therefore lies outside the divisible part, and Proposition~\ref{prop:SeparationFromCasselsTate} supplies a positive coded real $\eta_{\mathcal{E}}$ with $\varsigma_{\mathcal{E}}(\xi)\ge\eta_{\mathcal{E}}$ upon each of them, which is clause~\textup{(i)}. For clause~\textup{(ii)}, the non-zero classes are enumerated by a finite list of certified codes, each taken least in its class by the arithmetical least-search of the proof of Proposition~\ref{prop:CostBoundFinitenessSha}, and one sets $B_{\mathcal{E}}$ to be the sum of their certified costs, a finite sum of coded reals and hence a coded real of Universe~P. Every finite list $\Xi$ of pairwise inequivalent non-zero classes is then a sublist of that full list, and every summand of a load is non-negative, so $\mathfrak{D}_{\mathcal{E}}(\Xi)\le B_{\mathcal{E}}$, which is clause~\textup{(ii)}.
\end{proof}

\begin{remark}[The cost bound is a reformulation rather than a weakening]
\label{rem:CostBoundReformulation}
Proposition~\ref{prop:CostBoundEquivalentFiniteness} should temper any impression that Hypothesis~\ref{hyp:ShafarevichCostBound} is a milder demand than the finiteness it implies. Modulo the classical inputs named there the two statements coincide, and the hypothesis is the finiteness conjecture rewritten in the transport vocabulary of Universe~P. The rewriting neither strengthens nor weakens what is asked. It places the conjecture in the shape of a finite quantitative inequality, and it exposes clause~\textup{(i)} as the part the Cassels--Tate pairing settles outside the divisible part. The equivalence reads the cost of a class through its least certified representative. Under a reading in which the representative varies freely, clause~\textup{(ii)} asks in addition a bound uniform over representatives, a strengthening upon which no argument of this paper draws.
\end{remark}

\begin{remark}[How the cost bound meets the divisible part]
\label{rem:CostBoundMeetsDivisibility}
It is worth being exact about the mechanism by which Hypothesis~\ref{hyp:ShafarevichCostBound} excludes a non-trivial divisible part, since two mechanisms could be imagined and only one is at work. Suppose $\Sha(E/K)_{\mathrm{div}}\neq 0$. The group $\Sha(E/K)$ is torsion, a non-zero divisible torsion abelian group contains a copy of the Pr\"ufer group $\Q_{\ell}/\Z_{\ell}$ for some prime $\ell$, and such a copy supplies finite lists of pairwise inequivalent non-zero classes of every length. Clause~\textup{(i)} prices each of them at $\eta_{\mathcal{E}}$ or more, so the loads of ever longer lists surpass every bound, against clause~\textup{(ii)}. The exclusion is therefore an exclusion by counting. The hypothesis forbids an infinitude of non-zero classes, divisibility entails that infinitude, and the divisible part falls with it.

A second mechanism plays no part here. The cost $\varsigma_{\mathcal{E}}$ of Definition~\ref{def:CertifiedShafarevichCost} compares one torsor code with the split presentation, and its axioms do not couple the cost of a class to the costs of its $n$-th roots. A chain $\xi=n\,\xi_{n}$, available for every $n$ when $\xi$ is divisible, therefore exerts no downward pressure upon $\varsigma_{\mathcal{E}}(\xi)$. The metric is neutral upon the existence of divisible classes, a neutrality of a piece with Remark~\ref{rem:SeparationPossessed}, where the one pairing that separates classes was seen to be silent precisely upon the divisible ones. The exclusion is effected by the finitude the hypothesis demands, with the cost bound acting as a constraint imposed upon the metric rather than read off from it.
\end{remark}
%------------------------------------------------------------------
\subsection{What remains}
\label{subsec:BSDWhatRemains}
%------------------------------------------------------------------

The two results above delimit the reduction precisely. Proposition~\ref{prop:SeparationFromCasselsTate} separates every class outside the divisible part, and Proposition~\ref{prop:DivisiblePartSoleObstruction} shows that this separation is exactly the content finiteness asks for. The divisible part is therefore the whole of what is left, and it resists the local methods that serve elsewhere in this paper.

A class of $\Sha(E/K)$ is locally soluble at every completion of $K$. Once a local point is chosen at a place $v$ the torsor becomes isomorphic to $E$ over $K_v$, so the Kodaira type, the conductor exponent, and the Tamagawa number, each computed from the local model by Tate's algorithm and Ogg's formula, agree across distinct classes. A predicate built from local reduction data alone therefore cannot separate a non-zero class from zero. What distinguishes them is global and cohomological, which is where Proposition~\ref{prop:SeparationFromCasselsTate} stops.

The reduction thus isolates Hypothesis~\ref{hyp:ShafarevichCostBound} as its single remaining object, a uniform positive cost on every non-zero class together with a uniform bound on the total cost of any finite family of them. It holds on the non-divisible quotient, and by Proposition~\ref{prop:CostBoundEquivalentFiniteness}, modulo the classical inputs of this section, it is equivalent to the finiteness of $\Sha(E/K)$.

\bibliographystyle{smfalpha}
\bibliography{intensional_semantics}

\end{document}